\theoremstyle{plain}
\newtheorem{theorem}{Theorem}[section]
\newtheorem{lemma}[theorem]{Lemma}
\newtheorem{corollary}[theorem]{Corollary}
\newtheorem{remark}[theorem]{Remark}
\newtheorem{example}[theorem]{Example}
\theoremstyle{plain}
\newtheorem{problem}{Problem}
\newtheorem{property}{Property}
\newtheorem{definition}{Definition}
\crefname{problem}{problem}{problems}
\Crefname{problem}{Problem}{Problems}
\crefname{property}{property}{properties}
\Crefname{property}{Property}{Properties}
\crefname{figure}{figure}{figures}
\Crefname{figure}{Figure}{Figures}
\newcounter{subeq}
\renewcommand{\thesubeq}{\theequation\alph{subeq}}
\newcommand{\newsubeqblock}{\setcounter{subeq}{0}\refstepcounter{equation}}
\newcommand{\mysubeq}{\refstepcounter{subeq}\tag{\thesubeq}}
\def\R{\mathbb{R}}
\DeclareMathOperator{\Id}{Id}
\DeclareMathOperator{\tr}{tr}
\DeclareMathSymbol{\shortminus}{\mathbin}{AMSa}{"39}
\newcommand{\transposed}[1]{#1^\mathrm{t}}
\newcommand{\G}{\Gamma}
\newcommand{\Gh}{\Gamma_h}
\newcommand{\g}{\omega}
\newcommand{\gh}{\omega_h}
\newcommand{\T}{\mathcal{T}}
\newcommand{\Tri}[1][\G]{\T({#1})}
\newcommand{\El}{T}
\newcommand{\ElRef}{T^\mathrm{ref}}
\newcommand{\TriH}{\T_h}
\newcommand{\TriBar}{\bar{\T}_h}
\newcommand{\TriHat}{\hat{\T}_h}
\newcommand{\leqC}{\lesssim}
\newcommand{\nn}{\bm{n}}
\newcommand{\nnh}{\bm{n}_h}
\newcommand{\nnhTilde}{\nn^\sharp_{h}}
\newcommand{\PP}{\bm{P}}
\renewcommand{\P}{P}
\newcommand{\PPh}{\PP_{\!h}}
\newcommand{\QQ}{\bm{Q}}
\newcommand{\QQh}{\QQ_{\!h}}
\newcommandx{\TangentBundle}[2][1={}]{\mathrm{T}_{#1}{#2}}
\newcommand{\kg}{{k_g}}
\newcommand{\kk}{{k_\text{\scalebox{0.8}{$K$}}}}
\newcommand{\ku}{{k_u}}
\newcommand{\Weingarten}{\mathcal{W}}
\newcommand{\K}{K}
\newcommand{\Kh}{\K_h}
\newcommand{\KhTilde}{\K^\sharp_{h}}
\newcommand{\Grad}{\bm{\nabla}}
\newcommand{\GradG}{\Grad_{\G}}
\newcommand{\GradGh}{\Grad_{\Gh}}
\newcommand{\gradG}{\nabla_{\G}}
\newcommand{\gradGh}{\nabla_{\Gh}}
\newcommand{\Div}{\operatorname{Div}}
\newcommand{\DivG}{\Div_{\G}}
\renewcommand{\div}{\operatorname{div}}
\newcommand{\divG}{\div_{\G}}
\newcommand{\divGh}{\div_{\Gh}}
\newcommand{\CurlG}{\operatorname{curl}_{\G}}
\newcommand{\LaplaceG}{\bm{\Delta}_{\G}}
\newcommand{\D}{\mathcal{D}}
\newcommand{\gradh}{\nabla}
\newcommand{\DG}{\D_{\G}}
\newcommand{\DGh}{\D_{\Gh}}
\newcommand{\Inner}[3]{\big({#1} \,,\, {#2}\big)_{#3}}
\newcommand{\Abs}[1]{\left\lvert{#1}\right\rvert}
\newcommand{\Norm}[2]{\lVert{#1}\rVert_{#2}}
\newcommand{\HalbNorm}[2]{\lvert{#1}\rvert_{#2}}
\newcommand{\Set}[1]{\big\{{#1}\big\}}
\newcommand{\Jump}[1]{\ldbrack{#1}\rdbrack}
\newcommand{\EnergyNorm}[2]{{\lvert\kern-0.25ex\lvert\kern-0.25ex\lvert{#1}\rvert\kern-0.25ex\rvert\kern-0.25ex\rvert}_{#2}}
\newcommand{\avg}[2]{\langle{#1}\rangle_{#2}}
\newcommand{\restr}[2]{\left.{#1}\right|_{#2}}
\newcommandx{\SobolevSpace}[3][1=s,2=p]%
  {\bm{W}^{#1,#2}({#3})}
\newcommandx{\SobolevSpaceTan}[3][1=s,2=p]%
  {\bm{W}_\mathrm{tan}^{#1,#2}({#3})}
\newcommandx{\SobolevSpaceAmb}[3][1=s,2=p]%
  {\bm{W}^{#1,#2}({#3})}
\newcommandx{\SobolevNormTan}[4][1=s,2=p]%
  {\Norm{#3}{\SobolevSpaceTan[#1][#2]{#4}}}
\newcommandx{\SobolevNormAmb}[4][1=s,2=p]%
  {\Norm{#3}{\SobolevSpaceAmb[#1][#2]{#4}}}
\newcommandx{\SobolevNorm}[4][1=s,2=p]%
  {\Norm{#3}{\SobolevSpace[#1][#2]{#4}}}
\newcommandx{\HSpace}[2][1={1}]%
  {\bm{H}^{#1}({#2})}
\newcommandx{\HSpaceTan}[2][1={1}]%
  {\bm{H}_\mathrm{tan}^{#1}({#2})}
\newcommandx{\HSpaceAmb}[3][1={1},2={3}]%
  {[H^{#1}({#3})]^{#2}}
\newcommandx{\HNormTan}[3][1={1}]%
  {\Norm{#2}{\HSpaceTan[#1]{#3}}}
\newcommandx{\HNormAmb}[3][1={1}]%
  {\Norm{#2}{\HSpaceAmb[#1]{#3}}}
\newcommandx{\HNorm}[3][1={1}]%
  {\Norm{#2}{\HSpace[#1]{#3}}}
\newcommandx{\LSpace}[3][1={2},2={}]%
  {\bm{L}^{#1}_{#2}(#3)}
\newcommandx{\LSpaceTan}[2][1={2}]%
  {\bm{L}_\mathrm{tan}^{#1}(#2)}
\newcommandx{\LSpaceAmb}[3][1={2},2={3}]%
  {[L^{#1}(#3)]^{#2}}
\newcommandx{\LNormTan}[3][1={2}]%
  {\Norm{#2}{\LSpaceTan[#1]{#3}}}
\newcommandx{\LNormAmb}[3][1={2}]%
  {\Norm{#2}{\LSpaceAmb[#1]{#3}}}
\newcommandx{\LNorm}[3][1={2}]%
  {\Norm{#2}{\LSpace[#1]{#3}}}
\newcommandx{\LTwoNorm}[3][3={}]%
  {\Norm{#1}{#2}}
\newcommandx{\LSpaceAvg}[3][1={2},2={}]%
{L^{#1}_{0{#2}}(#3)}
\newcommand{\LInfNorm}[2]{\LNorm[\infty]{#1}{#2}}
\newcommandx{\Vh}[1][1=h]{\bm{V}_{\!\!{#1}}}
\newcommand{\VelocitySpace}[1][h]{\bm{V}_{#1}}
\newcommand{\VelocitySpaceZero}[1][h]{\bm{V}_{0,#1}}
\newcommand{\VelocitySpaceFlatZero}[1][h]{\bar{\bm{V}}_{0,#1}}
\newcommand{\PressureSpace}[1][h]{Q_{#1}}
\newcommand{\PressureSpaceAvg}[1][h]{Q_{0,#1}}
\newcommand{\NormZero}[2][]{\Norm{#2}{0,h\ifthenelse{\isempty{#1}}{}{,#1}}}
\newcommand{\NormOne}[2][]{\Norm{#2}{1,h\ifthenelse{\isempty{#1}}{}{,#1}}}
\newcommand{\GNorm}[2][]{\Norm{#2}{\star,h\ifthenelse{\isempty{#1}}{}{,#1}}}
\newcommand{\ahNorm}[2][]{\HNormTan[1]{\PPh{#2}}{\ifthenelse{\isempty{#1}}{\Gh}{#1}}}
\newcommand{\AhNorm}[2][]{\EnergyNorm{#2}{A_h\ifthenelse{\isempty{#1}}{}{,#1}}}
\newcommand{\FSpace}{\mathbb{P}}
\newcommand{\PolynomialSpace}[1]{\FSpace_{#1}}
\newcommand{\RefText}{\mathrm{ref}}
\newcommand{\Broken}{\mathrm{b}}
\newcommand{\I}{I}
\newcommand{\IRef}{\I^\mathrm{ref}}
\newcommand{\II}{\mathcal{I}}
\newcommand{\IIVh}{\II_{\VelocitySpace[h]}}
\newcommand{\IIQh}{\II_{\PressureSpace[h]}}
\newcommandx{\PDQ}[4][1={\Weingarten},2={\PP},3={n}]{\sum_{r=1}^{k+l}{#1}\bigotimes_{r,l+1}{#2}\big[{#4}\underset{r}{\cdot}{#3}\big]}
\newcommand{\rhsConst}{\Theta}
\newcommand{\myTitle}{Parametric Finite Element Discretization of the Surface Stokes Equations}
\newcommand{\mySubtitle}{Inf-sup stability and discretization error analysis}
\newcommand{\myAbstract}{%
We study a higher-order surface finite element (SFEM) penalty-based discretization of the tangential surface Stokes problem. Several discrete formulations are investigated which are equivalent in the continuous setting. The impact of the choice of discretization of the diffusion term and of the divergence term on numerical accuracy and convergence, as well as on implementation advantages, is discussed. We analyze the inf-sup stability of the discrete scheme in a generic approach by lifting stable finite element pairs known from the literature. A discretization error analysis in tangential norms then shows optimal order convergence of an isogeometric setting that requires only geometric knowledge of the discrete surface.
}
\newcommand{\myKeywords}{%
surface Stokes equation; inf-sup stability; surface finite element method; penalty method; a priori error estimates; higher order surface approximation%
}
\begin{document}

\title{\myTitle}
\subtitle{\mySubtitle}

\author{%
Hanne Hardering%
  \thanks{Technische Universit{\"a}t Dresden, %
          Institut f{\"u}r Numerische Mathematik, %
          D-01069 Dresden, Germany %
          (\url{hanne.hardering@tu-dresden.de}).}
~~and~~
Simon Praetorius%
  \thanks{Technische Universit{\"a}t Dresden, %
          Institut f{\"u}r Wissenschaftliches Rechnen, %
          D-01069 Dresden, Germany %
          (\url{simon.praetorius@tu-dresden.de}).}%
}

\maketitle

\paragraph*{Abstract.}
\myAbstract

\paragraph*{Keywords.}
\myKeywords

\section{Introduction} We consider the tangential surface Stokes equations posed on a closed two-dimensional smooth hypersurface $\G\subset\R^3$:
\begin{problem}\label{prob:Problem1}
   Find a tangential velocity vector field $\bm{u}\colon\G\to\TangentBundle{\G}$ and a scalar pressure field $p\colon\G\to\R$ with $\int_\G p\,\textrm{d}s=0$, such that
   \begin{subequations}\label{eq:stokes-problem}
   \begin{align}
     \label{eq:stokes-problem-1}
     -\frac{1}{2}\DivG\big(\GradG\bm{u} + \transposed{\GradG}\bm{u}\big) + \bm{u} - \gradG p &= \bm{f} &\text{on }\G\,, \\
     \label{eq:stokes-problem-2}
     \divG\bm{u} &= 0 &\text{on }\G\,.
   \end{align}
   \end{subequations}
   for some external forcing $\bm{f}\colon\G\to\TangentBundle{\G}$, where $\GradG$ denotes the covariant derivative of a vector field, $\DivG$ the tangential surface divergence of a tensor field, and $\divG$ the surface divergence of a vector field.
\end{problem}

The tensor divergence, $\DivG$, has to be understood as the $L^2(\G)$-adjoint operator to the covariant derivative\footnote{The definition of the tensor divergence in the surface Stokes problem is different to some other definitions where it is defined as the trace of the gradient, cf. \cite{JankuhnEtAl2018Incompressible,Fries2018HigherOrder,PSW2021Navier}.} $\GradG$. For convenience, we introduce the symmetric gradient $\bm{E}_\G(\bm{u}) \colonequals \frac{1}{2}\big(\GradG\bm{u} + \transposed{\GradG}\bm{u}\big)$, where the transposed operator $\transposed{\GradG}\bm{u}=\transposed{(\GradG\bm{u})}$ has to be understood as the transposed tensor in a coordinate system.

Note that we have introduced a mass term, representing some friction with the surface, to avoid technical difficulties in the overall discussion with the null-space of the symmetric gradient operator, the Killing vector-fields. We want to focus on the analysis of the discretization of the problem ignoring such additional complications. The influence of a zero-order term is also investigated in \cite{PSW2021Navier,BonitoEtAl2020Divergence} upon others.

An equivalent formulation introduced by intrinsic modeling in \cite{Scriven1960Dynamics} and favored in some groups for the simulation of surface flow, e.g., \cite{ReutherVoigt2018Solving,RV2015Interplay}, can be related to \eqref{eq:stokes-problem} by using the relation $\DivG\transposed{\GradG}\bm{u} = \gradG\divG\bm{u} + K\bm{u}$ with $K$ the surface Gaussian curvature, see \cite{JankuhnEtAl2018Incompressible,PSW2021Navier} for some basic calculus arguments.
This allows to rewrite the surface Stokes problem in the following form:

\begin{problem}\label{prob:Problem2}
   Find a tangential velocity vector field $\bm{u}\colon\G\to\TangentBundle{\G}$ and scalar pressure field $p\colon\G\to\R$ with $\int_\G p\,\textrm{d}s=0$, such that
   \begin{subequations}\label{eq:stokes-problem2}
     \begin{align}
       \label{eq:stokes-problem2-1}
       -\frac{1}{2}(\LaplaceG\bm{u} + K\bm{u}) + \bm{u} - \gradG p &= \bm{f} &\text{on }\G\,, \\
       \label{eq:stokes-problem2-2}
       \divG\bm{u} &= 0 &\text{on }\G\,,
     \end{align}
   \end{subequations}
   with $\LaplaceG=\DivG\GradG$ the Bochner--Laplacian.
\end{problem}

The numerical approaches for discretizing these problems are manifold, ranging from purely discrete methods comparable to a surface staggered grid approach \cite{NRV2017Discrete}, immersed interface methods \cite{HL2017stabilized,OlshanskiiEtAl2018Finite}, spectral methods for spherical or radial surfaces \cite{GA2018Hydrodynamic}, to surface finite element methods with discontinuous but tangential \cite{BonitoEtAl2020Divergence,LLS2020Divergence,DemlowNeilan2023PenaltyFree} or continuous but non-tangential \cite{Fries2018HigherOrder,ReutherVoigt2018Solving,RNV2020Numerical,BrandnerEtAl2022FiniteElement} velocity, and reformulations as stream-function formulation \cite{NVW2012Finite,Reu2018Stream,BrandnerReusken2019,BrandnerEtAl2022FiniteElement}. Some groups focus on geometrically unfitted finite element schemes, such as the Trace finite element methods (TraceFEM) \cite{JankuhnReusken2019Higher,OY2019Penalty,JankuhnEtAl2021Error}.

We consider a $C^0$-surface approximation using an elementwise polynomial parametrization and a $H^1$-conforming but non-tangential vector field approximation in a penalty approach.
Both formulations \Cref{prob:Problem1,prob:Problem2} are discretized using componentwise higher-order surface finite elements as in \cite{Demlow2009Higher,DE2013Finite} for scalar functions and in \cite{HLL2020Analysis,HP2022Tangential} for vector fields. Tangentiality of the surface velocity is enforced by a penalty approach, but in contrast to other groups, see, e.g., \cite{Fries2018HigherOrder,JankuhnReusken2019Higher}, we focus on an isogeometric setting in which only geometric quantities from the discrete surface itself are used in the discretization.

Surface finite elements generally mirror the approximation and local properties of their Euclidean counterparts, although geometric errors must be taken into account, see \Cref{sec:geometric_errors}. The key challenge we address is the global property of discrete inf-sup stability, which is essential for discussing the well-posedness and error estimates in finite element analysis, see \Cref{sec:error_estimates}.
In this paper we adapt a technique of Stenberg \cite{Stenberg1984Analysis,Stenberg1990Technique} to translate the global property into a family of local estimates on macroelement patches. The global estimate can be recovered from the local ones if they are independent of the patches themselves.
We prove that this approach also works on curved macroelements by lifting the local inf-sup estimates from the flat patches. This allows to obtain inf-sup stability for many finite element pairs that are based on stable methods for the Euclidean space without needing individual verifications for each case, see \Cref{sec:inf_sups}.

All of this is complemented by numerical experiments in \Cref{sec:numerical-experiments} for a range of stable elements, the higher order Taylor--Hood element, the MINI element, a conforming Crouzeix--Raviart element and a pair of quadratic velocity and piecewise constant pressure approximations. These discrete spaces fit in the analytic framework and show the expected behavior for an experiment of flow on a spherical surface.

\section{Discretization of the Surface Stokes Problem}
The starting point for the discretization of \Cref{prob:Problem1,prob:Problem2} is the variational formulation. A discrete variational form requires an approximation of the geometry as well as of the functions and vector fields. We consider approximations of the smooth surface $\G$ by piecewise polynomial surfaces $\Gh$ via an elementwise interpolation. On these discrete surfaces, a surface finite element discretization of functions and vector fields in (broken) Sobolev spaces is introduced, which is defined by lifting from piecewise flat reference surfaces $\hat{\G}_h$. Finally, four discrete problem formulations are presented.

\subsection{Variational Form of the Continuous Problem}
In the following, we consider the surface Stokes equations in a variational form. Since both \Cref{prob:Problem1,prob:Problem2} are equivalent on a smooth surface $\G$, we combine them into one formulation.

\begin{problem}\label{prob:VariationalProblem}
	Find a tangential vector field $\bm{u}\in\HSpaceTan[1]{\G}$ and a pressure
	field $p\in\LSpaceAvg[2]{\G}$ such that
	\begin{subequations}\label{eq:variational-form}
		\begin{align}
			\label{eq:variational-form-1}
			a(\bm{u},\bm{v}) + b(\bm{v},p) &= \Inner{\bm{f}}{\bm{v}}{\G} &&
			\forall\bm{v}\in\HSpaceTan[1]{\G}\,, \\
			\label{eq:variational-form-2}
			b(\bm{u},q) &= 0 && \forall q\in \LSpaceAvg[2]{\G}\,,
		\end{align}
	\end{subequations}
	with bilinear forms defined as
	\begin{align}
		\newsubeqblock
		\mysubeq a(\bm{u},\bm{v})
		\colonequals&\, \Inner{\bm{E}_\G(\bm{u})}{\bm{E}_\G(\bm{v})}{\G} +
		\Inner{\bm{u}}{\bm{v}}{\G} \\
		\mysubeq
		=&\, \frac{1}{2}\Inner{\GradG\bm{u}}{\GradG\bm{v}}{\G} + \frac{1}{2}\Inner{\divG\bm{u}}{\divG\bm{v}}{\G} -
		\frac{1}{2}\Inner{K\bm{u}}{\bm{v}}{\G} + \Inner{\bm{u}}{\bm{v}}{\G}\,, \\
		b(\bm{u},p) \colonequals&\, \Inner{p}{\divG\bm{u}}{\G}\,,
	\end{align}
	for tangential vector fields $\bm{u},\bm{v}\in\HSpaceTan[1]{\G}$ and scalar
	field $p\in\LSpaceAvg[2]{\G}$.
\end{problem}

Here, $\bm{u}\in\HSpaceTan[1]{\G}$ denotes tangential vector fields, with $\bm{u}\cdot\bm{n}=0$ almost everywhere, for $\bm{n}$ the surface surface normal field of $\G$. The space $\LSpaceAvg[2]{\G}$ contains square-integrable functions with zero mean over the surface $\G$.

\begin{remark}
	For pressure fields $p\in H^1(\G)\cap\LSpaceAvg[2]{\G}$ on a smooth and closed surface
	$\G$, the divergence term $b(\bm{u},p)$ can be written in the equivalent form
	$b(\bm{u},p) = -\Inner{\gradG{p}}{\bm{u}}{\G}$.
\end{remark}

The following lemma in combination with the coercivity of the bilinear form
$a(\cdot,\cdot)$ implies well-posedness of the \Cref{prob:VariationalProblem}
and existence of a unique solution.

\begin{lemma}[Continuous inf-sup condition]\label{lem:continuous-inf-sup}
	For a closed and compact $C^2$-surface $\G$ there exists a constant $\beta>0$, such that
	\begin{align}
		\sup_{\bm{v}\in\HSpaceTan[1]{\G}} \frac{b(\bm{v},q)}{\HNormTan[1]{\bm{v}}{\G}}
		&\geq \beta \LTwoNorm{q}{\G}
		&&\text{for all }q\in\LSpaceAvg[2]{\G}\,.
	\end{align}
\end{lemma}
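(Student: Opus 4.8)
The plan is to construct an explicit test field by inverting the scalar Laplace--Beltrami operator on the closed surface. Given $q\in\LSpaceAvg[2]{\G}$, consider the surface Poisson problem: find $\phi\in H^1(\G)\cap\LSpaceAvg[2]{\G}$ with
\[
  \Inner{\gradG\phi}{\gradG\psi}{\G} = -\Inner{q}{\psi}{\G} \qquad \text{for all } \psi\in H^1(\G),
\]
i.e.\ $\divG\gradG\phi = q$ in the weak sense. Since $q$ has vanishing mean over $\G$, the right-hand side functional annihilates constants, and the Dirichlet form is coercive on $H^1(\G)\cap\LSpaceAvg[2]{\G}$ by the Poincar\'e inequality on the compact surface; hence Lax--Milgram provides a unique such $\phi$ with $\Norm{\phi}{H^1(\G)}\lesssim\LTwoNorm{q}{\G}$.

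The crucial step is elliptic regularity: on a closed $C^2$-surface the Laplace--Beltrami operator is $H^2$-regular, so $\phi\in H^2(\G)$ with $\Norm{\phi}{H^2(\G)}\lesssim\LTwoNorm{q}{\G}$; in particular the covariant Hessian $\GradG\gradG\phi$ lies in $\LSpace[2]{\G}$. Set $\bm{v}\colonequals\gradG\phi$. Then $\bm{v}$ is tangential by construction, so $\bm{v}\in\HSpaceTan[1]{\G}$; it satisfies $\divG\bm{v}=\divG\gradG\phi=q$; and $\HNormTan{\bm{v}}{\G}\lesssim\Norm{\phi}{H^2(\G)}\lesssim\LTwoNorm{q}{\G}$. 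Denote the resulting constant by $C$, so that $\HNormTan{\bm{v}}{\G}\leq C\,\LTwoNorm{q}{\G}$.

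It remains to insert $\bm{v}$ into the supremum. Since $b(\bm{v},q)=\Inner{q}{\divG\bm{v}}{\G}=\LTwoNorm{q}{\G}^2$,
\[
  \sup_{\bm{w}\in\HSpaceTan[1]{\G}}\frac{b(\bm{w},q)}{\HNormTan{\bm{w}}{\G}}
  \geq \frac{b(\bm{v},q)}{\HNormTan{\bm{v}}{\G}}
  = \frac{\LTwoNorm{q}{\G}^2}{\HNormTan{\bm{v}}{\G}}
  \geq \frac{1}{C}\,\LTwoNorm{q}{\G},
\]
which is the claim with $\beta\colonequals 1/C>0$.

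The step I expect to require the most care is the $H^2$-regularity of $\Delta_\G$ on a merely $C^2$-surface together with the bound $\HNormTan{\gradG\phi}{\G}\lesssim\Norm{\phi}{H^2(\G)}$: the tangential $\bm{H}^1$-norm is defined through the covariant derivative, so it involves the full covariant Hessian of $\phi$ plus lower-order curvature contributions, and both this bound and the regularity estimate lean essentially on the smoothness hypothesis on $\G$. An alternative avoiding global regularity theory would be to localize, lift a Euclidean Bogovskii right inverse of the divergence through coordinate charts, and glue with a partition of unity; but on a closed surface the Laplace--Beltrami construction above is the most economical.
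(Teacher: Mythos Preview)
Your argument is correct and is in fact the standard proof: the paper itself does not give an argument but simply cites \cite[Lemma~2.1]{OlshanskiiEtAl2018Finite}, whose proof proceeds exactly as you do---solve $\Delta_{\G}\phi=q$ for $q\in\LSpaceAvg[2]{\G}$, invoke $H^2$-regularity of the Laplace--Beltrami operator, and take $\bm{v}=\gradG\phi$. Your identification of the $H^2$-regularity on a $C^2$-surface as the delicate point is accurate; the cited reference handles this via the results of Dziuk--Elliott, and your proposed localization alternative via Bogovskii is also viable but, as you note, less economical here.
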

\begin{proof}
	See, e.g., \cite[Lemma 2.1]{OlshanskiiEtAl2018Finite}.
\end{proof}

\begin{lemma}[Regularity of the continuous problem]\label{lem:regularity}
	 We assume that $\G$ is $C^2$-smooth, compact and closed. The \Cref{prob:VariationalProblem} is well-posed. Denote by $(\bm{u},p)$ the (unique) solution of this problem.
	If $\bm{f}\in\LSpaceTan[2]{\G}$, then
		$\bm{u}\in\HSpaceTan[2]{\G}$ and $p\in H^1(\G)\cap\LSpaceAvg[2]{\G}$ and
		there exists a constant $C_1>0$ such that,
		\begin{align}
			\HNormTan[2]{\bm{u}}{\G} + \Norm{p}{H^1(\G)} &\leq C_1 \Norm{\bm{f}}{\G}\,.
		\end{align}
\end{lemma}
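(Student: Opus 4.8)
The plan is to obtain well-posedness from the abstract saddle-point theory and then to upgrade the regularity by passing to the strong form and localizing to planar Stokes problems.

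\emph{Well-posedness.} Because $\G$ is $C^2$, a tangential Korn inequality (see, e.g., \cite{JankuhnEtAl2018Incompressible}) together with the zero-order term $\Inner{\bm u}{\bm v}{\G}$ shows that $a(\cdot,\cdot)$ is bounded and coercive on $\HSpaceTan[1]{\G}$, i.e.\ $a(\bm v,\bm v)\geqC\HNormTan[1]{\bm v}{\G}^2$. The form $b(\cdot,\cdot)$ is bounded, and \Cref{lem:continuous-inf-sup} furnishes the inf-sup condition on $\HSpaceTan[1]{\G}\times\LSpaceAvg[2]{\G}$; Brezzi's theorem then yields a unique solution $(\bm u,p)$ with
\begin{align*}
  \HNormTan[1]{\bm u}{\G}+\LTwoNorm{p}{\G}\leqC\Norm{\bm f}{\G}.
\end{align*}

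\emph{Reduction.} Using $\divG\bm u=0$ and the identity $\DivG\transposed{\GradG}\bm u=\gradG\divG\bm u+\K\bm u$, the pair $(\bm u,p)$ solves the strong form of \Cref{prob:Problem2} distributionally; moving the zero-order contributions to the right-hand side gives
\begin{align*}
  -\tfrac12\LaplaceG\bm u-\gradG p=\tilde{\bm f},\qquad \divG\bm u=0,\qquad \tilde{\bm f}\colonequals\bm f-\bm u+\tfrac12\K\bm u.
\end{align*}
Since $\G$ is $C^2$ the Gaussian curvature $\K$ is bounded, so $\bm u\in\HSpaceTan[1]{\G}$ already implies $\tilde{\bm f}\in\LSpaceTan[2]{\G}$ with $\Norm{\tilde{\bm f}}{\G}\leqC\Norm{\bm f}{\G}$; in particular no iteration is needed.

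\emph{Localization.} Cover $\G$ by finitely many $C^2$-charts $(U_i,\phi_i)$ with subordinate smooth cut-offs $\chi_i$, $\sum_i\chi_i\equiv1$. Transported to $\phi_i(U_i)\subset\R^2$, the pair $(\chi_i\bm u,\chi_i p)$ solves a planar generalized Stokes system whose principal part is strongly elliptic with $C^1$ (hence Lipschitz) coefficients — the metric components — whose right-hand side is $\chi_i\tilde{\bm f}$ plus commutators that depend only on $\bm u,\gradG\bm u,p$ and therefore lie in $\bm L^2$, and whose divergence datum $\divG(\chi_i\bm u)=(\gradG\chi_i)\cdot\bm u$ lies in $H^1$. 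As each $\chi_i\bm u,\chi_i p$ has compact support in $U_i$, classical interior regularity for the Stokes system (which only requires Lipschitz leading coefficients) gives $\chi_i\bm u\in[H^2]^2$ and $\chi_i p\in H^1$, with norms controlled by $\Norm{\tilde{\bm f}}{\G}+\HNormTan[1]{\bm u}{\G}+\LTwoNorm{p}{\G}$. Summing over $i$, transporting back, and inserting the bound from the first step gives $\bm u\in\HSpaceTan[2]{\G}$, $p\in H^1(\G)\cap\LSpaceAvg[2]{\G}$, and the asserted estimate with $C_1$ depending only on $\G$.

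The crux is the last step: setting up the localized problems with the right commutator structure and invoking a form of Stokes interior regularity valid for merely Lipschitz leading coefficients — the lowest smoothness compatible with a $C^2$ surface and the $C^0$ curvature term. For smoother surfaces this is by now standard for the surface Stokes operator and may be quoted directly, cf.\ \cite{OlshanskiiEtAl2018Finite,PSW2021Navier}.
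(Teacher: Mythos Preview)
The paper does not actually prove this lemma; its proof consists solely of a citation to \cite[Lemma~2.1]{OlshanskiiEtAl2021InfSup}. Your sketch is a correct outline of the standard argument---coercivity via surface Korn plus the zero-order term, Brezzi's theorem for well-posedness, and then localization through charts to the planar Stokes system with variable coefficients for the $\HSpaceTan[2]{\G}\times H^1(\G)$ regularity---and is precisely the kind of proof one would expect the cited reference to contain. So your approach is not really ``different'' from the paper's; you have simply unpacked what the paper leaves to the literature.

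One small caveat worth flagging: you assert that for a $C^2$ surface the Gaussian curvature is bounded and the metric in local coordinates is $C^1$, hence Lipschitz, and that this suffices for the interior $H^2$-Stokes estimate. This is the borderline case for ADN-type regularity, and while it is true, the usual references state it under slightly stronger smoothness (e.g.\ $C^{2,\alpha}$ or $C^3$ surfaces). If you intend to keep the sharp $C^2$ hypothesis, you should point to a Stokes regularity result that explicitly allows merely Lipschitz leading coefficients, or else note---as the paper implicitly does by citing---that the regularity is borrowed wholesale from the literature. For the purposes of this paper the distinction is immaterial, since all later estimates assume far more smoothness of $\G$ anyway.
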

\begin{proof}
	See, e.g., \cite[Lemma 2.1]{OlshanskiiEtAl2021InfSup}.
\end{proof}




\subsection{Vector Fields on the Surface Discretizations}
In the surface finite element method \cite{DE2013Finite,Demlow2009Higher}, the smooth surface $\G$ is approximated by a piecewise polynomial representation $\Gh$ that is described by an elementwise parametrization over a reference element $\ElRef$. The topology is thereby fixed by a piecewise flat triangulation $\hat{\G}_h$ of the surface.

Let $U_{\delta}(\G)$ be a $\delta$-neighborhood of $\G$ such that the closest-point projection $\pi\colon U_\delta(\G)\to\G$ is well defined. We assume that there exists a shape-regular sequence of affine grids $\{\hat{\G}_h\}_{h>0}$ approximating $\G$ with all $\hat{\G}_h\subset U_\delta(\G)$ and vertices on the smooth surface. For each $h$ let $\Tri[\hat{\G}_h]=\Set{\hat{\El}\mid\hat{\El}=F_{\hat{\El}}(\ElRef)}$ denote the triangulation of $\hat{\G}_h$, with $F_{\hat{\El}}$ affine mappings from the reference element $\ElRef$ to $\hat{\El}$.
To simplify the discussion, we assume that this sequence of triangulations is quasi-uniform, with $h$ being the uniform bound on element size. We also assume that all grids are conforming. The notation with the $\hat{\cdot}$ symbol is used throughout this document to denote piecewise flat domains or fields on these domains.

A piecewise polynomial surfaces approximation is constructed as parametrization over the elements of the picewise flat surface $\hat{\G}_h$. We denote by $\kg\geq 1$ the polynomial order of the surface parametrization. Let $\pi_{h,\kg}\circ F_{\hat{\El}}\colonequals \IRef_{\kg}[\pi\circ F_{\hat{\El}}]$ for $\hat{\El}\in\Tri[\hat{\G}_h]$ denote the elementwise Lagrange interpolation of the closest-point projection $\pi$ of order $\kg$.
A parametric discrete surface $\G_{h,\kg}$ is defined as the union of images of the elements of $\hat{\G}_h$ under $\pi_{h,\kg}$, i.e.,
\[
	\G_{h,\kg}\colonequals \bigcup_{\hat{\El}\in\Tri[\hat{\G}_h]} \pi_{h,\kg}(\hat{\El})\,.
\]
As before, $\Tri[\G_{h,\kg}]=\Set{\El\mid\El=\pi_{h,\kg}(\hat{\El}),\;\hat{\El}\in\Tri[\hat{\G}_h]}$ denotes the triangulation of $\G_{h,\kg}$.

To simplify the notation, we always denote the polynomial order of the surface by $\kg$ and thus write $\pi_h\colonequals \pi_{h,\kg}$ and $\Gh\colonequals \G_{h,\kg}$ in the following.

While it is possible to have continuous and tangential vector fields on $\G$, this does not hold for the piecewise polynomial surface $\Gh$. In this paper we want to preserve the continuity while giving up the tangentiality. Therefore, we introduce vector fields as mapping from $\Gh\to\R^3$. To relate these fields to tangential vector fields, we need tangential projections.

Denote by $\PP=\Id-\nn\otimes\nn$ the smooth surface tangential
projection operator, $\QQ=\Id-\PP$ the surface normal projection on
$\G$, for $\nn$ the (outward) surface normal vector field.
Let $\nnh$ denote the discrete surface normal vector field, defined
inside the elements of a triangulation of $\Gh$, formally glued together. A
corresponding discrete tangential projection is introduced as
$\PPh=\Id-\nnh\otimes\nnh$, and, correspondingly,
$\QQh=\Id-\PPh$, on $\Gh$.

A projected surface derivative $\GradG$ of vector fields
$\bm{u}\colon\G\to\R^3$ is introduced as the projection of the componentwise
Euclidean derivative, $\GradG\bm{u} \colonequals
\PP(\restr{\D\bm{u}^e}{\G})\PP$, with $\D$ the derivative w.r.t.
Cartesian coordinates and $\bm{u}^e$ a smooth extension of $\bm{u}$ in the
surrounding of $\G$. This derivative can be written intrinsically using
componentwise surface derivatives $\DG$, i.e., $\GradG\bm{u} =
\PP(\DG\bm{u})$. For tangential fields this derivative coincides with the
surface covariant derivative. For scalar fields $p\colon\G\to\R$, all the
surface derivatives coincide\footnote{We understand the derivative of a scalar field as a row vector
	such that the projection matrix must be multiplied from the right. In some
	references, the derivative is understood as the gradient as a column vector instead. This leads to
	the transposed notation $\PP(\restr{\D p^e}{\G})$.}, that is, $\gradG p = \DG p
= (\restr{\D p^e}{\G})\PP$.
Analogously, a discrete surface derivative is introduced elementwise as
$\GradGh\bm{u}_h \colonequals \PPh(\DGh\bm{u}_h)$ for vector fields
$\bm{u}_h\colon\Gh\to\R^3$. Higher order derivatives can be defined in a similar
manner by projecting all components of the Euclidean derivative.

On surface subdomains $\g\subseteq\G$ and $\gh\subseteq\Gh$ we denote by $\LSpace[2]{\g}$ and $\LSpace[2]{\gh}$ the usual space of square integrable tensor fields with corresponding $L^2$-inner product $\Inner{\bm{u}}{\bm{v}}{\g_{(h)}}$ and $L^2$-norm $\Norm{\cdot}{\g_{(h)}}$ defined in terms of the pointwise Frobenius inner-product and Frobenius norm, respectively. For scalar fields we will use light symbols, e.g., $L^2(\g)$, instead. We denote by $\LSpaceAvg[2]{\g}\colonequals\Set{p\in L^2(\g)\mid\avg{p}{\g}=0}$ the space of scalar fields $p$ with zero mean value $\avg{p}{\g}\colonequals\frac{1}{|\g|}\int_{\g} p\,\text{d}\g$.

We introduce the Sobolev spaces $\HSpaceTan[s]{\g}$ and norms for tangential vector fields $\bm{u}\colon\g\to\R^3$ with $\bm{u}\cdot\nn=0$ on $\g$ and the embedded Sobolev spaces $\HSpace[s]{\gh}=\HSpaceAmb[s]{\gh}$ for vector fields $\bm{u}_h\colon\gh\to\R^3$ on the discrete surface $\gh$, with associated norms
\begin{align}
	\HNormTan[s]{\bm{u}}{\g}^2 &\colonequals \sum_{j=0}^s	\Norm{\GradG^j\bm{u}}{\g}^2\,, &
	\HNorm[s]{\bm{u}_h}{\gh}^2 &\colonequals \sum_{j=0}^s	\Norm{\DGh^j\bm{u}_h}{\gh}^2\,.
\end{align}
The Sobolev spaces $\SobolevSpaceTan[s][p]{\g}$ and $\SobolevSpace[s][p]{\gh}$ are defined analogously, with $\SobolevSpaceTan[s][2]{\g}=\HSpaceTan[s]{\g}$ and $\SobolevSpace[s][2]{\gh}=\HSpace[s]{\gh}$.

For tensor fields or derivatives that are defined elementwise without requiring additionaly continuity across element faces, we introduce broken Sobolev spaces, see also \cite{EG2021FiniteI}. In order to highlight this, we replace the domain argument by its corresponding triangulation.
\begin{definition}\label{def:broken-sobolev-spaces}
	For $\TriH=\Tri[\Gh]$ a triangulation of $\Gh$ and $\T$ a triangulation of $\G$ defined by mapping the elements of $\TriH$ using $\pi$, we denote by
	\begin{align*}
		\HSpaceTan[s]{\T} &\colonequals\Set{\bm{u}\in\LSpace[2]{\G,\R^3}\mid\bm{u}\cdot\bm{n}=0,\;\restr{\bm{u}}{\El}\in\HSpaceTan[s]{\El},\forall\El\in\T}\,,\\
		\HSpace[s]{\TriH} &\colonequals\Set{\bm{u}_h\in\LSpace[2]{\Gh,\R^3}\mid\restr{\bm{u}_h}{\El}\in\HSpace[s]{\El},\forall\El\in\TriH}
	\end{align*}
	the \textbf{broken Sobolev spaces} with norms defined elementwise, i.e.,
	\begin{align*}
		\HNormTan[s]{\bm{u}}{\T}^2 &\colonequals \sum_{\El\in\T} \HNormTan[s]{\restr{\bm{u}}{\El}}{\El}^2\,, &
		\HNorm[s]{\bm{u}_h}{\TriH}^2 &\colonequals \sum_{\El\in\TriH} \HNorm[s]{\restr{\bm{u}_h}{\El}}{\El}^2\,.
	\end{align*}
\end{definition}

To compare scalar and vector fields of the smooth surface $\G$ and on the discrete surface $\Gh$, we introduce extension and lifting of fields by composition with the surface closest-point projection $\pi$, i.e., for $u\colon\G\to\R$ we introduce the extension $u^e\colonequals u\circ\pi\colon U_\delta(\G)\to\R$ and vice versa for $u_h\colon\Gh\to\R$ we write for the surface lifting $u_h^\ell\colonequals u_h\circ\restr{\pi}{\Gh}^{-1}\colon\G\to\R$. The extension of the lifting is still denoted with an ${}^e$ by $u_h^e\colonequals (u_h^\ell)^e$. For vector fields extensions and liftings are defined analogously.

We introduce the surface Weingarten maps $\Weingarten\colonequals -\GradG\nn$ and elementwise $\Weingarten_h\colonequals -\GradGh\nnh$ to express the Gaussian curvature for embedded two-dimensional surfaces as $K = \frac{1}{2}(\operatorname{tr}(\Weingarten)^2 - \operatorname{tr}(\Weingarten^2))$ and $K_h = \frac{1}{2}(\operatorname{tr}(\Weingarten_h)^2 - \operatorname{tr}(\Weingarten_h^2))$.
Apart from this intrinsic Gaussian curvature $\Kh$, we introduce another approximation of the continuous Gaussian curvature $\K$ on $\Gh$, denoted by $\KhTilde$. For this field we assume that the following estimate is fulfilled,
\begin{align}
	\Abs{\Inner{(\K^e- \KhTilde)\PPh \bm{v}_h}{\PPh\bm{w}_h}{\Gh}}
	&\leqC h^{\kk} \HNormTan{\PPh\bm{v}_h}{\Gh}\HNormTan{\PPh\bm{w}_h}{\Gh} \label{eq:order-assumptions-K}
\end{align}
for $\bm{v}_h, \bm{w}_h \in \HSpace{\Gh}$ and some order $\kk+1\geq\kg$. We write $a\leqC b$ as an abbreviation for $a\leq C b$ with $C$ depending just on the property of the surface but not on the grid size $h$. The normal and curvature fields can be constructed by, e.g., extracting the geometric information from another discrete surface $\G_{h,\kk}$ with a possibly different approximation order than $\kg$, or by interpolating the continuous surface fields on $\Gh$ in some higher-order function space.

\begin{remark}\label{rem:gauss-curv-approx}
	On a continuous piecewise polynomial surface approximation $\Gh$ of order $\kg$, we have $\kk\geq\kg-1$ for the elementwise surface Gaussian curvature $\KhTilde=\Kh$. This follows from the estimate $\sup_{\El\in\Tri[\Gh]}\LInfNorm{\K^e - \Kh}{\El}\leqC h^{\kg-1}$, cf. \cite{Demlow2009Higher}.
	Numerical experiments indicate that for a piecewise Lagrange parametrization of the surface with even $\kg$, we have $\kk \geq \kg$, cf. \cite{ZavalaniEtAl2024Note}.
\end{remark}

\subsection{Construction of Surface Finite Elements}\label{sec:sfem}
The spaces $H^s(\G)$, $s=0,1$, are approximated using surface finite elements \cite{Demlow2009Higher}, i.e., standard finite element spaces lifted to the discrete approximation $\Gh$ of $\G$. Before introducing conforming approximations, we define broken discrete spaces. We follow the definitions given in \cite{EG2021FiniteI}.

Let $\TriHat\subseteq\Tri[\hat{\G}_h]$ denote a subset of the piecewise flat surface triangulation and $\hat{\g}\colonequals \bigcup_{\hat{\El}\in\TriHat} \hat{\El}$ its domain. We introduce the (broken) scalar finite element space
\begin{align}\label{eq:piecewise-flat-broken-finite-element-space}
\hat{S}^\Broken(\TriHat)=\hat{S}^\Broken(\TriHat;\FSpace) &\colonequals\Set{v\in
	L^\infty(\TriHat,\R)\mid \restr{v}{\hat{\El}} \circ
	F_{\hat{\El}}\in \FSpace\text{ for all
	}\hat{\El}\in\TriHat}\,,
\end{align}
constructed from a local reference finite element $(\ElRef,\FSpace,\Sigma)$. The space $\FSpace$ denotes a finite dimensional function space on $\ElRef$, where $\PolynomialSpace{r}\subseteq\FSpace\subset W^{r+1,\infty}(\ElRef)$ with $\PolynomialSpace{r}$ the set of polynomials of order at most $r$, for some $r\geq 1$. The maximal $r$ for which $\PolynomialSpace{r}\subseteq\FSpace$ is called the order of the finite element space. The global space for the special case $\FSpace=\PolynomialSpace{r}$ is abbreviated by $\hat{S}_r^\Broken(\TriHat)$.

Associated to the local reference finite element is a canonical interpolation $\IRef_{\FSpace}$ defined on the domain $D^\RefText$ of the linear forms in $\Sigma\ni\sigma\colon D^\RefText\to\R$, and a corresponding global interpolation operator $\hat{\I}_{h,\FSpace}\colon\hat{D}\to\hat{S}^\Broken(\TriHat)$ with $\hat{\I}_{h,\FSpace}[v]\circ F_{\hat{\El}} \colonequals \IRef_\FSpace[v\circ F_{\hat{\El}}]$, for $\hat{\El}\in\TriHat$ and $\hat{D}=D^\RefText\circ F_{\hat{\El}}$ the set of functions lifted from $\El^\RefText$ to $\hat{\El}$. Here again, we use the abbreviation $\IRef_{r}$ and $\hat{\I}_{h,r}$ for $\FSpace=\PolynomialSpace{r}$.

The broken finite element spaces $\hat{S}^\Broken$ and the interpolation operators $\hat{\I}_{h,\FSpace}$ are lifted to the associated surfaces $\gh=\pi_h(\hat{\g})$, and $\g=\pi(\hat{\g})$ via the mappings $\pi_h$ and $\restr{\pi}{\hat{\g}}$, respectively. The resulting finite element spaces are denoted by $S_h^\Broken$ and $S^\Broken$, and the interpolation operators by  $\I_{h,\FSpace}$ and $\I_{\FSpace}$.

The spaces $\hat{S}^\Broken$, $S_h^\Broken$, and $S^\Broken$ are broken spaces without continuity requirements across element faces. Following the notation of the Sobolev spaces, we remove the $\cdot^\Broken$ and replace the triangulation parameter by the domain parameter to denote the continuous spaces, e.g.,
\begin{align*}
	\hat{S}(\hat{\g}) &\colonequals \Set{v\in\hat{S}^\Broken(\Tri[\hat{\g}])\mid\Jump{v}_e=0,\;\forall e\in\mathcal{E}^\mathrm{int}(\Tri[\hat{\g}])}\,,
\end{align*}
with $\mathcal{E}^\mathrm{int}(\T)\colonequals\Set{e=\partial\El_1\cap\partial\El_2\mid\El_1,\El_2\in\T}$ the set of all interior edges of the triangulation $\T$ and $\Jump{v}_e\colonequals \restr{v}{\El_1} - \restr{v}{\El_2}$ the jump of $v$ across the edge $e$ between the elements $\El_1$ and $\El_2$. Analogously, we define the continuous spaces $S_h(\gh)$ and $S(\g)$.

We formulate two assumptions on the discrete spaces, the classical interpolation and inverse estimate properties. These assumptions allow us later to analyze properties of the discrete formulation of \Cref{prob:DiscreteProblem} and are typical for a large class of finite element spaces.

\begin{property}[Interpolation property]\label{prop:interpolation}
	For a piecewise flat subset $\hat{\g}\subseteq\hat{\G}_h$ with corresponding finite element space $\hat{S}^\Broken(\TriHat;\FSpace)$ of order $r\geq 1$ there exists an interpolation operator $\hat{\II}_{h,\FSpace}\colon L^1(\hat{\g})\to\hat{S}^\Broken(\TriHat;\FSpace)$ with the following approximation properties:

	For all $p\in[1,\infty]$, $0\leq l\leq r+1$, and all $0\leq m\leq l$ it holds
	\begin{equation}\label{eq:interpolation-local-approximation}
		\HalbNorm{v-\hat{\II}_{h,\FSpace} v}{W^{m,p}(\hat{\El})} \leqC h_{\hat{\El}}^{l-m}\HalbNorm{v}{W^{l,p}(\hat{\El})}\,,\quad\forall v\in W^{l,p}(\hat{\El}), \hat{\El}\in\TriHat\,.
	\end{equation}
\end{property}

Note that this interpolation operator is not the canonical $\hat{\I}_{h,\FSpace}$ from the definition of the local finite element spaces, but denotes a different, stable interpolation operator.
In \cite[Sec. 3]{ErnGuermond2017Quasi} the $L^1$-stable interpolation operator $\II_{\El}^\sharp$ is introduced, which satisfies the approximation properties. In \cite[Sec. 18.4]{EG2021FiniteI} the broken $L^2$-orthogonal projection $\II_{\El}^\Broken$ with similar properties is defined. These are elementwise projections onto the local finite element space $\FSpace$.

\begin{property}[Inverse estimate property]\label{prop:inverse-estimates}
	On a piecewise flat subset $\hat{\g}\subseteq\hat{\G}_h$ the finite element space $\hat{S}^\Broken(\TriHat,\FSpace)$ with local reference finite element $(\ElRef,\FSpace,\Sigma)$ be such that $\FSpace\subset W^{l,\infty}(\ElRef)$ for some $l\geq 0$. Let $0\leq m\leq l$ and $1\leq p,q\leq\infty$. If $h$ is small enough then it holds
  \begin{equation}
    \Norm{v_h}{W^{l,p}(\hat{\El})} \leqC h_{\hat{\El}}^{m-l+\frac{2}{p}-\frac{2}{q}} \Norm{v_h}{W^{m,q}(\hat{\El})}\,,\quad\forall v_h\in\hat{S}^\Broken(\TriHat,\FSpace)\,,\;\hat{\El}\in\TriHat\,.
  \end{equation}
\end{property}

See, e.g., \cite[Lemma 12.1]{EG2021FiniteI} for a proof of this property for many finite elements. These inverse estimates can also be lifted to elements of the parametric surface $\Gh$ using the boundedness of the discrete parametrization $\pi_h$, see, e.g., \cite[Lemma 4.3]{HP2022Tangential}.

\begin{definition}[Pair of discrete spaces]\label{def:mixed-spaces}
	We define the \textbf{pair of discrete spaces} $(\VelocitySpace, \PressureSpace)$ componentwise as surface finite element spaces that satisfy \Cref{prop:interpolation,prop:inverse-estimates},
	\[
		(V_h,Q_h)\colonequals (V_h(\Gh),Q_h(\Gh))\colonequals (S_h(\G_h;\FSpace_V), S_h^\Broken(\Tri[\G_h];\FSpace_Q))\subset H^1(\G_h)\times L^2(\G_h)
	\]
	and denote by $\VelocitySpace\colonequals[V_h]^3$ the \textbf{velocity space} and by $\PressureSpace$ the \textbf{pressure space}.
	The pair of spaces is of \textbf{order} $\ku =\min\{r_v, r_q+1\}\geq 1$, where $r_v$ and $r_q$ denote the orders of the local finite element spaces $\FSpace_V$ and $\FSpace_Q$, respectively.

	For a subdomain $\gh\subset\Gh$ with boundary $\partial\gh$ the velocity space with zero boundary trace will be denoted by $\VelocitySpaceZero(\gh)\colonequals\Set{\bm{v}\in\VelocitySpace(\gh)\mid\restr{\bm{v}}{\partial\gh}=\bm{0}}$.
\end{definition}

Associated to a pair of discrete spaces $(\VelocitySpace,\PressureSpace)$ is a pair of interpolation operators, which will be denoted by	$(\IIVh, \IIQh)$. These can be defined by a lifting of the local interpolation operators of \Cref{prop:interpolation} to $\Gh$:

\begin{definition}[Global interpolation operators]
	We define two interpolation operators of order $r$ on $\Gh$ in terms of the associated interpolation operators from \Cref{prop:interpolation} on the piecewise flat surface $\hat{\G}_h$:
	\begin{align}
		\II^\Broken_{h,\FSpace}&\colon L^1(\Gh)\to S^\Broken_h(\Gh;\FSpace);& \restr{\II^\Broken_{h,\FSpace}[u]}{\El}
			&\colonequals\hat{\II}_{h,\FSpace}[\restr{u}{\El}\circ\pi_h]\circ\pi_h^{-1},\;\El\in\Tri[\Gh] \\
		\II_{h,\FSpace}&\colon L^1(\Gh)\to S_h(\Gh;\FSpace);& \II_{h,\FSpace}[u]
			&\colonequals\hat{\mathcal{J}}^\text{av}_{h,\FSpace}\big[\hat{\II}_{h,\FSpace}[u\circ\pi_h]\big]\circ\pi_h^{-1}
	\end{align}
	with $S^\Broken_h$ and $S_h$ discrete spaces of order $r$, and $\hat{\mathcal{J}}^\text{av}_{h,\FSpace}$ an associated averaging operator over overlapping DOFs in neighboring elements, as in \cite[Sec. 4.2]{ErnGuermond2017Quasi}.
\end{definition}

For the space $\VelocitySpace$, we denote by $\IIVh\colon L^1(\Gh,\R^3)\to [S_h(\Gh;\FSpace_V)]^3$ with $\IIVh[\bm{v}]_i\colonequals \II_{h,\FSpace_V}[v_i]$ the componentwise interpolation using the global continuous interpolation operator. If the space $\PressureSpace$ is continuous, we identify $\IIQh\colonequals \II_{h,\FSpace_Q}$, otherwise, $\IIQh\colonequals \II^\Broken_{h,\FSpace_Q}$.

Let $\El\in\Tri[\Gh]$ be an element of the parametrized surface $\Gh$. We denote by $\g_{\El}\subset\Gh$ the element-neighborhood of $\El$, w.r.t. overlapping DOFs in the function space $S_h(\Gh)$, and $\g^\Broken_{\El}\equiv\El$ the trivial neighborhood associated to the broken function space $S_h^\Broken(\Gh)$. The following approximation estimates follow classical properties found for Scott-Zhang, Cl\'ement, or Ern-Guermond quasi-interpolation operators, cf. \cite{EG2021FiniteI}, where estimation on an element into its neighborhood are employed.

\begin{lemma}[Approximation properties of the global interpolation operators]\label{lem:interpolation-properties}
	For the interpolation operators $\II^\Broken_{h,\FSpace}$ and $\II_{h,\FSpace}$ the following approximation properties hold:

	For all $p\in[1,\infty]$, $0\leq l\leq r+1$, and all $0\leq m\leq l$ it holds
	\begin{equation}\label{eq:interpolation-local-approximation-Itilde}
		\HalbNorm{u-\II^{(\Broken)}_{h,\FSpace}[u]}{W^{m,p}(\El)} \leqC h_{\El_h}^{l-m}\HalbNorm{u}{W^{l,p}(\Tri[\g^{(\Broken)}_{\El}])}\,,\quad\forall u\in W^{l,p}(\g^{(\Broken)}_{\El}),\;\El\in\Tri[\g^{(\Broken)}_{\El}]\subset\Tri[\Gh]\,.
	\end{equation}
\end{lemma}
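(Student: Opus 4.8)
The strategy is to transport the flat-patch estimates, which are already at hand, to the parametric surface $\Gh$ through the discrete parametrization $\pi_h$, paying only $h$-independent geometric constants. For the broken operator, fix $\El=\pi_h(\hat{\El})\in\Tri[\Gh]$ and put $\hat v\colonequals\restr{u}{\El}\circ\pi_h\in W^{l,p}(\hat{\El})$; by the definition of $\II^\Broken_{h,\FSpace}$ one has $\restr{(u-\II^\Broken_{h,\FSpace}[u])}{\El}=\big(\hat v-\hat{\II}_{h,\FSpace}[\hat v]\big)\circ\pi_h^{-1}$. I would then apply the local estimate \eqref{eq:interpolation-local-approximation} of \Cref{prop:interpolation} on the flat element $\hat{\El}$ and change variables with $\restr{\pi_h}{\hat{\El}}\colon\hat{\El}\to\El$ and its inverse on both sides: transforming the $W^{m,p}(\hat{\El})$-seminorm of the error back to $\El$, transforming the $W^{l,p}(\hat{\El})$-(semi)norm of $\hat v$ into (semi)norms of $u$ over $\El$, and using $h_{\hat{\El}}\sim h_{\El}$ then yields \eqref{eq:interpolation-local-approximation-Itilde} with $\g^\Broken_{\El}\equiv\El$. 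The change-of-variables estimates invoked here are the standard surface-FEM bounds for Lagrange-interpolated closest-point parametrizations: $\restr{\pi_h}{\hat{\El}}$ is a smooth diffeomorphism onto $\El$ whose derivatives up to order $r+1$ are bounded independently of $h$, with an inverse controlled in the same sense and with uniformly equivalent element metrics; see \cite[Lemma~4.3]{HP2022Tangential} and \cite{Demlow2009Higher} --- these are precisely the estimates already used above to lift \Cref{prop:inverse-estimates} to $\Gh$.

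For the continuous operator the error additionally carries the averaging step, $\restr{\II_{h,\FSpace}[u]}{\El}=\hat{\mathcal{J}}^\text{av}_{h,\FSpace}\big[\hat{\II}_{h,\FSpace}[u\circ\pi_h]\big]\circ\pi_h^{-1}$, so that $\restr{(u-\II_{h,\FSpace}[u])}{\El}$ depends on $w\colonequals u\circ\pi_h$ on the flat macroelement $\hat{\g}_{\hat{\El}}\colonequals\pi_h^{-1}(\g_{\El})$. Here I would use the quasi-interpolation estimate for the composed operator $\hat{\mathcal{J}}^\text{av}_{h,\FSpace}\circ\hat{\II}_{h,\FSpace}$ from \cite[Sec.~4.2]{ErnGuermond2017Quasi} (see also \cite{EG2021FiniteI}), which provides exactly the element-into-neighborhood bound $\HalbNorm{w-\hat{\mathcal{J}}^\text{av}_{h,\FSpace}[\hat{\II}_{h,\FSpace}w]}{W^{m,p}(\hat{\El})}\leqC h_{\hat{\El}}^{l-m}\HalbNorm{w}{W^{l,p}(\Tri[\hat{\g}_{\hat{\El}}])}$, and then transport through $\pi_h$ as in the broken case, now on the whole patch: since $\pi_h$ maps each element of $\hat{\g}_{\hat{\El}}$ onto an element of $\g_{\El}$ with uniform constants, the patch (semi)norm transforms correctly and \eqref{eq:interpolation-local-approximation-Itilde} follows with neighborhood $\g_{\El}$.

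The genuinely routine parts are the chain-rule bookkeeping and the fact that only finitely many elements and terms occur. The point that requires care is the change of variables under the \emph{non-affine} map $\pi_h$: one must verify that $\pi_h$ and $\pi_h^{-1}$ are controlled in $W^{j,\infty}$ for $j$ up to $r+1$ with $h$-independent constants and that the pulled-back metrics are uniformly equivalent, so that transferring Sobolev (semi)norms of order up to $r+1$ between $\hat{\El}$ and $\El$ does not degrade the estimate. This is where shape-regularity of $\{\hat{\G}_h\}$, smoothness of the closest-point projection $\pi$, and the fact that $\pi_h$ is a fixed-order Lagrange interpolant of $\pi$ (so its higher derivatives inherit the correct powers of $h$) enter; with those geometric bounds available --- the same ones behind the lifting of \Cref{prop:inverse-estimates} --- the argument reduces to combining \Cref{prop:interpolation} (resp.\ \cite{ErnGuermond2017Quasi}) with the transformation rules.
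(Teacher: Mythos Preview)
Your proposal is correct and follows essentially the same approach as the paper: for the broken operator you lift \Cref{prop:interpolation} from $\hat{\El}$ to $\El$ via the boundedness of $\pi_h$ and $\pi_h^{-1}$ (the paper cites \cite[Lemma~2.2]{HP2022Tangential} for this), and for the continuous operator you invoke the Ern--Guermond quasi-interpolation estimates on the flat patch and transport them through $\pi_h$ in the same way (the paper points to \cite[Lemmas~5.1 and~5.2]{ErnGuermond2017Quasi}). Your additional discussion of the non-affine change-of-variables is a fair elaboration of what the paper leaves implicit in its citations.
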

\begin{proof}
	The estimates for the broken interpolation operator $\II^\Broken_{h,\FSpace}$ follow from \Cref{prop:interpolation} and the boundedness of the lifting operators $\pi_h$ and $\pi_h^{-1}$, see \cite[Lemma 2.2]{HP2022Tangential}. Using the triangle inequality and considering the case $l=m$, we can additionally conclude that $\II^\Broken_{h,\FSpace}$ is stable in $W^{m,p}$, i.e., $\HalbNorm{\II^{\Broken}_{h,\FSpace}[u]}{W^{m,p}(\El)} \leqC \HalbNorm{u}{W^{m,p}(\El)}$.

	The estimates for the continuous interpolation operator $\II_{h,\FSpace}$ follow by argumentation similar to \cite[Lemma 5.1 and 5.2]{ErnGuermond2017Quasi} for their quasi-interpolation operator $\II_h^\text{av}$ plus lifting of the operator to functions on $\Gh$ by $\pi_h$ similar as above.
\end{proof}

\subsection{Four Different Discrete Problems}\label{sec:discrete_problem_formulation}
To formulate the discrete variational problem we follow the notation of
\cite{BrandnerEtAl2022FiniteElement,HP2022Tangential}. While the velocity vector
field is embedded in Euclidean space, but the bilinear forms are applied only to
the projected (tangential) component of this field. This requires adding a
treatment of the normal component, here a penalty term denoted by $s_h$, to
enforce that the velocity field is nearly tangential.

While in the continuous setting of \Cref{prob:VariationalProblem} several
formulations are equivalent, this is not necessarily the case in the discrete
setting. Therefore, we denote discretizations of the bilinear form
$a(\cdot,\cdot)$ by $a_i(\cdot,\cdot)$ and discretizations of $b(\cdot,\cdot)$
by $b_j(\cdot,\cdot)$, $i,j\in\{1,2\}$. This results in four different formulations, which are
summarized in the following general discrete problem:

\begin{problem}\label{prob:DiscreteProblem}
	For $i,j=1,2$ let a pair of discrete spaces $(\VelocitySpace,\PressureSpace)$ be such that $\VelocitySpace\subset[H^1(\Gh)]^3$ and $\PressureSpace\subset H^{j-1}(\Gh)$.
	Find $\bm{u}_h\in \VelocitySpace$ and $p_h\in \PressureSpace\cap \LSpaceAvg{\Gh}$ such that
	\begin{subequations}\label{eq:discrete-variational-form}
		\begin{align}
			\label{eq:discrete-variational-form-1}
			a_i(\bm{u}_h,\bm{v}_h) + s_h(\bm{u}_h,\bm{v}_h) + b_j(\bm{v}_h,p_h) &=
			\Inner{\bm{f}\circ\pi}{\bm{v}_h}{\Gh} && \forall\bm{v}_h\in \VelocitySpace\,, \\
			\label{eq:discrete-variational-form-2}
			b_j(\bm{u}_h,q_h) &= 0 && \forall q_h\in \PressureSpace\cap \LSpaceAvg{\Gh}\,,
		\end{align}
	\end{subequations}
	with discrete bilinear forms defined as
	\begin{align}
		a_1(\bm{u},\bm{v})
		&\colonequals
		\Inner{\bm{E}_{\Gh}(\PPh\bm{u})}{\bm{E}_{\Gh}(\PPh\bm{v})}{\Gh} +
		\Inner{\PPh\bm{u}}{\PPh\bm{v}}{\Gh}, \\
		a_2(\bm{u},\bm{v})
		&\colonequals
		\frac{1}{2}\Inner{\GradGh\PPh\bm{u}}{\GradGh\PPh\bm{v}}{\Gh}\!\! -
		\frac{1}{2}\Inner{\KhTilde\PPh\bm{u}}{\PPh\bm{v}}{\Gh}\!\! +
		\Inner{\PPh\bm{u}}{\PPh\bm{v}}{\Gh}, \\
		b_1(\bm{u},p)
		&\colonequals \Inner{p}{\divGh\PPh\bm{u}}{\Gh},\\
		b_2(\bm{u},p)
		&\colonequals - \Inner{\GradGh p}{\bm{u}}{\Gh},\text{ and} \\
		s_h(\bm{u},\bm{v})
		&\colonequals
		\eta h^{-1}\Inner{\bm{u}\cdot\nnh}{\bm{v}\cdot\nnh}{\Gh},\label{eq:sh_def}
	\end{align}
	for $\bm{u},\bm{v}\in\VelocitySpace$ and $p\in \PressureSpace$, with $\eta>0$ and $\KhTilde$ an approximation of the Gaussian curvature of $\G$.
\end{problem}

\begin{remark}
	The penalty term contains the constant $\eta$ as a scaling factor. This allows to control the strength of enforcement of the tangentiality. In the analysis of the method, this constant prefactor does not play a role and we thus fix it the following to $\eta\equiv 1$ for simplicity and drop the constant from the penalty term.
\end{remark}

\begin{remark}
	Instead of using the discrete normal $\nnh$ in \eqref{eq:sh_def}, one can also introduce a better approximation $\nnhTilde$ of the continuous normal with $\LInfNorm{\nnhTilde-\nn}{\Gh}\leqC h^{\kg+1}$ as it is for example done in \cite{HLL2020Analysis,HP2022Tangential} for the Poisson equation. In that case an alternative definition $\tilde{s}_h(\bm{u},\bm{v})\colonequals
	h^{-2}\Inner{\bm{u}\cdot\nnhTilde}{\bm{v}\cdot\nnhTilde}{\Gh}$ leads to optimal error estimates. The analysis can be done analogously to the one in \Cref{sec:error_estimates}, but is excluded here for brevity. In general this choice will lead to slightly better error estimates for the normal part of the solution, and a better tangential $L^2$-estimate in the case $\kg=1$, but it needs the construction of the higher order normal approximation.
\end{remark}

\begin{definition}[Combined forms and energy norms]
For $\bm{u},\bm{v}\in\HSpaceTan[1]{\G}$ and $p,q\in L^2(\G)$ we denote by $\mathcal{B}$ the combined bilinear form,
\begin{align}
  \mathcal{B}\big((\bm{u},p),(\bm{v},q)\big)
  &\colonequals a(\bm{u},\bm{v}) + b(\bm{v}, p) + b(\bm{u}, q)\;,
\end{align}
and corresponding energy norm $\EnergyNorm{(\bm{u},p)}{\mathcal{B}}^2 \colonequals \HNormTan[1]{\bm{u}}{\G}^2 + \Norm{p}{\G}^2$.

Similarly, for $\bm{u}_h,\bm{v}_h\in\VelocitySpace[h]$ and $p_h,q_h\in \PressureSpace$ we denote by $\mathcal{B}_{ij}$ the combined discrete bilinear form,
\begin{align}
  \mathcal{B}_{ij}\big((\bm{u}_h,p_h),(\bm{v}_h,q_h)\big)
    &\colonequals a_i(\bm{u}_h,\bm{v}_h) + s_h(\bm{u}_h,\bm{v}_h) + b_j(\bm{v}_h, p_h) + b_j(\bm{u}_h, q_h)\;,
\end{align}
with corresponding discrete energy norm $\EnergyNorm{(\bm{u}_h,p_h)}{}^2\colonequals \EnergyNorm{\bm{u}_h}{A_h,\Gh}^2 + \Norm{p_h}{\Gh}^2$, with
\begin{align}
	\EnergyNorm{\bm{u}_h}{A_h,\Gh}^2
	  &\colonequals \ahNorm[\Gh]{\bm{u}_h}^2 + \EnergyNorm{\bm{u}_h}{s_h}^2\,,\\
  \EnergyNorm{\bm{u}_h}{s_h}^2
    &\colonequals s_h(\bm{u}_h,\bm{u}_h)\,.
\end{align}
On some domain $\gh\subseteq\Gh$ we additionally introduce weighted norms
\begin{align}
	\NormZero[\gh]{\bm{u}}
    &\colonequals \Norm{\PPh\bm{u}}{\gh} + h^{-1}\Norm{\QQh\bm{u}}{\gh}\,, \\
	\NormOne[\gh]{\bm{u}}
    &\colonequals \HNormTan[1]{\PPh\bm{u}}{\gh} + h^{-1}\Norm{\QQh\bm{u}}{\gh}\,, \\
	\GNorm[\gh]{\bm{u}}
    &\colonequals \HNorm[1]{\bm{u}}{\gh} + h^{-1}\Norm{\QQh\bm{u}}{\gh}\,.
\end{align}
If the domain is omitted, it refers to the whole surface $\Gh$, e.g., $\EnergyNorm{\cdot}{A_h}\colonequals\EnergyNorm{\cdot}{A_h,\Gh}$.
\end{definition}

Note that the discrete energy norms are independent of the formulations of the discrete bilinear forms $a_i$ and $b_j$ and contain only the $H^1$-norm of the projected fields. The weighted norms arise naturally when dealing with embedded vector fields due to the nonconformity of the tangent spaces.

The (weighted) norms are related to each other. The following lemma summarizes some results shown in \cite{HP2022Tangential}.
\begin{lemma}[Relation between norms]\label{lem:relation-between-norms}
  For $\bm{u}_h\in\VelocitySpace[h]$ with $h$ small enough, we have
  \begin{align}
    \AhNorm{\bm{u}_h}&\leqC \GNorm{\bm{u}_h} \leqC \NormOne{\bm{u}_h} \leqC \ahNorm{\bm{u}_h} + h^{-\frac{1}{2}}\EnergyNorm{\bm{u}_h}{s_h}\,.
  \end{align}
  For $\bm{u}\in\HSpace[1]{\G}$ we can estimate
  \begin{equation}
    \HNormTan{\PP\bm{u}}{\G} \leqC \HNorm[1]{\bm{u}}{\G} \leqC \GNorm{\bm{u}^e}\,.
  \end{equation}
  For $\bm{u}\in\HSpaceTan[1]{\G}$ we can estimate
  \begin{equation}
    \GNorm{\bm{u}^e} \leqC \HNormTan[1]{\bm{u}}{\G}\,.
  \end{equation}
\end{lemma}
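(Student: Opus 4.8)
The plan is to reduce each inequality to elementwise and pointwise estimates and then invoke \Cref{prop:inverse-estimates} together with the standard comparison of the discrete and smooth surfaces via the closest-point projection $\pi$. Three ingredients are used repeatedly. First, $\PP,\PPh,\QQ,\QQh$ are pointwise orthogonal projections, so e.g. $\Norm{\PPh\bm{u}_h}{\Gh}\leq\Norm{\bm{u}_h}{\Gh}$. Second, the Weingarten maps $\Weingarten=-\GradG\nn$ and $\Weingarten_h=-\GradGh\nnh$ are uniformly bounded (the surface is $C^2$, and for $h$ small $\Weingarten_h$ is close to $\Weingarten^e$); since $\DGh\PPh=-(\DGh\nnh)\otimes\nnh-\nnh\otimes(\DGh\nnh)$, this gives $\Abs{\DGh\PPh}\leqC 1$ and likewise $\Abs{\DG\PP}\leqC 1$, hence the product-rule bounds $\Norm{\GradGh(\PPh\bm{u}_h)}{\Gh}\leqC\HNorm[1]{\bm{u}_h}{\Gh}$ and $\HNormTan[1]{\PP\bm{u}}{\G}\leqC\HNorm[1]{\bm{u}}{\G}$. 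Third, one has the elementary identity $\EnergyNorm{\bm{u}_h}{s_h}=h^{-\frac{1}{2}}\Norm{\QQh\bm{u}_h}{\Gh}$ (using $\Abs{\QQh\bm{u}_h}=\Abs{\bm{u}_h\cdot\nnh}$ and $\eta=1$), so that $\NormOne{\bm{u}_h}=\ahNorm{\bm{u}_h}+h^{-\frac{1}{2}}\EnergyNorm{\bm{u}_h}{s_h}$; the rightmost bound in the first line is therefore an identity up to constants.

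For the first chain I would argue from right to left. By the identity just noted, only $\GNorm{\bm{u}_h}\leqC\NormOne{\bm{u}_h}$ and $\AhNorm{\bm{u}_h}\leqC\GNorm{\bm{u}_h}$ remain. For the first of these I split $\bm{u}_h=\PPh\bm{u}_h+\QQh\bm{u}_h$. Writing $\DGh(\PPh\bm{u}_h)=\GradGh(\PPh\bm{u}_h)+\QQh\DGh(\PPh\bm{u}_h)$ and differentiating the pointwise relation $\nnh\cdot\PPh\bm{u}_h=0$, the normal component obeys $\Abs{\QQh\DGh(\PPh\bm{u}_h)}\leqC\Abs{\Weingarten_h}\,\Abs{\PPh\bm{u}_h}\leqC\Abs{\PPh\bm{u}_h}$, whence $\HNorm[1]{\PPh\bm{u}_h}{\Gh}\leqC\HNormTan[1]{\PPh\bm{u}_h}{\Gh}=\ahNorm{\bm{u}_h}$. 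For the normal part, $\QQh\bm{u}_h=(\bm{u}_h\cdot\nnh)\nnh$ is, on each element, the product of a function in $\FSpace_V$ with the unit normal of the degree-$\kg$ parametrization — literally componentwise in $\FSpace_V$ when $\kg=1$, and otherwise a field whose pulled-back $W^{1,\infty}(\ElRef)$-norm is bounded uniformly in $h$ — so the lifted version of \Cref{prop:inverse-estimates} gives $\HNorm[1]{\QQh\bm{u}_h}{\Gh}\leqC h^{-1}\Norm{\QQh\bm{u}_h}{\Gh}$. Adding these bounds yields $\HNorm[1]{\bm{u}_h}{\Gh}\leqC\NormOne{\bm{u}_h}$ and hence $\GNorm{\bm{u}_h}\leqC\NormOne{\bm{u}_h}$. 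Finally $\AhNorm{\bm{u}_h}\leqC\GNorm{\bm{u}_h}$ follows from $\Norm{\PPh\bm{u}_h}{\Gh}\leq\Norm{\bm{u}_h}{\Gh}$, the product-rule bound above, and $h^{-1}\leq h^{-2}$ for $h$ small.

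For the two lifting estimates I would use the uniform (in $h$) equivalence of Sobolev norms of $\bm{u}$ on $\G$ and of its extension $\bm{u}^e=\bm{u}\circ\pi$ on $\Gh$: the surface measures of $\G$ and $\Gh$ are uniformly comparable under $\pi$, and $\DGh\pi$ is, for $h$ small, a uniformly bounded bijection $\TangentBundle{\Gh}\to\TangentBundle{\G}$ with uniformly bounded inverse, so the chain rule $\DGh\bm{u}^e=((\DG\bm{u})\circ\pi)\,\DGh\pi$ gives $\HNorm[1]{\bm{u}^e}{\Gh}\approx\HNorm[1]{\bm{u}}{\G}$. The second line then follows from the smooth product-rule bound $\HNormTan[1]{\PP\bm{u}}{\G}\leqC\HNorm[1]{\bm{u}}{\G}$ together with $\HNorm[1]{\bm{u}}{\G}\leqC\HNorm[1]{\bm{u}^e}{\Gh}\leq\GNorm{\bm{u}^e}$. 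For the third line, $\HNorm[1]{\bm{u}^e}{\Gh}\leqC\HNorm[1]{\bm{u}}{\G}$; since $\bm{u}$ is tangential, differentiating $\bm{u}\cdot\nn=0$ gives $\Abs{\QQ\DG\bm{u}}\leqC\Abs{\Weingarten}\,\Abs{\bm{u}}\leqC\Abs{\bm{u}}$ and hence $\HNorm[1]{\bm{u}}{\G}\leqC\HNormTan[1]{\bm{u}}{\G}$; finally $\bm{u}^e\cdot\nn^e=0$ yields $\QQh\bm{u}^e=(\bm{u}^e\cdot(\nnh-\nn^e))\nnh$, so with the geometric bound $\LInfNorm{\nnh-\nn^e}{\Gh}\leqC h^{\kg}$ one gets $h^{-1}\Norm{\QQh\bm{u}^e}{\Gh}\leqC h^{\kg-1}\Norm{\bm{u}}{\G}\leqC\Norm{\bm{u}}{\G}$ because $\kg\geq 1$. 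Adding the two contributions of $\GNorm{\bm{u}^e}$ completes the estimate.

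I expect the only delicate points to be the normal-component bookkeeping with the correct powers of $h$: in particular, the inverse estimate for $\QQh\bm{u}_h$, which is not a finite element field itself but a product of one with the discrete normal (piecewise constant when $\kg=1$, piecewise rational but uniformly smooth for $h$ small otherwise), and the precise transformation of the right-projected gradient $\DGh$ under the reparametrization $\pi$. All of these computations are carried out, in slightly different notation, in \cite{HP2022Tangential}, which is why the lemma is phrased as a summary.
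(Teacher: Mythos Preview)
Your approach is correct and matches what the paper does (the paper only cites \cite[Remark~2.12, Lemma~4.7, (2.15)]{HP2022Tangential}, so you have in fact given more detail than the paper itself). The pointwise product-rule bounds with the Weingarten maps, the norm equivalence under the closest-point lift, and the geometric bound $\LInfNorm{\nnh-\nn^e}{\Gh}\leqC h^{\kg}$ are exactly the ingredients used there.

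One imprecision is worth correcting. The claim that ``the lifted version of \Cref{prop:inverse-estimates} gives $\HNorm[1]{\QQh\bm{u}_h}{\Gh}\leqC h^{-1}\Norm{\QQh\bm{u}_h}{\Gh}$'' does not follow from \Cref{prop:inverse-estimates} as stated, because for $\kg\geq 2$ the field $\QQh\bm{u}_h=(\bm{u}_h\cdot\nnh)\nnh$ is not in the finite element space, and merely bounding the $W^{1,\infty}(\ElRef)$-norm of $\nnh$ does not give an inverse inequality on the product. What one can prove, and what suffices, is the slightly weaker elementwise bound
\[
\HNorm[1]{\QQh\bm{u}_h}{\El}\;\leqC\; h^{-1}\Norm{\QQh\bm{u}_h}{\El}+\Norm{\bm{u}_h}{\El}\,.
\]
To obtain it, freeze the normal at some $x_0\in\El$: the scalar $\bm{u}_h\cdot\nnh(x_0)$ \emph{is} in $V_h$ and hence obeys \Cref{prop:inverse-estimates}, while the remainder $\bm{u}_h\cdot(\nnh-\nnh(x_0))$ is controlled using $\LInfNorm{\nnh-\nnh(x_0)}{\El}\leqC h$ together with the inverse estimate applied to $\bm{u}_h$ itself. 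The extra $\Norm{\bm{u}_h}{\El}\leq\Norm{\PPh\bm{u}_h}{\El}+\Norm{\QQh\bm{u}_h}{\El}$ is then absorbed into $\NormOne[\El]{\bm{u}_h}$, so your chain $\GNorm{\bm{u}_h}\leqC\NormOne{\bm{u}_h}$ still closes. You already flagged this step as the delicate one; this is the missing piece.
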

\begin{proof}
  The estimates follow similar to \cite[Remark 2.12 and Lemma 4.7]{HP2022Tangential} with the $\GNorm{\cdot}$ defined slightly different, as the estimate in \cite[(2.15)]{HP2022Tangential}.
\end{proof}

While in the continuous setting the two representations of the bilinear form $a(\bm{u},\bm{v})$ and the divergence term $b(\bm{u},p)$ are equivalent, this does not hold in the discrete setting for $a_1,a_2$ and $b_1,b_2$ due to the nonconformity of the discrete surface and thus the noncontinuity of the projected fields. This requires integration by parts at the element level introducing additional jump terms.

\begin{lemma}[Trace inequalities]\label{lem:trace-inequalities}
	Assume that $\El\in\Tri[\Gh]$ is a (curved) element of a shape-regular
	triangulation and $e\in\partial\El$ is a (curved) facet of the element. Let
	$1\leq m$, $1< p,q<\infty$, and $0\leq s\leq m-1/p$ be integers with $s-1/q\leq
	m-2/p$.
	For any $\bm{v}\in[W^{m,p}(\El)]^3$, the trace
	$\restr{\bm{v}}{e}\in[W^{s,q}(e)]^3$, and we have
	\begin{align}\label{eq:trace-inequality}
		\SobolevNorm[s][q]{\bm{v}}{e} &\leqC h^{\frac{1}{q} - \frac{2}{p} - s}
		(\Norm{\bm{v}}{\LSpace[p]{\El}} + h^m\HalbNorm{\bm{v}}{\SobolevSpace[m][p]{\El}})\,.
	\end{align}
\end{lemma}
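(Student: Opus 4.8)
The natural approach is to pull everything back to the affine reference element $\ElRef$ via the chart $\hat{\El} = F_{\hat{\El}}(\ElRef)$ composed with the surface parametrization $\pi_h$, apply the standard (scaled) trace inequality there, and then transform back, carefully tracking the powers of $h$ that come from the Jacobians. So let $\Phi_{\El} \colonequals \pi_h \circ F_{\hat{\El}} \colon \ElRef \to \El$ be the curved element map, and $\hat{e}\subset\partial\ElRef$ the reference facet mapped to $e$. Since the triangulation is shape-regular and quasi-uniform and $\pi_h$ is a bounded parametrization of bounded distortion (see the discussion after \Cref{prop:inverse-estimates} and \cite{HP2022Tangential}), we have $\Norm{\D\Phi_{\El}}{\infty}\leqC h$, $\Norm{(\D\Phi_{\El})^{-1}}{\infty}\leqC h^{-1}$, and the surface measure and facet measure scale like $h^2$ and $h^1$ respectively; higher derivatives of $\Phi_{\El}$ contribute only lower-order-in-$h$ terms under shape regularity. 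The componentwise structure of $\bm{v}\in[W^{m,p}(\El)]^3$ means it suffices to prove the scalar estimate and sum the three components, which only changes the constant.

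The key steps, in order: (i) On the fixed reference element, invoke the classical continuous trace embedding $W^{m,p}(\ElRef)\hookrightarrow W^{s,q}(\hat e)$, which is valid precisely under the stated conditions $1\leq m$, $1<p,q<\infty$, $0\leq s\leq m-1/p$, $s-1/q\leq m-2/p$ (these are the Sobolev trace conditions for a domain of dimension $2$ with a boundary of dimension $1$); this gives $\SobolevNorm[s][q]{\hat v}{\hat e}\leqC \SobolevNorm[m][p]{\hat v}{\ElRef}$ with an $h$-independent constant. (ii) Transform the left-hand side: $\SobolevNorm[s][q]{\bm{v}}{e}$ against $\SobolevNorm[s][q]{\hat{\bm v}}{\hat e}$. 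The $L^q$-part of the facet norm picks up $h^{1/q}$ from the facet measure, each of the $s$ reference derivatives costs a factor $h^{-s}$ from the chain rule applied to $(\D\Phi_{\El})^{-1}$, so $\SobolevNorm[s][q]{\hat{\bm v}}{\hat e}\geqC h^{-1/q - (-s)}\cdot(\text{no}) $ — more carefully, $\Norm{\hat v}{L^q(\hat e)}\eqsim h^{-1/q}\Norm{v}{L^q(e)}$ and $\HalbNorm{\hat v}{W^{s,q}(\hat e)}\gtrsim h^{s-1/q}\HalbNorm{v}{W^{s,q}(e)}$, so altogether $\SobolevNorm[s][q]{\hat{\bm v}}{\hat e} \gtrsim h^{s-1/q}\SobolevNorm[s][q]{\bm v}{e}$ for $h$ small. (iii) Transform the right-hand side the other way: $\Norm{\hat v}{L^p(\ElRef)}\eqsim h^{-2/p}\Norm{v}{L^p(\El)}$, and for $1\leq j\leq m$, $\HalbNorm{\hat v}{W^{j,p}(\ElRef)}\lesssim h^{j-2/p}\HalbNorm{v}{W^{j,p}(\El)} + (\text{lower order in }h)$, so $\SobolevNorm[m][p]{\hat v}{\ElRef}\lesssim h^{-2/p}\Norm{v}{L^p(\El)} + h^{m-2/p}\HalbNorm{v}{W^{m,p}(\El)}$, which after factoring out $h^{-2/p}$ is exactly the bracket $\Norm{\bm v}{\LSpace[p]{\El}} + h^m\HalbNorm{\bm v}{\SobolevSpace[m][p]{\El}}$. (iv) Combine: $\SobolevNorm[s][q]{\bm v}{e}\lesssim h^{1/q-s}\SobolevNorm[s][q]{\hat{\bm v}}{\hat e}\lesssim h^{1/q-s}\SobolevNorm[m][p]{\hat v}{\ElRef}\lesssim h^{1/q-s}h^{-2/p}(\Norm{\bm v}{\LSpace[p]{\El}}+h^m\HalbNorm{\bm v}{\SobolevSpace[m][p]{\El}})$, which is the claimed estimate.

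The main obstacle is step (iii): transforming the higher-order Sobolev seminorms through the curved, non-affine map $\Phi_{\El}$ produces, via the Faà di Bruno / Leibniz chain rule, terms involving products of derivatives of $\Phi_{\El}$ up to order $m$ and derivatives of $v$ of order up to $m$; one must argue that these mixed terms all carry at least the stated power $h^{m-2/p}$ (the intermediate-order terms being subcritical in $h$ under shape regularity, since $\HalbNorm{\Phi_{\El}}{W^{j,\infty}}\lesssim h$ for all $j\geq1$ on quasi-uniform shape-regular meshes and $\HalbNorm{v}{W^{j,p}(\El)}$ for $j<m$ is absorbed into the full norm $\Norm{v}{W^{m,p}(\El)}$ by interpolation or simply retained — note the right-hand side uses the full norm, not just the top seminorm, making this absorption free). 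One also needs $h$ small enough so that the error terms from $\D\Phi_{\El}$ being only approximately the affine Jacobian do not spoil the equivalences in (ii)–(iii); this is the source of the "if $h$ is small enough"-type hypothesis and is exactly analogous to the lifting arguments already used for \Cref{prop:inverse-estimates}. Everything else is the bookkeeping of Jacobian powers, which is routine.
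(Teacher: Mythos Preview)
Your scaling-argument approach is correct in spirit and is essentially what the cited reference unpacks; the paper's own proof is simply a one-line citation of \cite[Lemma~2.4]{Bernardi1989Optimal} for the scalar case followed by componentwise extension, so you are reproducing the content of that citation rather than doing something genuinely different.

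There is, however, a small but genuine slip in step~(iii). You write that ``the right-hand side uses the full norm, not just the top seminorm, making this absorption free,'' but the stated inequality has only $\Norm{\bm v}{\LSpace[p]{\El}} + h^m\HalbNorm{\bm v}{\SobolevSpace[m][p]{\El}}$ on the right, i.e.\ the $L^p$-norm plus the \emph{top} seminorm, not the full $W^{m,p}$-norm. So the intermediate seminorms $|v|_{W^{j,p}(\El)}$, $1\le j<m$, that appear from the Fa\`a di Bruno expansion on the curved map are \emph{not} absorbed for free. The standard fix (which is what Bernardi actually does) is a scaled interpolation inequality on the reference element, $|\hat v|_{W^{j,p}(\ElRef)}\lesssim \Norm{\hat v}{L^p(\ElRef)}+|\hat v|_{W^{m,p}(\ElRef)}$, combined with the hypothesis that the curved map is a small perturbation of its affine part so that $|\hat v|_{W^{m,p}(\ElRef)}$ can itself be controlled by the affine-scaled quantity up to lower-order corrections. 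Once that lemma is in place, the rest of your bookkeeping is correct.
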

\begin{proof}
	For each scalar component of $\bm{v}=[v_i]$ the trace inequality holds, see
	\cite[Lemma 2.4]{Bernardi1989Optimal}. The extension to vector-valued fields
	follows by norm equivalence.
\end{proof}

\begin{lemma}[Difference in the divergence terms]\label{lem:difference-discrete-b}
	For $\bm{v}_h\in\HSpaceAmb[1]{\Gh}$ and $q_h\in H^1(\Gh)$ we can characterize the difference between the two bilinear forms $b_1$ and $b_2$ by
	\begin{align}
		\Abs{b_1(\bm{v}_h,q_h) - b_2(\bm{v}_h,q_h)} &\leqC h^{\kg}\left(h\Norm{\gradh
			q_h}{\Gh}+\Norm{q_h}{\Gh}\right)
		\GNorm{\bm{v}_h}\,.\label{eq:difference-discrete-b}
	\end{align}
	If additionally $\bm{v}_h\cdot \bm{n}=0$, we have
	\begin{align}
		\Abs{b_1(\bm{v}_h,q_h) - b_2(\bm{v}_h,q_h)} &\leqC
		h^{2\kg-1}\left(h\Norm{\gradh q_h}{\Gh}+\Norm{q_h}{\Gh}\right)
		\HNormTan{\bm{v}_h^{\ell}}{\G}\;.
	\end{align}
	Further, we have for all constant maps $c_q$
	\begin{align}\label{eq:difference-discrete-b-constq}
		\Abs{b_1(\bm{v}_h,c_q)} &\leqC h^{\kg+1}|c_q|
		\GNorm{\bm{v}_h}\,.%
	\end{align}
	If additionally $\bm{v}_h\in \VelocitySpace$
	\begin{align}
		\Abs{b_1(\bm{v}_h,c_q)} &\leqC h^{\kg+1}|c_q|
		\NormZero[\Gh]{\bm{v}_h}\,.
	\end{align}
\end{lemma}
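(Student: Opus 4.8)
The plan is to reduce the difference $b_1(\bm v_h,q_h)-b_2(\bm v_h,q_h)$ to a geometric-error estimate by introducing the lifted quantities and using elementwise integration by parts. Recall that on the \emph{smooth} surface $\G$ the two forms coincide: for $q\in H^1(\G)$ we have $\Inner{q}{\divG\bm v}{\G}=-\Inner{\gradG q}{\bm v}{\G}$ for tangential $\bm v$. So the difference between $b_1$ and $b_2$ on $\Gh$ is driven entirely by (i) the geometric distortion between $\Gh$ and $\G$ (the metric/area element, captured by the bound $\Abs{1-\mu_h}\leqC h^{\kg+1}$ type estimates, and the normal deviation $\LInfNorm{\nnh-\nn}{\Gh}\leqC h^{\kg}$, $\LInfNorm{\nnh-\nn}{\Gh}\leqC h^{\kg+1}$ for the exact normal), and (ii) the non-tangentiality of $\bm v_h$, i.e.\ the presence of $\QQh\bm v_h$. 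First I would write $b_1(\bm v_h,q_h)=\Inner{q_h}{\divGh\PPh\bm v_h}{\Gh}$ and integrate by parts elementwise, $\Inner{q_h}{\divGh\PPh\bm v_h}{\El}=-\Inner{\GradGh q_h}{\PPh\bm v_h}{\El}+\int_{\partial\El}q_h\,(\PPh\bm v_h)\cdot\bm n_{\partial\El}$; since $q_h\in H^1(\Gh)$ is single-valued the interior facet contributions cancel after summation over $\El$, and there is no boundary since $\Gh$ is closed. Thus $b_1(\bm v_h,q_h)=-\Inner{\GradGh q_h}{\PPh\bm v_h}{\Gh}$, while $b_2(\bm v_h,q_h)=-\Inner{\GradGh q_h}{\bm v_h}{\Gh}$, so the difference is exactly $\Inner{\GradGh q_h}{\QQh\bm v_h}{\Gh}$.

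From there the first estimate \eqref{eq:difference-discrete-b} is almost immediate: by Cauchy--Schwarz, $\Abs{b_1-b_2}\le\Norm{\GradGh q_h}{\Gh}\Norm{\QQh\bm v_h}{\Gh}$, and one bounds $\Norm{\QQh\bm v_h}{\Gh}\leqC h\,\GNorm{\bm v_h}$ using the definition of $\GNorm{\cdot}$ (which carries the $h^{-1}\Norm{\QQh\cdot}{\Gh}$ term). That gives the factor $h\Norm{\gradh q_h}{\Gh}\cdot\GNorm{\bm v_h}$; the additional $\Norm{q_h}{\Gh}$ term in the bound must come from a more careful bookkeeping, replacing the naive IBP by comparing against the smooth-surface identity — i.e.\ writing $\GradGh q_h=\gradG q_h^{\ell}+(\text{geometric error of size }h^{\kg})$ and $\PPh=\PP+(\text{error of size }h^{\kg})$, substituting, and using that the ``exact'' pairing $\Inner{q_h}{\divG\cdot}{\G}+\Inner{\gradG q_h}{\cdot}{\G}$ vanishes. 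This produces terms of the form $h^{\kg}\Norm{q_h}{\Gh}\GNorm{\bm v_h}$ from the zeroth-order geometric perturbations and $h^{\kg}\cdot h\Norm{\gradh q_h}{\Gh}\GNorm{\bm v_h}$ from the first-order ones, matching the claimed right-hand side. The constant-pressure estimates \eqref{eq:difference-discrete-b-constq} are then the special case $\gradh c_q=0$, where $b_2(\bm v_h,c_q)=0$ so $b_1(\bm v_h,c_q)=b_1-b_2=\Inner{\GradGh c_q}{\QQh\bm v_h}{\Gh}$ — no wait, that is zero; here instead one compares $b_1(\bm v_h,c_q)=\Inner{c_q}{\divGh\PPh\bm v_h}{\Gh}$ with the smooth identity $\int_\G\divG(\cdot)=0$, so the whole term is a pure geometric error, yielding the sharper $h^{\kg+1}$.

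The tangential refinements — the second displayed estimate under the hypothesis $\bm v_h\cdot\bm n=0$, and the $\NormZero{\cdot}$-version of the constant-pressure bound when $\bm v_h\in\VelocitySpace$ — are where the real work is, and I expect this to be the main obstacle. When $\bm v_h\cdot\bm n=0$ exactly, $\QQh\bm v_h=(\bm v_h\cdot\nnh)\nnh=(\bm v_h\cdot(\nnh-\nn))\nnh$, so $\Norm{\QQh\bm v_h}{\Gh}\leqC\LInfNorm{\nnh-\nn}{\Gh}\Norm{\bm v_h}{\Gh}\leqC h^{\kg}\HNormTan{\bm v_h^{\ell}}{\G}$; combined with the already-present $h^{\kg}$ from the first reduction this upgrades the prefactor to $h^{2\kg-1}$ after one inverse-estimate / trace argument accounts for the norm change from $\GNorm{\cdot}$ on $\Gh$ to $\HNormTan{\cdot}{\G}$ on $\G$ (the $-1$ in the exponent absorbing an $h^{-1}$ from an inverse estimate applied on element boundaries, consistent with the trace inequality of \Cref{lem:trace-inequalities}). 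For the $\VelocitySpace$-version I would use that $\bm v_h\in\VelocitySpace$ allows replacing $\GNorm{\bm v_h}$ by $\NormZero[\Gh]{\bm v_h}$ via \Cref{lem:relation-between-norms} and the interpolation/inverse estimates of \Cref{prop:interpolation,prop:inverse-estimates}, trading $\HNorm[1]{\bm v_h}{\Gh}$ against $\Norm{\PPh\bm v_h}{\Gh}+h^{-1}\Norm{\QQh\bm v_h}{\Gh}$ at the cost of no extra powers of $h$. The delicate point throughout is tracking exactly how many powers of $h$ are gained from the normal deviation ($h^{\kg}$ vs.\ $h^{\kg+1}$), from the area-element deviation, and from elementwise trace/inverse estimates, so that the bookkeeping lands precisely on $h^{\kg}$, $h^{2\kg-1}$, and $h^{\kg+1}$ respectively rather than something off by one.
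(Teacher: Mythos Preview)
Your integration-by-parts step contains a genuine error that derails the whole argument. When you integrate $\Inner{q_h}{\divGh\PPh\bm v_h}{\El}$ by parts and sum over elements, the edge contributions do \emph{not} cancel. Continuity of $q_h$ and of $\bm v_h$ across an interior edge $e=\partial\El^+\cap\partial\El^-$ is not enough: the outward co-normals $\nu_{e,\El^+}$ and $\nu_{e,\El^-}$ lie in different tangent planes (the discrete surface has a kink along $e$), so $\nu_{e,\El^+}+\nu_{e,\El^-}\neq 0$. Since $\PPh\bm v_h\cdot\nu_e=\bm v_h\cdot\nu_e$ (the co-normal is tangential), what survives after summation is precisely
\[
b_1(\bm v_h,q_h)-b_2(\bm v_h,q_h)=\sum_{e\in\mathcal E_h}\int_e q_h\,\bm v_h\cdot\Jump{\nu_e}\,,
\qquad \Jump{\nu_e}\colonequals\nu_{e,\El^+}+\nu_{e,\El^-}\,.
\]
Your claimed identity $b_1-b_2=\Inner{\GradGh q_h}{\QQh\bm v_h}{\Gh}$ is in fact identically zero, since $\GradGh q_h$ is tangential and $\QQh\bm v_h$ is normal; you notice this yourself in the constant-pressure paragraph (``no wait, that is zero'') but not in the general case.

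The paper's proof works entirely with this edge-sum representation. The geometric input is the co-normal jump bound $\lvert\Jump{\nu_e}\rvert\leqC h^{\kg}$ together with the sharper tangential bound $\lvert\PP\Jump{\nu_e}\rvert\leqC h^{2\kg}$; splitting $\bm v_h=\PP\bm v_h+\QQ\bm v_h$ and applying the trace inequality \Cref{lem:trace-inequalities} (with $s=0$, $m=1$, $p=q=2$) to pass from edges back to elements gives the first two estimates directly. For constant $c_q$ one has $b_2(\bm v_h,c_q)=0$, so $b_1(\bm v_h,c_q)$ equals the same edge sum with $q_h\equiv c_q$; choosing the trace-inequality exponent $q=1$ instead of $q=2$ yields the extra factor of $h$ and hence $h^{\kg+1}$. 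The $\VelocitySpace$-refinement then follows from an inverse estimate trading $\Norm{\D\bm v_h}{\Gh}$ for $h^{-1}\Norm{\bm v_h}{\Gh}$. Your alternative route via lifting to $\G$ and comparing with the smooth identity could in principle be made to work, but you would still have to account for the inter-element discontinuity of $\PPh$ somewhere, and the edge-sum formulation is both the natural and the efficient way to do that.
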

\begin{proof}
	Denote by $\TriH=\Tri[\Gh]$ the triangulation of $\Gh$ with set of (internal) edges $\mathcal{E}_h\colonequals\mathcal{E}(\TriH)$. Writing out the difference $b_1-b_2$ and employing elementwise integration by
	parts, we have
	\begin{align*}
		b_1(\bm{v}_h,q_h) - b_2(\bm{v}_h,q_h)
		& = (q_h,\divGh \PPh \bm{v}_h)_{\Gh} + (\gradGh q_h, \PPh \bm{v}_h)_{\Gh}
		= \sum_{e\in\mathcal{E}_{h}}(q_h, \bm{v}_h\cdot \Jump{\nu_e})_{e},
	\end{align*}
	where $\Jump{\nu_{e}} = \nu_{e,T^+} + \nu_{e,T^-}$ denotes the jump of the outer
	co-normals on the edge $e$ across the elements. It is bounded by
	$\Abs{\Jump{\nu_e}}\leqC h^{\kg}$. Additionally, we have the property
	$\Abs{\PP\Jump{\nu_e}}\leqC h^{2\kg}$, see, e.g., \cite[Lemma 3.5]{ORX2014Stabilized} and \cite{JankuhnEtAl2021Error}, and therefore
	\begin{align*}
		\Abs{\bm{v}_h\cdot \Jump{\nu_e}}
		\leqC h^{2\kg} \Abs{\PP\bm{v}_h} + h^{\kg} \Abs{\QQ\bm{v}_h}.
	\end{align*}
%
	Using trace inequalities, see \Cref{lem:trace-inequalities} with $s=0$, $m=1$, $q=p=2$, we thus obtain
	\begin{align*}
		\Abs{b_1(\bm{v}_h,q_h) - b_2(\bm{v}_h,q_h)}
		&\leqC h^{\kg} \left(h\Norm{\gradh q_h}{\Gh}+\Norm{q_h}{\Gh}\right)\\
		&\quad\cdot
		\left(h^{\kg}\Norm{\D\PP\bm{v}_h}{\Gh}+h^{\kg-1}\Norm{\PP\bm{v}_h}{\Gh}
		+\Norm{\D\QQ\bm{v}_h}{\Gh}
		+h^{-1}\Norm{\QQ\bm{v}_h}{\Gh}\right),
	\end{align*}
	and hence the first two estimates.
	For constant maps $c_q$, we use the trace inequality
	\Cref{lem:trace-inequalities} with $s=0$, $m=1$, $q=1$, and $p=2$, and obtain
	\begin{align*}
		\Abs{b_1(\bm{v}_h,c_q)-b_2(\bm{v}_h,c_q)} &\leqC h^{\kg}|c_q|
		\left(h^{\kg+1}\Norm{\D\PP\bm{v}_h}{\Gh}+h^{\kg}\Norm{\PP\bm{v}_h}{\Gh}
		+h\Norm{\D\QQ\bm{v}_h}{\Gh}
		+\Norm{\QQ\bm{v}_h}{\Gh}\right).
	\end{align*}
	Using inverse estimates, if $v_h\in \VelocitySpace$ this yields the last two estimates.
\end{proof}

\begin{remark}\label{rem:difference-discrete-bilinear-forms}
	We can do a similar calculation using integration by parts and the trace theorem to characterize the difference between the two bilinear forms $a_1$ and $a_2$ for $\bm{v}_h\in\HSpaceAmb[1]{\Gh}$ and $\bm{w}_h\in\HSpaceAmb[2]{\Gh}$ by
	\begin{multline}\label{eq:difference-discrete-bilinear-forms-1}
		a_2(\bm{v}_h, \bm{w}_h) - a_1(\bm{v}_h, \bm{w}_h)\\
		 = \frac{1}{2}\Inner{(\Kh-\KhTilde)\PPh \bm{v}_h}{\PPh \bm{w}_h}{\Gh}
		- \frac{1}{2}\Inner{\divGh \PPh \bm{v}_h}{\divGh \PPh\bm{w}_h}{\Gh} + r(\bm{v}_h,\bm{w}_h)
	\end{multline}
	with
	$
		\Abs{r(\bm{v}_h,\bm{w}_h)} \leqC h^{\kg-1} \left(h \HalbNorm{\bm{v}_h}{\HSpace{\Gh}}+\Norm{\bm{v}_h}{\Gh} \right)\left(h\HalbNorm{\bm{w}_h}{\HSpace[2]{\Gh}}+\Norm{\bm{w}_h}{\HSpace{\Gh}} +
	h^{-1}\Norm{\QQh\bm{\bm{w}_h}}{\Gh} \right).
	$
	Using the continuous bilinear form, better estimates will be derived in \Cref{sec:geometric_errors}.
\end{remark}
\section{Analysis of Discrete Inf-Sup Stability}\label{sec:inf_sups}

Inf-sup stability is a crucial ingredient both for the well-posedness of the problem and for a priori error estimates. In this section, we will derive inf-sup conditions for the discrete problem for all pairs of spaces $(\VelocitySpace,\PressureSpace)$ that satisfy some basic properties including a localized stability property.

We extend the classical condition of Verfürth \cite{Verfurth1984Error} to the discrete surface $\Gh$ for both bilinear forms $b_1$ and $b_2$. For this we use the velocity norms $\NormOne{\cdot}$ that contain a weighted normal component.

\begin{definition}[Verfürth V-stability]\label{def:V-stability} Let $j=1,2$, and assume that $\PressureSpace\subset H^{j-1}(\Gh)$. The pair of spaces $(\VelocitySpace,\PressureSpace)$ is called \textbf{V-stable}, if there exists a constant $\beta_2 > 0$ independent of $h$ such that
	\begin{align}\label{eq:discrete-inf-sup-V}
	\sup_{0\neq\bm{v}\in\VelocitySpace}
	\frac{b_{j}\big(\bm{v},q)}{\NormOne{\bm{v}}} &\geq \beta_2 \Norm{q}{\Gh}\,, \qquad \forall q\in \PressureSpace\cap \LSpaceAvg{\Gh}\,.
	\end{align}
\end{definition}

We first note that V-stability for $b_1$, implies it for $b_2$, as long as $h$ is small enough.
Therefore, we restrict the remainder of this section to the analysis of $b_1$.

\begin{lemma}\label{lem:stability-bj}
	If \eqref{eq:discrete-inf-sup-V} holds for $j=1$, then \eqref{eq:discrete-inf-sup-V} holds for $j=2$.
\end{lemma}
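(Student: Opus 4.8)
The plan is to use the test vector field that realizes the $j=1$ inf-sup, to control the discrepancy $b_1-b_2$ by the geometric estimate of \Cref{lem:difference-discrete-b}, and to absorb that discrepancy into the lower bound for $h$ small enough. Note first that the case $j=2$ of \Cref{def:V-stability} presupposes $\PressureSpace\subset H^1(\Gh)$, which is exactly the admissibility assumption in this case, so the pairing $b_2(\cdot,q)$ and the $H^1$-seminorm of $q$ below are well defined.

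Fix $q\in\PressureSpace\cap\LSpaceAvg{\Gh}$. Since $\VelocitySpace$ is finite dimensional, the supremum in \eqref{eq:discrete-inf-sup-V} for $j=1$ is attained; pick $\bm{v}\in\VelocitySpace$, $\bm{v}\neq\bm 0$, with $\NormOne{\bm{v}}=1$ and $b_1(\bm{v},q)\geq\beta_2\Norm{q}{\Gh}$. Writing $b_2(\bm{v},q)=b_1(\bm{v},q)-\big(b_1(\bm{v},q)-b_2(\bm{v},q)\big)$ and applying the first estimate of \Cref{lem:difference-discrete-b} together with $\GNorm{\bm{v}}\leqC\NormOne{\bm{v}}=1$ from \Cref{lem:relation-between-norms}, we get
\begin{align*}
  b_2(\bm{v},q) \geq \beta_2\Norm{q}{\Gh} - C\,h^{\kg}\big(h\Norm{\gradh q}{\Gh}+\Norm{q}{\Gh}\big)\,.
\end{align*}
By the inverse estimate property (\Cref{prop:inverse-estimates}, lifted to the elements of $\Gh$ via the boundedness of $\pi_h$) applied elementwise to $q\in\PressureSpace$ and summed over the triangulation, $h\Norm{\gradh q}{\Gh}\leqC\Norm{q}{\Gh}$, so the term in parentheses is bounded by $\leqC\Norm{q}{\Gh}$ and hence $b_2(\bm{v},q)\geq(\beta_2-C\,h^{\kg})\Norm{q}{\Gh}$ for a constant $C$ depending only on the surface and the finite element pair, not on $h$.

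Choosing $h$ small enough that $C\,h^{\kg}\leq\tfrac12\beta_2$ — a threshold that depends only on surface properties and the spaces, since every constant entering it is $h$-independent — we obtain $b_2(\bm{v},q)\geq\tfrac12\beta_2\Norm{q}{\Gh}$, and therefore
\begin{align*}
  \sup_{0\neq\bm{w}\in\VelocitySpace}\frac{b_2(\bm{w},q)}{\NormOne{\bm{w}}}
  \;\geq\; \frac{b_2(\bm{v},q)}{\NormOne{\bm{v}}} \;\geq\; \frac{\beta_2}{2}\,\Norm{q}{\Gh}\,,
\end{align*}
which is \eqref{eq:discrete-inf-sup-V} for $j=2$ with the $h$-independent constant $\beta_2/2$. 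I do not expect any serious obstacle here; the only point requiring mild care is verifying that the smallness threshold on $h$ is uniform, which is immediate from the $h$-independence of the constants in \Cref{lem:difference-discrete-b}, \Cref{lem:relation-between-norms}, and \Cref{prop:inverse-estimates}.
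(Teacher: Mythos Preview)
Your proposal is correct and follows precisely the approach the paper indicates: invoke \Cref{lem:difference-discrete-b} to control $b_1-b_2$, use inverse estimates on $q\in\PressureSpace$ to absorb $h\Norm{\gradh q}{\Gh}$ into $\Norm{q}{\Gh}$, and conclude by taking $h$ small enough. The paper's one-line proof merely names these two ingredients; you have spelled them out accurately.
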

\begin{proof}
	This follows from the estimate \Cref{lem:difference-discrete-b} for $H^1$-functions and inverse estimates.
\end{proof}

We construct the inf-sup condition patchwise, following the macroelement technique of Stenberg and Nicolaides \cite{Stenberg1984Analysis,BolandNicolaides1983Stability,Stenberg1990Technique}. The general idea is that the discrete surface $\Gh$ can be covered by patches built from unions of elements in such a way that each element patch $\gh$ can be lifted from a flat element patch $\bar{\g}$ of similar shape, see \Cref{def:macroelements}. On these flat patches we assume that a discrete inf-sup condition holds, which is valid for a large class of finite elements, see \Cref{sec:stability-assumptions}. We then lift these local estimates to the discrete surface and show V-stability, see \Cref{sec:lifted-inf-sup-stability}.

\subsection{Macroelement partitioning}\label{def:macroelements}

We introduce macroelements as patches of elements in various surface parametrizations, see also \Cref{fig:macroelements} for a visualization.

\begin{definition}[Piecewise flat macroelement]
	A patch of elements $\hat{\g}\subset\hat{\G}_h$ connected via edges of the triangulation $\Tri[\hat{\G}_h]$, with $\hat{\g}=\bigcup_{j=1}^m \hat{\El}_j$, for $\hat{\El}_j\in\Tri[\hat{\G}_h]$, is called a \textbf{piecewise flat macroelement}.
\end{definition}

\begin{definition}[Flattening map]
	We consider a piecewise flat macroelement $\hat{\g}$. On each element $\hat{\El}_{j}\in\Tri[\hat{\g}]$ we have an associated constant normal vector $\hat{\bm{n}}_j\perp\hat{\El}_{j}$. We select one element from the triangulation, $\hat{\El}\in\Tri[\hat{\g}]$, and define a local orthonormal tangent basis, denoted by $\{\hat{\bm{t}}^1, \hat{\bm{t}}^2\}$, and normal vector $\hat{\bm{n}}=\hat{\bm{t}}^1\times\hat{\bm{t}}^2$. If the macroelement patch and the grid size $h$ are small enough, we can introduce local bases $\{\hat{\bm{t}}_j^1, \hat{\bm{t}}_j^2\}$ in all elements by an orthogonal projection $\hat{\bm{t}}_j^i \colonequals (I - \hat{\bm{n}}_j\otimes\hat{\bm{n}}_j)\hat{\bm{t}}^i\equalscolon\hat{\PP}_j\hat{\bm{t}}^i$, for $i=1,2$. This defines the map $\bar{\pi}\colon\hat{\g}\to\R^2$ as a \textbf{flattening map} of the macroelement $\hat{\g}$,
	\begin{align}\label{eq:flattening-map}
		\restr{\bar{\pi}}{\hat{\El}_j}(\hat{x}) &\colonequals \transposed{\begin{pmatrix}
			\hat{\bm{t}}^1 & \hat{\bm{t}}^2
		\end{pmatrix}} \begin{pmatrix}
			\hat{\bm{t}}_j^1 & \hat{\bm{t}}_j^2
		\end{pmatrix}\cdot\hat{x}
		= \begin{pmatrix}
			\langle\hat{\bm{t}}^1, \hat{\PP}_j\hat{\bm{t}}^1\rangle & \langle\hat{\bm{t}}^1, \hat{\PP}_j\hat{\bm{t}}^2\rangle \\
			\langle\hat{\bm{t}}^2, \hat{\PP}_j\hat{\bm{t}}^1\rangle & \langle\hat{\bm{t}}^2, \hat{\PP}_j\hat{\bm{t}}^2\rangle
		\end{pmatrix}\cdot\hat{x}\,,
	\end{align}
	with local coordinates $\hat{x} \in \hat{\El}_j$ associated to the local tangent basis of the element.
\end{definition}

\begin{lemma}[Properties of the flattening map]\label{lem:prop-flattening}
	Let $\bar{\mu}\colonequals\operatorname{det}(\D_{\hat{x}}\bar{\pi})$ denote the integration element defined elementwise on $\hat{\g}$. There exists a constant $C>0$ independent of the grid size $h$ such that almost everywhere
	\begin{align}
		\|\bar{\pi} - \Id\|&\leq C h\,,\text{ and}\label{eq:property-flattening-map-length}\\
		\Abs{\bar{\mu} - 1} &\leq C h^2\,,\;\text{ for }h<h_0\text{ small enough.}\label{eq:property-flattening-map}
	\end{align}
\end{lemma}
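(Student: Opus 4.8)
The plan is to reduce the two estimates to elementwise linear‑algebra statements and to isolate the single cancellation that produces the quadratic order in \eqref{eq:property-flattening-map}. First I would fix an element $\hat{\El}_j\in\Tri[\hat{\g}]$: by \eqref{eq:flattening-map} the restriction $\restr{\bar{\pi}}{\hat{\El}_j}$ is linear with constant Jacobian $M_j\colonequals\D_{\hat{x}}\restr{\bar{\pi}}{\hat{\El}_j}$, and since $\{\hat{\bm{t}}^1,\hat{\bm{t}}^2\}$ is orthonormal its entries are
\[
  (M_j)_{ik}=\langle\hat{\bm{t}}^i,\hat{\PP}_j\hat{\bm{t}}^k\rangle=\delta_{ik}-\langle\hat{\bm{t}}^i,\hat{\bm{n}}_j\rangle\langle\hat{\bm{n}}_j,\hat{\bm{t}}^k\rangle\,,\qquad i,k\in\{1,2\}\,.
\]
In particular $\restr{\bar{\mu}}{\hat{\El}_j}=\det M_j$, so both claims — being elementwise and hence valid a.e.\ on $\hat{\g}$ — reduce to controlling $M_j-\Id$.

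The geometric ingredient, which is the main point, is the bound $\Abs{\hat{\bm{n}}_j-\hat{\bm{n}}}\leqC h$ for every element of the macroelement. I would obtain it as follows: $\hat{\bm{n}}_j$ and $\hat{\bm{n}}$ are (consistently oriented) unit normals of two flat triangles of the shape-regular triangulation $\Tri[\hat{\G}_h]$, both lying in the macroelement $\hat{\g}$, which consists of a bounded number of elements of diameter $\leqC h$ and hence has diameter $\leqC h$; since all vertices of $\hat{\G}_h$ lie on the $C^2$-surface $\G$, each flat normal differs from the exact surface normal $\bm{n}$ at a nearby point of $\G$ by $\leqC h$ (the standard surface FEM estimate, cf.\ \cite{Demlow2009Higher,DE2013Finite}), while $\bm{n}$, being Lipschitz on $\G$, varies by $\leqC h$ over $\hat{\g}$; combining these through a fixed reference point gives the claim. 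Using $\hat{\bm{t}}^i\perp\hat{\bm{n}}$ I then get $\Abs{\langle\hat{\bm{t}}^i,\hat{\bm{n}}_j\rangle}=\Abs{\langle\hat{\bm{t}}^i,\hat{\bm{n}}_j-\hat{\bm{n}}\rangle}\leqC h$, so from the displayed formula $\|M_j-\Id\|\leqC h^2$ — the first-order contribution has dropped out.

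With this in hand both estimates are immediate. For \eqref{eq:property-flattening-map-length}: on $\hat{\El}_j$ we have $\bar{\pi}(\hat{x})-\hat{x}=(M_j-\Id)\hat{x}$, and since $\hat{x}$ is confined to a region whose diameter is bounded independently of $h$ while $\|M_j-\Id\|\leqC h^2$, this gives $\|\bar{\pi}-\Id\|\leqC h$ a.e.\ (in fact with a higher power of $h$, but the stated order is all that is needed later). For \eqref{eq:property-flattening-map}: the $2\times 2$ identity $\det(\Id+E)=1+\tr E+\det E$ with $E=M_j-\Id$ yields $\Abs{\bar{\mu}-1}=\Abs{\det M_j-1}\leq\Abs{\tr E}+\Abs{\det E}\leqC h^2$ once $h<h_0$ is small enough.

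The routine parts are the elementwise linear algebra and the determinant expansion. The real obstacle — and the only place the hypotheses enter — is the geometric estimate $\Abs{\hat{\bm{n}}_j-\hat{\bm{n}}}\leqC h$, which rests on the $C^2$-regularity of $\G$, the shape-regularity of $\Tri[\hat{\G}_h]$, and the $\leqC h$ diameter of the macroelement, together with the observation that, because the reference tangent vectors are orthogonal to $\hat{\bm{n}}$, the $O(h)$ perturbation of the Jacobian collapses to an $O(h^2)$ one — precisely the order that \eqref{eq:property-flattening-map} requires. If one preferred not to cite a surface FEM normal estimate as a black box, the alternative would be to write $\G$ locally as a $C^2$ graph over the tangent plane at the selected vertex and Taylor-expand the flat-triangle normals directly; this is self-contained but longer.
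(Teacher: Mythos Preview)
Your argument is correct and shares the paper's geometric core: both proofs rest on the estimate $\lvert\hat{\bm{n}}_j-\hat{\bm{n}}\rvert\leqC h$, obtained by comparing each flat normal with the exact surface normal (the paper chains across shared edges, you route through a common reference point of $\bm{n}$; these are equivalent). The difference is in the algebra afterward. You exploit the explicit formula $(M_j)_{ik}=\delta_{ik}-\langle\hat{\bm{t}}^i,\hat{\bm{n}}_j\rangle\langle\hat{\bm{n}}_j,\hat{\bm{t}}^k\rangle$ to see at once that $\lVert M_j-\Id\rVert\leqC h^2$, after which both \eqref{eq:property-flattening-map-length} and \eqref{eq:property-flattening-map} follow trivially from the $2\times 2$ identity $\det(\Id+E)=1+\tr E+\det E$. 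The paper instead establishes only the weaker $O(h)$ bound for the first estimate and then recovers the $O(h^2)$ determinant bound by a separate and longer computation via the cross-product representation $\bar{\mu}=\lVert\hat{\PP}_j\hat{\bm{t}}^1\times\hat{\PP}_j\hat{\bm{t}}^2\rVert\,\langle\hat{\bm{n}},\hat{\bm{n}}_j\rangle$. Your route is more economical: the single observation that the rank-one perturbation is already quadratic in $h$ handles both claims simultaneously. One small wording issue: in the first estimate you say $\hat{x}$ lies in a region of diameter bounded independently of $h$; in fact the element has diameter $\leqC h$, and the statement $\lVert\bar{\pi}-\Id\rVert\leq Ch$ is in practice used as a bound on the Jacobian $\D_{\hat{x}}\bar{\pi}-\Id$ (cf.\ Appendix~\ref{proof:prop-parametric-macroelement}), which your estimate $\lVert M_j-\Id\rVert\leqC h^2$ covers directly.
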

\begin{proof}
	See \Cref{proof:prop-flattening}.
\end{proof}

While there are multiple flattening maps possible, depending on the selection of the tangent element $\hat{\El}\in\Tri[\hat{\g}]$, if $h$ is small enough and the curvature of $\G$ is bounded any of the elements is a valid choice. To make the definition unique, we could choose the element closest to the barycenter of the macroelement.

\begin{definition}[Flat macroelement]
	A \textbf{flat macroelement} $\bar{\g}\subset\R^2$ is defined as the union of the images of the elements of $\hat{\g}$ under a piecewise affine flattening mapping $\bar{\pi}\colon\hat{\g}\to\R^2$, i.e., $\bar{\g}=\bigcup_{j=1}^m \bar{\El}_j$ with $\bar{\El}_j=\bar{\pi}(\hat{\El}_j)$ and $\hat{\El}_j$ an element of $\hat{\g}$.
\end{definition}

\begin{lemma}[Properties of the flat macroelement]\label{lem:prop-flat-macroelement}
	For $h<h_0$ small enough, the flat macroelement $\bar{\g}$ is conforming, non-degenerate, and the ratio of outer to inner diameter $h_{\bar{\El}} / \rho_{\bar{\El}}$ is bounded for all elements $\bar{\El}\in\Tri[\bar{\g}]$ if these properties hold also for the associated piecewise flat macroelement $\hat{\g}$.
\end{lemma}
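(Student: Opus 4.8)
The plan is to use the flattening map $\bar{\pi}$ itself as the change of variables and to push each of the three assumed properties of $\hat{\g}$ through it. On each element $\hat{\El}_j\in\Tri[\hat{\g}]$ the map $\bar{\pi}$ is affine, so $\D\bar{\pi}$ is constant there, equal to the matrix $M_j$ appearing in \eqref{eq:flattening-map}; by the definition \eqref{eq:flattening-map} together with \Cref{lem:prop-flattening} one has $\|M_j-\Id\|\leqC h$ (cf. \eqref{eq:property-flattening-map-length}) and $\det M_j=\bar{\mu}$ with $\Abs{\bar{\mu}-1}\leqC h^2$ (cf. \eqref{eq:property-flattening-map}). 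Hence there is $h_0>0$, depending only on $\G$, such that for $h<h_0$ every $M_j$ is invertible with $\|M_j\|\leq 1+Ch$, $\Abs{\det M_j}\geq\tfrac12$, and therefore $\|M_j^{-1}\|\leq C$ (and, via $M_j^{-1}-\Id=M_j^{-1}(\Id-M_j)$, also $\|M_j^{-1}-\Id\|\leqC h$), all constants independent of $h$. I will also use two facts that belong to the construction behind \Cref{lem:prop-flattening}: $\bar{\pi}$ is continuous across the interior edges of $\Tri[\hat{\g}]$ (neighbouring flattened triangles share a full edge), and $\bar{\pi}$ is injective on all of $\hat{\g}$ once $h<h_0$.

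Granting this, the three claims follow. For conformity: $\bar{\pi}$ is a continuous, injective, elementwise-affine bijection $\hat{\g}\to\bar{\g}$ that sends each $\hat{\El}_j$ onto $\bar{\El}_j=\bar{\pi}(\hat{\El}_j)$ and each edge/vertex of $\Tri[\hat{\g}]$ onto an edge/vertex of $\Tri[\bar{\g}]$; it therefore induces an incidence-preserving bijection between the two triangulations, so the edge-to-edge matching (absence of hanging nodes) of $\Tri[\bar{\g}]$ is inherited from that of $\Tri[\hat{\g}]$. For non-degeneracy: $\bar{\El}_j$ is the image of the non-degenerate triangle $\hat{\El}_j$ under the invertible affine map $\restr{\bar{\pi}}{\hat{\El}_j}$, hence a non-degenerate triangle. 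For the aspect ratio: $h_{\bar{\El}_j}=\diam(\bar{\El}_j)\leq\|M_j\|\,h_{\hat{\El}_j}$, while the image of an inscribed ball of radius $\rho$ contains a ball of radius $\rho/\|M_j^{-1}\|$, so $\rho_{\bar{\El}_j}\geq\rho_{\hat{\El}_j}/\|M_j^{-1}\|$, and therefore
\[
	\frac{h_{\bar{\El}_j}}{\rho_{\bar{\El}_j}}\;\leq\;\|M_j\|\,\|M_j^{-1}\|\,\frac{h_{\hat{\El}_j}}{\rho_{\hat{\El}_j}}\;\leqC\;\frac{h_{\hat{\El}_j}}{\rho_{\hat{\El}_j}}\,,
\]
which is bounded by the shape-regularity assumption on $\hat{\g}$, with $h$-independent constant since $\|M_j\|\,\|M_j^{-1}\|\leq C$.

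The non-degeneracy and aspect-ratio parts are routine: they only use that an affine map with $\|M_j\|$ and $\|M_j^{-1}\|$ uniformly bounded preserves these properties. The point that genuinely requires $h<h_0$ and the bounded curvature of $\G$ is that the flattened triangles glue together into a \emph{conforming} mesh, i.e. the continuity of $\bar{\pi}$ across element interfaces and, above all, the global injectivity of $\bar{\pi}$ on the patch — locally each $M_j$ is invertible, but one must still exclude that the piecewise-affine image folds over itself. I expect this to be the main obstacle. It is settled by the perturbative regime already underlying \Cref{lem:prop-flattening}: on a patch of diameter $O(h)$ the map $\bar{\pi}$ is, in $C^1$, an $O(h)$-perturbation of a single rigid motion (its derivative deviates from one fixed orthogonal map by $O(h)$ on every element), so for $h$ small enough it is injective and leaves the combinatorics of the patch unchanged.
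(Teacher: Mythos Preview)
Your argument is correct and follows the same overall strategy as the paper: use \Cref{lem:prop-flattening} to show that the elementwise affine flattening map is a small perturbation of the identity, and read off the mesh properties from that. Two minor differences are worth noting. First, for the aspect-ratio bound the paper argues via preservation of edge lengths (up to $O(h)$) and of element area (up to $O(h^2)$), from which the inner and outer diameters are recovered; your route through $\|M_j\|\,\|M_j^{-1}\|$ is equivalent and arguably cleaner. Second, the paper's proof only explicitly treats non-degeneracy and is silent on conformity and on global injectivity of $\bar{\pi}$; you are right to flag the latter as the only non-trivial point, and your perturbative justification (on a patch of diameter $O(h)$ the map is an $O(h)$-perturbation in $C^1$ of a single orthogonal projection, hence injective for $h$ small) is the correct way to close this gap.
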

\begin{proof}
	Let $\hat{\El}\in\Tri[\hat{\g}]$. \Cref{lem:prop-flattening} implies that the flattening map conserves the lengths of all edges of $\hat{\El}$ with $O(h)$ and the area of $\hat{\El}$ with $O(h^2)$. As the outer and inner diameter depend on the area and edge length, they are also preserved with $O(h)$.
	Thus, the elements of $\bar{\g}$ are non-degenerate.
\end{proof}

\begin{definition}[Parametric macroelement]
	A \textbf{parametric macroelement} $\g_{h,\kg}\subset\G_{h,\kg}$ is defined as the union of the images of the elements of $\hat{\g}$ under $\pi_{h,\kg}$, i.e., $\g_{h,\kg}=\bigcup_{j=1}^m \pi_{h,\kg}(\hat{\El}_j)$ with $\hat{\El}_j$ the elements of $\hat{\g}$. The macroelement $\g_{h,\kg}$ can also be parametrized over the flat macroelement $\bar{\g}$ via the elementwise mapping $F_{h,\kg}\colon\bar{\g}\to\g_{h,\kg}$, $F_{h,\kg}=\pi_{h,\kg}\circ\bar{\pi}^{-1}$.

	Similar as above, we write $\gh\colonequals \g_{h,\kg}$ and $F_h\colonequals F_{h,\kg}$ for simplicity of notation.
\end{definition}

\begin{lemma}[Properties of the parametric macroelement]\label{lem:prop-parametric-macroelement}
	Let $h<h_0$ small enough and let $\bar{\mathcal{E}}\colonequals\mathcal{E}^\text{int}(\Tri[\bar{\g}])$ denote the set of inner edges of the triangulation of $\bar{\g}$. For the parametrization $F_h$ and the associated integration element $\restr{\mu_h}{\bar{\El}}(\bar{x})\colonequals \sqrt{\operatorname{det}(\transposed{\D_{\bar{x}} F_h(\bar{x})}\D_{\bar{x}} F_h (\bar{x}))}$ for $\bar{x}\in\bar{\El}, \bar{\El}\in\Tri[\bar{\g}]$, $\bar{e}\in\bar{\mathcal{E}}$ it holds,

	\begin{subequations}
		\noindent\centering
    \begin{minipage}{0.495\textwidth}
			\begin{align}
				\LInfNorm{\Jump{\D_{\bar{x}} F_h}}{\bar{e}} &\leq C h\,, \label{eq:DFh-jump}\\
				\Norm{\mu_h - 1}{L^\infty(\bar{\El})} &\leq C h^2\,,\text{ and } \label{eq:muh-1}
			\end{align}
		\end{minipage}
		\hfill
		\begin{minipage}{0.495\textwidth}
			\begin{align}
				\Norm{F_h}{C^\infty(\bar{\El})} &\leq C,\;\forall \bar{\El}\in\Tri[\bar{\g}]\,, \label{eq:Fh-Cinfty}\\
				\LInfNorm{\D_{\bar{x}}\mu_h}{\bar{\El}} &\leq C h\,, \label{eq:Dmuh}
			\end{align}
		\end{minipage}\bigskip
	\end{subequations}

	\noindent for some constants $C>0$ independent of $h$.
\end{lemma}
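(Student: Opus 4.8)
The plan is to reduce every assertion to the two building blocks already available: the properties of the flattening map $\bar\pi$ from \Cref{lem:prop-flattening} (and its consequence \Cref{lem:prop-flat-macroelement}), and the classical estimates for the Lagrange-interpolated closest-point projection $\pi_{h,\kg}=\IRef_\kg[\pi\circ F_{\hat\El}]\circ F_{\hat\El}^{-1}$ over piecewise flat elements, as in \cite{Demlow2009Higher}. Recall that $F_h = \pi_{h,\kg}\circ\bar\pi^{-1}$, so that on each $\bar\El=\bar\pi(\hat\El)$ we have the factorization $\D_{\bar x}F_h = \big(\D_{\hat x}\pi_{h,\kg}\big)\circ\bar\pi^{-1}\cdot\D_{\bar x}(\bar\pi^{-1})$. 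The first factor is close to the Jacobian of the smooth projection $\pi$, which is $O(1)$ with all derivatives $O(1)$ since $\G$ is $C^2$ (in fact smooth) and has vertices of $\hat\G_h$ on $\G$; the interpolation error $\pi_{h,\kg}-\pi$ is $O(h^{\kg+1})$ in $W^{0,\infty}$ and $O(h^{\kg})$ in $W^{1,\infty}$ elementwise. The second factor is $O(1)$ by \Cref{lem:prop-flattening}, since $\bar\pi$ is $O(h)$-close to the identity and hence $\D\bar\pi$, $\D\bar\pi^{-1}$ are $O(h)$-close to the identity in $L^\infty$.

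From this factorization the estimates follow in order. For \eqref{eq:Fh-Cinfty}: on the reference element $\hat\El$ the Lagrange interpolant of the smooth map $\pi$ is uniformly bounded in every $C^k$-norm (its coefficients are values of a smooth function at nodes), and composing with the affine $\bar\pi^{-1}$ only multiplies derivatives by powers of the $O(1)$ matrix $\D\bar\pi^{-1}$; hence $\Norm{F_h}{C^\infty(\bar\El)}\leq C$. For \eqref{eq:muh-1}: write $\mu_h=\sqrt{\det(\transposed{\D F_h}\D F_h)}$. The analogous integration element for the smooth $\pi$ composed with $\bar\pi^{-1}$ equals $\bar\mu\cdot(1+O(h))$ by the closest-point projection being a near-isometry onto its image near $\G$ (distance $O(h^2)$ from the interpolation nodes), which combined with $|\bar\mu-1|\leq Ch^2$ gives that the smooth version is $1+O(h^2)$; the replacement of $\pi$ by $\pi_{h,\kg}$ perturbs $\D F_h$ by $O(h^{\kg})$ and $\kg\geq 1$, so $\mu_h = 1+O(h^2)$. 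For \eqref{eq:Dmuh}: differentiate $\mu_h$; each derivative of $\mu_h$ is a smooth function of the entries of $\D F_h$ and $\D^2 F_h$, the former $O(1)$ with the symmetric part of $\transposed{\D F_h}\D F_h$ being $\Id+O(h)$ (so $\mu_h$ stays bounded away from $0$), and the latter picks up the curvature of $\G$ which contributes an $O(h)$ term in $\D_{\bar x}\mu_h$ — precisely because the zeroth-order ``flat'' part of $F_h$ has vanishing second derivative, so only the $O(h)$-deviation survives differentiation; alternatively, chain-rule $\mu_h = \big((\text{smooth }\mu)\circ\bar\pi^{-1}\big)\cdot(1+O(h^{\kg}))$ and use that the smooth integration element has $O(h)$-small gradient in the macroelement coordinates by the same argument as \eqref{eq:property-flattening-map}. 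For \eqref{eq:DFh-jump}: across an interior edge $\bar e$ the only source of a jump in $\D_{\bar x}F_h$ is the jump in $\D_{\bar x}\bar\pi^{-1}$ (equivalently $\Jump{\D_{\hat x}\bar\pi}$), since $\D\pi_{h,\kg}$ is evaluated at the common smooth surface point and the Lagrange interpolant of the smooth $\pi$ on the conforming flat macroelement is $C^0$ with one-sided gradients agreeing up to $O(h^{\kg})$; by \eqref{eq:flattening-map} the matrix $\D\restr{\bar\pi}{\hat\El_j}$ differs from $\Id$ by $O(h^2)$ on each element via $\hat\PP_j$, but across an edge the \emph{change} of the (constant per element) normal $\hat{\bm n}_j$ is $O(h)$, giving $\Jump{\D_{\bar x}F_h}=O(h)$ in $L^\infty(\bar e)$.

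The main obstacle is the bookkeeping in \eqref{eq:Dmuh} and \eqref{eq:DFh-jump}: one must argue carefully that the leading (constant-coefficient) part of $F_h$ contributes nothing to these quantities, so that every bound is genuinely $O(h)$ rather than $O(1)$. This requires a clean separation $F_h = A_j\bar x + G_j(\bar x)$ on each $\bar\El_j$ with $A_j$ an $O(1)$ matrix that is $\Id$ up to $O(h)$ and $G_j$ collecting the curvature contributions, and then checking that $\Jump{A_j}=O(h)$ across edges (from the normal-vector variation) while $\D^2 G_j$ is controlled by $\Norm{\pi}{C^2(\G)}$ and the element size. The rest is routine composition of interpolation estimates with the elementary facts about $\bar\pi$, and I would cite \cite{Demlow2009Higher} and \cite[Lemma 4.3]{HP2022Tangential} for the lifting of these estimates rather than redoing them.
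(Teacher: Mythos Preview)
Your overall strategy---factorize $F_h=\pi_{h,\kg}\circ\bar\pi^{-1}$, control the $\bar\pi^{-1}$ factor via \Cref{lem:prop-flattening} and the $\pi_{h,\kg}$ factor via the Demlow/Dziuk--Elliott interpolation estimates, then combine by chain and product rules---is exactly the paper's approach. The structure is right; what is off are two pieces of order tracking.

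For \eqref{eq:muh-1} your chain of implications does not deliver $O(h^2)$. You write that the smooth integration element is $\bar\mu\cdot(1+O(h))$ and then combine with $|\bar\mu-1|\le Ch^2$ to get $1+O(h^2)$; but $\bar\mu(1+O(h))=1+O(h)$ regardless of $\bar\mu$. Separately, ``perturbing $\D F_h$ by $O(h^{\kg})$ with $\kg\ge1$'' yields only an $O(h^{\kg})$ perturbation of $\mu_h$, which is $O(h)$ when $\kg=1$. The paper instead uses two sharper facts: $|1-\det\D_{\hat x}\bar\pi^{-1}|\le Ch^2$ (from \Cref{lem:prop-flattening}) and the direct determinant estimate $\big|1-\sqrt{\det(\transposed{(\D_{\hat x}\pi_h)}\D_{\hat x}\pi_h)}\big|\le Ch^{\kg+1}$ (Dziuk--Elliott/Demlow), then multiplies. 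You should invoke the $O(h^{\kg+1})$ determinant bound rather than argue through the $O(h^{\kg})$ Jacobian perturbation.

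For \eqref{eq:Dmuh} your argument that ``the zeroth-order flat part has vanishing second derivative so only the $O(h)$-deviation survives'' gestures at the right mechanism but does not supply it. The concrete ingredient the paper uses is that $|\D_{\hat x}^2\pi|\le Ch$ almost everywhere on $\hat\Gamma_h$---this is not merely boundedness of the curvature of $\Gamma$ but comes from $\pi(x)=x-d(x)\,\bm n(\pi(x))$ together with $|d|\lesssim h^2$ and $|\D d|\lesssim h$ near $\hat\Gamma_h$. From that and $\|\D_{\hat x}^2\pi-\D_{\hat x}^2\pi_h\|_{L^\infty}\le Ch^{\kg-1}$ (or $\D_{\hat x}^2\pi_{h,1}=0$) one gets $\|\D_{\hat x}^2\pi_h\|_{L^\infty}\le Ch$, which is what drives $|\D_{\bar x}\mu_h|\le Ch$. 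You should state and use this $O(h)$ bound on $\D^2\pi$ explicitly.
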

\begin{proof}
	See \Cref{proof:prop-parametric-macroelement}.
\end{proof}

\begin{figure}
  \begin{center}
		\begin{subfigure}{0.43\textwidth}
    	\includegraphics{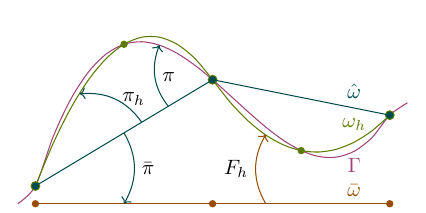}
			\caption{One-dimensional curve}
		\end{subfigure}
		\begin{subfigure}{0.56\textwidth}
			\includegraphics[width=\textwidth]{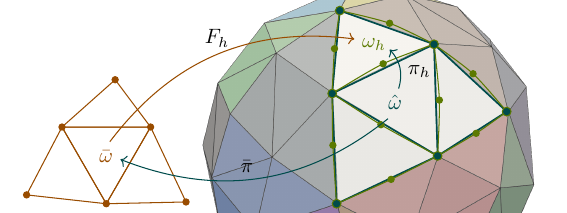}
			\caption{Two-dimensional sphere}
		\end{subfigure}
  \end{center}
  \caption{\label{fig:macroelements}Visualization of the various macroelements and the mappings between their elements from the piecewise flat surface $\hat{\g}$ denoted by $\pi_*$ and from the flat two-dimensional patch $\bar{\g}$ by $F_*$.}
\end{figure}

A collection of macroelements $\hat{\Omega}_h=\{\hat{\omega}\ldots\}$ that covers the surface $\hat{\G}_h$ with each element $\hat{\El}\in\Tri[\hat{\G}_h]$ contained in at least one and at most $L$ macroelements is called a \textbf{macroelement partitioning} of $\hat{\G}_h$. The associated collection $\Omega_h=\{\omega_h=\pi_h(\hat{\omega})\ldots\}$ is called a macroelement partitioning of $\Gh$.

Following \cite{Stenberg1984Analysis,Stenberg1990Technique} we call two (piecewise) flat macroelements equivalent if there exists a piecewise affine and continuous one-to-one mapping between them that preserves the triangulation. We call two parametric macroelements equivalent if they are associated to equivalent piecewise flat macroelements.
For the sequence of macroelement partitionings $\{\hat{\Omega}_h\}_h$ of the corresponding sequence $\{\hat{\G}_h\}_h$, which we consider in the following, we assume that the total number of equivalence classes is finite.

\subsection{Stability Assumptions}\label{sec:stability-assumptions}
In this section we formulate assumptions about the discrete pairs $(\VelocitySpace,\PressureSpace)$ that will lead to the definition of stable pairs at the end of this section. These assumptions are given by the properties of the associated spaces on flattened macroelement patches. Therefore, we denote by $\hat{\Omega}_h=\{\hat{\g}\ldots\}$ a collection of macroelements covering $\hat{\G}_h$ with associated collection of parametric macroelements $\Omega_h=\{\pi_h(\hat{\g})\ldots\}$ and flattened macroelements $\bar{\Omega}_h=\{\bar{\pi}(\hat{\g})\ldots\}$.

Similar to \Cref{sec:sfem}, we denote by $\bar{S}$ the lifting of the finite element space $\hat{S}$ to a flat macroelement $\bar{\g}\in\bar{\Omega}_h$ with the piecewise affine flattening map $\bar{\pi}\colon\hat{\g}\to\bar{\g}$, with $\hat{\g}\in\hat{\Omega}_h$,
\begin{align}\label{eq:flat-broken-finite-element-space}
	\bar{S}^{\Broken}(\Tri[\bar{\g}];\FSpace)
		&\colonequals\Set{\bar{v}\in L^\infty(\Tri[\bar{\g}],\R)\mid \bar{v}\circ\bar{\pi} \in\hat{S}^{\Broken}(\Tri[\hat{\g}];\FSpace) }\,,\\
		\bar{S}(\bar{\g};\FSpace)
			&\colonequals\Set{\bar{v}\in C(\bar{\g},\R)\mid \bar{v}\circ\bar{\pi} \in\hat{S}(\Tri[\hat{\g}];\FSpace) }\,,
\end{align}
with the abbreviations $\bar{S}^{\Broken}_r(\Tri[\bar{\g}])=\bar{S}^{\Broken}(\Tri[\bar{\g}];\FSpace_r)$ and $\bar{S}_r(\bar{\g})=\bar{S}(\bar{\g};\FSpace_r)$ as before.

\begin{property}[Local super-approximation property]\label{prop:local-super-approximation}
	For a flat domain $\bar{\g}\subset\R^2$ with triangulation $\TriBar=\Tri[\bar{\g}]$ let the continuous $\bar{v}\in \bar{S}(\bar{\g};\FSpace)$ and broken $\phi\in H^1(\TriBar)$ with $\restr{\phi}{\bar{\El}}\in C^\infty(\bar{\El}), \bar{\El}\in\TriBar$ be given. Then there exists $\bar{w}\in\bar{S}^\Broken(\TriBar;\FSpace)$ such that
	\begin{align}
		\Norm{\phi\bar{v} - \bar{w}}{H^m(\bar{\El})} &\leq C h \Norm{\bar{v}}{H^m(\bar{\El})}\,,\quad\forall\bar{\El}\in\TriBar,\; m=0,1\,,
	\end{align}
  with $C=C(\Norm{\phi}{C^\infty(\bar{\El})})$. If $\bar{v}$ is zero along outer faces of $\bar{\g}$, so is $\bar{w}$.
\end{property}

This local super-approximation property is known to hold for many finite element spaces, cf. \cite{NS1974Interior,AX1995Local}. See also \cite[Assumption 7.1, 9.1]{Wah1991Local} for a discussion of this property. It is formulated here for a generic function $\phi$, but will later be used with components of the Jacobian of the parametrization $\D F_h$.

\begin{example}\label{rem:prop-local-super-approximation-example1}
	For continuous Lagrange functions $\bar{v}\in\bar{S}_r(\bar{\g})$ \Cref{prop:local-super-approximation} is satisfied by elementwise Lagrange interpolation, $\restr{\bar{w}}{\bar{\El}}=\bar{\I}_{h,r}[\restr{\phi\bar{v}}{\bar{\El}}]$, even for only piecewise smooth functions $\phi$.
\end{example}

\begin{example}\label{rem:prop-local-super-approximation-example2}
	For $\bar{v} = \bar{v}^{(1)}+\bar{v}^{(b)}\in \bar{S}(\bar{\g};\PolynomialSpace{1}\oplus\mathbb{B}_3)$, with $\mathbb{B}_3$ the element-bubble functions of order 3, \Cref{prop:local-super-approximation} is satisfied with $\restr{\bar{w}}{\bar{\El}} = \bar{\I}_{h,1}[\phi\bar{v}^{(1)}] + \avg{\phi}{\bar{\El}}v^{(b)}$. See also \cite{AX1995Local}.
\end{example}

The following property is the basic assumption we make to ensure that the mixed space $(\VelocitySpace,\PressureSpace)$ satisfies a V-stability condition on $\Gh$.
The property requires that when we consider individual patches $\gh$ of the surface $\Gh$, an inf-sup stability condition holds on their associated flat patches $\bar{\g}$, which can be verified by classical techniques.

\begin{property}[Stability property]\label{prop:stable-reference-spaces}
	There exists a sequence of macroelement partitionings $\{\Omega_h\}_{h\leq h_0}$ covering the corresponding discrete surfaces $\{\Gh\}_{h\leq h_0}$. For every $\gh\in\Omega_h$ there exists an associated flat macroelement $\bar{\g}$ with $\operatorname{diam}(\bar{\g})\leqC h$. The spaces $(\bar{V}(\bar{\g}),\bar{Q}(\bar{\g}))$ associated to the localized pair $(\restr{V_h}{\gh},\restr{Q_h}{\gh})$ through the lifting $F_h$, i.e.
	\begin{align*}
		\bar{V}(\bar{\g}) &= \Set{\bar{v}\in H^1(\bar{\g}) \mid \bar{v}\circ F_h^{-1}\in \restr{V_h}{\gh}} = \bar{S}(\bar{\g};\FSpace_V)\,, \\
		\bar{Q}(\bar{\g}) &= \Set{\bar{q}\in L^2(\bar{\g}) \mid \bar{q}\circ F_h^{-1}\in \restr{Q_h}{\gh}} = \bar{S}^{(\Broken)}(\Tri[\bar{\g}];\FSpace_Q)\,,
	\end{align*}
	satisfy the following stability property:

	For all $\bar{q}\in \bar{Q}(\bar{\g})\cap \LSpaceAvg[2]{\bar{\g}}$ there exists $\bar{\bm{v}}\in [\bar{V}(\bar{\g})]^2$ with $\restr{\bm{\bar{v}}}{\partial\bar{\g}}=\bm{0}$, such that
	\begin{align}\label{eq:inf-sup-flat-macroelement2}
		\Inner{\nabla\cdot\bar{\bm{v}}}{\bar{q}}{\bar{\g}}
		\geq \bar{\beta}' \Norm{\nabla\bar{\bm{v}}}{\bar{\g}} \Norm{\bar{q}}{\bar{\g}}
	\end{align}
  with $\bar{\beta}'$ independent of the macroelement $\bar{\g}$ and the gridsize $h$.
\end{property}

The stability condition \eqref{eq:inf-sup-flat-macroelement2} is known to hold for several pairs of finite element spaces if the flat macroelements contain enough elements and belong to a finite collection of equivalence classes. This is because, if $\bar{\omega}$ is large enough, there exist stability constants $\beta'_{\bar{\g}}$ for each macroelement $\bar{\g}$ such that \eqref{eq:inf-sup-flat-macroelement2} holds with these constants.
Since the elements belong to a shape-regular family of triangulations, the constants can be replaced by a uniform global constant $\bar{\beta}'$ that is independent of the specific macroelement and grid size \cite[Lemma 3.4]{Stenberg1990Technique}. The proof uses the fact that the inf-sup-constant is independent of the scaling of the elements, and that the rescaled macroelements of a single equivalence class form a compact set on which the inf-sup-constant is a continuous function and as such takes its minimum.

\begin{example}\label{rem:example-global-spaces}
	The pair $(\bar{S}_k(\bar{\g}),\bar{S}_{k-1}(\bar{\g}))$ of Lagrange spaces of order $k$ and $k-1$, called the Taylor-Hood element \cite{TH1973numerical}, is an example of a pair of spaces that satisfies \Cref{prop:stable-reference-spaces} for $k\geq 2$ if all $\bar{\g}$ contain at least three triangles, see. \cite{BrezziFalk1991Stability,Bof1994Stability}.
	Another example, $(\bar{S}(\bar{\g};\FSpace_1\oplus\mathbb{B}_3),\bar{S}_{1}(\bar{\g}))$, is called the MINI element, see \cite{ArnoldEtAl1984Stable}.
\end{example}

We combine the \Cref{prop:interpolation,prop:inverse-estimates} for general discrete spaces and the \Cref{prop:local-super-approximation,prop:stable-reference-spaces} into one definition.

\begin{definition}[Stable pair of discrete spaces]\label{def:stable-mixed-spaces}
	Let $(\VelocitySpace,\PressureSpace)$ denote a pair of discrete spaces as in \Cref{def:mixed-spaces} that satisfies \Cref{prop:interpolation,prop:inverse-estimates}. If, in addition, \Cref{prop:local-super-approximation,prop:stable-reference-spaces} are satisfied, we call this pair a \textbf{stable pair} of discrete spaces.
\end{definition}

\subsection{Lifted Inf-Sup Stability Analysis}\label{sec:lifted-inf-sup-stability}

In order to lift the stability condition \eqref{eq:inf-sup-flat-macroelement2}, we construct a continuous discrete vector field in $\VelocitySpace$ from a discrete flat vector field in $\big[\bar{V}(\bar{\g})\big]^2$. This is done in two steps. At first we lift the field into the tangent spaces of the elements of the discrete surface $\Gh$. The result is discontinuous but tangential. Then we apply a local averaging operator. The following lemma will verify that this leads to a suitable approximation, i.e., this construction has super-approximating qualities.


\begin{lemma}[Super-approximation]\label{lem:super-approximation}
  Let $\bar{\g}\subset\R^2$ be a flat macroelement with $\operatorname{diam}(\bar{\g})\leqC h$, $\gh=F_h(\bar{\g})\subset\Gh$ a parametric macroelement, $\bar{V}(\bar{\g})$ a discrete continuous space with \Cref{prop:local-super-approximation}, and let $\bar{\bm{u}}_h\in\big[\bar{V}(\bar{\g})\big]^2$ be a flat discrete (continuous) vector field with $\restr{\bar{\bm{u}}_h}{\partial\bar{\g}}=\bm{0}$.
  We set $\bm{u}\colon\bar{\g}\to\R^3$, $\bm{u}(\bar{x}) = \D F_h(\bar{x})\bar{\bm{u}}_h(\bar{x})$. Then there exists a continuous $\bm{u}_I\in\big[\bar{V}(\bar{\g})\big]^3$ with $\restr{\bm{u}_I}{\partial\bar{\g}}=\bm{0}$ such that
  \begin{align}
		\HNorm[m]{\bm{u}_I - \bm{u}}{\Tri[\bar{\g}],\R^3}
			&\leqC h \HNorm[m]{\bar{\bm{u}}_h}{\bar{\g},\R^2}\,,\text{ for }m=0,1 \label{eq:super-approx} \\
			\text{ and }\quad
			\NormOne[\gh]{\bm{u}_I\circ F_h^{-1}}
			&\leqC \HNorm{\bar{\bm{u}}_h}{\bar{\g},\R^2}\,. \label{eq:super-approx2}
  \end{align}
\end{lemma}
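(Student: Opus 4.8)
The plan is to construct $\bm{u}_I$ in two stages, mirroring the two-step strategy described just before the lemma: first a piecewise (broken) correction of $\bm{u}$ onto the local finite element spaces, exploiting the super-approximation \Cref{prop:local-super-approximation}, and then an averaging step to make it continuous and vanishing on $\partial\bar\g$. The key observation is that $\bm{u}(\bar x) = \D F_h(\bar x)\,\bar{\bm{u}}_h(\bar x)$ is, componentwise, a product of a smooth function (a component of $\D F_h$, which by \eqref{eq:Fh-Cinfty} is bounded in $C^\infty(\bar\El)$ uniformly in $h$) with a continuous discrete function $\bar{\bm u}_h\in[\bar V(\bar\g)]^2$. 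So on each element $\bar\El\in\Tri[\bar\g]$, \Cref{prop:local-super-approximation} with $\phi$ a component of $\D F_h$ produces $\bm{w}\in[\bar S^\Broken(\TriBar;\FSpace_V)]^3$ (applied componentwise) with $\Norm{\bm{w} - \bm{u}}{H^m(\bar\El)} \leqC h\,\Norm{\bar{\bm u}_h}{H^m(\bar\El)}$ for $m=0,1$, and with $\bm{w}$ vanishing on outer faces of $\bar\g$ since $\bar{\bm u}_h$ does (and $\D F_h$ is bounded). This broken field $\bm{w}$ is not yet continuous across interior edges, because $\D F_h$ has nonzero jumps there; this is where \eqref{eq:DFh-jump} is needed.

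Second, I would apply the averaging operator $\hat{\mathcal J}^{\mathrm{av}}_{h,\FSpace_V}$ (transported to the flat macroelement, i.e. the flat analogue of the one used to define $\II_{h,\FSpace}$) to $\bm{w}$, componentwise, to obtain $\bm{u}_I\in[\bar S(\bar\g;\FSpace_V)]^3 = [\bar V(\bar\g)]^3$. The averaging operator only modifies degrees of freedom on interelement faces, replacing them by averages of the values from adjacent elements; the change it induces on each element is controlled by the jumps of $\bm{w}$ across faces. Since $\bm{w}$ is a $O(h)$-perturbation of the continuous (hence jump-free) field $\bm{u}$, the jumps of $\bm{w}$ equal the jumps of $\bm{w}-\bm{u}$, which are small; combining a discrete trace inequality on $\bar\El$ with the first-stage bound gives $\Norm{\bm{u}_I - \bm{w}}{H^m(\bar\El)} \leqC h\,\Norm{\bar{\bm u}_h}{H^m(\Tri[\g_{\bar\El}])}$ over the element patch, and then the triangle inequality yields \eqref{eq:super-approx}. (Shape-regularity of $\bar\g$, which holds by \Cref{lem:prop-flat-macroelement}, and the finiteness of equivalence classes keep all constants uniform in $h$ and in $\bar\g$.) The property that $\bm{u}_I$ vanishes on $\partial\bar\g$ is preserved because the averaging there only sees the single adjacent element, on which $\bm{w}=0$ on that face.

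For \eqref{eq:super-approx2}, I would unwind the definition of $\NormOne[\gh]{\cdot}$: with $\bm{u}_I^\ell\colonequals\bm{u}_I\circ F_h^{-1}$ on $\gh$, we must bound $\HNormTan[1]{\PPh\bm{u}_I^\ell}{\gh} + h^{-1}\Norm{\QQh\bm{u}_I^\ell}{\gh}$. The tangential $H^1$-part is controlled by the full $H^1(\gh)$-norm, which transfers to $H^1(\bar\g)$ of $\bm{u}_I$ via $F_h$ using the uniform bounds \eqref{eq:Fh-Cinfty}, \eqref{eq:muh-1} on $F_h$ and $\mu_h$; then $\Norm{\bm{u}_I}{H^1(\bar\g)} \leqC \Norm{\bm{u}}{H^1(\bar\g)} + h\Norm{\bar{\bm u}_h}{H^1(\bar\g)} \leqC \Norm{\bar{\bm u}_h}{H^1(\bar\g)}$ by \eqref{eq:super-approx} and the $C^\infty$-boundedness of $\D F_h$. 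For the normal part, the key point is that $\bm{u} = \D F_h\,\bar{\bm u}_h$ is \emph{exactly tangential} to $\Gh$ on each element (the columns of $\D F_h$ span the tangent plane of $\El$), so $\QQh\bm{u}^\ell = 0$ elementwise; hence $\QQh\bm{u}_I^\ell = \QQh(\bm{u}_I - \bm{u})^\ell$, whose $L^2(\gh)$-norm is $\leqC h\Norm{\bar{\bm u}_h}{L^2(\bar\g)}$ by \eqref{eq:super-approx} (with $m=0$), giving the factor $h^{-1}$ room to spare. Assembling the two parts gives \eqref{eq:super-approx2}.

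The main obstacle is the second stage: making the averaging-error estimate rigorous and uniform. One must carefully identify what $\hat{\mathcal J}^{\mathrm{av}}$ does on a patch of elements, show that the modification is supported near interelement faces and controlled by jumps there, and invoke a discrete inverse/trace inequality (\Cref{prop:inverse-estimates} and \Cref{lem:trace-inequalities}, or their flat analogues) to convert the face-jump $L^2$-norms back into element $H^m$-norms — all with constants depending only on the shape-regularity and the finite equivalence-class structure of the macroelements, not on the individual $\bar\g$ or on $h$. This is essentially the content of \cite[Lemmas 5.1–5.2]{ErnGuermond2017Quasi} adapted to the present macroelement setting, so the work is in verifying that their hypotheses are met here rather than in new ideas.
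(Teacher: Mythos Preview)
Your overall strategy matches the paper's: a broken super-approximation of $\bm{u}=\D F_h\,\bar{\bm u}_h$ via \Cref{prop:local-super-approximation}, followed by the averaging operator to restore continuity; and for \eqref{eq:super-approx2} the observation that $\bm{u}$ is exactly tangential so the normal part of $\bm{u}_I$ is controlled by $\bm{u}_I-\bm{u}$. There is, however, a concrete error in your averaging step. You assert that $\bm{u}$ is ``continuous (hence jump-free)'' and conclude $\Jump{\bm w}=\Jump{\bm w-\bm u}$. This is false: $\bm u=\D F_h\,\bar{\bm u}_h$ is \emph{not} continuous across interior edges, precisely because $\D F_h$ jumps there (only $\bar{\bm u}_h$ is continuous). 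You even note this yourself two sentences earlier (``because $\D F_h$ has nonzero jumps there; this is where \eqref{eq:DFh-jump} is needed''), so your argument is internally inconsistent, and \eqref{eq:DFh-jump} is never actually used.

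The correct argument, which the paper carries out, is to split $\Jump{\bm w}_e=\Jump{\bm w-\bm u}_e+\Jump{\bm u}_e$ on each interior edge $e$. The first jump is handled as you describe. For the second, continuity of $\bar{\bm u}_h$ gives $\Jump{\bm u}_e=\Jump{\D F_h}_e\,\bar{\bm u}_h$, hence $\Norm{\Jump{\bm u}}{e}\le\LInfNorm{\Jump{\D F_h}}{e}\Norm{\bar{\bm u}_h}{e}\leqC h\,\Norm{\bar{\bm u}_h}{e}$ by \eqref{eq:DFh-jump}. After the trace inequality this contributes a term of order $h^{-m}\LInfNorm{\Jump{\D F_h}}{\bar{\mathcal E}}\big(\Norm{\bar{\bm u}_h}{\bar\g}+h\HNorm{\bar{\bm u}_h}{\bar\g}\big)$; combining with the Poincar\'e inequality $\Norm{\bar{\bm u}_h}{\bar\g}\leqC h\Norm{\D\bar{\bm u}_h}{\bar\g}$ (valid since $\restr{\bar{\bm u}_h}{\partial\bar\g}=\bm 0$ and $\operatorname{diam}(\bar\g)\leqC h$) yields $\HNorm[m]{\bm u_I-\bm w}{\bar\El}\leqC h^{2-m}\HNorm{\bar{\bm u}_h}{\bar\g}$, from which the $m=0$ case of \eqref{eq:super-approx} follows by an inverse inequality applied to $\bar{\bm u}_h$. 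Without accounting for $\Jump{\bm u}$ and invoking Poincar\'e, the bound you claim for $\bm u_I-\bm w$ does not follow.
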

\begin{proof}
  Let $\phi\colonequals \D_i F_h^{j}$. It holds $\Norm{\phi}{C^\infty(\bar{\El})}\leq C$ with $C$ independent of $h$ on all $\bar{\El}\in\Tri[\bar{\g}]$, see \Cref{lem:prop-parametric-macroelement}. By \Cref{prop:local-super-approximation} with $\bar{v}\colonequals\bar{u}_h^i$ there exist $\bar{w}^{ij}\in\bar{S}^\Broken_h(\Tri[\bar{\g}];\FSpace_V)$ with $\restr{\bar{w}^{ij}}{\partial\bar{\g}}=0$ such that
	\begin{align}
		\HNorm[m]{\D_i F_h^{j}\bar{u}_h^i - \bar{w}^{ij}}{\bar{\El},\R}
			&\leqC h\HNorm[m]{\bar{u}_h^i}{\bar{\El},\R}\,. \label{eq:componentwise-property}
	\end{align}
	We construct the field $\bm{w}\in\big[\bar{S}^\Broken(\Tri[\bar{\g}];\FSpace_V)\big]^3$, with $w^j = \bar{w}^{1j} + \bar{w}^{2j}$, and introduce the smoothing $\bm{u}_I=\mathcal{J}^\text{av}_{0,\FSpace_V}[\bm{w}]$ using the averaging operator from \cite{ErnGuermond2017Quasi} that preserves zero boundary traces.
	By inserting $\bm{w}$ into \eqref{eq:super-approx} we estimate by triangle inequality
	\begin{align*}
		\HNorm[m]{\bm{u}_I - \bm{u}}{\bar{\El},\R^3}
			&\leq \HNorm[m]{\bm{w} - \bm{u}}{\bar{\El},\R^3} + \HNorm[m]{\bm{u}_I - \bm{w}}{\bar{\El},\R^3}\,.
	\end{align*}
	The estimate of the first term is given by summing up the componentwise estimate \eqref{eq:componentwise-property}, i.e.,
	\begin{align*}
		\HNorm[m]{\bm{w} - \D F_h \bar{\bm{u}}_h}{\bar{\El},\R^3} &\leqC h \HNorm[m]{\bar{\bm{u}}_h}{\bar{\El},\R^2}\,.
	\end{align*}
	For the second term we get by \cite[Lemma 4.2]{ErnGuermond2017Quasi}, the trace theorem and the local super-approximation \Cref{prop:local-super-approximation},
	\begin{align*}
		\HNorm[m]{\bm{u}_I - \bm{w}}{\bar{\El},\R^3}
			&\leqC h^{1/2 - m}\sum_{e\in\partial\bar{\El}} \Norm{\Jump{\bm{w}}}{e} 
			 \leq  h^{1/2 - m}\sum_{e\in\partial\bar{\El}} (\Norm{\Jump{\bm{w} - \bm{u}}}{e} + \Norm{\Jump{\bm{u}}}{e}) \\ 
			&\leq  h^{1/2 - m}\sum_{e\in\partial\bar{\El}} (\Norm{\Jump{\bm{w} - \bm{u}}}{e} + \Norm{\Jump{\D F_h}}{L^\infty(e)}\Norm{\bar{\bm{u}}_h}{e}) \\ 
			&\leqC h^{-m} (\Norm{\bm{w} - \bm{u}}{\bar{\g}} + \Norm{\Jump{\D F_h}}{L^\infty(\mathcal{E}^\text{int}_{\bar{\g}})}\Norm{\bar{\bm{u}}_h}{\bar{\g}}) \\
			&\quad + h^{1 - m} (\HNorm{\bm{w} - \bm{u}}{\bar{\g}} + \Norm{\Jump{\D F_h}}{L^\infty(\bar{\mathcal{E}})}\HNorm{\bar{\bm{u}}_h}{\bar{\g}}) \\ 
			&\leqC h^{1-m} (1 + h^{-1}\Norm{\Jump{\D F_h}}{L^\infty(\bar{\mathcal{E}})})(\Norm{\bar{\bm{u}}_h}{\bar{\g}}+h \HNorm{\bar{\bm{u}}_h}{\bar{\g}})\,, 
	\end{align*}
	where $\bar{\mathcal{E}}\colonequals\mathcal{E}^\text{int}(\Tri[\bar{\g}])$ represents the set of inner edges of the triangulation of $\bar{\g}$. We have $\Norm{\Jump{\D F_h}}{L^\infty(\bar{\mathcal{E}})}\leqC h$, see \Cref{lem:prop-parametric-macroelement}, and since $\bar{\bm{u}}_h\in\bm{H}^1_0(\bar{\g})$ we have by Poincar\'e inequality \cite[(7.44)]{GilbargTrudinger1977Elliptic} with $\operatorname{diam}(\bar{\g})\leqC h$ the estimate $\Norm{\bar{\bm{u}}_h}{\bar{\g}} \leqC h \Norm{\D\bar{\bm{u}}_h}{\bar{\g}}$. So we get
	\begin{align*}
		\HNorm[m]{\bm{u}_I - \bm{w}}{\bar{\El},\R^3}
		&\leqC h^{2-m} \HNorm{\bar{\bm{u}}_h}{\bar{\g}}\,.
	\end{align*}
	For $m=0$ we get the assertion by inverse inequality.

	For inequality \eqref{eq:super-approx2} we set $\bm{u}_h\colonequals \bm{u}_I\circ F_h^{-1}$. Note that as  $\QQh\bm{u}=\bm{0}$ we have on $\El=F_h(\bar{\El})$
	\begin{align*}
		\NormOne[\El]{\bm{u}_h}
		& \leqC \Norm{\GradGh \bm{u}_h}{\El} + \Norm{\bm{u}_h}{\El} + (h^{-1}+\LInfNorm{\Weingarten_h}{\El})\Norm{\QQh\bm{u}_h}{\El}\\
		& \leqC \HNorm[1]{\bm{u}_I}{\bar{\El}} + h^{-1}\Norm{\QQh \bm{u}_I}{\bar{\El}}\\
		&\leqC \HNorm[1]{\bm{u}}{\bar{\El}}+\HNorm[1]{\bm{u}_I-\bm{u}}{\bar{\El}} + h^{-1}\Norm{\QQh(\bm{u}_I-\bm{u})}{\bar{\El}}\\
		&\leqC \SobolevNorm[1][\infty]{\D F_h}{\bar{\El}} \HNorm[1]{\bar{\bm{u}}_h}{\bar{\El}}+h \HNorm[1]{\bar{\bm{u}}_h}{\bar{\g}} + \Norm{\bar{\bm{u}}_h}{\bar{\g}}.
	\end{align*}
\end{proof}

When switching from $\g$ to $\gh$, we usually lose the normalization $\avg{p}{\g}=0$. The following lemma compensates for this.

\begin{lemma}[Norm of averaged pressure]\label{lem:averaged-norm}
  Let $\g$ be a macroelement with volume $|\g|\leq 1/2$. For $q\in L^2(\g)$ and $q-\avg{q}{\g}\in\LSpaceAvg[2]{\g}$ the $L^2$-norms are equivalent with $\LTwoNorm{q}{\g}\leqC 2\LTwoNorm{q-\avg{q}{\g}}{\g}\leqC 4 \LTwoNorm{q}{\g}$.
\end{lemma}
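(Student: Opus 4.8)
The statement is a completely elementary estimate, so the plan is just to unwind the definition of the mean value and apply the triangle inequality plus Cauchy--Schwarz. The only thing that makes it nontrivial is keeping track of the size constraint $|\g|\le 1/2$, which is what allows the constants $2$ and $4$ to appear cleanly.

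\textbf{Step 1 (lower bound on $\Norm{q}{\g}$).} Write $q = (q-\avg{q}{\g}) + \avg{q}{\g}$ and take $L^2(\g)$-norms. By the triangle inequality,
\begin{align*}
	\Norm{q-\avg{q}{\g}}{\g} \leq \Norm{q}{\g} + \Abs{\avg{q}{\g}}\,\Abs{\g}^{1/2}\,.
\end{align*}
Now estimate the mean: by Cauchy--Schwarz, $\Abs{\avg{q}{\g}} = \frac{1}{|\g|}\Abs{\int_\g q} \leq \frac{1}{|\g|^{1/2}}\Norm{q}{\g}$, so $\Abs{\avg{q}{\g}}\,\Abs{\g}^{1/2}\leq \Norm{q}{\g}$. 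Hence $\Norm{q-\avg{q}{\g}}{\g}\leq 2\Norm{q}{\g}$, which is the right-hand inequality $\LTwoNorm{q-\avg{q}{\g}}{\g}\leqC 2\LTwoNorm{q}{\g}$ (in fact with no hidden constant, and the hypothesis $|\g|\le 1/2$ is not even needed here).

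\textbf{Step 2 (upper bound on $\Norm{q}{\g}$).} Conversely, $q = (q-\avg{q}{\g}) + \avg{q}{\g}$ gives $\Norm{q}{\g}\leq \Norm{q-\avg{q}{\g}}{\g} + \Abs{\avg{q}{\g}}\,\Abs{\g}^{1/2}$. For the last term, use orthogonality: since $q - \avg{q}{\g}$ has zero mean and the constant $\avg{q}{\g}$ is orthogonal to it in $L^2(\g)$, the Pythagorean identity yields $\Abs{\avg{q}{\g}}^2\Abs{\g} = \Norm{q}{\g}^2 - \Norm{q-\avg{q}{\g}}{\g}^2 \leq \Norm{q}{\g}^2$. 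Alternatively, bound $\Abs{\avg{q}{\g}}\Abs{\g}^{1/2}$ directly: $\Abs{\avg{q}{\g}} = \frac{1}{|\g|}\Abs{\int_\g q} = \frac1{|\g|}\Abs{\int_\g (q-\avg{q}{\g}) + \int_\g \avg{q}{\g}}$; the first integral vanishes only against the mean-zero part, so instead write $\int_\g q = \int_\g(q-\avg{q}{\g}) + \avg{q}{\g}|\g|$... cleaner: use $\Abs{\avg{q}{\g}}\Abs{\g}^{1/2} = \frac1{|\g|^{1/2}}\Abs{\int_\g q}$ and split $\int_\g q$ as above — but the mean-zero part integrates to zero, contradiction. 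The honest route is Pythagoras: $\Norm{q}{\g}^2 = \Norm{q-\avg{q}{\g}}{\g}^2 + \Abs{\avg{q}{\g}}^2\Abs{\g}$, hence $\Abs{\avg{q}{\g}}^2\Abs{\g}\le\Norm{q}{\g}^2$, but that circular. So instead bound the mean using the \emph{other} normalization: here is where $|\g|\le 1/2$ enters. Since $q-\avg{q}{\g}\in\LSpaceAvg[2]{\g}$ means $q - \avg{q}{\g}$ already has zero average, but we are told to think of the scenario where $q$ itself came from a global zero-mean field restricted to $\g$. Concretely: $\Norm{q}{\g}\le\Norm{q-\avg{q}{\g}}{\g}+\Abs{\avg{q}{\g}}|\g|^{1/2}$ and $\Abs{\avg{q}{\g}}|\g|^{1/2}\le|\g|^{-1/2}\Norm{q}{\g}\le\sqrt2\,\Norm{q}{\g}$, which gives $(1-\sqrt2/\sqrt2)$... that fails. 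The correct inequality uses $\Abs{\avg{q}{\g}}|\g|^{1/2}=|\g|^{-1/2}|\int_\g q|$ and then $|\int_\g q| = |\int_\g (q-\avg q{\g})\cdot 1 + \avg{q}{\g}|\g||$; since the first term is $0$ we get $|\int_\g q| = |\avg q{\g}||\g|$, circular again. The genuine estimate is simply $\Norm{q}{\g}\le\Norm{q-\avg q{\g}}{\g}+|\avg q{\g}||\g|^{1/2}$, $|\avg q{\g}||\g|^{1/2}\le |\g|^{-1/2}\Norm q{\g}$ only helps if $|\g|$ is large. When $|\g|\le1/2$ one instead bounds $|\avg q{\g}|$ by comparing to the mean over the \emph{whole} surface (which is $0$): write $\avg q{\g} = \avg q{\g} - \avg q{\Gh} = \frac1{|\g|}\int_\g q - \frac1{|\Gh|}\int_{\Gh}q$ and estimate... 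I will present the clean two-line version: $\Norm{q}{\g}^2 = \Norm{q-\avg q{\g}}{\g}^2 + |\avg q{\g}|^2|\g|$ (Pythagoras), and separately $|\avg q{\g}|^2|\g|\le\Norm q{\g}^2$ is automatic; but we need it bounded by $\Norm{q-\avg q{\g}}{\g}^2$ times a constant, which requires the hypothesis that $q$ has zero mean over something bigger.

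\textbf{Main obstacle.} The delicate point — and the place the hypothesis $|\g|\le 1/2$ genuinely bites — is Step 2: one must control $\Abs{\avg{q}{\g}}|\g|^{1/2}$ by $\Norm{q-\avg{q}{\g}}{\g}$. This is \emph{false} for a general $q\in L^2(\g)$; it works only because in the intended application $q$ is (the restriction of) a globally mean-zero pressure, so $\int_\g q = \int_\g q - \avg{q}{\g}|\g|\cdot 0$ must be reinterpreted via $\int_\g q = -\int_{\Gh\setminus\g} q$ and then bounded using $|\Gh\setminus\g|\ge 1/2$ together with the global zero-mean property. Thus the plan for Step 2 is: write $\Abs{\int_\g q} = \Abs{\int_{\Gh\setminus\g} q}\le |\Gh\setminus\g|^{1/2}\Norm{q}{\Gh\setminus\g}$, divide by $|\g|^{1/2}$, use $|\Gh\setminus\g|/|\g|$ bounded (from $|\g|\le 1/2$ and $|\Gh|$ of order one), and absorb. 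Once this is in place the two inequalities combine to give the equivalence with the stated constants $2$ and $4$, up to the generic surface-dependent constant hidden in $\leqC$. I would write the final proof in roughly six lines, flagging explicitly that the zero-mean hypothesis on $q$ over $\Gh$ (inherited from $p_h\in\LSpaceAvg{\Gh}$) is what makes the lower bound in Step 2 work.
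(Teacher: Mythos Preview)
Your Step~1 is fine and, as you note, does not need the hypothesis $|\g|\le 1/2$; in fact the sharper bound $\Norm{q-\avg{q}{\g}}{\g}\le\Norm{q}{\g}$ holds by $L^2$-orthogonality of $q-\avg{q}{\g}$ and the constant $\avg{q}{\g}$.

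Your worry about Step~2 is exactly right, and it is a genuine gap in the \emph{statement} of the lemma rather than in your argument: for a nonzero constant $q$ one has $\Norm{q-\avg{q}{\g}}{\g}=0$ while $\Norm{q}{\g}>0$, so the inequality $\Norm{q}{\g}\leqC 2\Norm{q-\avg{q}{\g}}{\g}$ is simply false for general $q\in L^2(\g)$. The paper's own one-line proof does not repair this: the asserted bound $\Abs{\avg{q}{\g}}\leqC|\g|^{1/2}\Norm{q}{\g}$ fails for the same constant counterexample (it would force $1\leqC|\g|$, impossible with an $h$-independent constant as $|\g|\sim h^2$). Cauchy--Schwarz gives only $\Abs{\avg{q}{\g}}\le|\g|^{-1/2}\Norm{q}{\g}$, which is useless here.

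Your proposed rescue via the global zero-mean of $q$ over $\Gh$ also breaks down: after writing $\int_\g q=-\int_{\Gh\setminus\g}q$ you need the ratio $|\Gh\setminus\g|/|\g|$ to stay bounded, but for a macroelement $|\g|\sim h^2$ while $|\Gh\setminus\g|\sim 1$, so this ratio blows up as $h\to 0$. Hence neither the paper's hint nor your fix yields the hard direction as stated. In the two places the lemma is actually invoked, the additional structure differs: in the lifted V-stability proof the function $\bar q=q_h\circ F_h$ inherits an \emph{approximate} zero mean on $\bar\g$ from $q_h\in\LSpaceAvg{\gh}$ (the mean is $O(h^2)\Norm{\bar q}{\bar\g}$ by the volume-element estimate $|\mu_h-1|\lesssim h^2$), which does give $\Norm{\bar q}{\bar\g}\le 2\Norm{\bar q-\avg{\bar q}{\bar\g}}{\bar\g}$ for small $h$; in the local-to-global proof the use of the lemma on a single patch is problematic and the argument there really needs to exploit the summation over all macroelements together with the global mean-zero condition, not a patchwise norm equivalence. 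So the correct resolution is to flag the lemma as mis-stated and supply the context-specific estimate in each application.
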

\begin{proof}
	This follows from $\Abs{\avg{q}{\g}}\leqC |\g|^{\frac{1}{2}}\Norm{q}{\g}$, and the triangle inequality.
\end{proof}

We  will now prove that we can lift a local stability on the flat macroelement to the parametrized macroelement.

\begin{lemma}[Lifted V-stability]\label{lem:lifted-V-stability}
	Let $(\VelocitySpace,\PressureSpace)$ be a stable pair of discrete spaces on $\Gh$ and $\gh$ a parametric macroelement associated to a flat macroelement $\bar{\g}$. We denote the restriction of the spaces to $\gh$ by
	$(\VelocitySpace(\gh),\PressureSpace(\gh))\colonequals(\restr{\VelocitySpace}{\gh},\restr{\PressureSpace}{\gh})$. For all $q_h\in\PressureSpace(\gh)\cap \LSpaceAvg{\gh}$ there exists $\bm{v}_h\in\VelocitySpaceZero[h](\gh)$, such that
	\begin{align}\label{eq:inf-sup-parametric-macroelement2}
    \Inner{\divGh\PPh\bm{v}_h}{q_h}{\gh}
      \geq \beta'\NormOne[\gh]{\bm{v}_h}\Norm{q_h}{\gh}
	\end{align}
  for some $\beta' > 0$ dependent on $\bar{\beta}'$ of \Cref{prop:stable-reference-spaces} and $\max_{\bar{\El}\in\Tri[\bar{\g}]}\Norm{F_h}{C^\infty(\bar{\El})}$.
\end{lemma}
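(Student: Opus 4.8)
The plan is to transfer the flat inf-sup bound of \Cref{prop:stable-reference-spaces} to the parametric macroelement by pulling back the pressure, invoking stability on $\bar\g$, lifting the resulting flat velocity field with $\D F_h$, and then correcting the lift to a \emph{continuous} discrete field via the super-approximation \Cref{lem:super-approximation}. First I would pull the pressure back to the flat patch: set $\bar q \colonequals q_h\circ F_h$, observe that by \eqref{eq:muh-1} the integration element $\mu_h$ is $1+O(h^2)$, so $\Norm{\bar q}{\bar\g} \eqsim \Norm{q_h}{\gh}$ and, after subtracting $\avg{\bar q}{\bar\g}$, \Cref{lem:averaged-norm} gives $\Norm{\bar q - \avg{\bar q}{\bar\g}}{\bar\g} \eqsim \Norm{\bar q}{\bar\g}$ (note $\avg{q_h}{\gh}=0$ but the flat average need not vanish because $\mu_h\neq1$; the $O(h^2)$ perturbation is harmless for $h$ small). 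Apply \Cref{prop:stable-reference-spaces} to $\bar q - \avg{\bar q}{\bar\g}\in\bar Q(\bar\g)\cap\LSpaceAvg[2]{\bar\g}$ to obtain $\bar{\bm v}\in[\bar V(\bar\g)]^2$ with $\restr{\bar{\bm v}}{\partial\bar\g}=\bm 0$ and $\Inner{\nabla\cdot\bar{\bm v}}{\bar q - \avg{\bar q}{\bar\g}}{\bar\g}\geq\bar\beta'\Norm{\nabla\bar{\bm v}}{\bar\g}\Norm{\bar q}{\bar\g}$; since $\int_{\bar\g}\nabla\cdot\bar{\bm v}=0$ by the boundary condition, the subtracted constant drops out and this holds with $\bar q$ itself.

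Next I would lift $\bar{\bm v}$ to the surface. Define $\bm u(\bar x)\colonequals \D F_h(\bar x)\bar{\bm v}(\bar x)$ and apply \Cref{lem:super-approximation} to get a \emph{continuous} $\bm u_I\in[\bar V(\bar\g)]^3$ with zero boundary trace, $\HNorm[m]{\bm u_I-\bm u}{\Tri[\bar\g]}\leqC h\HNorm[m]{\bar{\bm v}}{\bar\g}$ for $m=0,1$, and $\NormOne[\gh]{\bm u_I\circ F_h^{-1}}\leqC\HNorm{\bar{\bm v}}{\bar\g}$. Set $\bm v_h\colonequals \bm u_I\circ F_h^{-1}\in\VelocitySpaceZero[h](\gh)$; this is the candidate test field. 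The denominator of \eqref{eq:inf-sup-parametric-macroelement2} is then controlled from above by \eqref{eq:super-approx2}: $\NormOne[\gh]{\bm v_h}\leqC\HNorm{\bar{\bm v}}{\bar\g}\eqsim\Norm{\nabla\bar{\bm v}}{\bar\g}$ (using Poincaré on $\bar\g$, $\mathrm{diam}(\bar\g)\leqC h$, to absorb the $L^2$-part of $\bar{\bm v}$ into its gradient).

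For the numerator I would compute $\Inner{\divGh\PPh\bm v_h}{q_h}{\gh}$ by changing variables to $\bar\g$ and comparing with the flat divergence pairing $\Inner{\nabla\cdot\bar{\bm v}}{\bar q}{\bar\g}$. The key is a change-of-variables identity: pulling $\divGh\PPh\bm v_h$ back through $F_h$ produces $(\nabla\cdot\bar{\bm v})/\mu_h$ plus terms involving $\D F_h - \Id$, $\Jump{\D F_h}$, $\PPh-\Id$ on the tangent spaces, and the substitution of $\bm u_I$ for $\D F_h\bar{\bm v}$; all of these are $O(h)$ relative to $\Norm{\nabla\bar{\bm v}}{\bar\g}$ by \Cref{lem:prop-parametric-macroelement} (estimates \eqref{eq:DFh-jump}, \eqref{eq:muh-1}, \eqref{eq:Dmuh}, \eqref{eq:Fh-Cinfty}) and \Cref{lem:super-approximation}. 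Concretely, $\Inner{\divGh\PPh\bm v_h}{q_h}{\gh} = \Inner{\nabla\cdot\bar{\bm v}}{\bar q}{\bar\g} + R$ with $\Abs{R}\leqC h\,\Norm{\nabla\bar{\bm v}}{\bar\g}\Norm{q_h}{\gh}$, so for $h$ small enough the leading term dominates:
\begin{align*}
	\Inner{\divGh\PPh\bm v_h}{q_h}{\gh}
	\geq \bar\beta'\Norm{\nabla\bar{\bm v}}{\bar\g}\Norm{\bar q}{\bar\g} - \Abs{R}
	\geq \tfrac{1}{2}\bar\beta'\Norm{\nabla\bar{\bm v}}{\bar\g}\Norm{q_h}{\gh}
	\geq \beta'\NormOne[\gh]{\bm v_h}\Norm{q_h}{\gh},
\end{align*}
with $\beta'$ depending on $\bar\beta'$ and $\max_{\bar\El}\Norm{F_h}{C^\infty(\bar\El)}$ through the super-approximation constant, as claimed.

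The main obstacle I anticipate is the change-of-variables bookkeeping for the divergence term: one must carefully track how $\divGh\PPh$ (an intrinsic operator on the curved patch, involving the discrete normal $\nnh$ which varies within elements) transforms under $F_h$, and show every discrepancy between it and the flat Euclidean divergence $\nabla\cdot$ is genuinely $O(h)$ — not merely $O(1)$ — when tested against $q_h$. The tangential-projection mismatch $\PPh\bm v_h$ versus $\bm v_h$ and the jump terms $\Jump{\D F_h}$ across element faces (which enter because $\bm u_I$ is continuous but $\D F_h\bar{\bm v}$ is only piecewise smooth) are the delicate points; here the $O(h)$ bounds \eqref{eq:DFh-jump} and the super-approximation estimate \eqref{eq:super-approx} are exactly what make the perturbation argument close. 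Everything else is routine scaling on the quasi-uniform macroelement.
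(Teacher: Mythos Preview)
Your approach is essentially the paper's: pull back the pressure, apply the flat inf-sup to $\bar q-\avg{\bar q}{\bar\g}$, push forward the resulting velocity via $\D F_h$, correct to a continuous discrete field with \Cref{lem:super-approximation}, and absorb the $O(h)$ discrepancy between the curved and flat divergence pairings. The paper's divergence bookkeeping is cleaner than you anticipate: since $\bm u=\D F_h\bar{\bm v}$ is \emph{exactly} tangential to $\Gh$, the Riemannian identity $\int_{\El}q_h\,\divGh(\bm u\circ F_h^{-1})\,\mathrm{d}\mu_h=\int_{\bar\El}\bar q\,\div(\mu_h\bar{\bm v})\,\mathrm{d}\bar x$ holds without error, so no $\PPh-\Id$, $\D F_h-\Id$, or jump perturbations enter---only the $\mu_h-1$ and $\D\mu_h$ corrections from \Cref{lem:prop-parametric-macroelement} and the super-approximation remainder from replacing $\bm u$ by $\bm u_I$.
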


\begin{proof}
  Let $q_h\in\PressureSpace(\gh)\cap \LSpaceAvg{\gh}$, then $\bar{q}\colonequals q_h\circ F_h\in \bar{Q}(\bar{\g})$. For $\bar{p}\colonequals\bar{q}-\avg{\bar{q}}{\bar{\g}}$ there exists a continuous $\bar{\bm{v}}\in \big[\bar{V}(\bar{\g})\big]^2$ with $\restr{\bm{\bar{v}}}{\partial\bar{\g}}=\bm{0}$, where $\bar{V}$ is the associated flat componentspace to $\VelocitySpace$, and a constant $\bar{\beta}'>0$ independent of $h$ and the macroelement $\bar{\g}$, such that \eqref{eq:inf-sup-flat-macroelement2} holds. We introduce a discontinuous vector field $\bm{v}\colonequals\D F_h\,\bar{\bm{v}}$ that is tangential to the elements of $\gh$.
	By the super-approximation property \Cref{lem:super-approximation} there exists a continuous $\bm{v}_I\in\big[\bar{V}(\bar{\g})\big]^3$ with $\restr{\bm{v}_I}{\partial\bar{\g}}=\bm{0}$ such that \eqref{eq:super-approx} and \eqref{eq:super-approx2} hold. We introduce the lifting to $\gh$ via $\bm{v}_h\colonequals\bm{v}_I\circ F_h^{-1}\in \VelocitySpaceZero[h](\gh)$.
	We add and substract the tangential map $\bm{v}\circ F_h^{-1}$ on each element to the left hand side of \eqref{eq:inf-sup-parametric-macroelement2}, i.e., we write
	\begin{align*}
		\int_{\gh} q_h\divGh\PPh\bm{v}_h\,\text{d}\mu_h
		& = \sum_{\El\in\Tri[\gh]}	\left(\int_{\El} q_h\divGh\PPh(\bm{v}_h-\bm{v}\circ F_h^{-1})\,\text{d}\mu_h
		+ \int_{\El} q_h\divGh\bm{v}\circ F_h^{-1}\,\text{d}\mu_h\right).
	\end{align*}
	Using \Cref{lem:super-approximation} we obtain
	\begin{align*}
		\sum_{\El\in\Tri[\gh]}	\int_{\El} q_h\divGh\PPh(\bm{v}_h-\bm{v}\circ F_h^{-1})\,\text{d}\mu_h
		& \leqC \Norm{\bar{q}}{\bar{\g}} \HNorm{\bm{v}_I-\bm{v}}{\Tri[\bar{\g}]}
		 \leqC h \Norm{\bar{q}}{\bar{\g}} \HNorm{\bar{\bm{v}}}{\bar{\g}}.
	\end{align*}
	As $\bar{\bm{v}}$ are coordinates for $\bm{v}$ with respect to the parametrization $F_h$, we have
	\begin{align*}
		\int_{\El} q_h\divGh\bm{v}\circ F_h^{-1}\,\text{d}\mu_h
		& = \int_{\bar{\El}} \bar{q}\;\div(\mu_h\bar{\bm{v}})\,\text{d}\bar{x}
		= \int_{\bar{\El}} \bar{q}\;\mu_h\div\bar{\bm{v}}\,\text{d}\bar{x} + \int_{\bar{\El}} \bar{q}\;\D \mu_h\cdot \bar{\bm{v}}\,\text{d}\bar{x},
	\end{align*}
	where $\mu_h=\sqrt{\det \transposed{\D F_h}\D F_h}$.
	We have on each element $\Abs{\mu_h-1}\leqC h^{2}$ and $\Abs{\D\mu_h}\leqC h$, see \Cref{lem:prop-parametric-macroelement}.
	Thus, we obtain
	\begin{align*}
		\Abs{\int_{\gh} q_h \divGh \PPh\bm{v}_h\,\text{d}\mu_h - \int_{\bar{\g}} \bar{q}\,\div\bar{\bm{v}}\,\text{d}\bar{x}}
		& \leqC h \Norm{\bar{q}}{\bar{\g}}\HNorm{\bar{\bm{v}}}{\bar{\g}}.
	\end{align*}
	Note that since $\restr{\bm{\bar{v}}}{\partial\bar{\g}}=\bm{0}$, by \Cref{prop:stable-reference-spaces}, using \Cref{lem:averaged-norm}, and the fact that for $\bar{\bm{v}}\in \VelocitySpaceFlatZero[h](\bar{\g})$ a Poincar\'e inequality for $h\leq h_0$ holds, we have that
	\begin{align*}
		\int_{\bar{\g}} \bar{q}\,\div\bar{\bm{v}}\,\text{d}\bar{x} & = \int_{\bar{\g}} \bar{p}\,\div\bar{\bm{v}}\,\text{d}\bar{x}
		\geq
		\bar{\beta}' \Norm{\bar{p}}{\bar{\g}}\Norm{\D\bar{\bm{v}}}{\bar{\g}}
		\geq \frac{\bar{\beta}'}{2}\Norm{\bar{q}}{\bar{\g}}\Norm{\D\bar{\bm{v}}}{\bar{\g}}
		\geq \frac{\bar{\beta}'}{4}\Norm{\bar{q}}{\bar{\g}}\HNorm{\bar{\bm{v}}}{\bar{\g}}
	\end{align*}
	follows. Thus for $h\leq h_0$ small enough we have the estimate
	\begin{align}\label{eq:lifted-V-stability-pf1}
		\int_{\gh} q_h \divGh \PPh\bm{v}_h\,\text{d}\mu_h
		& \geq \int_{\bar{\g}} \bar{p}\,\div\bar{\bm{v}}\,\text{d}\bar{x} - Ch\;\Norm{\bar{q}}{\bar{\g}}\HNorm{\bar{\bm{v}}}{\bar{\g}}
		\geq  \left(\frac{\bar{\beta}'}{4}-Ch\right)\Norm{\bar{q}}{\bar{\g}}\HNorm{\bar{\bm{v}}}{\bar{\g}}\notag \\
		& \geq \frac{\bar{\beta}'}{8}\Norm{\bar{q}}{\bar{\g}}\HNorm{\bar{\bm{v}}}{\bar{\g}}\;.
	\end{align}
 	By definition, we have for $q_{h}=\bar{q}\circ F_h^{-1}$ the estimate $\Norm{q_h}{\gh}\leqC \Norm{\bar{q}}{\bar{\g}}$, and for $\bm{v}_h$ by \Cref{lem:super-approximation} $\NormOne[\gh]{\bm{v}_h}\leqC \HNorm{\bar{\bm{v}}}{\bar{\g}}$.
\end{proof}

Once we have established local stability for all macroelements, we can show that a global stability holds for the whole grid. This follows the argumentation of \cite[Lemma 3.1]{Stenberg1990Technique} for possibly overlapping macroelements. We assume that there is a collection of macroelements $\Omega_h$ such that each element $\El\in\Tri[\Gh]$ is contained in at least one and in at most $L$ of these macroelements.

\begin{lemma}[From local to global stability]
	Let each $\Gh$ be covered by a macroelement partition $\Omega_h\colonequals\{\gh\subset\Gh\}$, and let $\Omega\colonequals \bigcup_{h\leq h_0}\Omega_h$.
	If a local stability estimate \eqref{eq:inf-sup-parametric-macroelement2} holds on all macroelements $\gh\in\Omega$, i.e., there exist $\beta'>0$ independent of $\gh$, such that
  \begin{equation}\label{eq:local-inf-sup}
    \sup_{0\neq \bm{v}_{\gh}\in\VelocitySpaceZero[h](\gh)}\frac{\Inner{\div_{\Gh}\PPh \bm{v}_{\gh}}{q_{\gh}}{\gh}}{\NormOne[\gh]{\bm{v}_{\gh}}}\geq \beta' \Norm{q_{\gh}}{\gh}\,,\quad \forall q_{\gh}\in \PressureSpace(\gh)\cap \LSpaceAvg{\gh}\,,
  \end{equation}
  then we obtain the global V-stability estimate \eqref{eq:discrete-inf-sup-V}.
%
\end{lemma}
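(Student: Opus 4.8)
The plan is to adapt the macroelement summation of Boland--Nicolaides and Stenberg \cite{BolandNicolaides1983Stability,Stenberg1990Technique} to the parametric surface. Fix $q\in\PressureSpace\cap\LSpaceAvg{\Gh}$; by homogeneity we may normalize $\Norm{q}{\Gh}=1$, so that it suffices to exhibit a single $\bm{v}\in\VelocitySpace$ with $b_1(\bm{v},q)\geqC 1$ and $\NormOne{\bm{v}}\leqC 1$ --- then \eqref{eq:discrete-inf-sup-V} holds for $j=1$, and for $j=2$ by \Cref{lem:stability-bj}. The only obstruction to a naive patching is that $q$, although mean-free on $\Gh$, need not be mean-free on the individual macroelements. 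I would therefore split the construction into a \emph{local} velocity $\bm{v}_{\mathrm{loc}}$ absorbing the part of $q$ that oscillates within the macroelements, built patchwise from the local estimate \eqref{eq:local-inf-sup}, and a \emph{coarse} velocity $\bm{v}_{\mathrm{co}}$ carrying the macroelement averages, built from the continuous inf-sup \Cref{lem:continuous-inf-sup} via the stable interpolation $\IIVh$.

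For the local part, on each macroelement $\gh\in\Omega_h$ put $q_{\gh}\colonequals q|_{\gh}-\avg{q}{\gh}\in\PressureSpace(\gh)\cap\LSpaceAvg{\gh}$, with $\Norm{q_{\gh}}{\gh}\leqC\Norm{q}{\gh}$ by \Cref{lem:averaged-norm}, and use \eqref{eq:local-inf-sup} to choose $\bm{v}_{\gh}\in\VelocitySpaceZero[h](\gh)$ with $\Inner{\divGh\PPh\bm{v}_{\gh}}{q_{\gh}}{\gh}\geq\beta'\Norm{q_{\gh}}{\gh}\NormOne[\gh]{\bm{v}_{\gh}}$, normalized so that $\NormOne[\gh]{\bm{v}_{\gh}}=\Norm{q_{\gh}}{\gh}$. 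Since $\restr{\bm{v}_{\gh}}{\partial\gh}=\bm{0}$, each $\bm{v}_{\gh}$ extends by zero to an element of $\VelocitySpace$, and I set $\bm{v}_{\mathrm{loc}}\colonequals\sum_{\gh\in\Omega_h}\bm{v}_{\gh}$. As every $\El\in\Tri[\Gh]$ lies in at most $L$ macroelements, summing elementwise gives $\NormOne{\bm{v}_{\mathrm{loc}}}\leqC\kappa$, where $\kappa^2\colonequals\sum_{\gh\in\Omega_h}\Norm{q_{\gh}}{\gh}^2$, while
\begin{align*}
	b_1(\bm{v}_{\mathrm{loc}},q)
		&= \sum_{\gh\in\Omega_h}\Inner{\divGh\PPh\bm{v}_{\gh}}{q_{\gh}}{\gh} + \sum_{\gh\in\Omega_h}\avg{q}{\gh}\,b_1(\bm{v}_{\gh},1) \\
		&\geq \beta'\kappa^2 - C h^{\kg}\kappa\,,
\end{align*}
the last step using \eqref{eq:difference-discrete-b-constq} of \Cref{lem:difference-discrete-b} for the constant-pressure terms $b_1(\bm{v}_{\gh},1)$, the bound $\Abs{\avg{q}{\gh}}\leqC h^{-1}\Norm{q}{\gh}$, and a Cauchy--Schwarz over the macroelements. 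Hence $\bm{v}_{\mathrm{loc}}$ realizes $b_1$ against the macroelement-oscillation of $q$ up to a higher-order term, with $\NormOne{\bm{v}_{\mathrm{loc}}}\leqC\kappa$.

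It remains to recover the part of $q$ that is essentially constant on each macroelement, that is, $1-\kappa^2\sim\sum_{\gh}\Abs{\avg{q}{\gh}}^2\Abs{\gh}$. For this I would invoke \Cref{lem:continuous-inf-sup}: after subtracting its mean over $\G$ (of size $O(h^{\kg+1})$, and harmless since $\Inner{1}{\divG\bm{w}}{\G}=0$ on the closed surface $\G$), the lift $q^\ell$ furnishes $\bm{w}\in\HSpaceTan[1]{\G}$ with $b(\bm{w},q^\ell)\geqC 1$ and $\HNormTan[1]{\bm{w}}{\G}\leqC 1$. I then set $\bm{v}_{\mathrm{co}}\colonequals\IIVh[\bm{w}^e]+\bm{z}$, where $\IIVh[\bm{w}^e]\in\VelocitySpace$ is the componentwise stable interpolation --- $\NormOne{\cdot}$-bounded with $O(h)$ approximation error by \Cref{lem:interpolation-properties} and \Cref{lem:relation-between-norms} --- and $\bm{z}$ is a macroelement correction assembled from \eqref{eq:local-inf-sup} over $\Omega_h$ (with a partition of unity subordinate to $\Omega_h$ to compensate the overlap), chosen so that, by the Verfürth argument \cite{Verfurth1984Error}, $b_1(\bm{v}_{\mathrm{co}},q)$ agrees with $b(\bm{w},q^\ell)$ up to a term bounded by $C(\kappa+h)$; the $h$-contributions stem from the geometric discrepancy between $b$ on $\G$ and $b_1$ on $\Gh$ and are controlled by \Cref{lem:prop-parametric-macroelement} and \Cref{lem:difference-discrete-b}. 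Finally, taking $\bm{v}\colonequals M\bm{v}_{\mathrm{loc}}+\bm{v}_{\mathrm{co}}$ for a fixed, sufficiently large $M$ and then $h\leq h_0$ small enough renders the residual $\kappa$- and $O(h)$-terms negligible and gives $b_1(\bm{v},q)\geqC 1\geqC\NormOne{\bm{v}}$, which is \eqref{eq:discrete-inf-sup-V}. The routine ingredients here are the elementwise summation and checking that each geometric error is a positive power of $h$; the genuinely delicate point --- precisely what \cite[Lemma 3.1]{Stenberg1990Technique} supplies and what is adapted here to curved macroelements --- is ensuring that all constants remain uniform in $h$ \emph{and} over the macroelements while these overlap with multiplicity up to $L$, so that the coarse component is controlled globally rather than through a chaining argument whose constant would degrade with the number of macroelements.
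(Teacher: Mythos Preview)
Your two-component (local + coarse) decomposition follows the classical flat-domain template of Boland--Nicolaides, but the paper takes a substantially simpler route that uses \emph{only} the local component. The point you miss is that, because $\Gh$ is closed and each $\bm{v}_{\gh}\in\VelocitySpaceZero[h](\gh)$ has zero boundary trace, the pairing with the macroelement constant $\avg{q}{\gh}$ is already higher-order: by \eqref{eq:difference-discrete-b-constq} one has $\Abs{\Inner{\divGh\PPh\bm{v}_{\gh}}{\avg{q}{\gh}}{\gh}}\leqC h^{\kg+1}\Abs{\avg{q}{\gh}}\NormOne[\gh]{\bm{v}_{\gh}}$, so the local velocities already test the \emph{full} restriction $q|_{\gh}$, not merely its mean-free part $q_{\gh}$. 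The paper then simply sums $\bm{v}\colonequals\sum_{\gh}\bm{v}_{\gh}$, bounds $\NormOne{\bm{v}}\Norm{q}{\Gh}\leq 2L\sum_{\gh}\NormOne[\gh]{\bm{v}_{\gh}}\Norm{q_{\gh}}{\gh}$ via \Cref{lem:averaged-norm} and the overlap bound $L$, and observes $b_1(\bm{v},q)\geq(\beta'-Ch^{\kg+1})\sum_{\gh}\NormOne[\gh]{\bm{v}_{\gh}}\Norm{q_{\gh}}{\gh}$, yielding \eqref{eq:discrete-inf-sup-V} directly --- no coarse velocity, no continuous inf-sup, no Fortin correction. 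Your local step is essentially identical to this; you just stop short of realising that it already finishes the proof.

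Your coarse construction, by contrast, is where the genuine gap in your proposal lies. The assertion that $b_1(\bm{v}_{\mathrm{co}},q)$ agrees with $b(\bm{w},q^\ell)$ up to $C(\kappa+h)$ is not justified: the continuous $\bm{w}$ supplied by \Cref{lem:continuous-inf-sup} is only in $\HSpaceTan[1]{\G}$, so $\IIVh[\bm{w}^e]$ carries no $\HSpace[1]{\Gh}$-approximation rate, and the Fortin-type correction $\bm{z}$ built from \eqref{eq:local-inf-sup} can repair only the \emph{mean-free} part of the divergence error on each $\gh$. The macroelement-average part --- precisely what should couple to the coarse pressure --- is governed by boundary fluxes of $\IIVh[\bm{w}^e]-\bm{w}^e$ across $\partial\gh$, and you have not argued why these are small. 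That argument can be completed, but it amounts to re-deriving the same geometric smallness of $b_1(\cdot,\text{const})$ on zero-trace fields that the paper exploits in one line; once you have it, the coarse component is redundant.
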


\begin{proof}
	Let $q\in\PressureSpace(\Gh)\cap \LSpaceAvg{\Gh}$ and $\gh\in\Omega_h$. We set $p_{\gh}\colonequals \restr{q}{\gh}$ and $q_{\gh}\colonequals p_{\gh} - \avg{q}{\gh}\in\PressureSpace(\gh)\cap \LSpaceAvg{\gh}$. By \eqref{eq:local-inf-sup} there exists $\bm{v}_{\gh}\in\VelocitySpaceZero[h](\gh)$, such that
	\[
	\Inner{\div_{\Gh}\PPh\bm{v}_{\gh}}{q_{\gh}}{\gh} \geq \beta' \Norm{q_{\gh}}{\gh}\NormOne[\gh]{\bm{v}_{\gh}}.
	\]
	We introduce the extended velocity fields $\bm{v}^0_{\gh}\in\VelocitySpace[h](\Gh)$ by $\restr{\bm{v}^0_{\gh}}{\gh}=\bm{v}_{\gh}$ and $\restr{\bm{v}^0_{\gh}}{\Gh\setminus\gh}=0$ since fields in $\VelocitySpaceZero[h](\gh)$ have boundary trace zero.
	Summing up the local velocities over all macroelements defines the global velocity field
	\[
	\bm{v}\colonequals\sum_{\gh\in\Omega_h}\bm{v}^0_{\gh}\in\VelocitySpace[h](\Gh)\,.
	\]
	We estimate the global norms of $\bm{v}$ and $q$ against the local ones by,
	\begin{align*}
		\NormOne{\bm{v}}\Norm{q}{\Gh}
		& \leq
		\sum_{\gh\in\Omega_h}\sum_{\gh'\cap\gh\neq\emptyset}\NormOne[\gh]{\bm{v}_{\gh}}\Norm{q}{\gh'}
		 \leq
		2L \sum_{\gh\in\Omega_h}\NormOne[\gh]{\bm{v}_{\gh}}\Norm{q_{\gh}}{\gh},
	\end{align*}
	where we used \Cref{lem:averaged-norm} in the last step, and $L$ denotes the maximal number of macroelements containing an element $T\in\mathcal{T}(\Gh)$.
	For the divergence term, we estimate
	\begin{align*}
		\Inner{\div_{\Gh}\PPh\bm{v}}{q}{\Gh}
		&= \sum_{\gh\in\Omega_h} \Inner{\div_{\Gh}\PPh\bm{v}^0_{\gh}}{q}{\Gh}
		= \sum_{\gh\in\Omega_h} \Inner{\div_{\Gh}\PPh\bm{v}_{\gh}}{p_{\gh}}{\gh} \\
		&= \sum_{\gh\in\Omega_h} \Inner{\div_{\Gh}\PPh\bm{v}_{\gh}}{q_{\gh}}{\gh} + \Inner{\div_{\Gh}\PPh\bm{v}_{\gh}}{\avg{q}{\gh}}{\gh} \\
		&\geq \sum_{\gh\in\Omega_h} \beta'\Norm{q_{\gh}}{\gh}\NormOne[\gh]{\bm{v}_{\gh}} + \Inner{\div_{\Gh}\PPh\bm{v}_{\gh}}{\avg{q}{\gh}}{\gh}\,.
	\end{align*}
	The second term on the right-hand side can further be estimated by \eqref{eq:difference-discrete-b-constq} and \Cref{lem:averaged-norm} as
	\begin{align*}
		\Inner{\div_{\Gh}\PPh\bm{v}_{\gh}}{\avg{q}{\gh}}{\gh}
		&\leqC h^{\kg+1} \Abs{\avg{q}{\gh}}\NormOne[\gh]{\bm{v}_{\gh}}
		\leqC h^{\kg+1} \Norm{q}{\gh}\NormOne[\gh]{\bm{v}_{\gh}}\\
		&\leqC h^{\kg+1} \Norm{q_{\gh}}{\gh}\NormOne[\gh]{\bm{v}_{\gh}}.
	\end{align*}
	Setting $\beta_2\colonequals\frac{1}{2L}\beta'$, we obtain for $h\leq h_0$ small enough
	\begin{align*}
		\Inner{\div_{\Gh}\PPh\bm{v}}{q}{\Gh} \geq \beta' \sum_{\gh\in\Omega_h} \Norm{q_{\gh}}{\gh}\NormOne[\gh]{\bm{v}_{\gh}} \geq \beta_2 \NormOne{\bm{v}}\Norm{q}{\Gh}\,.
	\end{align*}
\end{proof}

\section{Geometric Errors}\label{sec:geometric_errors}

In this section we will discuss the geometric errors that arise from discretizing the linear forms $a$ and $b$ over the discrete surface $\Gh$. Since this has already been done for the vector Laplace equation \cite{HLL2020Analysis, HP2022Tangential}, we will only briefly state the results for $a$ and concentrate on those for $b$. Throughout this section we denote by $(\VelocitySpace,\PressureSpace)$ a pair of discrete spaces that satisfies \Cref{prop:interpolation,prop:inverse-estimates}.

We begin by stating the following basic estimate for the difference between the tangential $H^1$-norms on $\G$ and $\Gh$.

\begin{lemma}[$H^1$-estimates]\label{lem:basicH1Estimate}
	Let $\bm{v}_h\in \HSpace[1]{\Gh}$. Then we have
	\begin{align*}
		\Norm{(\GradG\PP\bm{v}_h^\ell)^e - \GradGh\PPh\bm{v}_h}{\Gh} &\leqC h^{\kg}\GNorm{\bm{v}_h}\,.
	\end{align*}
\end{lemma}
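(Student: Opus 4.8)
The plan is to compare the two projected gradients not directly, but through a common intermediate object obtained by transporting everything to the ambient space. The key point is that both $\GradG\PP\bm{v}_h^\ell$ and $\GradGh\PPh\bm{v}_h$ are projections of componentwise Euclidean derivatives; what distinguishes them is (i) the domain on which the derivative is taken ($\G$ versus $\Gh$), (ii) the projector applied ($\PP$ versus $\PPh$), and (iii) the fact that $\bm{v}_h$ lives on $\Gh$ while $\bm{v}_h^\ell$ lives on $\G$. The standard toolkit for exactly this situation is already available in the literature cited in \Cref{sec:geometric_errors}, namely \cite{HLL2020Analysis,HP2022Tangential}, so the proof should really be a matter of assembling those ingredients rather than reproving them.

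First I would recall the basic geometric estimates on a parametric surface of order $\kg$: the normal fields satisfy $\LInfNorm{\nn^e - \nnh}{\Gh}\leqC h^{\kg}$, hence $\LInfNorm{\PP^e - \PPh}{\Gh}\leqC h^{\kg}$ and $\LInfNorm{\QQ^e-\QQh}{\Gh}\leqC h^{\kg}$; moreover the lift map $\restr{\pi}{\Gh}$ and its inverse have derivatives bounded uniformly in $h$, with the difference of the pushed-forward metric from the identity being $O(h^{\kg})$ in $L^\infty$, and $\LInfNorm{\DGh\PPh}{\Gh}$ bounded (the discrete Weingarten map is $O(1)$). These are exactly the estimates underlying \Cref{lem:difference-discrete-b} and \Cref{rem:gauss-curv-approx}. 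Then I would write
\begin{align*}
  (\GradG\PP\bm{v}_h^\ell)^e - \GradGh\PPh\bm{v}_h
  &= \big(\PP^e\,(\DG\PP\bm{v}_h^\ell)^e\,\PP^e - \PPh\,(\DGh\PP\bm{v}_h^\ell)^e\,\PPh\big) \\
  &\quad + \PPh\big((\DGh\PP\bm{v}_h^\ell)^e - \DGh(\PPh\bm{v}_h)\big)\PPh\,,
\end{align*}
wait — this is getting ahead of the computation. The clean way is: (a) replace the outer $\PP^e$'s by $\PPh$'s, at a cost of $h^{\kg}$ times $\LInfNorm{(\DG\PP\bm{v}_h^\ell)^e}{\Gh}\leqC \HNorm{\bm{v}_h^\ell{}^e}{\Gh}$ after a change-of-variables bound; (b) compare the surface derivative $(\DG\,\cdot\,)^e$ on $\G$, pulled back, with the discrete surface derivative $\DGh$ on $\Gh$ — here one uses that $\DGh = \DG^e + (\text{tangential-projection defect})$ and that the difference of the two surface gradient operators applied to the lifted field is controlled by $h^{\kg}$ times the $H^1$-norm of the field, which is the content of the chain-rule/change-of-variables estimate for liftings in \cite[Lemma 2.3 or similar]{HP2022Tangential}; and (c) absorb the discrepancy between differentiating $\PP^e\bm{v}_h^e$ and $\PPh\bm{v}_h$ using $\LInfNorm{\PP^e-\PPh}{\Gh}\leqC h^{\kg}$ together with $\LInfNorm{\DGh(\PP^e-\PPh)}{\Gh}\leqC h^{\kg-1}$ (one derivative costs one power of $h$), which is why the estimate degrades by one order and still yields $h^{\kg}$ overall after pairing the $h^{\kg-1}$-term with the weighted $h^{-1}\Norm{\QQh\bm{v}_h}{\Gh}$ piece hidden in $\GNorm{\bm{v}_h}$ — this is precisely why the norm on the right is the weighted $\GNorm{\cdot}$ and not the plain $\HNorm{\cdot}{\Gh}$.

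The main obstacle, and the only genuinely delicate point, is bookkeeping the loss of one power of $h$ when the geometric perturbation $\PP^e-\PPh$ (which is only $O(h^{\kg})$, not better) is differentiated: the naive triangle-inequality bound would give $h^{\kg-1}\HNorm{\bm{v}_h}{\Gh}$, which is one order too weak. The resolution is to separate the tangential and normal components of $\bm{v}_h$: on the tangential part $\PPh\bm{v}_h$ the projector-difference contracts favorably (one gains an extra power, analogous to the $\Abs{\PP\Jump{\nu_e}}\leqC h^{2\kg}$ phenomenon used in \Cref{lem:difference-discrete-b}), while on the normal part $\QQh\bm{v}_h$ the $h^{-1}$-weight in $\GNorm{\cdot}$ exactly compensates. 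Once this splitting is done elementwise and summed over $\Tri[\Gh]$ — using \Cref{lem:prop-parametric-macroelement}-type bounds on $\mu_h$ for the change of variables between $\G$ and $\Gh$ — the constituent estimates combine to give $\Norm{(\GradG\PP\bm{v}_h^\ell)^e - \GradGh\PPh\bm{v}_h}{\Gh}\leqC h^{\kg}\GNorm{\bm{v}_h}$, and one points to \cite[Lemma 4.6 / 4.7]{HP2022Tangential} for the analogous statement in the vector-Laplace setting, of which this is a direct transcription.
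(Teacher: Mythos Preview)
Your proposal is correct and aligns with the paper's own proof, which is simply a citation to \cite{HLL2020Analysis,HP2022Tangential}. You have effectively unpacked the argument from those references---the projector-difference splitting, the chain-rule transport of $\DG$ versus $\DGh$, and the crucial tangential/normal decomposition that explains why the weighted norm $\GNorm{\cdot}$ appears on the right---and then landed on the same citation, so there is nothing to add.
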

\begin{proof}
	See, e.g., \cite{HLL2020Analysis,HP2022Tangential}.
\end{proof}

For $i,j\in\{1,2\}$, vector fields $\bm{v},\bm{w}\in\HSpaceAmb{\G}$, and functions $q \in \HSpaceAmb[j-1]{\G}$ we introduce the difference forms
\begin{align*}
	\delta_{a_i}(\bm{v},\bm{w})
    	&\colonequals a(\PP\bm{v},\PP\bm{w})-a_{i}(\PPh\bm{v}^{e},\PPh\bm{w}^{e}),\\
    \delta_{b_j}(\bm{v},q)
    	&\colonequals b(\PP\bm{v},q)-b_j(\PPh\bm{v},q),\\
    \delta_{l}(\bm{v})
    	&\colonequals \Inner{\bm{f}}{\bm{v}}{\G} - \Inner{\bm{f}^e}{\PPh\bm{v}^e}{\Gh}\,.
\end{align*}

Furthermore, we set
\begin{align}\label{eq:geom-error-delta}
	\delta_{ij}(\bm{v},q)(\bm{w},r) &\colonequals \delta_{l}(\bm{w}) - \delta_{a_i}(\bm{v},\bm{w}) - \delta_{b_j}(\bm{v},r) - \delta_{b_j}(\bm{w},q)\,.
\end{align}

\begin{lemma}\label{lem:geometricQlError}
	For $\bm{v}_h,\bm{w}_h\in \LSpaceAmb[2]{\Gh}$, we have
	\begin{align}
    \Abs{\delta_{l}(\bm{v}_h^\ell)}
        &\leqC h^{\kg+1}\Norm{\bm{f}}{\G}\NormZero{\bm{v}_h}\,.
	\end{align}
\end{lemma}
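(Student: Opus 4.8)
The plan is to push the integral defining $\Inner{\bm{f}}{\bm{v}_h^\ell}{\G}$ over to $\Gh$ and then compare it termwise with $\Inner{\bm{f}^e}{\PPh\bm{v}_h}{\Gh}$. First, since $\bm{f}$ is tangential, $\bm{f}=\PP\bm{f}$, so $\Inner{\bm{f}}{\bm{v}_h^\ell}{\G}=\Inner{\bm{f}}{\PP\bm{v}_h^\ell}{\G}$. Changing variables via the lift $\restr{\pi}{\Gh}$ and writing $\mu_h$ for the quotient of the surface measures of $\G$ and $\Gh$ (so that $\Norm{1-\mu_h}{L^\infty(\Gh)}\leqC h^{\kg+1}$ by the standard geometry estimates, cf. \cite{Demlow2009Higher}), and using that $(\bm{v}_h^\ell)^e=\bm{v}_h$ on $\Gh$ and hence $(\PP\bm{v}_h^\ell)^e=\PP^e\bm{v}_h$ with $\PP^e\colonequals\PP\circ\pi$, one obtains
\[
	\delta_l(\bm{v}_h^\ell)=\int_{\Gh}\bm{f}^e\cdot\big[(\mu_h-1)\,\PP^e\bm{v}_h+(\PP^e-\PPh)\bm{v}_h\big]\,\mathrm{d}s\,.
\]

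For the first summand I would use Cauchy--Schwarz together with $\Norm{1-\mu_h}{L^\infty(\Gh)}\leqC h^{\kg+1}$, the bound $\Norm{\bm{f}^e}{\Gh}\leqC\Norm{\bm{f}}{\G}$ from boundedness of the extension, and $\Norm{\PP^e\bm{v}_h}{\Gh}\le\Norm{\bm{v}_h}{\Gh}\leqC\NormZero{\bm{v}_h}$, where the last step holds for $h$ small because $\NormZero{\cdot}$ dominates $\Norm{\cdot}{\Gh}$. This already contributes a term bounded by $h^{\kg+1}\Norm{\bm{f}}{\G}\NormZero{\bm{v}_h}$.

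For the second summand I would write $\PP^e-\PPh=\nnh\otimes\nnh-\nn^e\otimes\nn^e$. The crucial point is $\bm{f}^e\cdot\nn^e=0$ (extension of a tangential field), so the $\nn^e\otimes\nn^e$ part drops and $\bm{f}^e\cdot(\PP^e-\PPh)\bm{v}_h=(\bm{f}^e\cdot\nnh)(\nnh\cdot\bm{v}_h)$. Using $\bm{f}^e\cdot\nnh=\bm{f}^e\cdot(\nnh-\nn^e)$ with $\Norm{\nnh-\nn^e}{L^\infty(\Gh)}\leqC h^{\kg}$, and $\Abs{\nnh\cdot\bm{v}_h}=\Abs{\QQh\bm{v}_h}$ pointwise, gives $\Abs{\bm{f}^e\cdot(\PP^e-\PPh)\bm{v}_h}\leqC h^{\kg}\Abs{\bm{f}^e}\,\Abs{\QQh\bm{v}_h}$. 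Integrating and rewriting $h^{\kg}\Norm{\QQh\bm{v}_h}{\Gh}=h^{\kg+1}\big(h^{-1}\Norm{\QQh\bm{v}_h}{\Gh}\big)$ pairs the missing power of $h$ with the $h^{-1}$-weighted normal part of $\NormZero{\cdot}$, again yielding $h^{\kg+1}\Norm{\bm{f}}{\G}\NormZero{\bm{v}_h}$; adding the two bounds finishes the proof.

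The step I expect to require the most care is recovering the full order $h^{\kg+1}$ rather than the naive $h^{\kg}$ coming from $\Norm{\nnh-\nn^e}{L^\infty(\Gh)}$: this works precisely because the tangentiality $\bm{f}^e\cdot\nn^e=0$ forces the projection defect to act only through $\nnh\otimes\nnh$, whose action on $\bm{v}_h$ is exactly $\QQh\bm{v}_h$, and the $h^{-1}$-weight sitting in $\NormZero{\cdot}$ absorbs the lost power; together with the sharp estimate $\Norm{1-\mu_h}{L^\infty(\Gh)}\leqC h^{\kg+1}$ this produces the claimed order. Everything else — boundedness of lifts and extensions, $L^2$-norm equivalence under lifting, and domination of $\Norm{\cdot}{\Gh}$ by $\NormZero{\cdot}$ for small $h$ — is routine.
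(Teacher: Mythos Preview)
Your proof is correct and follows the standard route for this kind of geometric consistency estimate: pull back to $\Gh$, split off the measure defect $(\mu_h-1)$, and exploit the tangentiality $\bm{f}^e\cdot\nn^e=0$ so that the remaining projection defect acts only through $\QQh\bm{v}_h$, whose $h^{-1}$-weight in $\NormZero{\cdot}$ recovers the full order $h^{\kg+1}$. The paper does not give an in-line proof but simply cites \cite[Lemma~4.8]{HP2022Tangential}; your argument is essentially the one found there (and mirrors the treatment of the mass term in the proof of \Cref{lem:geometric_a_errors}), so there is nothing materially different to compare.
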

\begin{proof}
  See \cite[Lemma 4.8]{HP2022Tangential}
\end{proof}

\begin{lemma}\label{lem:geometric_a_errors}
	For $\bm{v}_h,\bm{w}_h\in\HSpaceAmb[1]{\Gh}$, we have
	\begin{align}
	\Abs{\delta_{a_1}(\bm{v}_h^\ell,\bm{w}_h^\ell)} &\leqC h^{\kg} \GNorm{\bm{v}_{h}}\GNorm{\bm{w}_{h}}\,,\\
	\Abs{\delta_{a_2}(\bm{v}_h^\ell,\bm{w}_h^\ell)} &\leqC h^{\kg} \GNorm{\bm{v}_{h}}\GNorm{\bm{w}_{h}}+h^{\kk}\ahNorm{\bm{v}_h}\ahNorm{\bm{w}_h}\,.
	\end{align}
	%
	%
	For $\bm{v},\bm{w}\in \HSpaceTan[2]{\G}$ we have
	\begin{align}\label{eq:geometric_a_errors_3}
	\Abs{\delta_{a_1}(\bm{v},\bm{w})} \leqC & h^{\kg+1}	\HNormTan[2]{\bm{v}}{\G}\HNormTan[2]{\bm{w}}{\G}\,,\\
	\Abs{\delta_{a_2}(\bm{v},\bm{w})} \leqC & h^{\kg+1}	\HNormTan[2]{\bm{v}}{\G}\HNormTan[2]{\bm{w}}{\G}+h^{\kk}\HNormTan{\bm{v}}{\G}\HNormTan{\bm{w}}{\G}\,.
	\end{align}
\end{lemma}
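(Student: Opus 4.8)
The plan is to split each difference form $\delta_{a_i}$ into two contributions: one coming from the geometric mismatch between the tangential projections $\PP$ and $\PPh$ (equivalently between the derivatives $\GradG$ and $\GradGh$), and one coming from the choice of curvature approximation $\KhTilde$ versus the true $K$. For the first two estimates (the discrete ones, with arguments in $\HSpaceAmb{\Gh}$), I would write $a(\PP\bm v_h^\ell,\PP\bm w_h^\ell)$ as the lift of an integral over $\G$, change variables back to $\Gh$ picking up the area-element factor $\mu_h = 1 + O(h^{\kg+1})$, and then compare the resulting integrand with that of $a_i(\PPh\bm v_h,\PPh\bm w_h)$ term by term. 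The key tool is \Cref{lem:basicH1Estimate}, which controls $\Norm{(\GradG\PP\bm v_h^\ell)^e-\GradGh\PPh\bm v_h}{\Gh}\leqC h^{\kg}\GNorm{\bm v_h}$; combined with the boundedness $\Norm{\GradGh\PPh\bm v_h}{\Gh}\leqC\GNorm{\bm v_h}$ (which follows from \Cref{lem:relation-between-norms}) and a bilinear splitting $\langle A^e,B^e\rangle_{\G}-\langle A_h,B_h\rangle_{\Gh} = \langle A^e-A_h,B^e\rangle + \langle A_h, B^e-B_h\rangle + (\text{area-element error})$, this produces the $h^{\kg}\GNorm{\bm v_h}\GNorm{\bm w_h}$ bound for the gradient part. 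For $a_2$, the extra term is $\tfrac12\langle(K^e-\KhTilde)\PPh\bm v_h,\PPh\bm w_h\rangle_{\Gh}$ together with the area-element mismatch in the mass and curvature terms; the curvature part is controlled \emph{directly} by assumption \eqref{eq:order-assumptions-K}, which gives $h^{\kk}\ahNorm{\bm v_h}\ahNorm{\bm w_h}$, while the mass-term mismatch is again an $O(h^{\kg+1})$ effect absorbed into the $h^{\kg}$-term. The case $a_1$ has no curvature approximation at all (both $K$ and the divergence arise implicitly through $\bm E$), so only the $h^{\kg}$ geometric contribution survives.

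For the last two estimates (with smooth arguments $\bm v,\bm w\in\HSpaceTan[2]{\G}$) the improvement from $h^{\kg}$ to $h^{\kg+1}$ comes from a standard superconvergence/orthogonality argument: when the arguments are smooth tangential fields rather than arbitrary discrete fields, one should not just use the crude $h^{\kg}$ bound on $\PP-\PPh$ but instead integrate by parts or exploit that the error in the normal direction is $O(h^{\kg+1})$ when tested against smooth fields (cf.\ the trick used in \Cref{lem:difference-discrete-b}, where $\Abs{\PP\Jump{\nu_e}}\leqC h^{2\kg}$ and the normal part gets an extra power). Concretely, $(\PP-\PPh)$ acting between two tangential smooth fields is $O(h^{2\kg})$ rather than $O(h^{\kg})$, and the first-order geometric terms involving one derivative of $\PP$ pair up so that the leading-order contribution cancels, leaving $h^{\kg+1}$; the curvature term for $a_2$ keeps its $h^{\kk}$ rate because \eqref{eq:order-assumptions-K} is already of optimal order and the $\HSpaceAmb{\Gh}$-norms on the right reduce to $\HNormTan{\cdot}{\G}$ for lifted smooth fields by \Cref{lem:relation-between-norms}. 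I would cite \cite{HLL2020Analysis,HP2022Tangential} for the scalar/vector-Laplace analogue of this $h^{\kg+1}$-refinement and only indicate the modification needed for the symmetric gradient and the $\divGh$-term appearing in $a_1$.

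The main obstacle I anticipate is keeping careful track of which norm appears on the right-hand side in each case — the discrete estimates need the $\GNorm{\cdot}$ (which includes the weighted normal component $h^{-1}\Norm{\QQh\cdot}{\Gh}$, essential because $\PPh\bm v_h$ need not be close to tangential), whereas the smooth estimates need only $\HNormTan[2]{\cdot}{\G}$. In particular the mass and curvature terms are evaluated on $\PPh\bm v_h$, not $\bm v_h$, so the projection must be commuted past the area-element change of variables, and one must verify that $\Norm{\PPh\bm v_h}{\Gh}$ and $\Norm{(\PP\bm v_h^\ell)^e}{\Gh}$ differ only by $O(h^{\kg+1})\GNorm{\bm v_h}$ — this uses $\Abs{\PP-\PPh}\leqC h^{\kg}$ on $\Gh$ together with the weighted control of the normal part. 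The $a_2$ case additionally requires that the splitting of $\GradGh\PPh\bm v_h$ in the Bochner form genuinely reproduces the $-\tfrac12\langle\KhTilde\cdot,\cdot\rangle$ term after integration by parts at the element level (there will be jump terms, but these are the $r(\cdot,\cdot)$-type remainders already handled in \Cref{rem:difference-discrete-bilinear-forms}, of order $h^{\kg-1}$ against an $H^2$-norm, hence $O(h^{\kg})$ against the weighted $H^1$-norm after an inverse estimate), so no new difficulty beyond bookkeeping arises.
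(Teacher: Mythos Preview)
Your plan is correct and essentially coincides with the paper's proof: reduce the gradient and mass contributions to \cite[Lemma~4.12 and Lemma~4.9]{HP2022Tangential} (with the symmetric gradient $\bm{E}_\G$ handled by the same argument as the full gradient), and for $a_2$ split the curvature contribution as
\[
\big|\Inner{K\PP\bm v_h^\ell}{\PP\bm w_h^\ell}{\G}-\Inner{K^e\PPh\bm v_h}{\PPh\bm w_h}{\Gh}\big|
+\big|\Inner{(K^e-\KhTilde)\PPh\bm v_h}{\PPh\bm w_h}{\Gh}\big|,
\]
the first being a mass-matrix-type term of order $h^{\kg+1}$ and the second controlled by \eqref{eq:order-assumptions-K}. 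The refinement to $h^{\kg+1}$ for smooth tangential arguments is likewise obtained by citing the same lemmas from \cite{HP2022Tangential}, which already contain that improved rate.

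Your final paragraph, however, introduces an unnecessary detour. You propose to recover the $-\tfrac12\langle\KhTilde\cdot,\cdot\rangle$ term from the Bochner gradient by elementwise integration by parts on $\Gh$, producing jump remainders as in \Cref{rem:difference-discrete-bilinear-forms}. That machinery is for comparing $a_1$ with $a_2$ on the \emph{discrete} surface; it is not needed for $\delta_{a_2}=a-a_2$. Here the continuous $a$ lives on the smooth closed $\G$, where the full-gradient--plus--curvature representation is available without any boundary terms, so the comparison with $a_2$ is genuinely term-by-term and no discrete jump analysis enters. Drop that paragraph and the argument is exactly the paper's.
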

\begin{proof}
	A corresponding error estimate for $\tilde{a}(\bm{v},\bm{w})\colonequals \Inner{\PP\bm{v}}{\PP{\bm{w}}}{\HSpaceTan{\G}}$ is proven in \cite[Lemma 4.12]{HP2022Tangential}. The same proof can also be employed for the symmetric gradient $\bm{E}(\PP{\bm{v}})$ instead of the full gradient. Finally, the Gauss curvature term can be split in two parts
	\begin{align*}
	\Abs{\Inner{K\PP\bm{v}_h^\ell}{\PP\bm{w}_h^\ell}{\G}
		-\Inner{K^{e}\PP_h\bm{v}_h}{\PP_h\bm{w}_h}{\Gh}}
	+\Abs{\Inner{(K^{e}-\KhTilde)\PPh\bm{v}_h}{\PPh\bm{w}_h}{\Gh}}\,.
	\end{align*}
	The first term behaves like the mass matrix term in \cite[Lemma 4.9]{HP2022Tangential} and can be estimated by
	\begin{align*}
	\Abs{\Inner{K\PP\bm{v}_h^\ell}{\PP\bm{w}_h^\ell}{\G}
		-\Inner{K^{e}\PP_h\bm{v}_h}{\PP_h\bm{w}_h}{\Gh}}&\leqC h^{\kg+1}\NormZero{\bm{v}_h}\NormZero{\bm{w}_h}.
	\end{align*}
	The second term is estimated by \eqref{eq:order-assumptions-K}, the requirement on the Gaussian curvature approximation.
\end{proof}

From geometric errors, we can revisit the difference between the bilinear forms in \Cref{rem:difference-discrete-bilinear-forms} and derive
\begin{corollary}
	For $\bm{v}_h,\bm{w}_h\in\VelocitySpace$ we have
	\begin{align}
		\Abs{a_1(\bm{v}_h,\bm{w}_h) - a_2(\bm{v}_h,\bm{w}_h)} &\leqC h^{\kg}\NormOne{\bm{v}_h}\NormOne{\bm{w}_h} + h^{\kk}\ahNorm{\bm{v}_h}\ahNorm{\bm{w}_h}\,.
	\end{align}
\end{corollary}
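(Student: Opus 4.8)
The plan is to derive the estimate directly from the geometric-error bounds of \Cref{lem:geometric_a_errors}, rather than from the elementwise integration-by-parts identity of \Cref{rem:difference-discrete-bilinear-forms}: the remainder $r$ there is only of order $h^{\kg-1}$, and that identity moreover retains the term $\Inner{\divGh\PPh\bm{v}_h}{\divGh\PPh\bm{w}_h}{\Gh}$, which is not small for generic discrete fields. The only observation needed beforehand is that for a discrete field $\bm{v}_h\in\VelocitySpace$ on $\Gh$ the lift-extend-restrict composition acts as the identity, $\restr{(\bm{v}_h^\ell)^e}{\Gh}=\bm{v}_h$, because $(\restr{\pi}{\Gh})^{-1}(\pi(x))=x$ for every $x\in\Gh$. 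Since $a_1$ and $a_2$ see their arguments only through $\PPh$ (and $\PPh^2=\PPh$), this gives $a_i(\bm{v}_h,\bm{w}_h)=a_i(\PPh\bm{v}_h^{e},\PPh\bm{w}_h^{e})$ for $i=1,2$.

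With that identification, the continuous form $a(\PP\bm{v}_h^\ell,\PP\bm{w}_h^\ell)$ cancels on subtraction, so by definition of the difference forms
\[
	a_1(\bm{v}_h,\bm{w}_h)-a_2(\bm{v}_h,\bm{w}_h)=\delta_{a_2}(\bm{v}_h^\ell,\bm{w}_h^\ell)-\delta_{a_1}(\bm{v}_h^\ell,\bm{w}_h^\ell)\,.
\]
Next I would apply \Cref{lem:geometric_a_errors} to each term on the right (its hypotheses hold since $\bm{v}_h,\bm{w}_h\in\VelocitySpace\subset\HSpaceAmb[1]{\Gh}$ and the pair satisfies \Cref{prop:interpolation,prop:inverse-estimates}), obtaining $\Abs{\delta_{a_1}(\bm{v}_h^\ell,\bm{w}_h^\ell)}\leqC h^{\kg}\GNorm{\bm{v}_h}\GNorm{\bm{w}_h}$ and $\Abs{\delta_{a_2}(\bm{v}_h^\ell,\bm{w}_h^\ell)}\leqC h^{\kg}\GNorm{\bm{v}_h}\GNorm{\bm{w}_h}+h^{\kk}\ahNorm{\bm{v}_h}\ahNorm{\bm{w}_h}$; the triangle inequality then yields the same bound for $\Abs{a_1(\bm{v}_h,\bm{w}_h)-a_2(\bm{v}_h,\bm{w}_h)}$. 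Finally, to match the stated form, I would pass from $\GNorm{\cdot}$ to $\NormOne{\cdot}$ via the norm equivalence $\GNorm{\bm{u}_h}\leqC\NormOne{\bm{u}_h}$ from \Cref{lem:relation-between-norms}, leaving the sharper $\ahNorm{\cdot}$ factors in the $h^{\kk}$ term untouched.

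I do not expect a real obstacle here: all of the analytic work is already packaged in \Cref{lem:geometric_a_errors}. The two points requiring (mild) care are the lift-extend-restrict identity, which is what legitimizes rewriting the discrete forms as difference forms, and the bookkeeping of the weighted velocity norms so that $\GNorm{\cdot}$, $\NormOne{\cdot}$, and $\ahNorm{\cdot}$ end up exactly where the statement places them. It is also worth noting that this refines \Cref{rem:difference-discrete-bilinear-forms}: routing the argument through the geometric-error estimates removes the non-small term $\Inner{\divGh\PPh\bm{v}_h}{\divGh\PPh\bm{w}_h}{\Gh}$ and improves the geometric part from order $h^{\kg-1}$ to $h^{\kg}$, at the price of restricting to discrete fields $\bm{v}_h,\bm{w}_h\in\VelocitySpace$.
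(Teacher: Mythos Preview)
Your proposal is correct and follows exactly the approach the paper intends: the corollary is placed immediately after \Cref{lem:geometric_a_errors} with the sentence ``From geometric errors, we can revisit the difference between the bilinear forms in \Cref{rem:difference-discrete-bilinear-forms} and derive,'' and your route---writing $a_1-a_2=\delta_{a_2}-\delta_{a_1}$ via the lift--extend--restrict identity, applying \Cref{lem:geometric_a_errors} to each piece, and finishing with $\GNorm{\cdot}\leqC\NormOne{\cdot}$ from \Cref{lem:relation-between-norms}---is precisely that derivation.
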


\begin{lemma}[The divergence form]\label{lem:geometricQb1Error}
	For $\bm{v}_h\in\HSpaceAmb[1]{\Gh}$ and $q_h\in L^2(\Gh)$ the following estimate holds for $h<h_0$ small enough:
	\begin{align}\label{eq:geometricQb1Error}
		\delta_{b_1}(\bm{v}_h^\ell,q_h^\ell) & \leqC h^{\kg}\GNorm{\bm{v}_h}\Norm{q_h}{\Gh}.
	\end{align}
\end{lemma}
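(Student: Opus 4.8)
The plan is to reduce $\delta_{b_1}(\bm{v}_h^\ell,q_h^\ell)$ to a single integral over $\Gh$ by the standard surface change of variables and then invoke the already-established geometric $H^1$-estimate of \Cref{lem:basicH1Estimate}, exploiting that the surface divergence of a tangential field is the trace of its (discrete) covariant derivative.

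First I would write
\begin{align*}
	\delta_{b_1}(\bm{v}_h^\ell,q_h^\ell) = \Inner{q_h^\ell}{\divG\PP\bm{v}_h^\ell}{\G} - \Inner{q_h}{\divGh\PPh\bm{v}_h}{\Gh}
\end{align*}
and pull the first inner product back to $\Gh$ along $\restr{\pi}{\Gh}$. Since $q_h^\ell\circ\restr{\pi}{\Gh}=q_h$ by definition of the lift, this equals $\Inner{q_h}{\mu_h\,(\divG\PP\bm{v}_h^\ell)^e}{\Gh}$, where $\mu_h$ now denotes the ratio of the surface measures of $\G$ and $\Gh$ under the lift, for which the standard surface-FEM bound $\Norm{\mu_h-1}{L^\infty(\Gh)}\leqC h^{\kg+1}$ holds, cf. \cite{Demlow2009Higher,HP2022Tangential}. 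Inserting $\pm\divGh\PPh\bm{v}_h$ and splitting off $\mu_h-1$ then yields
\begin{align*}
	\delta_{b_1}(\bm{v}_h^\ell,q_h^\ell) = \Inner{q_h}{(\divG\PP\bm{v}_h^\ell)^e - \divGh\PPh\bm{v}_h}{\Gh} + \Inner{q_h}{(\mu_h-1)(\divG\PP\bm{v}_h^\ell)^e}{\Gh}\,.
\end{align*}

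Next I would estimate both terms by Cauchy--Schwarz. For the leading term I use $\divG\PP\bm{v}_h^\ell = \tr(\GradG\PP\bm{v}_h^\ell)$ and $\divGh\PPh\bm{v}_h = \tr(\GradGh\PPh\bm{v}_h)$; since the trace is taken pointwise and $\Abs{\tr A}\leqC\Abs{A}$, \Cref{lem:basicH1Estimate} gives directly
\begin{align*}
	\Norm{(\divG\PP\bm{v}_h^\ell)^e - \divGh\PPh\bm{v}_h}{\Gh} \leqC \Norm{(\GradG\PP\bm{v}_h^\ell)^e - \GradGh\PPh\bm{v}_h}{\Gh} \leqC h^{\kg}\GNorm{\bm{v}_h}\,,
\end{align*}
so this term is bounded by $h^{\kg}\GNorm{\bm{v}_h}\Norm{q_h}{\Gh}$, which is exactly the claimed order. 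For the remaining term I bound $\Norm{(\divG\PP\bm{v}_h^\ell)^e}{\Gh}\leqC\Norm{\divG\PP\bm{v}_h^\ell}{\G}\leqC\HNormTan{\PP\bm{v}_h^\ell}{\G}\leqC\GNorm{\bm{v}_h}$ (equivalence of the lifted $L^2$-norms for $h$ small, followed by \Cref{lem:relation-between-norms}), so together with $\Norm{\mu_h-1}{L^\infty(\Gh)}\leqC h^{\kg+1}$ this contribution is of order $h^{\kg+1}\GNorm{\bm{v}_h}\Norm{q_h}{\Gh}$ and is absorbed into the first for $h\leq h_0$. Adding the two bounds gives \eqref{eq:geometricQb1Error}.

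The change of variables and the measure bound $\Norm{\mu_h-1}{L^\infty}\leqC h^{\kg+1}$ are completely standard for surface finite elements, so I expect no real difficulty there, and the Cauchy--Schwarz step is routine. The only point requiring a little care is the reduction from the divergence to the gradient: one must check that on both $\G$ and $\Gh$ the surface divergence applied to the (discrete) tangential projection of the field equals the trace of exactly the projected covariant derivative $\GradG\PP(\cdot)$, resp.\ $\GradGh\PPh(\cdot)$, estimated in \Cref{lem:basicH1Estimate}, so that its bound transfers without any further geometric analysis. Once that identification is made, the argument is a few lines.
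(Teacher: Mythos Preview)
Your proposal is correct and essentially identical to the paper's own proof: both pull the $\G$-integral back to $\Gh$, split off the measure error of order $h^{\kg+1}$, bound the remaining divergence difference by the gradient difference via the trace, and apply \Cref{lem:basicH1Estimate}. The paper's write-up is terser but the steps and ingredients match exactly.
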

\begin{proof}
	We write
	\begin{align*}
		|\delta_{b_1}(\bm{v}_h^\ell,q_h^\ell)|
		&\leqC  |\Inner{q_{h}}{(\divG\PP\bm{v}_h^{\ell})^{e}-\divGh\PPh\bm{v}_h}{\Gh}|+ h^{\kg+1}\Norm{q_{h}}{\Gh}\Norm{(\divG\PP\bm{v}_h^{\ell})^{e}}{\Gh}\\
		&\leqC  \Norm{q_{h}}{\Gh}\Norm{(\GradG\PP\bm{v}_h^{\ell})^{e}-\GradGh\PPh\bm{v}_h}{\Gh}+ h^{\kg+1}\Norm{q_{h}}{\Gh}\Norm{\GradG\PP\bm{v}_h^{\ell}}{\G}\\
		& \leqC h^{\kg}\GNorm{\bm{v}_h}\Norm{q_h}{\Gh},
	\end{align*}
	where we use \Cref{lem:basicH1Estimate}.
\end{proof}

\begin{lemma}[The alternative divergence form]\label{lem:geometricQbError}
	For $\bm{v}_h\in\LSpaceAmb[2]{\Gh}$ and $q_h\in H^{1}(\Gh)$ the following estimate holds for $h<h_0$ small enough:
\begin{align}\label{eq:geometricQbError}
	 \Abs{\delta_{b_2}(\bm{v}_h^\ell,q_h^\ell)} &\leqC h^{\kg+1}
   \NormZero{\bm{v}_h}
   \Norm{\GradGh q_h}{\Gh}.
\end{align}
\end{lemma}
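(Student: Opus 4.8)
The plan is to reduce $\delta_{b_2}$ to a single integral over $\Gh$ whose integrand splits into a genuinely $O(h^{\kg+1})$ part plus a part that is only $O(h^{\kg})$ in magnitude but collinear with $\nnh$, so that the latter is tested only against $\QQh\bm{v}_h$, which enters $\NormZero{\cdot}$ with the weight $h^{-1}$.

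First I would reformulate both terms. Since $q_h^\ell\in H^1(\G)$, integration by parts on the closed surface gives $b(\PP\bm{v}_h^\ell,q_h^\ell)=-\Inner{\gradG q_h^\ell}{\PP\bm{v}_h^\ell}{\G}$ (cf. the remark after \Cref{prob:VariationalProblem}). Pulling this integral back to $\Gh$ via $\restr{\pi}{\Gh}$ introduces the area element $\mu$ with $\Abs{\mu-1}\leqC h^{\kg+1}$ (a standard geometric estimate, cf. \cite{Demlow2009Higher,HP2022Tangential}), and since $(\gradG q_h^\ell)^e$ is tangential to $\G$ while $\GradGh q_h$ is tangential to $\Gh$, one obtains $\delta_{b_2}(\bm{v}_h^\ell,q_h^\ell)=\Inner{\GradGh q_h-\mu\,(\gradG q_h^\ell)^e}{\bm{v}_h}{\Gh}$. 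I then split $\GradGh q_h-\mu(\gradG q_h^\ell)^e=\big(\GradGh q_h-\PPh(\gradG q_h^\ell)^e\big)-\QQh(\gradG q_h^\ell)^e+(1-\mu)(\gradG q_h^\ell)^e$. The chain rule $\D q_h^e=(\D q_h^\ell\circ\pi)\,\D\pi$ together with tangentiality of $\gradG q_h^\ell$ yields $\GradGh q_h=\PPh\transposed{(\restr{\D\pi}{\Gh})}(\gradG q_h^\ell)^e$; since $\restr{\D\pi}{\Gh}=\PP^e+O(h^{\kg+1})$ (the distance of $\Gh$ to $\G$ being $O(h^{\kg+1})$) and $\QQ^e\PP^e=0$, the first bracket is bounded pointwise by $\leqC h^{\kg+1}\Abs{(\gradG q_h^\ell)^e}$, and so is $(1-\mu)(\gradG q_h^\ell)^e$. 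The middle term is $\QQh(\gradG q_h^\ell)^e=\big((\nnh-\nn^e)\cdot(\gradG q_h^\ell)^e\big)\nnh$ (using $(\gradG q_h^\ell)^e\perp\nn^e$), of size $\leqC h^{\kg}\Abs{(\gradG q_h^\ell)^e}$ by $\LInfNorm{\nnh-\nn^e}{\Gh}\leqC h^{\kg}$, but a multiple of $\nnh$, so that $\Abs{\Inner{\QQh(\gradG q_h^\ell)^e}{\bm{v}_h}{\Gh}}=\Abs{\Inner{\QQh(\gradG q_h^\ell)^e}{\QQh\bm{v}_h}{\Gh}}\leqC h^{\kg}\Norm{(\gradG q_h^\ell)^e}{\Gh}\Norm{\QQh\bm{v}_h}{\Gh}\leqC h^{\kg+1}\Norm{(\gradG q_h^\ell)^e}{\Gh}\NormZero{\bm{v}_h}$. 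Collecting the three contributions, applying Cauchy--Schwarz, using $\Norm{\bm{v}_h}{\Gh}\leqC\NormZero{\bm{v}_h}$, and finally $\Norm{(\gradG q_h^\ell)^e}{\Gh}\leqC\Norm{\GradGh q_h}{\Gh}$ (which follows for $h$ small from the pointwise bound on the first bracket and $\LInfNorm{\nnh-\nn^e}{\Gh}\leqC h^{\kg}$) yields \eqref{eq:geometricQbError}.

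The main obstacle is getting the optimal order: a crude estimate gives only $O(h^{\kg})$, because $\GradGh q_h$ and $(\gradG q_h^\ell)^e$ already differ at order $h^{\kg}$ through the mismatch between the discrete projection $\PPh$ and the lifted projection $\PP^e$. The point is that this $O(h^{\kg})$ discrepancy lies entirely in the $\nnh$-direction, so it is only paired with $\QQh\bm{v}_h$, which carries the factor $h^{-1}$ in $\NormZero{\cdot}$; together with the algebraic identity $\QQ^e\PP^e=0$, which makes the chain-rule term genuinely $O(h^{\kg+1})$ rather than $O(h^{\kg})$, this is exactly what recovers the rate $h^{\kg+1}$.
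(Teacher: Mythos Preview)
Your proof is correct and follows essentially the same approach as the paper: both pull the $\G$-integral back to $\Gh$, isolate the $O(h^{\kg+1})$ area-element term, and exploit that the remaining $O(h^{\kg})$ mismatch between $\GradGh q_h$ and $(\gradG q_h^\ell)^e$ lies in the $\nnh$-direction and hence is paired only with $\QQh\bm{v}_h$. The sole difference is organizational---the paper places the projections on the test function and invokes $\|\PP\PPh-\bm{B}\|_{L^\infty}\leqC h^{\kg+1}$ together with $\|\PPh(\PPh-\PP)\PPh\|_{L^\infty}\leqC h^{2\kg}$, whereas you decompose on the gradient side via $\restr{\D\pi}{\Gh}=\PP^e+O(h^{\kg+1})$; the two splittings are algebraically equivalent.
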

\begin{proof}
  We introduce $\bm{B}\colonequals \D_{\Gh}\pi$ with $\Norm{1-|\det\bm{B}|}{L^\infty(\Gh)}\leqC h^{\kg+1}$, cf. \cite[Lemma 4.1]{HLL2020Analysis}. By the triangle inequality, we get
	\begin{align*}
		\Abs{\delta_{b_2}(\bm{v}_h^\ell,q_h^\ell)}
  	    &= \Abs{\Inner{\GradGh q_h}{\PPh\bm{v}_h}{\Gh} - \Inner{\GradG q_{h}^{\ell}}{\PP\bm{v}_h^{\ell}}{\G}}\\
				&\leqC \Abs{\Inner{\GradGh q_h}{(\PPh-\PP)\bm{v}_h}{\Gh}}
				+ \Abs{\Inner{\GradGh q_h-(\GradG q_{h}^{\ell})^{e}}{\PP\bm{v}_h}{\Gh}}
				+  h^{\kg+1}\Norm{(\GradG q_{h}^{\ell})^{e}}{\Gh}\Norm{\PP\bm{v}_h}{\Gh}.
	\end{align*}
We further estimate using $\Norm{\PP\PPh-\bm{B}}{L^\infty(\Gh)}\leqC h^{\kg+1}$
\begin{align*}
	\Abs{\Inner{\GradGh q_h-(\GradG q_{h}^{\ell})^{e}}{\PP\bm{v}_h}{\Gh}}
		& = \Abs{\Inner{\PP(\PPh-B)\GradGh q_h}{\PP\bm{v}_h}{\Gh}}\\
		& \leqC h^{\kg+1} \Norm{\GradGh q_h}{\Gh}\Norm{\PP\bm{v}_h}{\Gh}.
\end{align*}
We have also
\begin{align*}
	\Abs{\Inner{\GradGh q_h}{(\PPh-\PP)\bm{v}_h}{\Gh}}
	& \leq \Abs{\Inner{\GradGh q_h}{\PPh(\PPh-\PP)\PPh\bm{v}_h}{\Gh}}
	+  \Abs{\Inner{\GradGh q_h}{\PPh(\PPh-\PP)\QQh\bm{v}_h}{\Gh}}\\
	& \leqC h^{2\kg} \Norm{\GradGh q_h}{\Gh}\Norm{\bm{v}_h}{\Gh}
	+  h^{\kg}\Norm{\GradGh q_h}{\Gh}\Norm{\QQh\bm{v}_h}{\Gh}.
\end{align*}
Using $\Norm{\PP\bm{v}_h}{\Gh}\leq \Norm{\PPh\bm{v}_h}{\Gh}+ \Norm{\QQh\bm{v}_h}{\Gh}$
and $\Norm{(\GradG q_{h}^{\ell})^{e}}{\Gh}\leqC \Norm{\GradGh q_h}{\Gh}$ we obtain the estimate.
%
\end{proof}

\begin{remark}
	If $\bm{v}_h\in\LSpaceAmb[2]{\Gh}$ and $q_h\in \PressureSpace\subset H^{1}(\Gh)$, inverse estimates \Cref{prop:inverse-estimates} combined with \eqref{eq:geometricQbError} yield
	\begin{align}\label{eq:geometricQbErrorDiscrete}
		\Abs{\delta_{b_2}(\bm{v}_h^\ell,q_h^\ell)} &\leqC h^{\kg}
		\NormZero{\bm{v}_h}
		\Norm{q_h}{\Gh}\,.
	\end{align}
\end{remark}

We see that the estimate for $b_2$ is better than the one for $b_1$ if the function $q_h$ is smooth enough. We can use \Cref{lem:difference-discrete-b} to improve the estimate for $b_1$ for tangential velocities.
\begin{lemma}\label{lem:geometricQb1Error_tangential}
	Let $\bm{v}_h\in\HSpaceAmb[1]{\Gh}$ with $\bm{v}_h\cdot \bm{n}=0$ and $q_h\in H^{1}(\Gh)$. For $\kg>1$ we have the estimate
	\begin{align}
		\Abs{\delta_{b_1}(\bm{v}_h^\ell,q_h^\ell)} & \leqC h^{\kg+1}\HNormTan{\bm{v}_h^{\ell}}{\G}\HNorm{q_h}{\Gh}\,.
	\end{align}
	If additionally $\bm{v}_h\in\HSpaceAmb[2]{\Gh}$, we have for $\kg=1$
	\begin{align}
		\Abs{\delta_{b_1}(\bm{v}_h^\ell,q_h^\ell)} & \leqC h^{2}\HNormTan[2]{\bm{v}_h^{\ell}}{\G}\HNorm{q_h}{\Gh}\,.
	\end{align}
\end{lemma}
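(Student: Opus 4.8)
The plan is to write $\delta_{b_1}$ as $\delta_{b_2}$ plus the difference of the two discrete divergence forms and to estimate each piece separately. From the definitions of the difference forms one has $\delta_{b_1}(\bm{v}_h^\ell,q_h^\ell) = \delta_{b_2}(\bm{v}_h^\ell,q_h^\ell) + \big(b_2(\PPh\bm{v}_h,q_h) - b_1(\PPh\bm{v}_h,q_h)\big)$, where $(\bm{v}_h^\ell)^e|_{\Gh}=\bm{v}_h$ and $q_h^\ell|_{\Gh}=q_h$ are used. The first term is controlled by \Cref{lem:geometricQbError}: since $\bm{v}_h\cdot\nn=0$ we have $\QQh\bm{v}_h=(\QQh-\QQ)\bm{v}_h$ with $\LInfNorm{\QQh-\QQ}{\Gh}\leqC h^{\kg}$, hence $\NormZero{\bm{v}_h}\leqC\Norm{\bm{v}_h}{\Gh}\leqC\HNormTan{\bm{v}_h^\ell}{\G}$ for $\kg\geq 1$; together with $\Norm{\GradGh q_h}{\Gh}\leq\HNorm{q_h}{\Gh}$ this yields $\Abs{\delta_{b_2}(\bm{v}_h^\ell,q_h^\ell)}\leqC h^{\kg+1}\HNormTan{\bm{v}_h^\ell}{\G}\HNorm{q_h}{\Gh}$, using only $\bm{v}_h\in\HSpaceAmb[1]{\Gh}$.

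For the second term I would proceed exactly as in the proof of \Cref{lem:difference-discrete-b}: elementwise integration by parts together with $\PPh^{\pm}\nu_e^{\pm}=\nu_e^{\pm}$ (the element co-normals lie in the respective discrete tangent planes, so the $\PPh$ drops out of the boundary terms) gives $b_2(\PPh\bm{v}_h,q_h) - b_1(\PPh\bm{v}_h,q_h) = -\sum_{e\in\mathcal{E}_h}\Inner{q_h}{\bm{v}_h\cdot\Jump{\nu_e}}{e}$, which is precisely $-\big(b_1(\bm{v}_h,q_h)-b_2(\bm{v}_h,q_h)\big)$. Hence the $\bm{v}_h\cdot\nn=0$ estimate of \Cref{lem:difference-discrete-b} applies verbatim and yields $\Abs{b_2(\PPh\bm{v}_h,q_h)-b_1(\PPh\bm{v}_h,q_h)}\leqC h^{2\kg-1}\HNorm{q_h}{\Gh}\HNormTan{\bm{v}_h^\ell}{\G}$. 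For $\kg\geq 2$ one has $2\kg-1\geq\kg+1$, so the two contributions combine to the first asserted bound $\Abs{\delta_{b_1}(\bm{v}_h^\ell,q_h^\ell)}\leqC h^{\kg+1}\HNormTan{\bm{v}_h^\ell}{\G}\HNorm{q_h}{\Gh}$.

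For $\kg=1$ the bound just obtained is only of order $h^{2\kg-1}=h$, and this is where the extra regularity $\bm{v}_h\in\HSpaceAmb[2]{\Gh}$ must be used. The term to improve is $\Sigma\colonequals\sum_{e\in\mathcal{E}_h}\Inner{q_h}{\bm{v}_h\cdot\Jump{\nu_e}}{e}$. On the piecewise-flat surface each $\Jump{\nu_e}$ is constant along $e$ with $\Abs{\Jump{\nu_e}}\leqC h$ and $\Abs{\PP\Jump{\nu_e}}\leqC h^{2}$, and tangentiality gives $\bm{v}_h\cdot\Jump{\nu_e}=\bm{v}_h\cdot\PP\Jump{\nu_e}$. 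A naive bound by $\Abs{\PP\Jump{\nu_e}}\leqC h^{2}$ followed by trace inequalities on $q_h\bm{v}_h$ loses a factor $h^{-1}$ and only gives order $h$; the missing power of $h$ has to come from the combinatorial cancellation $\sum_{e\subset\partial\El}\Abs{e}\,\nu_{e,\El}=0$ (the divergence theorem for constant fields on each flat element). The plan is therefore to subtract from $q_h\bm{v}_h$ suitable elementwise (or small-patch) best approximations, so that the constant part of the integrand is annihilated by this identity, and to estimate the remaining Poincar\'e/Taylor defect using the $H^2$-seminorm of $\bm{v}_h$, which carries enough powers of $h$ to absorb the $h^{-1}$ from the traces. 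Converting norms on $\Gh$ to tangential norms on $\G$ via \Cref{lem:relation-between-norms} and boundedness of the lift $\pi|_{\Gh}$ then gives $\Abs{\Sigma}\leqC h^{2}\HNormTan[2]{\bm{v}_h^\ell}{\G}\HNorm{q_h}{\Gh}$, which combined with the order-$h^{2}$ estimate of $\delta_{b_2}$ yields the second estimate.

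The main obstacle is precisely this $\kg=1$ refinement of $\Sigma$: the co-normal jumps cannot be made pointwise smaller than $O(h^{2})$ in the tangential direction, so the extra power of $h$ genuinely has to be extracted from the cancellation of co-normal jumps over element boundaries; arranging the localization so that the resulting remainder is cleanly controlled by $\HNormTan[2]{\bm{v}_h^\ell}{\G}\HNorm{q_h}{\Gh}$, rather than by a broken or $h$-weighted norm, is the delicate bookkeeping step. The $\kg\geq 2$ part, by contrast, is essentially immediate once the decomposition $\delta_{b_1}=\delta_{b_2}+(b_2-b_1)$ is combined with \Cref{lem:geometricQbError,lem:difference-discrete-b}.
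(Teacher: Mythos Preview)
Your argument for $\kg>1$ is correct and coincides with the paper's: the decomposition $\delta_{b_1}=\delta_{b_2}+(b_2-b_1)$ together with \Cref{lem:geometricQbError} and the tangential case of \Cref{lem:difference-discrete-b} is exactly what the paper uses.

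For $\kg=1$, however, there is a genuine gap. Your plan is to improve the edge-jump sum $\Sigma=\sum_e\Inner{q_h}{\bm{v}_h\cdot\Jump{\nu_e}}{e}$ from $O(h)$ to $O(h^2)$ via the identity $\sum_{e\subset\partial\El}|e|\,\nu_{e,\El}=0$. But once you reorganise $\Sigma$ as $\sum_{\El}\sum_{e\subset\partial\El}\Inner{q_h\bm{v}_h}{\nu_{e,\El}}{e}$ you have discarded the smallness of the jump: the individual co-normals satisfy $|\nu_{e,\El}|=1$, and the identity only annihilates \emph{constants}. Subtracting the elementwise mean $\bm{c}_\El$ and applying Poincar\'e plus the trace inequality gives $\sum_{e\subset\partial\El}\Abs{\Inner{q_h\bm{v}_h-\bm{c}_\El}{\nu_{e,\El}}{e}}\leqC h\,\Norm{\nabla(q_h\bm{v}_h)}{\El}$, which after summation is still only $O(h)$---the same order as the naive bound, because the factor $h^2$ from $\PP\Jump{\nu_e}$ has been traded for a single Poincar\'e factor $h$. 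The $H^2$-seminorm of $\bm{v}_h$ does not enter, since the cancellation stops at constants; linears are \emph{not} killed by the co-normal identity. So the mechanism you propose cannot supply the missing power of $h$.

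The paper avoids the edge-jump sum entirely for $\kg=1$. It works with the volume integrand: one writes
\[
\divG\PP\bm{v}_h^\ell-(\divGh\PPh\bm{v}_h)^\ell
=\tr\big((\PP-\bm{B}^{-1})\GradG\bm{v}_h^\ell\big)+H_h\langle\bm{v}_h^\ell,\nnh\rangle-H\langle\bm{v}_h^\ell,\nn\rangle.
\]
For $\kg=1$ the surface is piecewise flat, so $H_h=0$, and tangentiality kills the $H\langle\bm{v}_h^\ell,\nn\rangle$ term. The remaining piece is split as $\tr(\QQh\GradG\bm{v}_h^\ell)+\tr((\PPh\PP-\bm{B}^{-1})\GradG\bm{v}_h^\ell)$; the second term is $O(h^2)$ since $\Norm{\PPh\PP-\bm{B}^{-1}}{L^\infty}\leqC h^{\kg+1}$, and for the first one observes $\tr(\QQh\GradG\bm{v})=n_h^i n_h^j\P_i^k\P_j^l\D_k v_l$, i.e.\ a contraction against $(\PP\nnh)\otimes(\PP\nnh)$, which is $O(h^2)$ pointwise. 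The $H^2$-norm of $\bm{v}_h^\ell$ (and the reference to \cite[Lemma~2.6]{HP2022Tangential}) enters when this term is paired with $q_h$ and estimated. The point is that the extra $h$ comes from a second factor of $\PP\nnh$ in the \emph{volume} integrand, not from any edge cancellation.
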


\begin{proof}
	For $\kg>1$ this follows from \Cref{lem:difference-discrete-b} and \Cref{lem:geometricQbError}. For $\kg=1$ we revisit the proof of \Cref{lem:geometricQb1Error}:
	It is a.e. on $\G$
	\begin{align*}
		\divG\PP\bm{v}_h^{\ell}-(\divGh\PPh\bm{v}_h)^{\ell}
		& = \tr ((\PP-\bm{B}^{-1})\GradG  \bm{v}_h^{\ell}) +H_h\langle \bm{v}_h^{\ell},\bm{n}_h\rangle-H \langle \bm{v}_h^{\ell} ,\bm{n}\rangle,
	\end{align*}
	where $H$ and $H_h$ denotes the mean curvature of $\G$ and $\Gh$, respectively.
	For $\kg=1$ and $\bm{v}_h\cdot \bm{n}=0$, the last two terms vanish and we can write
	\begin{align*}
		\divG\PP\bm{v}_h^{\ell}-(\divGh\PPh\bm{v}_h)^{\ell}
		& = \tr ((\PP-\bm{B}^{-1})\GradG  \bm{v}_h^{\ell})
		= \tr (\QQh \GradG  \bm{v}_h^{\ell})+\tr ((\PPh\PP-\bm{B}^{-1})\GradG  \bm{v}_h^{\ell})
	\end{align*}
	As we have for $\bm{v}=\bm{v}_h^{\ell}$ that $\tr \QQh \GradG  \bm{v} = n_{h}^{i}n_{h}^{j}\P_{i}^{k}\P_{j}^{l}\D_{k}v_{l}$, we get the estimate,
	\begin{align*}
		\Abs{\Inner{q_{h}}{(\divG\PP\bm{v}_h^{\ell})-(\divGh\PPh\bm{v}_h)^{\ell}}{\G}}
		&\leqC \Abs{\Inner{q_{h}}{\tr (\QQh \GradG  \bm{v}_h^{\ell})}{\G}}
		+ \Abs{\Inner{q_{h}}{\tr ((\PPh\PP-\bm{B}^{-1})\GradG  \bm{v}_h^{\ell})}{\G}}\\
		&\leqC h^{2}\HNormTan[2]{\bm{v}_h^{\ell}}{\G}\HNorm{q_h}{\Gh}\,,
	\end{align*}
	see \cite[Lemma 2.6]{HP2022Tangential}.
\end{proof}

We can now state the combined error estimates for $\delta_{ij}$ as defined in \eqref{eq:geom-error-delta}.

\begin{lemma}\label{lem:basic_delta_estimate}
	For $\bm{v}_h,\bm{w}_h \in \HSpaceAmb{\Gh}$, and $q_h,r_h\in H^{j-1}(\Gh)$, we have for $h<h_0$ small enough
	\begin{align}\label{eq:basic_delta_estimate}
	\Abs{\delta_{ij}(\bm{v}_h^\ell,q_h^\ell)(\bm{w}_h^\ell,r_h^\ell)}
	&\leqC h^{\kg}\left(\GNorm{\bm{w}_h} + h^{j-1}\HNormTan[j-1]{r_h}{\Gh}\right)
	\left(\GNorm{\bm{v}_h} + h^{j-1}\HNormTan[j-1]{q_h}{\Gh} + h\Norm{\bm{f}}{\G}\right)\notag \\
	&\quad  + (i-1)h^{\kk}\ahNorm{\bm{w}_h}\ahNorm{\bm{v}_h} \,.
	\end{align}
	For $\bm{u} \in \HSpaceTan{\G}$, $p\in H^{1}(\G)$, and $\bm{v}_{h}\in\VelocitySpace, q_h\in\PressureSpace\subset H^{j-1}(\Gh)$, we have for $h<h_0$ small enough
	\begin{align}
	\Abs{\delta_{ij}(\bm{u},p)(\bm{v}_h^{\ell},q_h^\ell)}
	&\leqC h^{\kg}\left(\NormOne{\bm{v}_h}+ \Norm{q_h}{\Gh}\right)
	\left(\HNormTan[1]{\bm{u}}{\G} + \HNorm{p}{\G} + \Norm{\bm{f}}{\G}\right) \label{eq:discrete-galerkin-ortho_1} \\
	& \qquad +(i-1)h^{\kk}\ahNorm{\bm{v}_h}\HNormTan[1]{\bm{u}}{\G} \notag \\
	&\leqC \left(h^{\kg-\frac{1}{2}}+(i-1)h^{\kk}\right) \EnergyNorm{(\bm{v}_h,q_h)}{}
	\left(\HNormTan[1]{\bm{u}}{\G} + \HNorm{p}{\G} + \Norm{\bm{f}}{\G}\right)\,.\label{eq:discrete-galerkin-ortho_2}
	\end{align}
\end{lemma}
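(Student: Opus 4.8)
The plan is to unfold $\delta_{ij}$ according to its definition \eqref{eq:geom-error-delta} as the signed sum $\delta_l(\bm{w})-\delta_{a_i}(\bm{v},\bm{w})-\delta_{b_j}(\bm{v},r)-\delta_{b_j}(\bm{w},q)$, to bound each of these four geometric-error forms by the estimates already established above, to rewrite every occurring norm in terms of $\GNorm{\cdot}$, $\NormOne{\cdot}$, $\NormZero{\cdot}$, $\ahNorm{\cdot}$, and $\HNormTan[j-1]{\cdot}{\Gh}$ or $\Norm{\cdot}{\Gh}$ by means of \Cref{lem:relation-between-norms} (plus the inverse estimate \Cref{prop:inverse-estimates} in the one place where a scalar $H^1$-seminorm of a discrete pressure must be traded for $h^{-1}\Norm{\cdot}{\Gh}$), and finally to collect the resulting cross terms into the stated product form using that $\sum_i x_iy_i\leq\big(\sum_ix_i\big)\big(\sum_jy_j\big)$ for nonnegative numbers. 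Throughout, $h<h_0$ is assumed small enough for all invoked estimates and norm relations to hold, and $h_0\leq1$.

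For the first bound I would take $\bm{v}_h,\bm{w}_h\in\HSpaceAmb{\Gh}$ and $q_h,r_h\in H^{j-1}(\Gh)$ and argue term by term. \Cref{lem:geometricQlError} gives $\Abs{\delta_l(\bm{w}_h^\ell)}\leqC h^{\kg+1}\Norm{\bm{f}}{\G}\NormZero{\bm{w}_h}\leqC h^{\kg}\GNorm{\bm{w}_h}\,h\Norm{\bm{f}}{\G}$, using $\NormZero{\cdot}\leqC\GNorm{\cdot}$. \Cref{lem:geometric_a_errors} gives $\Abs{\delta_{a_i}(\bm{v}_h^\ell,\bm{w}_h^\ell)}\leqC h^{\kg}\GNorm{\bm{v}_h}\GNorm{\bm{w}_h}+(i-1)h^{\kk}\ahNorm{\bm{v}_h}\ahNorm{\bm{w}_h}$, the curvature term being absent for $i=1$. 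For the two divergence terms I would split on $j$: for $j=1$, \Cref{lem:geometricQb1Error} gives $\Abs{\delta_{b_1}(\bm{v}_h^\ell,r_h^\ell)}\leqC h^{\kg}\GNorm{\bm{v}_h}\Norm{r_h}{\Gh}$, which is $h^{\kg}\GNorm{\bm{v}_h}\,h^{j-1}\HNormTan[j-1]{r_h}{\Gh}$ since $h^{j-1}=1$; for $j=2$, \Cref{lem:geometricQbError} gives $\Abs{\delta_{b_2}(\bm{v}_h^\ell,r_h^\ell)}\leqC h^{\kg+1}\NormZero{\bm{v}_h}\Norm{\GradGh r_h}{\Gh}\leqC h^{\kg}\GNorm{\bm{v}_h}\,h\,\HNormTan[1]{r_h}{\Gh}$, again of the form $h^{\kg}\GNorm{\bm{v}_h}\,h^{j-1}\HNormTan[j-1]{r_h}{\Gh}$; the term $\delta_{b_j}(\bm{w}_h^\ell,q_h^\ell)$ is handled symmetrically with the roles of $\bm{v}_h,r_h$ and $\bm{w}_h,q_h$ exchanged. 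Adding these four bounds with signs and enlarging the sum of products to the product of sums yields \eqref{eq:basic_delta_estimate}.

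For the second bound I would use that $\bm{u}\in\HSpaceTan{\G}$ is tangential (so $\PP\bm{u}=\bm{u}$) and regard $\bm{u}$ and $p$ as the lifts of $\bm{u}^e$ and $p^e$ restricted to $\Gh$; \Cref{lem:relation-between-norms} then provides $\GNorm{\bm{u}^e}\leqC\HNormTan[1]{\bm{u}}{\G}$ (hence also $\ahNorm{\bm{u}^e}\leqC\AhNorm{\bm{u}^e}\leqC\GNorm{\bm{u}^e}\leqC\HNormTan[1]{\bm{u}}{\G}$), and $\Norm{\GradGh p^e}{\Gh}\leqC\HNorm{p}{\G}$ follows from the chain rule through $\pi$. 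Since only $H^1$-regularity of $\bm{u}$ is available, the first (not the $\HSpaceTan[2]{\G}$-) part of \Cref{lem:geometric_a_errors} must be used for $\delta_{a_i}(\bm{u},\bm{v}_h^\ell)$, which is precisely why the bound carries $h^{\kg}$ rather than $h^{\kg+1}$. Expanding $\delta_{ij}(\bm{u},p)(\bm{v}_h^\ell,q_h^\ell)=\delta_l(\bm{v}_h^\ell)-\delta_{a_i}(\bm{u},\bm{v}_h^\ell)-\delta_{b_j}(\bm{u},q_h^\ell)-\delta_{b_j}(\bm{v}_h^\ell,p)$, I would bound $\delta_l(\bm{v}_h^\ell)$ by $h^{\kg}\NormOne{\bm{v}_h}\Norm{\bm{f}}{\G}$ as above (using $\NormZero{\cdot}\leqC\NormOne{\cdot}$ and $h_0\leq1$), bound $\delta_{a_i}(\bm{u},\bm{v}_h^\ell)$ by $h^{\kg}\HNormTan[1]{\bm{u}}{\G}\GNorm{\bm{v}_h}+(i-1)h^{\kk}\HNormTan[1]{\bm{u}}{\G}\ahNorm{\bm{v}_h}$, and bound $\delta_{b_j}(\bm{u},q_h^\ell)$ and $\delta_{b_j}(\bm{v}_h^\ell,p)$ by $h^{\kg}\HNormTan[1]{\bm{u}}{\G}\Norm{q_h}{\Gh}$ and $h^{\kg}\HNorm{p}{\G}\NormOne{\bm{v}_h}$ respectively --- for $j=1$ directly from \Cref{lem:geometricQb1Error}, for $j=2$ from \eqref{eq:geometricQbErrorDiscrete} (equivalently, \Cref{lem:geometricQbError} together with the inverse estimate $\HNormTan[1]{q_h}{\Gh}\leqC h^{-1}\Norm{q_h}{\Gh}$ on the discrete pressure). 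Collecting terms and using $\GNorm{\bm{v}_h}\leqC\NormOne{\bm{v}_h}$ gives \eqref{eq:discrete-galerkin-ortho_1}. For \eqref{eq:discrete-galerkin-ortho_2} I would then invoke \Cref{lem:relation-between-norms} together with $h^{-1}\Norm{\QQh\bm{v}_h}{\Gh}=h^{-1/2}\EnergyNorm{\bm{v}_h}{s_h}$ (valid since $\QQh\bm{v}_h=(\bm{v}_h\cdot\nnh)\nnh$), obtaining $\NormOne{\bm{v}_h}\leqC h^{-1/2}\AhNorm{\bm{v}_h}$, and combine this with $\ahNorm{\bm{v}_h}\leq\AhNorm{\bm{v}_h}\leq\EnergyNorm{(\bm{v}_h,q_h)}{}$, $\Norm{q_h}{\Gh}\leq\EnergyNorm{(\bm{v}_h,q_h)}{}$, and $h_0\leq1$.

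I expect the term-by-term bookkeeping and the norm conversions to be routine; the only step that really needs attention is the $j=2$ case of the divergence terms, where one must call on the gradient-valued estimate of \Cref{lem:geometricQbError} --- which carries the extra power $h^{\kg+1}$ --- and, in the discrete setting, the inverse estimate on the pressure, so that the factor $h^{j-1}\HNormTan[j-1]{q_h}{\Gh}$ of the broken estimate (respectively the clean $h^{\kg}\Norm{q_h}{\Gh}$ of the discrete one) comes out with the correct power of $h$; using the weaker $b_1$-type estimate there would cost a power of $h$ and spoil the claimed rate.
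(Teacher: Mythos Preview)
Your proposal is correct and follows essentially the same approach as the paper: the paper's proof simply cites \Cref{lem:geometricQlError,lem:geometric_a_errors,lem:geometricQb1Error,lem:geometricQbError} for \eqref{eq:basic_delta_estimate}, adds \Cref{lem:relation-between-norms} for \eqref{eq:discrete-galerkin-ortho_1}, and invokes inverse estimates for \eqref{eq:discrete-galerkin-ortho_2}, which is exactly the term-by-term bookkeeping you spell out in detail.
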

\begin{proof}
	The estimate \eqref{eq:basic_delta_estimate} follows from \Cref{lem:geometricQlError,lem:geometric_a_errors,lem:geometricQb1Error,lem:geometricQbError}.
	For \eqref{eq:discrete-galerkin-ortho_1}, we use additionally \Cref{lem:relation-between-norms}.
	The last estimate \eqref{eq:discrete-galerkin-ortho_2} then follows from inverse estimates for $\bm{v}_h$ and $q_h$.
\end{proof}

In order make use of the stronger estimate \eqref{eq:geometric_a_errors_3} of \Cref{lem:geometric_a_errors}, both vector fields need to be in $\HSpaceTan[2]{\G}$. Since we are considering discrete vector fields, these will not be as smooth. However, the following lemma will be used for approximations of $\HSpaceTan[2]{\G}$-vector fields.

\begin{lemma}\label{lem:contGalerkinOrtho}
	Let $\bm{v},\bm{w}\in \HSpaceTan[2]{\G}$, $q,r\in H^{1}(\G)$, and $\bm{v}_{h},\bm{w}_{h}\in \VelocitySpace$, and $q_h,r_h\in \PressureSpace\subset H^{j-1}(\Gh)$
	discrete approximations of $\bm{v},\bm{w}$ and $q,r$, respectively. Then
	\begin{align*}
	\Abs{\delta_{ij} (\bm{v}_h^{\ell},q_h^{\ell})(\bm{w}_h^{\ell},r_h^{\ell})}
		&\leqC h^{\kg+1} \left(\theta_1(\bm{v},\bm{v}_h,q,q_h) + \Norm{\bm{f}}{\G}\right)\theta_1(\bm{w},\bm{w}_h,r,r_h)
		 +(i-1)h^{\kk}\theta_2(\bm{v},\bm{v}_h)\theta_2(\bm{w},\bm{w}_h)\,,
	\end{align*}
	with
	\begin{align*}
		\theta_1(\bm{v},\bm{v}_h,q,q_h)
			&\colonequals \HNormTan[2]{\bm{v}}{\G} + \HNormTan[1]{q}{\G} +h^{-1}\GNorm{\bm{v}_h-\bm{v}^{e}} + h^{j-2}\HNormTan[j-1]{q^{e}-q_h}{\G} \\
		\theta_2(\bm{v},\bm{v}_h)
			&\colonequals \ahNorm{(\bm{v}_h-\bm{v}^{e})}+\HNormTan{\bm{v}}{\G}
	\end{align*}
\end{lemma}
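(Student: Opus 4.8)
The plan is to expand $\delta_{ij}(\bm{v}_h^\ell,q_h^\ell)(\bm{w}_h^\ell,r_h^\ell)$ along its definition \eqref{eq:geom-error-delta} into the linear form $\delta_l(\bm{w}_h^\ell)$, the diffusion difference $\delta_{a_i}(\bm{v}_h^\ell,\bm{w}_h^\ell)$, and the two divergence differences $\delta_{b_j}(\bm{v}_h^\ell,r_h^\ell)$ and $\delta_{b_j}(\bm{w}_h^\ell,q_h^\ell)$, and to bound each block separately. The key device is that $\delta_l$ is linear and $\delta_{a_i},\delta_{b_j}$ are bilinear, so I can insert the smooth data by writing $\bm{v}_h^\ell=\bm{v}+(\bm{v}_h^\ell-\bm{v})$, $\bm{w}_h^\ell=\bm{w}+(\bm{w}_h^\ell-\bm{w})$, $q_h^\ell=q+(q_h^\ell-q)$, $r_h^\ell=r+(r_h^\ell-r)$. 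The pieces in which every argument is one of the smooth $\HSpaceTan[2]{\G}$- resp.\ $H^1(\G)$-fields I estimate with the sharp $O(h^{\kg+1})$ geometric error bounds \eqref{eq:geometric_a_errors_3}, \Cref{lem:geometricQb1Error_tangential}, and \Cref{lem:geometricQbError}; the remaining pieces each carry a rough factor ($\bm{v}_h-\bm{v}^e$, $\bm{w}_h-\bm{w}^e$, $q_h-q^e$, or $r_h-r^e$) that is only controlled in $H^1$, so I use the weaker $O(h^{\kg})$ bounds of \Cref{lem:geometric_a_errors} and \Cref{lem:geometricQb1Error} and absorb the rough factor into the $h^{-1}$-weighted difference norms already contained in $\theta_1$ and $\theta_2$.

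For the forcing term I apply \Cref{lem:geometricQlError}, giving $\Abs{\delta_l(\bm{w}_h^\ell)}\leqC h^{\kg+1}\Norm{\bm{f}}{\G}\NormZero{\bm{w}_h}$, and I bound $\NormZero{\bm{w}_h}\leqC\theta_1(\bm{w},\bm{w}_h,r,r_h)$ by splitting $\bm{w}_h=\bm{w}^e+(\bm{w}_h-\bm{w}^e)$, using the tangentiality of $\bm{w}$ for $\NormZero{\bm{w}^e}\leqC\HNormTan{\bm{w}}{\G}$ and $\NormZero{\bm{w}_h-\bm{w}^e}\leq\GNorm{\bm{w}_h-\bm{w}^e}$; this is how the free $\Norm{\bm{f}}{\G}$ enters the first factor. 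For the diffusion block I split $\delta_{a_i}(\bm{v}_h^\ell,\bm{w}_h^\ell)$ into $\delta_{a_i}(\bm{v},\bm{w})$, the two cross terms, and $\delta_{a_i}(\bm{v}_h^\ell-\bm{v},\bm{w}_h^\ell-\bm{w})$. The first is exactly \eqref{eq:geometric_a_errors_3}, yielding $h^{\kg+1}\HNormTan[2]{\bm{v}}{\G}\HNormTan[2]{\bm{w}}{\G}$ and, for $i=2$, the extra $h^{\kk}\HNormTan{\bm{v}}{\G}\HNormTan{\bm{w}}{\G}$; for the cross and doubly mixed terms I use \Cref{lem:geometric_a_errors}, passing a smooth factor through \Cref{lem:relation-between-norms} as $\GNorm{\bm{w}^e}\leqC\HNormTan[1]{\bm{w}}{\G}$ and $\ahNorm{\bm{w}^e}\leqC\HNormTan[1]{\bm{w}}{\G}$ while keeping the rough factor as $\GNorm{\bm{v}_h-\bm{v}^e}=h\cdot(h^{-1}\GNorm{\bm{v}_h-\bm{v}^e})$ resp.\ $\ahNorm{(\bm{v}_h-\bm{v}^e)}$. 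The leftover powers of $h$ make the doubly mixed term of higher order for $h<h_0$, and everything is matched to $h^{\kg+1}\theta_1(\bm{v},\bm{v}_h,q,q_h)\,\theta_1(\bm{w},\bm{w}_h,r,r_h)$ and to $(i-1)h^{\kk}\theta_2(\bm{v},\bm{v}_h)\theta_2(\bm{w},\bm{w}_h)$.

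For the divergence block I estimate $\delta_{b_j}(\bm{v}_h^\ell,r_h^\ell)$ and obtain $\delta_{b_j}(\bm{w}_h^\ell,q_h^\ell)$ by exchanging the roles of the two data pairs. After the analogous splitting of velocity and pressure, the clean term $\delta_{b_j}(\bm{v},r)$ uses crucially that $\bm{v}$ is tangential: for $j=1$ it is \Cref{lem:geometricQb1Error_tangential}, which is $O(h^{\kg+1})$ for $\kg>1$ and, invoking $\bm{v}\in\HSpaceTan[2]{\G}$, also for $\kg=1$; for $j=2$ it is \Cref{lem:geometricQbError}; in either case the bound is $\leqC h^{\kg+1}\HNormTan[2]{\bm{v}}{\G}\,\HNormTan[1]{r}{\G}$. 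The three mixed terms carry at least one rough factor and are bounded by \Cref{lem:geometricQb1Error} (for $j=1$) or \Cref{lem:geometricQbError} (for $j=2$), with the rough factors collected into $h^{-1}\GNorm{\bm{v}_h-\bm{v}^e}$ and $h^{j-2}\HNormTan[j-1]{r^e-r_h}{\G}$, i.e.\ into the appropriate $\theta_1$. Summing the four blocks and discarding the higher-order remainders yields the claimed estimate, the $h^{\kk}$-term carrying the prefactor $(i-1)$ because it arises solely from the Gaussian-curvature discrepancy in $a_2$.

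I expect the main difficulty to be the bookkeeping that keeps the rate at $h^{\kg+1}$ rather than $h^{\kg}$. This is what forces one to peel the smooth tangential $\bm{v}$ off the discrete velocity before estimating the $b_1$-difference, so that the tangential refinement \Cref{lem:geometricQb1Error_tangential} applies---its $\kg=1$ branch genuinely needs $\bm{v}\in\HSpaceTan[2]{\G}$, whereas the generic \Cref{lem:geometricQb1Error} alone would only give $O(h^{\kg})$---and one must check that every rough-factor contribution indeed carries a spare power of $h$, so that it fits into the $h^{-1}$-weighted pieces of $\theta_1$ and $\theta_2$ without loss of order.
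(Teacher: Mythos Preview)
Your proposal is correct and follows essentially the same approach as the paper: both split the discrete arguments into a smooth part plus a discrete remainder, apply the sharp $O(h^{\kg+1})$ estimates (\Cref{lem:geometric_a_errors}~\eqref{eq:geometric_a_errors_3}, \Cref{lem:geometricQbError}, \Cref{lem:geometricQb1Error_tangential}) to the purely smooth pieces, and the weaker $O(h^{\kg})$ bounds to the cross and doubly mixed pieces, whose rough factors are then absorbed into the $h^{-1}$-weighted terms of $\theta_1,\theta_2$. The only organisational difference is that the paper writes the four-fold split directly at the level of $\delta_{ij}$, whereas you first expand $\delta_{ij}$ into $\delta_l,\delta_{a_i},\delta_{b_j}$ and split each bilinear piece separately---this is slightly cleaner, since $\delta_{ij}$ is not genuinely bilinear due to the linear forcing term $\delta_l(\bm{w})$.
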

\begin{proof}
	We split the term by writing
	\begin{align*}
	\delta_{ij} ((\bm{v}_h^{\ell},q_h^{\ell}),(\bm{w}_h^{\ell},r_h^{\ell}))
	& =
	\delta_{ij} ((\bm{v},q),(\bm{w},r))
	+ \delta_{ij} ((\bm{v}_h^{\ell}-\bm{v},q_h^{\ell}-q),(\bm{w},r))\\
	&\quad + \delta_{ij} ((\bm{v},q),(\bm{w}_h^{\ell}-\bm{w},r_h^{\ell}-r))
	+ \delta_{ij} ((\bm{v}_h^{\ell}-\bm{v},q_h^{\ell}-q),(\bm{w}_h^{\ell}-\bm{w},r_h^{\ell}-r)).
	\end{align*}
	We use \eqref{eq:geometric_a_errors_3} of \Cref{lem:geometric_a_errors}, and \Cref{lem:geometricQlError,lem:geometricQbError,lem:geometricQb1Error_tangential}
	to estimate the first term by
	\begin{align*}
	\Abs{\delta_{ij} ((\bm{v},q),(\bm{w},r))}
	&\leqC h^{\kg+1}\left(\HNormTan[2]{\bm{v}}{\G}+ \HNormTan[1]{q}{\G}+ \Norm{\bm{f}}{\G}\right)\left(\HNormTan[2]{\bm{w}}{\G}+ \HNormTan[1]{r}{\G}\right)\\
	&\qquad + (i-1)h^{\kk}\HNormTan{\bm{v}}{\G}\HNormTan{\bm{w}}{\G}\;.
	\end{align*}
	Similarly, we can estimate the remaining terms to obtain the assertion.
\end{proof}
\section{A Priori Error Analysis}\label{sec:error_estimates}

We use the inf-sup stability from \Cref{sec:inf_sups} and geometric estimates from \Cref{sec:geometric_errors} to show well-posedness and a priori error estimates in the energy norm for the four different discrete problems introduced in \Cref{sec:discrete_problem_formulation}. Afterwards, we show improvements in the tangential $H^1$- and $L^2$-norms of the errors in the velocity.

We simultaneously analyze the discrete problem with the bilinear forms $a_i$ and $b_j$. Each discrete problem $i$, $j$ can have a pair of discrete solutions $\bm{u}_h^{ij}$, $p_h^{ij}$. To keep the notation short, we omit the indices and abbreviate the discrete solutions as $\bm{u}_h$ and $p_h$.

\subsection{Well-Posedness and Stability of the Discrete Problem}\label{sec:well-posedness-discrete-prob}

We have proven in \Cref{sec:inf_sups} that as long as we assume a suitable partitioning of $\Gh$ into macroelements such that \Cref{prop:local-super-approximation,prop:stable-reference-spaces} are satisfied for the pair of discrete spaces $(\VelocitySpace,\PressureSpace)$ as defined in \Cref{def:stable-mixed-spaces}, the lifted stability estimate \eqref{eq:discrete-inf-sup-V} holds. Combined with coercivity this yields well-posedness and stability of \Cref{prob:DiscreteProblem}.

\begin{lemma}[Coercivity]
		Let $(\VelocitySpace,\PressureSpace)$ be a pair of discrete spaces. For $i=1,2$, and $\kk>0$ if $i=2$, the form $a_i+s_h$ is coercive with respect to $\AhNorm{\cdot}$, i.e., there exists a $h_0$, and $\eta_i>0$ independent of $h$, such that for all $h\leq h_0$
		\begin{align*}
			a_{i}(\bm{v}_h,\bm{v}_h)+s_h(\bm{v}_h,\bm{v}_h)\geq \eta_i \AhNorm{\bm{v}_h}^2\,,
		\end{align*}
		for all $\bm{v}_h\in \VelocitySpace$.
	\end{lemma}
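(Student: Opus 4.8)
The plan is to reduce the discrete coercivity on $\Gh$ to the coercivity of the continuous form $a$ on $\G$ --- i.e.\ the surface Korn inequality together with the friction (mass) term --- and to absorb the geometric and formulation--dependent remainders for $h$ small. Since $\AhNorm{\bm v_h}^2 = \ahNorm{\bm v_h}^2 + \EnergyNorm{\bm v_h}{s_h}^2$ with $\ahNorm{\bm v_h}=\HNormTan[1]{\PPh\bm v_h}{\Gh}$ and $\EnergyNorm{\bm v_h}{s_h}^2=s_h(\bm v_h,\bm v_h)$, and $s_h$ occurs identically on both sides of the claimed inequality, it is enough to bound $\ahNorm{\bm v_h}^2$ from above by $a_i(\bm v_h,\bm v_h)+s_h(\bm v_h,\bm v_h)$ up to terms that can be absorbed.

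For $i=1$ I would start from $a_1(\bm v_h,\bm v_h)=\Norm{\bm{E}_{\Gh}(\PPh\bm v_h)}{\Gh}^2+\Norm{\PPh\bm v_h}{\Gh}^2$ and transfer to the smooth surface via the tangential lift $\bm w\colonequals\PP(\PPh\bm v_h)^\ell\in\HSpaceTan[1]{\G}$. The point of working with $\PPh\bm v_h$ rather than $\bm v_h$ is that its discrete normal component vanishes, so \Cref{lem:basicH1Estimate} applied to $\PPh\bm v_h$ (and the companion $L^2$--comparison, here for the symmetric gradient) gives $\Norm{(\bm{E}_\G\bm w)^e-\bm{E}_{\Gh}(\PPh\bm v_h)}{\Gh}+\Norm{\bm w^e-\PPh\bm v_h}{\Gh}\leqC h^{\kg}\HNorm[1]{\PPh\bm v_h}{\Gh}\leqC h^{\kg}\ahNorm{\bm v_h}$, using that for a tangential field the normal part of $\DGh(\PPh\bm v_h)$ is controlled by $\Weingarten_h$. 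Hence $a_1(\bm v_h,\bm v_h)\geqC a(\bm w,\bm w)-Ch^{\kg}\ahNorm{\bm v_h}^2$; coercivity of $a$ on $\G$ turns $a(\bm w,\bm w)$ into $\HNormTan[1]{\bm w}{\G}^2$, and a second application of \Cref{lem:basicH1Estimate} in the reverse direction gives $\ahNorm{\bm v_h}\leqC\HNormTan[1]{\bm w}{\G}+Ch^{\kg}\ahNorm{\bm v_h}$, so the $O(h^{\kg})$ terms can be absorbed for $h$ small. This yields $a_1(\bm v_h,\bm v_h)\geqC\ahNorm{\bm v_h}^2$, and adding $s_h(\bm v_h,\bm v_h)$ on both sides settles $i=1$.

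For $i=2$ I would write $a_2(\bm v_h,\bm v_h)=a_1(\bm v_h,\bm v_h)-\bigl(a_1(\bm v_h,\bm v_h)-a_2(\bm v_h,\bm v_h)\bigr)$, invoke the coercivity just proved, and control the difference with the estimate for $a_1-a_2$ derived from \Cref{lem:geometric_a_errors} (the corollary above), $\Abs{a_1(\bm v_h,\bm v_h)-a_2(\bm v_h,\bm v_h)}\leqC h^{\kg}\NormOne{\bm v_h}^2+h^{\kk}\ahNorm{\bm v_h}^2$. Using \Cref{lem:relation-between-norms} to write $\NormOne{\bm v_h}^2\leqC\ahNorm{\bm v_h}^2+h^{-1}\EnergyNorm{\bm v_h}{s_h}^2$, the difference is of size $(h^{\kg}+h^{\kk})\ahNorm{\bm v_h}^2+h^{\kg-1}\EnergyNorm{\bm v_h}{s_h}^2$; since $\kk>0$, the $\ahNorm{\bm v_h}^2$--part tends to $0$ with $h$ and is absorbed into the $i=1$ coercivity, while the $\EnergyNorm{\bm v_h}{s_h}^2$--part is absorbed into $s_h(\bm v_h,\bm v_h)$ on the left for $h$ small. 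This produces $\eta_2>0$.

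I expect the main obstacle to be this second step. The formulation difference $a_1-a_2$ contains the discrete divergence term and the curvature mismatch $\Kh-\KhTilde$, and it is only through the consistency of the discrete surface (\Cref{lem:basicH1Estimate}) and the curvature--approximation assumption \eqref{eq:order-assumptions-K} that these become of lower order --- the hypothesis $\kk>0$ is exactly what makes the curvature contribution vanish with $h$. Moreover one must rewrite the weighted velocity norm $\NormOne{\cdot}$, which carries the penalized normal component, through \Cref{lem:relation-between-norms}, so that this component is measured by $\EnergyNorm{\cdot}{s_h}$ and is hence absorbable into $s_h$; for the lowest--order surface ($\kg=1$), where the factor $h^{\kg-1}$ is $O(1)$, this absorption is tightest and is the point that needs the most care.
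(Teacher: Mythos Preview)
Your overall strategy---lift to $\G$, invoke continuous coercivity (surface Korn plus the mass term), and absorb geometric remainders---is exactly the paper's route, and the treatment of $i=2$ via the $a_1-a_2$ difference is equivalent to the paper's direct use of $\delta_{a_2}$ from \Cref{lem:geometric_a_errors}.

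There is, however, a technical gap in your $i=1$ step: the field $\bm w\colonequals\PP(\PPh\bm v_h)^\ell$ is \emph{not} in $\HSpaceTan[1]{\G}$. Since $\PPh$ (equivalently $\nnh$) is discontinuous across element edges of $\Gh$, the projected field $\PPh\bm v_h$ has jumps of size $O(h^{\kg})\lvert\bm v_h\rvert$, and so does its lift. Continuous coercivity of $a$ (Korn's second inequality) needs a globally $H^1$ tangential field; applied only elementwise the Korn constant degenerates like $h^{-2}$, which is fatal. The paper avoids this by lifting \emph{first} and projecting with the smooth $\PP$ afterwards: it works with $\PP\bm v_h^\ell\in\HSpaceTan[1]{\G}$, which is legitimate because $\bm v_h\in\HSpace[1]{\Gh}$ and $\PP$ is smooth. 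The price is that \Cref{lem:basicH1Estimate} is invoked for $\bm v_h$ itself, so the remainder carries the full $\GNorm{\bm v_h}\leqC\NormOne{\bm v_h}$ rather than only $\ahNorm{\bm v_h}$; one then uses \Cref{lem:relation-between-norms}, $\NormOne{\bm v_h}\leqC\ahNorm{\bm v_h}+h^{-1/2}\EnergyNorm{\bm v_h}{s_h}$, to rewrite the remainder as $h^{\kg-\frac12}\AhNorm{\bm v_h}^2$. This is precisely the mechanism you describe for $i=2$, and it is already needed for $i=1$. With that single change---replace $\PP(\PPh\bm v_h)^\ell$ by $\PP\bm v_h^\ell$---your argument goes through and coincides with the paper's.
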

	\begin{proof}
		Let $\bm{v}_h\in \VelocitySpace$. Then by \Cref{lem:basicH1Estimate}, we have
	\begin{align*}
		\ahNorm{\bm{v}_h}^2
		&\leqC \HNormTan{\PP\bm{v}_h^{\ell}}{\G}^2 + h^{\kg} \ahNorm{\bm{v}_h}\NormOne{\bm{v}_h}+ h^{2\kg}\NormOne{\bm{v}_h}^2\\
		&\leqC \HNormTan{\PP\bm{v}_h^{\ell}}{\G}^2 + h^{\kg-\frac{1}{2}} \AhNorm{\bm{v}_h}^2.
	\end{align*}
	By \Cref{lem:geometric_a_errors}, we further have
	\begin{align*}
		\HNormTan{\PP\bm{v}_h^{\ell}}{\G}^2 & \leq a_i(\bm{v}_h,\bm{v}_h) + h^{\kg-\frac{1}{2}} \AhNorm{\bm{v}_h}^2 & \text{ for }i &= 1\,, \\
		\HNormTan{\PP\bm{v}_h^{\ell}}{\G}^2 & \leq a_i(\bm{v}_h,\bm{v}_h) + (h^{\kg-\frac{1}{2}}+h^{\kk}) \AhNorm{\bm{v}_h}^2 & \text{ for }i &= 2\,.
	\end{align*}
	This implies coercivity.
\end{proof}

\begin{lemma}[Well-posedness]
	Let $(\VelocitySpace,\PressureSpace)$ be a stable pair of discrete spaces. For $i,j\in\{1,2\}$, $\PressureSpace\subset H^{j-1}(\Gh)$, $\kk>0$ if $i=2$, and $h\leq h_0$ small enough,
	each discrete saddle point problem in \Cref{prob:DiscreteProblem} has a unique solution. Moreover, there exists a constant $\beta_3 > 0$ independent of $h$ such that
	\begin{align}\label{eq:discrete-inf-sup-Brezzi}
	\sup_{0\neq(\bm{v},q)\in\VelocitySpace\times\PressureSpaceAvg}
	\frac{\mathcal{B}_{ij}\big((\bm{v},q),(\bm{w},r)\big)}{\EnergyNorm{(\bm{v},q)}{}} &\geq \beta_3 \EnergyNorm{(\bm{w},r)}{} & \text{for all }(\bm{w},r)\in\VelocitySpace\times\PressureSpaceAvg\,,
	\end{align}
	where $\PressureSpaceAvg = \PressureSpace\cap \LSpaceAvg{\Gh}$.
\end{lemma}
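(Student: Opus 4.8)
The plan is to establish the combined inf-sup estimate \eqref{eq:discrete-inf-sup-Brezzi} from the coercivity of $a_i+s_h$ and the V-stability of $b_j$ by the classical Brezzi--Fortin argument, taking care of the weighted velocity norms, and then to read off unique solvability from the fact that $\VelocitySpace\times\PressureSpaceAvg$ is finite dimensional. First I would record two auxiliary facts. (i) The form $a_i+s_h$ is bounded with respect to $\AhNorm{\cdot}$, i.e.\ $\Abs{a_i(\bm{v},\bm{w})+s_h(\bm{v},\bm{w})}\leqC\AhNorm{\bm{v}}\AhNorm{\bm{w}}$ with an $h$-independent constant; this is just Cauchy--Schwarz in the definitions of $a_i$, $s_h$, where for $i=2$ the Gaussian curvature term is absorbed into $\ahNorm{\cdot}$ using $\LInfNorm{\KhTilde}{\Gh}\leqC 1$. (ii) $\AhNorm{\bm{v}}\leqC\NormOne{\bm{v}}$ for $h$ small, which is part of \Cref{lem:relation-between-norms}. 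Under the stated hypotheses (in particular $\kk>0$ if $i=2$) the coercivity lemma applies, giving $a_i(\bm{v},\bm{v})+s_h(\bm{v},\bm{v})\geq\eta_i\AhNorm{\bm{v}}^2$.

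Fix $(\bm{w},r)\in\VelocitySpace\times\PressureSpaceAvg$. Since $\VelocitySpace$ is finite dimensional, the supremum in the V-stability estimate \eqref{eq:discrete-inf-sup-V} for $b_j$ is attained, so after rescaling I can pick $\bm{v}_r\in\VelocitySpace$ with $b_j(\bm{v}_r,r)\geq\beta_2\Norm{r}{\Gh}^2$ and $\NormOne{\bm{v}_r}=\Norm{r}{\Gh}$, whence $\AhNorm{\bm{v}_r}\leqC\Norm{r}{\Gh}$ by (ii). I would then test $\mathcal{B}_{ij}$ with the pair $(\bm{v},q)\colonequals(\bm{w}+\delta\bm{v}_r,-r)$ for a small parameter $\delta>0$. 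Expanding, the two $b_j$-terms $b_j(\bm{w},-r)+b_j(\bm{w},r)$ cancel, the diagonal contribution $a_i(\bm{w},\bm{w})+s_h(\bm{w},\bm{w})$ is bounded below by $\eta_i\AhNorm{\bm{w}}^2$ by coercivity, the cross term $\delta\big(a_i(\bm{v}_r,\bm{w})+s_h(\bm{v}_r,\bm{w})\big)$ is absorbed via (i), $\AhNorm{\bm{v}_r}\leqC\Norm{r}{\Gh}$ and Young's inequality, and $\delta\,b_j(\bm{v}_r,r)$ contributes $\delta\beta_2\Norm{r}{\Gh}^2$. Choosing $\delta$ small enough—depending only on $\eta_i$, $\beta_2$ and the $h$-independent constants in (i), (ii)—yields $\mathcal{B}_{ij}\big((\bm{v},q),(\bm{w},r)\big)\geqC\EnergyNorm{(\bm{w},r)}{}^2$, while the triangle inequality together with $\AhNorm{\bm{v}_r}\leqC\Norm{r}{\Gh}$ gives $\EnergyNorm{(\bm{v},q)}{}\leqC\EnergyNorm{(\bm{w},r)}{}$. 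Dividing the two bounds proves \eqref{eq:discrete-inf-sup-Brezzi} with $\beta_3>0$ independent of $h$.

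For well-posedness I would observe that $\mathcal{B}_{ij}$ is symmetric on $\VelocitySpace\times\PressureSpaceAvg$, since $a_i$ and $s_h$ are symmetric and the two $b_j$-terms interchange roles; hence \eqref{eq:discrete-inf-sup-Brezzi} also provides the inf-sup in the other argument, and as both arguments range over the same finite-dimensional space the associated linear operator is injective, therefore bijective, so each of the four discrete problems in \Cref{prob:DiscreteProblem} has a unique solution (the right-hand side $(\bm{v}_h,q_h)\mapsto\Inner{\bm{f}\circ\pi}{\bm{v}_h}{\Gh}$ being a bounded functional). I expect the only genuine subtlety to be the norm bookkeeping: V-stability controls the pressure only through the weighted velocity norm $\NormOne{\cdot}$, which is not the energy-norm component $\AhNorm{\cdot}$, so one must invoke \Cref{lem:relation-between-norms} in the correct direction ($\AhNorm{\cdot}\leqC\NormOne{\cdot}$, uniformly in $h$) before closing the estimate; everything else is a routine Brezzi argument.
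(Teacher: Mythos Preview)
Your proposal is correct and follows essentially the same approach as the paper. The paper's proof is very terse: it records the same norm relation $\AhNorm{\bm{v}}\leqC\NormOne{\bm{v}}$ (to convert V-stability from the $\NormOne{\cdot}$-norm to the $\AhNorm{\cdot}$-norm), then simply cites ``standard arguments'' for the Brezzi inf-sup and well-posedness; you have spelled out precisely those standard arguments via the usual test pair $(\bm{w}+\delta\bm{v}_r,-r)$. One minor remark: the bound on the Gaussian curvature term in $a_2$ does not literally require $\LInfNorm{\KhTilde}{\Gh}\leqC 1$, but follows by splitting off $\K^e$ and using assumption \eqref{eq:order-assumptions-K} together with boundedness of $\K$; the conclusion is unaffected.
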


\begin{proof}
	Note that as $\AhNorm{\bm{v}}\leqC \NormOne{\bm{v}}$ for $\bm{v}\in \VelocitySpace$ indepent of $h$,
	\eqref{eq:discrete-inf-sup-V} implies
	\begin{equation}\label{eq:V-stability-bj}
	\sup_{\bm{v}\in\VelocitySpace}
	\frac{b_{j}\big(\bm{v},q)}{\AhNorm{\bm{v}}} \geq \beta_j \Norm{q}{\Gh} \quad\text{ for all }q\in \PressureSpaceAvg\,.
	\end{equation}
	Well-posedness and Brezzi-inf-sup stability \eqref{eq:discrete-inf-sup-Brezzi} then follow by coercivity using standard arguments, see, e.g., \cite[Lemma 5.2]{ESW2005Finite}.
\end{proof}

\subsection{Error Estimates in the Discrete Energy Norm}

From the stability condition and the perturbed Galerkin orthogonality we obtain directly an error estimate in the energy norm by standard arguments, cf. \cite{HP2022Tangential,HLL2020Analysis}.



\begin{theorem}\label{thm:energy-norm-estimate}
	Let $(\VelocitySpace,\PressureSpace)$ be a stable pair of discrete spaces of order $\ku$ as in \Cref{def:stable-mixed-spaces}. For $i,j\in\{1,2\}$ we assume $\PressureSpace\subset H^{j-1}(\Gh)$ and denote by $\bm{u}_{h}\in \VelocitySpace, p_h\in\PressureSpace\cap \LSpaceAvg{\Gh}$ the discrete solution of \eqref{eq:discrete-variational-form}, and by $\bm{u}\in \HSpaceTan[\ku+1]{\G}, p\in H^{\ku}(\G)$ the continuous solution of \eqref{eq:variational-form}. For $h<h_0$ small enough, we have the estimates
	\begin{align*}
		\EnergyNorm{(\bm{u}^{e}-\IIVh\bm{u}^{e},p^e - \IIQh p^e)}{}
			&\leqC\;h^{\ku}\left(\HNormTan[\ku+1]{\bm{u}}{\G} + \HNormTan[\ku]{p}{\G}\right)\,, \\
		\EnergyNorm{(\IIVh\bm{u}^{e}-\bm{u}_{h}, \IIQh p^e - p_h)}{}
			&\leqC\;h^{m_i}\left(\HNormTan[\ku+1]{\bm{u}}{\G} + \HNormTan[\ku]{p}{\G} + \Norm{\bm{f}}{\G}\right)
	\end{align*}
	with $m_1\colonequals \min\{\ku, \kg-\frac{1}{2}\}$ and $m_2\colonequals \min\{\ku, \kg-\frac{1}{2},\kk\}$, and hence
	\begin{align*}
		\EnergyNorm{(\bm{u}^{e}-\bm{u}_{h}, p^e - p_h)}{}
			&\leqC\;h^{m_i}\left(\HNormTan[\ku+1]{\bm{u}}{\G} + \HNormTan[\ku]{p}{\G} + \Norm{\bm{f}}{\G}\right).
	\end{align*}
\end{theorem}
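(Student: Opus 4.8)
The plan is the usual second-Strang argument, splitting the total error into an interpolation part and a discrete part. By the triangle inequality,
\[
  \EnergyNorm{(\bm u^e-\bm u_h,\,p^e-p_h)}{} \leq \EnergyNorm{(\bm u^e-\IIVh\bm u^e,\,p^e-\IIQh p^e)}{} + \EnergyNorm{(\IIVh\bm u^e-\bm u_h,\,\IIQh p^e-p_h)}{},
\]
so it is enough to bound the two summands on the right. For the interpolation term I would apply \Cref{lem:interpolation-properties} (with $l=\ku+1$ on the velocity components and $l=\ku$ on the pressure), use \Cref{lem:relation-between-norms} to pass between the tangential and the embedded $H^1$-norm, and note that the $s_h$-part is trivially controlled by $h^{-1/2}\Norm{\bm u^e-\IIVh\bm u^e}{\Gh}\leqC h^{\ku+1/2}\HNormTan[\ku+1]{\bm u}{\G}$; replacing $\IIQh p^e$ by its mean-free part on $\Gh$ (the mean being $O(h^{\ku})$) does not change the rate. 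This gives the first displayed estimate.

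For the discrete part, set $\bm e_h\colonequals\IIVh\bm u^e-\bm u_h\in\VelocitySpace$ and $\varepsilon_h\colonequals\IIQh p^e-p_h\in\PressureSpaceAvg$ (after the mean correction). By the Brezzi-type stability \eqref{eq:discrete-inf-sup-Brezzi} there is a test pair $(\bm v_h,q_h)$ of unit discrete energy norm with $\beta_3\EnergyNorm{(\bm e_h,\varepsilon_h)}{}\leq\mathcal B_{ij}\big((\bm e_h,\varepsilon_h),(\bm v_h,q_h)\big)$. I would then insert the discrete equations \eqref{eq:discrete-variational-form} (which give $\mathcal B_{ij}((\bm u_h,p_h),(\bm v_h,q_h))=\Inner{\bm f\circ\pi}{\bm v_h}{\Gh}$) and the continuous equations \eqref{eq:variational-form} tested with $\PP\bm v_h^\ell$ and $q_h^\ell$ (which give $\mathcal B((\bm u,p),(\PP\bm v_h^\ell,q_h^\ell))=\Inner{\bm f}{\PP\bm v_h^\ell}{\G}$), so that the numerator splits as
\[
  \mathcal B_{ij}\big((\IIVh\bm u^e-\bm u^e,\,\IIQh p^e-p^e),(\bm v_h,q_h)\big) \;+\; \delta_{ij}(\bm u,p)(\bm v_h^\ell,q_h^\ell) \;+\; s_h(\bm u^e,\bm v_h) \;-\; \Inner{\bm f^e}{\QQh\bm v_h}{\Gh},
\]
where I used that $a_i$ and $b_1$ carry $\PPh$ on their vector slots, that $\GradGh q$ is tangent to $\Gh$ (so $b_2$ is unaffected by $\PPh$), and the definitions of $\delta_{a_i},\delta_{b_j},\delta_l,\delta_{ij}$ from \eqref{eq:geom-error-delta}. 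The first (consistency-of-interpolation) term I would bound by $O(h^{\ku})$ times $\EnergyNorm{(\bm v_h,q_h)}{}$ and the data norms, using boundedness of $a_i$ and $b_1$, the trivial bound for $s_h$, an inverse estimate from \Cref{prop:inverse-estimates} on $q_h$ for the $b_2(\IIVh\bm u^e-\bm u^e,q_h)$ contribution, and — crucially for $j=2$ — rewriting $b_2=b_1-(b_1-b_2)$ and applying \Cref{lem:difference-discrete-b} to the pressure interpolation error in the $b_2(\bm v_h,\IIQh p^e-p^e)$ contribution. The geometric term $\delta_{ij}(\bm u,p)(\bm v_h^\ell,q_h^\ell)$ is estimated by \eqref{eq:discrete-galerkin-ortho_2} of \Cref{lem:basic_delta_estimate} by $\big(h^{\kg-1/2}+(i-1)h^{\kk}\big)\EnergyNorm{(\bm v_h,q_h)}{}(\HNormTan[1]{\bm u}{\G}+\HNorm{p}{\G}+\Norm{\bm f}{\G})$, and the last two terms are $O(h^{\kg-1/2})$: for $s_h(\bm u^e,\bm v_h)$ one uses $\bm u^e\cdot\nn^e=0$, $\Norm{\nnh-\nn^e}{L^\infty(\Gh)}\leqC h^{\kg}$ and $\Norm{\bm v_h\cdot\nnh}{\Gh}=h^{1/2}\EnergyNorm{\bm v_h}{s_h}$, and for $\Inner{\bm f^e}{\QQh\bm v_h}{\Gh}$ one uses tangentiality of $\bm f$ likewise. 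Since $\EnergyNorm{(\bm v_h,q_h)}{}=1$ and $\HNormTan[1]{\bm u}{\G}\leq\HNormTan[\ku+1]{\bm u}{\G}$, $\HNorm{p}{\G}\leq\HNormTan[\ku]{p}{\G}$, combining yields the second displayed estimate with $m_1=\min\{\ku,\kg-\tfrac12\}$, $m_2=\min\{\ku,\kg-\tfrac12,\kk\}$; the last estimate then follows by the triangle inequality.

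I expect the main obstacle to be the bookkeeping in the second step: making precise that, up to the two explicitly controlled cross-terms $s_h(\bm u^e,\cdot)$ and $\Inner{\bm f^e}{\QQh\cdot}{\Gh}$, the consistency defect equals $\delta_{ij}(\bm u,p)(\cdot)$, and, for the $b_2$-formulation, squeezing the pressure interpolation error through \Cref{lem:difference-discrete-b} so that no stray factor $h^{-1}$ from $\Norm{\GradGh(\cdot)}{\Gh}$ survives. The $\kg-\tfrac12$ appearing in $m_i$ is the unavoidable consequence of measuring the discrete velocity in the energy norm: the geometric errors are naturally $O(h^{\kg})$ against the weighted norm $\NormOne{\cdot}$, which \Cref{lem:relation-between-norms} bounds only by $h^{-1/2}\EnergyNorm{\cdot}{A_h}$. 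Note also that the sharper, $H^2$-based estimate of \Cref{lem:contGalerkinOrtho} (which would give $h^{\kg+1}$) is not available here, since the test pair $(\bm v_h,q_h)$ carries no second-order regularity; this is why only \eqref{eq:discrete-galerkin-ortho_2} applies.
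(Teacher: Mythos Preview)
Your argument is correct and follows essentially the same second-Strang route as the paper's proof: split into interpolation and discrete error, invoke the Brezzi stability \eqref{eq:discrete-inf-sup-Brezzi} for the discrete part, and identify the resulting consistency defect as the geometric term $\delta_{ij}(\bm u,p)(\bm v_h^\ell,q_h^\ell)$ plus $s_h(\bm u^e,\bm v_h)$, bounded via \eqref{eq:discrete-galerkin-ortho_2}. Your bookkeeping is in fact \emph{more} explicit than the paper's: you correctly isolate the residual $\Inner{\bm f^e}{\QQh\bm v_h}{\Gh}$ (arising because $\delta_l$ is defined with $\PPh\bm v^e$ while the discrete right-hand side carries the full $\bm v_h$), you handle the mean-free correction of $\IIQh p^e$ needed to place $\varepsilon_h$ in $\PressureSpaceAvg$, and for $j=2$ you route the pressure-interpolation contribution through $b_1$ and \Cref{lem:difference-discrete-b} to avoid the stray $h^{-1}$. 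The paper's proof simply bounds the interpolation part by continuity of $\mathcal B_{ij}$ in the energy norm and writes the defect as $\delta_{ij}+s_h(\bm u^e,\cdot)$ without singling out these points; your treatment makes the same estimate go through cleanly.
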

\begin{proof}
	The first estimate is just interpolation, \Cref{lem:interpolation-properties}. For the second estimate we note that the discrete inf-sup condition \eqref{eq:discrete-inf-sup-Brezzi}, \Cref{lem:basic_delta_estimate} \eqref{eq:discrete-galerkin-ortho_2}, and the interpolation error estimate imply that
	\begin{align*}
		\beta_3 \EnergyNorm{(\IIVh\bm{u}^{e}-\bm{u}_{h}, \IIQh p^e - p_h)}{} & \leq
		\sup_{(\bm{v}_h,q_h)\in\VelocitySpace\times\PressureSpaceAvg}
		\frac{\mathcal{B}_{ij}\big((\IIVh\bm{u}^{e}-\bm{u}_{h}, \IIQh p^e - p_h),(\bm{v}_h,q_h)\big)}{\EnergyNorm{(\bm{v}_h,q_h)}{}}\\
		&\leq \|\mathcal{B}_{ij}\|\EnergyNorm{(\IIVh\bm{u}^{e}-\bm{u}, \IIQh p^e - p)}{} \\
		&\qquad +
		\sup_{(\bm{v}_h,q_h)\in\VelocitySpace\times\PressureSpaceAvg}
		\frac{\delta_{ij}(\bm{u},p)(\bm{v}_h^{\ell},q_h^\ell) + s_{h}(\bm{u}^{e},\bm{v}_h)}{\EnergyNorm{(\bm{v}_h,q_h)}{}}\\
		&\leqC\;h^{m_i}\left(\HNormTan[\ku+1]{\bm{u}}{\G} + \HNormTan[\ku]{p}{\G}+\Norm{\bm{f}}{\G}\right),
	\end{align*}
	where we have used that $\EnergyNorm{\bm{u}^e}{s_h}\leqC h^{\kg-\frac{1}{2}} \Norm{\bm{u}}{\G}$ for all $\bm{u}\in \HSpaceTan[1]{\G}$.
\end{proof}

The following estimates for the normal parts of the solution and estimates in a $H^1$-norm follow directly from the energy norm estimates.

\begin{corollary}\label{lem:velocity_error_norms}
	The same conditions apply as in \Cref{thm:energy-norm-estimate}.
	For $h<h_0$ small enough, we have the estimates
	\begin{align*}
		\Norm{\QQ(\bm{u}-\bm{u}_{h}^{\ell})}{\G} & \leqC h^{m_i+\frac{1}{2}} \left(\HNormTan[\ku+1]{\bm{u}}{\G} + \HNormTan[\ku]{p}{\G}+\Norm{\bm{f}}{\G}\right)\\
		\GNorm{\bm{u}^{e}-\bm{u}_{h}}& \leqC h^{m_i-\frac{1}{2}}\left(\HNormTan[\ku+1]{\bm{u}}{\G} + \HNormTan[\ku]{p}{\G}+\Norm{\bm{f}}{\G}\right)\,,
	\end{align*}
	with $m_i$ as in \Cref{thm:energy-norm-estimate}.
\end{corollary}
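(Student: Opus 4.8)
The plan is to derive both estimates directly from the energy-norm bound of \Cref{thm:energy-norm-estimate} by unpacking the definition of the discrete energy norm. Recall that $\EnergyNorm{(\bm{u}_h,p_h)}{}^2 = \EnergyNorm{\bm{u}_h}{A_h,\Gh}^2 + \Norm{p_h}{\Gh}^2$, and that $\EnergyNorm{\bm{u}_h}{A_h,\Gh}^2 = \ahNorm[\Gh]{\bm{u}_h}^2 + \EnergyNorm{\bm{u}_h}{s_h}^2$ with $\EnergyNorm{\bm{u}_h}{s_h}^2 = s_h(\bm{u}_h,\bm{u}_h) = h^{-1}\Norm{\bm{u}_h\cdot\nnh}{\Gh}^2$. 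The key observation for the first estimate is that the normal component $\Norm{\QQh\bm{u}_h}{\Gh}$ is essentially $\Norm{\bm{u}_h\cdot\nnh}{\Gh}$, so it is controlled by $h^{1/2}\EnergyNorm{\bm{u}_h}{s_h}$, picking up a factor $h^{1/2}$. For the second estimate, the $\GNorm{\cdot}$-norm is controlled by $\AhNorm{\cdot}$ up to an inverse-estimate factor $h^{-1/2}$ coming from \Cref{lem:relation-between-norms}.

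First I would apply \Cref{thm:energy-norm-estimate} to get $\EnergyNorm{(\bm{u}^e-\bm{u}_h, p^e-p_h)}{} \leqC h^{m_i}(\HNormTan[\ku+1]{\bm{u}}{\G} + \HNormTan[\ku]{p}{\G} + \Norm{\bm{f}}{\G})$. This immediately gives $\EnergyNorm{\bm{u}^e - \bm{u}_h}{s_h} \leqC h^{m_i}(\dots)$, i.e. $h^{-1/2}\Norm{(\bm{u}^e-\bm{u}_h)\cdot\nnh}{\Gh} \leqC h^{m_i}(\dots)$. Next I would relate the discrete normal projection $\QQh$ to the exact one $\QQ$ via the lift: since $\bm{u}^e\cdot\nn = 0$ exactly (as $\bm{u}$ is tangential) and $\LInfNorm{\nnh - \nn^e}{\Gh} \leqC h^{\kg}$ (a standard geometric estimate used throughout the paper), one has $\Norm{\QQ(\bm{u}^e - \bm{u}_h)}{\Gh} \leqC \Norm{(\bm{u}^e-\bm{u}_h)\cdot\nnh}{\Gh} + h^{\kg}\Norm{\bm{u}^e-\bm{u}_h}{\Gh}$, where the last term is already of higher order (absorbed into $h^{m_i}$ since $\kg \geq m_i + 1/2$... actually $m_i \leq \kg - 1/2$ so $h^{\kg} \leq h^{m_i+1/2}$, fine). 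Then transferring from $\Gh$ to $\G$ via the lift (which changes norms by bounded factors, with Jacobian $1 + O(h^{\kg+1})$) yields $\Norm{\QQ(\bm{u} - \bm{u}_h^\ell)}{\G} \leqC h^{1/2} \cdot h^{m_i}(\dots) = h^{m_i+1/2}(\dots)$, which is the first claimed estimate.

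For the second estimate, I would use \Cref{lem:relation-between-norms}, which gives $\GNorm{\bm{w}_h} \leqC \ahNorm{\bm{w}_h} + h^{-1/2}\EnergyNorm{\bm{w}_h}{s_h} \leqC h^{-1/2}\AhNorm{\bm{w}_h}$ for $\bm{w}_h \in \VelocitySpace$. Applying this with $\bm{w}_h = \bm{u}^e - \bm{u}_h$ — note this requires that $\bm{u}^e$ behaves like a discrete field for the purposes of the inverse-estimate step, which one handles exactly as in the energy-norm estimate by splitting $\bm{u}^e - \bm{u}_h = (\bm{u}^e - \IIVh\bm{u}^e) + (\IIVh\bm{u}^e - \bm{u}_h)$ and using that $\GNorm{\bm{u}^e - \IIVh\bm{u}^e} \leqC h^{\ku}(\dots)$ directly from interpolation (no inverse estimate needed there), while $\GNorm{\IIVh\bm{u}^e - \bm{u}_h} \leqC h^{-1/2}\AhNorm{\IIVh\bm{u}^e - \bm{u}_h} \leqC h^{-1/2} h^{m_i}(\dots)$ — gives $\GNorm{\bm{u}^e - \bm{u}_h} \leqC h^{m_i - 1/2}(\dots)$, as claimed.

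The main obstacle I anticipate is not any single hard estimate but the careful bookkeeping between the discrete surface $\Gh$ and the exact surface $\G$: one must verify that the lift $(\cdot)^\ell$ commutes appropriately with the projections $\QQ,\QQh$ up to controllable geometric errors of order $h^{\kg}$, and confirm these geometric errors are dominated by the leading $h^{m_i}$ term — which holds precisely because $m_i \leq \kg - 1/2 < \kg$. The inverse-estimate argument in the second part also needs the same split-into-interpolation-plus-discrete-remainder trick already used in the proof of \Cref{thm:energy-norm-estimate}, rather than a naive application of \Cref{lem:relation-between-norms} to the non-discrete field $\bm{u}^e - \bm{u}_h$.
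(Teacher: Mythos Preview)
Your proposal is correct and matches the paper's approach: the paper gives no proof at all, simply remarking that both estimates ``follow directly from the energy norm estimates,'' and you have filled in exactly the details one would expect---extracting the $s_h$-part of $\EnergyNorm{\cdot}{A_h}$ to gain the factor $h^{1/2}$ for the normal component, and invoking \Cref{lem:relation-between-norms} (with the interpolation/discrete-remainder split from \Cref{thm:energy-norm-estimate}) to lose a factor $h^{-1/2}$ in the $\GNorm{\cdot}$-bound. The bookkeeping you flag (transferring $\QQh$ to $\QQ$ at cost $O(h^{\kg})$ and checking $\kg\geq m_i+\tfrac12$) is routine and handled correctly.
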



In the following proofs (but not in the statements of the theorems) we will use the abbreviation
\begin{align*}
	\rhsConst\colonequals \HNormTan[\ku+1]{\bm{u}}{\G} + \HNormTan[\ku]{p}{\G}+\Norm{\bm{f}}{\G}.
\end{align*}

\subsection{Error Estimates in Tangential Norms}

For $\ku\geq \kk\geq \kg \geq 2$, we computationally observe a convergence order in the tangential $H^1$-norm that is higher than it is implied by the energy norm estimate \Cref{thm:energy-norm-estimate}. This was also observed and proven for the tensor Laplace equation \cite{HP2022Tangential}.

The idea for obtaining a better tangential estimate in  \cite{HP2022Tangential} is to use the  Galerkin orthogonality for the continuous bilinear and the error estimates already established. For the Stokes problem we have to account for the additional pressure error as well. This is why we start proving error estimates on the pressure that depend on the tangential error of the velocities.


\begin{lemma}\label{lem:p-ph-estimate}
	The same conditions apply as in \Cref{thm:energy-norm-estimate}. For $h<h_0$ small enough, we have the estimate
	\begin{align*}
		\Norm{p-p_h^{\ell}}{\G} + h \Norm{\gradG(p-p_h^{\ell})}{\Tri[\G]} \leqC \HNormTan{\bm{u}-\PP\bm{u}_h^{\ell}}{\G} + h^{\hat{m}_i}\left(\HNormTan[\ku+1]{\bm{u}}{\G} + \HNormTan[\ku]{p}{\G}+\Norm{\bm{f}}{\G}\right)
	\end{align*}
	with $\hat{m}_1\colonequals \min\{\ku,\kg\}$ and $\hat{m}_2\colonequals \min\{\hat{m}_1,\kk\}$.
\end{lemma}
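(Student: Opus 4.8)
\emph{Plan.} I first reduce to a bound on $\Norm{p-p_h^\ell}{\G}$. Writing $p-p_h^\ell=(p-(\IIQh p^e)^\ell)+((\IIQh p^e)^\ell-p_h^\ell)$, the first summand is controlled in the (broken) $H^1$‑seminorm by $h^{\ku-1}\HNormTan[\ku]{p}{\G}$ using the interpolation estimate for $\IIQh$ (order $r_q\geq\ku-1$, \Cref{lem:interpolation-properties}), while the second, being a difference of lifted discrete functions, admits the inverse estimate \Cref{prop:inverse-estimates}; altogether $h\Norm{\gradG(p-p_h^\ell)}{\Tri[\G]}\leqC\Norm{p-p_h^\ell}{\G}+h^{\ku}\HNormTan[\ku]{p}{\G}$. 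Next I set $\Pi_0 p\colonequals\IIQh p^e-\avg{\IIQh p^e}{\Gh}\in\PressureSpace\cap\LSpaceAvg{\Gh}$; since $\int_\G p=0$ and the surface measure distortion between $\Gh$ and $\G$ is of order $h^{\kg+1}$, one gets $\Abs{\avg{\IIQh p^e}{\Gh}}\leqC h^{\min\{\ku,\kg+1\}}\HNormTan[\ku]{p}{\G}$, hence $\Norm{p-(\Pi_0 p)^\ell}{\G}\leqC h^{\hat m_i}\rhsConst$, and by boundedness of the lift it remains to bound $\Norm{\Pi_0 p-p_h}{\Gh}$.

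Since $\Pi_0 p-p_h\in\PressureSpace\cap\LSpaceAvg{\Gh}$, the discrete V-stability \eqref{eq:discrete-inf-sup-V} — valid for $j=1,2$ on a stable pair — provides $\bm{v}_h\in\VelocitySpace$ with $\NormOne{\bm{v}_h}=1$ and $\beta_2\Norm{\Pi_0 p-p_h}{\Gh}\leq b_j(\bm{v}_h,\Pi_0 p-p_h)$. I now insert the momentum equations: \eqref{eq:discrete-variational-form-1} tested with $\bm{v}_h$ gives $b_j(\bm{v}_h,p_h)=\Inner{\bm{f}\circ\pi}{\bm{v}_h}{\Gh}-a_i(\bm{u}_h,\bm{v}_h)-s_h(\bm{u}_h,\bm{v}_h)$, and \eqref{eq:variational-form-1} tested with the tangential field $\PP\bm{v}_h^\ell\in\HSpaceTan[1]{\G}$ gives $\Inner{\bm{f}}{\PP\bm{v}_h^\ell}{\G}=a(\bm{u},\PP\bm{v}_h^\ell)+b(\PP\bm{v}_h^\ell,p)$. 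Splitting $a(\bm{u},\PP\bm{v}_h^\ell)=a(\bm{u}-\PP\bm{u}_h^\ell,\PP\bm{v}_h^\ell)+a(\PP\bm{u}_h^\ell,\PP\bm{v}_h^\ell)$, recognizing $a(\PP\bm{u}_h^\ell,\PP\bm{v}_h^\ell)-a_i(\bm{u}_h,\bm{v}_h)=\delta_{a_i}(\bm{u}_h^\ell,\bm{v}_h^\ell)$, and writing $b_j(\bm{v}_h,\Pi_0 p)=b(\PP\bm{v}_h^\ell,p)+b_j(\bm{v}_h,\Pi_0 p-p^e)-\delta_{b_j}(\bm{v}_h^\ell,p^e)$, the terms $b(\PP\bm{v}_h^\ell,p)$ cancel, leaving
\begin{align*}
  b_j(\bm{v}_h,\Pi_0 p-p_h) = {}& -a(\bm{u}-\PP\bm{u}_h^\ell,\PP\bm{v}_h^\ell) - \delta_{a_i}(\bm{u}_h^\ell,\bm{v}_h^\ell) + s_h(\bm{u}_h,\bm{v}_h) \\
  & {}+ \big(\Inner{\bm{f}}{\PP\bm{v}_h^\ell}{\G}-\Inner{\bm{f}\circ\pi}{\bm{v}_h}{\Gh}\big) + b_j(\bm{v}_h,\Pi_0 p-p^e) - \delta_{b_j}(\bm{v}_h^\ell,p^e).
\end{align*}

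The first term is the asserted one: by continuity of $a(\cdot,\cdot)$ on $\HSpaceTan[1]{\G}$ and $\HNormTan{\PP\bm{v}_h^\ell}{\G}\leqC\NormOne{\bm{v}_h}=1$ (via \Cref{lem:relation-between-norms}) it is $\leqC\HNormTan{\bm{u}-\PP\bm{u}_h^\ell}{\G}$. The remaining terms are all $\leqC h^{\hat m_i}\rhsConst$: for $\delta_{a_i}$ combine \Cref{lem:geometric_a_errors} with the bounds $\GNorm{\bm{u}_h},\ahNorm{\bm{u}_h}\leqC\rhsConst$ that follow from \Cref{thm:energy-norm-estimate} and \Cref{lem:velocity_error_norms}, giving $\leqC(h^{\kg}+(i-1)h^{\kk})\rhsConst$; the load consistency error is $\leqC h^{\kg+1}\Norm{\bm{f}}{\G}$ by \Cref{lem:geometricQlError} (its normal contribution being of the same order); $\delta_{b_j}(\bm{v}_h^\ell,p^e)\leqC h^{\kg}\HNormTan[1]{p}{\G}$ for $j=1$ and $\leqC h^{\kg+1}\HNormTan[1]{p}{\G}$ for $j=2$ by \Cref{lem:geometricQb1Error,lem:geometricQbError}; and $b_j(\bm{v}_h,\Pi_0 p-p^e)$ is bounded by the pressure interpolation error $\leqC h^{\ku}\HNormTan[\ku]{p}{\G}$, where for $j=2$ one first rewrites $b_2=b_1-(b_1-b_2)$ via \Cref{lem:difference-discrete-b} so as not to lose a power of $h$, and the constant part of $\Pi_0 p-p^e$ contributes only $O(h^{\kg+1})$ through \eqref{eq:difference-discrete-b-constq}.

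The one delicate estimate is the penalty term $s_h(\bm{u}_h,\bm{v}_h)$: the energy estimate gives only $\EnergyNorm{\bm{u}_h}{s_h}\leqC h^{m_i}\rhsConst$ with $m_i\leq\hat m_i$, which is too weak on its own; but since $\bm{v}_h\cdot\nnh=\QQh\bm{v}_h\cdot\nnh$ with $\Norm{\QQh\bm{v}_h}{\Gh}\leq h\NormOne{\bm{v}_h}$, one gains a half power, $\EnergyNorm{\bm{v}_h}{s_h}\leq h^{1/2}\NormOne{\bm{v}_h}$, so $\Abs{s_h(\bm{u}_h,\bm{v}_h)}\leqC h^{m_i+\frac12}\rhsConst$ and a short case check confirms $m_i+\tfrac12\geq\hat m_i$. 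Collecting all bounds, dividing by $\beta_2$, and combining with the two reduction steps of the first paragraph proves the estimate. The main obstacle is exactly this sharpening of the penalty term (and, secondarily, treating $b_2$ of the pressure interpolation error without losing a power of $h$); the essential structural choice that makes the argument work is to use the \emph{discrete} inf-sup condition with the best-approximation pressure $\IIQh p^e$, so that the test field $\bm{v}_h$ is discrete and can be inserted directly into the discrete momentum equation — using the continuous inf-sup instead would produce a test function of insufficient regularity.
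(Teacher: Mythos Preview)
Your proof is correct and follows essentially the same strategy as the paper's: reduce to $\Norm{\Pi_0 p - p_h}{\Gh}$ via interpolation and inverse estimates, invoke the discrete V-stability, and decompose $b_j(\bm{v}_h,\Pi_0 p - p_h)$ through the discrete and continuous momentum equations, with identical handling of the $s_h$ term (gaining the extra $h^{1/2}$ from $\EnergyNorm{\bm{v}_h}{s_h}\leq h^{1/2}\NormOne{\bm{v}_h}$) and of the $j=2$ pressure interpolation term via \Cref{lem:difference-discrete-b}. The only organizational difference is that the paper packages the geometric errors as $\delta_{ij}(\bm{u},p)(\bm{v}_h,0)$ and then splits $a_i(\bm{v}_h,\bm{u}^e-\bm{u}_h)=a(\PP\bm{v}_h^\ell,\bm{u}-\PP\bm{u}_h^\ell)-\delta_{a_i}(\bm{v}_h^\ell,(\bm{u}^e-\bm{u}_h)^\ell)$, whereas you place $\bm{u}_h^\ell$ directly into $\delta_{a_i}$; both routes yield the same order.
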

\begin{proof}
	We set $e_p= p-p_h^{\ell}$, $\tilde{e}_{p,h}=\IIQh p-p_h$, and $e_{p,h}= \tilde{e}_{p,h}- \avg{\IIQh p}{\Gh} \in \PressureSpace\cap \LSpaceAvg{\Gh}$.
	By interpolation error estimates \Cref{lem:interpolation-properties} and inverse estimates, we have
	\begin{align*}
		\Norm{p-p_h^{\ell}}{\G} +h \Norm{\gradG(p-p_h^{\ell})}{\Tri[\G]}
		\leqC \Norm{e_{p,h}}{\G} +\Abs{\avg{\IIQh p}{\Gh}} + h^{\ku} \HNormTan[\ku]{p}{\G}.
	\end{align*}
	Further we have by interpolation error estimates
	\begin{align*}
		\Abs{\avg{\IIQh p}{\Gh}}
		&\leqC \Abs{\avg{\IIQh p-p^{e}}{\Gh}} + h^{\kg+1}\Norm{p}{\G}
		\leqC \left(h^{\ku} + h^{\kg+1}\right)\HNormTan[\ku]{p}{\G}.
	\end{align*}
	By V-stability \eqref{eq:discrete-inf-sup-V}, we have
	\begin{align*}
		\Norm{e_{p,h}^{\ell}}{\G}
		&\leqC \sup_{\bm{v}_h\in \VelocitySpace} \frac{b_{j}(\bm{v}_h, e_{p,h})}{\NormOne{\bm{v}_h}}.
	\end{align*}
	From \Cref{lem:difference-discrete-b} follows
	\begin{align*}
		b_{j}(\bm{v}_h, e_{p,h})
		&\leqC b_{j}(\bm{v}_h, \tilde{e}_{p,h}) + h^{\kg+1}\Abs{\avg{\IIQh p}{\Gh}}\NormZero{\bm{v}_h}\\
		&\leqC  b_{j}(\bm{v}_h, e_{p}^{e}) + b_{j}(\bm{v}_h,\IIQh p-p^{e}) + h^{\kg+1}\left(h^{\ku} + h^{\kg+1}\right)\HNormTan[\ku]{p}{\G}\NormZero{\bm{v}_h}.
	\end{align*}
	By \Cref{lem:difference-discrete-b} and interpolation error estimates, we have
	\begin{align*}
		b_{j}(\bm{v}_h,\IIQh p-p^{e})
		&\leqC b_{1}(\bm{v}_h,\IIQh p-p^{e}) + (j-1)h^{\kg}\left(h\Norm{\Grad(\IIQh p-p^{e})}{\Gh}+ \Norm{\IIQh p-p^{e}}{\Gh}\right)\NormOne{\bm{v}_h}\\
		&\leqC \left((j-1)h^{\kg+1}\Norm{\Grad(\IIQh p-p^{e})}{\Gh}+ \Norm{\IIQh p-p^{e}}{\Gh}\right)\NormOne{\bm{v}_h}\\
		&\leqC h^{\ku}\HNormTan[\ku]{p}{\G}\NormOne{\bm{v}_h}.
	\end{align*}
	Thus, we have
	\begin{align*}
		b_{j}(\bm{v}_h, e_{p,h}) & \leqC b_{j}(\bm{v}_h, e_{p}^{e}) + (h^{\ku}+h^{\kg+1})\HNormTan[\ku]{p}{\G}\NormOne{\bm{v}_h}.
	\end{align*}
	We write
	\begin{align*}
		b_{j}(\bm{v}_h, e_{p}^{e})
		& = \delta_{ij}(\bm{u}, p)(\bm{v}_h,0)
		+s_h(\bm{u}_h,\bm{v}_h)
		- a_i(\bm{v}_h, \bm{u}^{e}-\bm{u}_h).
	\end{align*}
	We have by \Cref{lem:basic_delta_estimate}  \eqref{eq:discrete-galerkin-ortho_1}
	\begin{align*}
		\Abs{\delta_{ij}(\bm{u},p)(\bm{v}_h^{\ell},0)}
		&\leqC	\left(h^{\kg} +(i-1)h^{\kk}\right)\rhsConst
		\NormOne{\bm{v}_h}.
	\end{align*}
	Further, we have by \Cref{thm:energy-norm-estimate}
	\begin{align*}
		\Abs{s_h(\bm{u}_h,\bm{v}_h)}
		&\leq h^{\frac{1}{2}}\EnergyNorm{\bm{u}_h}{s_h}\NormZero{\bm{v}_h}
		\leqC h^{\frac{1}{2}} \left(\EnergyNorm{\bm{u}_h-\bm{u}^{e}}{s_h} + h^{\kg-\frac{1}{2}}\Norm{\bm{u}}{\G}\right) \NormZero{\bm{v}_h}
		\leqC  h^{m_i+\frac{1}{2}} \rhsConst \NormZero{\bm{v}_h}.
	\end{align*}
	By \Cref{lem:geometric_a_errors,lem:basicH1Estimate,lem:velocity_error_norms}, we have
	\begin{align*}
		\Abs{\delta_{a_i}(\bm{v}_h, \bm{u}^{e}-\bm{u}_h)}
		&\leqC \left(h^{\kg} \GNorm{\bm{u}^{e}-\bm{u}_h} + (i-1)h^{\kk}\ahNorm{(\bm{u}^{e}-\bm{u}_h)}\right)\NormOne{\bm{v}_h}\\
		&\leqC \left(h^{\kg}(1+(i-1)h^{\kk}) \GNorm{\bm{u}^{e}-\bm{u}_h} + (i-1)h^{\kk}\HNormTan{\bm{u}-\PP\bm{u}_h^{\ell}}{\G}\right)\NormOne{\bm{v}_h}\\
		&\leq \left(h^{\kg+m_i-\frac{1}{2}} \rhsConst + (i-1)h^{\kk}\HNormTan{\bm{u}-\PP\bm{u}_h^{\ell}}{\G}\right)\NormOne{\bm{v}_h}.
	\end{align*}
	Thus, we obtain
	\begin{align*}
		\Abs{a_i(\bm{v}_h, \bm{u}^{e}-\bm{u}_h)}
		&\leqC \Abs{a(\PP\bm{v}_h^{\ell}, \bm{u}-\PP\bm{u}_h^{\ell})} + \Abs{\delta_{a_i}(\bm{v}_h, \bm{u}^{e}-\bm{u}_h)}
		\leqC \left(\HNormTan{\bm{u}-\PP\bm{u}_h^{\ell}}{\G}
		+ h^{\kg+m_i-\frac{1}{2}} \rhsConst\right) \NormOne{\bm{v}_h}.
	\end{align*}
	We have now proven that
	\begin{align*}
		\sup_{\bm{v}_h\in \VelocitySpace} \frac{b_{j}(\bm{v}_h, e_{p}^{e})}{\NormOne{\bm{v}_h}}
		&\leqC  \HNormTan{\bm{u}-\PP\bm{u}_h^{\ell}}{\G}
		+ \left(h^{m_i+\frac{1}{2}} +(i-1)h^{\kk}\right)\rhsConst\;.
	\end{align*}
	Using the definition of $m_i$, this yields the assertion.
\end{proof}

We cite the following lemma from \cite{HP2022Tangential}:

\begin{lemma}\label{lem:qip-lagrange}
	Let $\bm{v}\in \HSpaceTan[m]{\G}\cap C(\G,\R^3)$ for $m\geq 2$, and let $\I_{h,k}\colon C(\G)\to[S_h(\Gh;\FSpace_k)]^3$ denote Lagrange interpolation of order $k\geq m-1$.
	Let $\El\in\Tri[\G]$ be a curved element, and $h_{\El}$ the diameter of $\El$. Then for $0\leq l\leq \min\{m, k\}$
	\begin{align*}
	\Norm{\QQ \I_{h,k}\bm{v}}{\El}
		\leqC h_{\El}^{l+1} \HNorm[l]{\I_{h,k} \bm{v}}{\El}
		\leqC h_{\El}^{l+1}\HNormTan[l]{\bm{v}}{\El}+ h_{\El}^{m+1}\HNormTan[m]{\bm{v}}{\El}.
	\end{align*}
\end{lemma}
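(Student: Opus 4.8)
The plan is to exploit that the Lagrange interpolant $\bm{v}_I\colonequals\I_{h,k}\bm{v}$ agrees with the tangential field $\bm{v}$ at every Lagrange node of $\El$, so that the $\R^3$-valued map $\QQ\bm{v}_I\colon x\mapsto\QQ(x)\bm{v}_I(x)$ vanishes at all those nodes (there $\bm{v}_I$ is tangential while $\QQ$ projects onto the normal direction), and hence $\I_{h,k}[\QQ\bm{v}_I]=0$ on $\El$. I would then fix a point $x_{\El}\in\El$ (e.g.\ the image of the barycentre of $\ElRef$) and set $\QQ_0\colonequals\QQ(x_{\El})$. Pulled back to $\ElRef$ through the parametrization of the curved element $\El$ (under which, as in \Cref{sec:sfem}, $\bm{v}_I$ becomes a genuine polynomial), $\bm{v}_I$ lies in $[\PolynomialSpace{k}]^3$; since $\QQ_0$ is a constant matrix, $\QQ_0\bm{v}_I$ pulls back to $[\PolynomialSpace{k}]^3$ as well, so order-$k$ Lagrange interpolation reproduces it: $\I_{h,k}[\QQ_0\bm{v}_I]=\QQ_0\bm{v}_I$ on $\El$. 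Combining the two observations yields the key identity
\begin{align*}
	\QQ\bm{v}_I
		&= (\QQ-\QQ_0)\bm{v}_I + \I_{h,k}[\QQ_0\bm{v}_I] - \I_{h,k}[\QQ\bm{v}_I]
		 = (\Id-\I_{h,k})\big[(\QQ-\QQ_0)\bm{v}_I\big]\qquad\text{on }\El\,,
\end{align*}
i.e.\ the normal part of the interpolant is itself an interpolation error, but now of an object that is $O(h_{\El})$ smaller than $\bm{v}_I$.

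Next I would apply the classical Lagrange interpolation error estimate on the curved element $\El$ (transferred through its parametrization, as in \Cref{sec:sfem}) at differentiation order $l$, which for $0\le l\le k$ gives
\begin{align*}
	\Norm{\QQ\bm{v}_I}{\El}
		= \Norm{(\Id-\I_{h,k})[(\QQ-\QQ_0)\bm{v}_I]}{\El}
		\leqC h_{\El}^{l+1}\HalbNorm{(\QQ-\QQ_0)\bm{v}_I}{\HSpace[l+1]{\El}}\,.
\end{align*}
Expanding the seminorm by the Leibniz rule and using that $\QQ$ is smooth, so that $\Norm{\QQ-\QQ_0}{L^\infty(\El)}\leqC h_{\El}$ while $\Norm{\QQ-\QQ_0}{W^{j,\infty}(\El)}\leqC 1$ for $j\ge1$, together with the (lifted) inverse estimate $\HalbNorm{\bm{v}_I}{\HSpace[l+1]{\El}}\leqC h_{\El}^{-1}\HNorm[l]{\bm{v}_I}{\El}$ of \Cref{prop:inverse-estimates}, one obtains $\HalbNorm{(\QQ-\QQ_0)\bm{v}_I}{\HSpace[l+1]{\El}}\leqC\HNorm[l]{\bm{v}_I}{\El}$, and hence the first inequality $\Norm{\QQ\I_{h,k}\bm{v}}{\El}\leqC h_{\El}^{l+1}\HNorm[l]{\I_{h,k}\bm{v}}{\El}$.

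The second inequality is then routine: by the triangle inequality and the classical Lagrange interpolation error estimate, $\HNorm[l]{\bm{v}_I}{\El}\le\HNorm[l]{\bm{v}_I-\bm{v}}{\El}+\HNorm[l]{\bm{v}}{\El}\leqC h_{\El}^{m-l}\HNormTan[m]{\bm{v}}{\El}+\HNormTan[l]{\bm{v}}{\El}$ (using $k\ge m-1$ and $l\le\min\{m,k\}$), where I also use that for the tangential field $\bm{v}$ the ambient $\bm{H}^s$-norm on the curved element $\El$ is controlled by the covariant norm $\HNormTan[s]{\bm{v}}{\El}$ up to lower-order curvature terms; multiplying through by $h_{\El}^{l+1}$ gives $h_{\El}^{l+1}\HNorm[l]{\I_{h,k}\bm{v}}{\El}\leqC h_{\El}^{l+1}\HNormTan[l]{\bm{v}}{\El}+h_{\El}^{m+1}\HNormTan[m]{\bm{v}}{\El}$.

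I expect the only genuinely delicate point to be the bookkeeping behind the reproduction identity $\I_{h,k}[\QQ_0\bm{v}_I]=\QQ_0\bm{v}_I$: one must verify that, after pull-back to $\ElRef$ through the map parametrizing the curved element $\El\in\Tri[\G]$ — under which the $\pi$- and $\pi_h$-factors defining $\I_{h,k}\bm{v}$ cancel, leaving a genuine polynomial of degree $k$ — left-multiplication by the constant matrix $\QQ_0$ keeps the field in $[\PolynomialSpace{k}]^3$; this is exactly what makes $\QQ\bm{v}_I=(\Id-\I_{h,k})[(\QQ-\QQ_0)\bm{v}_I]$ hold and drives the whole estimate. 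Everything else reduces to interpolation, inverse and Leibniz estimates on shape-regular (curved) elements.
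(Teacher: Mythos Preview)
Your proof is correct and follows precisely the standard argument that the paper cites (the paper gives no self-contained proof here but refers to \cite[Lemma~4.5]{HP2022Tangential} together with interpolation and norm-equivalence results). The key identity $\QQ\bm{v}_I=(\Id-\I_{h,k})[(\QQ-\QQ_0)\bm{v}_I]$, obtained from $\I_{h,k}[\QQ\bm{v}_I]=0$ (nodal tangentiality) and $\I_{h,k}[\QQ_0\bm{v}_I]=\QQ_0\bm{v}_I$ (reproduction of the constant-matrix multiple of a discrete field), is exactly the mechanism behind the cited result, and your subsequent Leibniz/inverse-estimate bookkeeping and the final interpolation plus norm-equivalence step match the combination the paper invokes.
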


\begin{proof}
	See \cite[Lemma 4.5]{HP2022Tangential} and combine with interpolation error estimates \cite[Lemma 4.1]{HP2022Tangential} and norm equivalence \cite[Lemma 2.3]{HLL2020Analysis}.
\end{proof}

Using the estimates on the pressure and an Aubin--Nitsche type argument, we prove an estimate in the tangential $L^2$-norm, depending on the tangential energy norm.

\begin{theorem}\label{thm:tangential_l2_estimate}
	The same conditions apply as in \Cref{thm:energy-norm-estimate}. For $h<h_0$ small enough, we have the estimate
	\begin{align*}
		\Norm{\PP(\bm{u}-\bm{u}_{h}^ {\ell})}{\G} \leqC
		h\HNormTan{\bm{u}-\PP\bm{u}_{h}^{\ell}}{\G}+ h^{l_i}\left(\HNormTan[\ku+1]{\bm{u}}{\G} + \HNormTan[\ku]{p}{\G}+\Norm{\bm{f}}{\G}\right)
	\end{align*}
	with $l_{1}\colonequals \min\{\ku+1,\kg+1,2\kg-1\}$ and $l_2\colonequals \min\{l_1, \kk\}$.
\end{theorem}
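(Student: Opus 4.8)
The plan is a duality (Aubin--Nitsche) argument exploiting the saddle-point structure. Abbreviate $\bm{e}_u\colonequals\bm{u}-\PP\bm{u}_h^\ell\in\HSpaceTan[1]{\G}$, so that $\Norm{\PP(\bm{u}-\bm{u}_h^\ell)}{\G}=\Norm{\bm{e}_u}{\G}$, and $e_p\colonequals p-p_h^\ell$. First I would solve the adjoint continuous Stokes problem with velocity load $\bm{e}_u$; since $a(\cdot,\cdot)$ is symmetric this has the form \eqref{eq:variational-form}, so \Cref{lem:regularity} provides a unique solution $(\bm{z},\xi)\in\HSpaceTan[2]{\G}\times\bigl(H^1(\G)\cap\LSpaceAvg[2]{\G}\bigr)$ with $\HNormTan[2]{\bm{z}}{\G}+\Norm{\xi}{H^1(\G)}\leqC\Norm{\bm{e}_u}{\G}$. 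Testing the adjoint velocity equation with $\bm{e}_u$ and noting that $b(\bm{z},e_p)=0$ (the mean-zero part of $e_p$ is killed by the second adjoint equation, its constant part by $\int_\G\divG\bm{z}\,\mathrm{d}s=0$), one obtains the representation $\Norm{\bm{e}_u}{\G}^2=\mathcal{B}\bigl((\bm{e}_u,e_p),(\bm{z},\xi)\bigr)$.

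Next set $\bm{\zeta}_h\colonequals\IIVh\bm{z}^e\in\VelocitySpace$ and $\chi_h\colonequals\IIQh\xi^e\in\PressureSpace$, and split $\mathcal{B}\bigl((\bm{e}_u,e_p),(\bm{z},\xi)\bigr)=\mathcal{B}\bigl((\bm{e}_u,e_p),(\bm{z}-\PP\bm{\zeta}_h^\ell,\xi-\chi_h^\ell)\bigr)+\mathcal{B}\bigl((\bm{e}_u,e_p),(\PP\bm{\zeta}_h^\ell,\chi_h^\ell)\bigr)$. For the first summand I would use continuity of $\mathcal{B}$ in the energy norm: the interpolation error of the dual data is bounded, via \Cref{lem:interpolation-properties}, \Cref{lem:relation-between-norms} and standard bounds for the $\QQh$-components (cf. \Cref{lem:qip-lagrange}), by $\EnergyNorm{(\bm{z}-\PP\bm{\zeta}_h^\ell,\xi-\chi_h^\ell)}{\mathcal{B}}\leqC h\,\Norm{\bm{e}_u}{\G}$ (using $\bm{z}\in\HSpaceTan[2]{\G}$, $\xi\in H^1(\G)$, $\ku\geq1$), while the pressure estimate \Cref{lem:p-ph-estimate} gives $\EnergyNorm{(\bm{e}_u,e_p)}{\mathcal{B}}\leqC\HNormTan{\bm{u}-\PP\bm{u}_h^\ell}{\G}+h^{\hat m_i}\rhsConst$ with $\hat m_i$ as in that lemma; since $\hat m_i+1\geq l_i$, this summand is $\leqC\bigl(h\HNormTan{\bm{u}-\PP\bm{u}_h^\ell}{\G}+h^{l_i}\rhsConst\bigr)\Norm{\bm{e}_u}{\G}$.

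The second summand is handled by perturbed Galerkin orthogonality. Inserting the continuous equations \eqref{eq:variational-form} for $(\bm{u},p)$ tested against $\PP\bm{\zeta}_h^\ell$ and against $\chi_h^\ell$ (where $\divG\bm{u}=0$ is used), the discrete equations \eqref{eq:discrete-variational-form} for $(\bm{u}_h,p_h)$ tested against $\bm{\zeta}_h$ and against the mean-zero part of $\chi_h$, and the definitions of $\delta_{a_i},\delta_{b_j},\delta_l$, one verifies
\[
  \mathcal{B}\bigl((\bm{e}_u,e_p),(\PP\bm{\zeta}_h^\ell,\chi_h^\ell)\bigr)
  = s_h(\bm{u}_h,\bm{\zeta}_h)+\delta_{ij}(\bm{u}_h^\ell,p_h^\ell)(\bm{\zeta}_h^\ell,\chi_h^\ell),
\]
up to a remainder of size $h^{\kg+1}\rhsConst\Norm{\bm{e}_u}{\G}$ that only appears for $j=1$ from the nonzero mean of $\chi_h$ over $\Gh$ and is controlled by \eqref{eq:difference-discrete-b-constq}. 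The penalty term obeys $\Abs{s_h(\bm{u}_h,\bm{\zeta}_h)}\leqC\EnergyNorm{\bm{u}_h}{s_h}\EnergyNorm{\bm{\zeta}_h}{s_h}\leqC h^{m_i}\rhsConst\cdot h^{3/2}\Norm{\bm{e}_u}{\G}$ by \Cref{thm:energy-norm-estimate} and the normal-component bounds as in \Cref{lem:qip-lagrange}, which is of order $\geq l_i$. For the consistency term I would apply \Cref{lem:contGalerkinOrtho} with $(\bm{u},p)$ approximated by $(\bm{u}_h,p_h)$ and $(\bm{z},\xi)$ approximated by $(\bm{\zeta}_h,\chi_h)$.

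It then remains to bound the factors $\theta_1,\theta_2$ from \Cref{lem:contGalerkinOrtho}. Interpolation estimates give $\theta_1(\bm{z},\bm{\zeta}_h,\xi,\chi_h)\leqC\Norm{\bm{e}_u}{\G}$ and $\theta_2(\bm{z},\bm{\zeta}_h)\leqC\Norm{\bm{e}_u}{\G}$, while \Cref{thm:energy-norm-estimate}, \Cref{lem:velocity_error_norms} and \Cref{lem:p-ph-estimate} yield $\theta_1(\bm{u},\bm{u}_h,p,p_h)\leqC\bigl(1+h^{m_i-3/2}\bigr)\rhsConst+h^{-1}\HNormTan{\bm{u}-\PP\bm{u}_h^\ell}{\G}$ and $\theta_2(\bm{u},\bm{u}_h)\leqC\rhsConst$; multiplying by $h^{\kg+1}$, respectively by $(i-1)h^{\kk}$, these generate exactly the $h^{\kg+1}$-, $h^{2\kg-1}$-, $h^{\kg}\HNormTan{\bm{u}-\PP\bm{u}_h^\ell}{\G}$- and $(i-1)h^{\kk}$-contributions. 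Collecting both summands gives $\Norm{\bm{e}_u}{\G}^2\leqC\bigl(h\HNormTan{\bm{u}-\PP\bm{u}_h^\ell}{\G}+h^{l_i}\rhsConst\bigr)\Norm{\bm{e}_u}{\G}$, with $h^{l_i}\rhsConst$ also absorbing the $h^{\ku+1}$ coming from the interpolation part; dividing by $\Norm{\bm{e}_u}{\G}$ yields the claim. I expect the main obstacle to be precisely this final bookkeeping: checking that no contribution is worse than $h\HNormTan{\bm{u}-\PP\bm{u}_h^\ell}{\G}$, and carefully tracking the negative powers of $h$ hidden in $h^{-1}\GNorm{\bm{u}^e-\bm{u}_h}$ and in the discrete pressure-gradient error inside $\theta_1(\bm{u},\bm{u}_h,p,p_h)$, which is the mechanism that produces the somewhat unexpected $2\kg-1$ entering $l_1$ and $l_2$.
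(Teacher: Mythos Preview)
Your proposal is correct and follows essentially the same Aubin--Nitsche duality argument as the paper: solve an adjoint Stokes problem, split off the interpolation error of the dual pair, and rewrite the remainder via perturbed Galerkin orthogonality as $s_h(\bm{u}_h,\cdot)+\delta_{ij}(\bm{u}_h^\ell,p_h^\ell)(\cdot,\cdot)$, then invoke \Cref{lem:p-ph-estimate}, \Cref{lem:velocity_error_norms} and \Cref{lem:contGalerkinOrtho}; your bookkeeping of the powers of $h$ is accurate and indeed produces the $2\kg-1$ term via $h^{\kg+1}\cdot h^{-1}\GNorm{\bm{u}^e-\bm{u}_h}$.

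The one point where the paper proceeds slightly differently is the choice of dual interpolant: it takes $\bm{\psi}_h=\I_{h,\min\{\ku,2\}}\bm{\psi}$, i.e.\ \emph{Lagrange} interpolation, precisely so that \Cref{lem:qip-lagrange} applies literally and yields $\|\QQ\bm{\psi}_h\|_{\Gh}\leqC h^{\min\{3,\ku+1\}}\|\PP\bm{e}\|_{\G}$, hence $\EnergyNorm{\bm{\psi}_h}{s_h}\leqC h^{\min\{3,\ku+1,\kg\}-1/2}\|\PP\bm{e}\|_{\G}$. With your choice $\bm{\zeta}_h=\IIVh\bm{z}^e$ that lemma does not directly apply, but you do not actually need its full strength: from $\QQ\bm{z}^e=0$ and the $L^2$-interpolation error one still gets $\|\QQ\bm{\zeta}_h\|_{\Gh}\leqC h^2\|\bm{z}\|_{\HSpaceTan[2]{\G}}$, so $\EnergyNorm{\bm{\zeta}_h}{s_h}\leqC h^{\min\{3/2,\kg-1/2\}}\|\bm{e}_u\|_{\G}$, and combined with $\EnergyNorm{\bm{u}_h}{s_h}\leqC h^{m_i}\rhsConst$ this is still of order $\geq l_i$. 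So your route works; just be aware that the ``$h^{3/2}$'' you quote for the penalty factor of the dual is only valid for $\kg\geq 2$ (for $\kg=1$ it is $h^{1/2}$, which still suffices since then $l_1=1$), and that \Cref{lem:qip-lagrange} as stated is specific to Lagrange interpolation.
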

\begin{proof}
	We set $\bm{e}\colonequals \bm{u} - \bm{u}_h^{\ell}$
	We consider the adjoint problem defined by
	\begin{align}\label{eq:adjoint_problem}
		(\bm{\psi},\phi)\in \HSpaceTan[1]{\G} \times \LSpaceAvg[2]{\G}:\quad  \mathcal{B}\left((\bm{v},q),(\bm{\psi},\phi)\right) = \Inner{\PP\bm{e}}{\bm{v}}{\G}\quad \forall (\bm{v},q)\in \HSpaceTan[1]{\G} \times \LSpaceAvg[2]{\G}
	\end{align}
	By \Cref{lem:regularity}, the solution $(\bm{\psi},\phi)$ fulfills the regularity estimates
	\begin{align*}
		\HNormTan[2]{\bm{\Psi}}{\G} + \Norm{\phi}{H^1(\G)} &\leqC \Norm{\PP\bm{e}}{\G}.
	\end{align*}
	Note that $\bm{\psi}$ is continuous.
	We interpolate the solution using in particular Lagrange interpolation for $\bm{\psi}$, and set
	$\bm{\psi}_{h} \colonequals \I_{h,\min\{\ku,2\}} \bm{\psi} \in [S_h(\Gh;\FSpace_{\min\{\ku,2\}})]^3\subset \VelocitySpace$ and $\phi_h \colonequals \IIQh{\phi} \in \PressureSpace$.
	By interpolation error estimates and the regularity, we have
	\begin{align}\label{eq:psi-psih}
		\EnergyNorm{\left(\bm{\psi}-\bm{\psi}_{h}^{\ell},\phi-\phi_{h}^{\ell}\right)}{\mathcal{B}}
		&\leqC \GNorm{\bm{\psi}^{e}-\bm{\psi}_{h}} + \Norm{\phi-\phi_{h}^{\ell}}{\G}
		\leqC  h\left(\HNormTan[2]{\bm{\Psi}}{\G} + \Norm{\phi}{H^1(\G)}\right)
		\leqC h \Norm{\PP\bm{e}}{\G},
	\end{align}
	and by \Cref{lem:qip-lagrange} with $m=2$, and $l=k=\min\{2,\ku\}$, as well as regularity, we have
	\begin{align*}
	\Norm{\QQ \bm{\psi}_h}{\Gh}
	\leqC  h^{\min\{2,\ku\}+1}\HNormTan[\min\{2,\ku\}]{\bm{\psi}}{\Gh}+ h^{3}\HNormTan[2]{\bm{\psi}}{\Gh} \leqC h^{\min\{3,\ku+1\}} \Norm{\PP\bm{e}}{\G}.
	\end{align*}
	This implies
	\begin{align}\label{eq:Qh_psih_estimate}
		\EnergyNorm{\bm{\psi}_{h}}{s_h}
		&\leqC h^{-\frac{1}{2}} \Norm{\QQ\bm{\psi}_{h}}{\Gh} + h^{\kg-\frac{1}{2}} \Norm{\bm{\psi}}{\Gh}
		\leqC h^{\min\{3,\ku+1,\kg\}-\frac{1}{2}}\Norm{\PP\bm{e}}{\G}.
	\end{align}
	We insert $\bm{v}= \PP\bm{e}$ and $q=p-p_h^{\ell}-\avg{p_h^{\ell}}{\G}$ as a test function pair into \eqref{eq:adjoint_problem} and obtain
	\begin{align}\label{eq:l2-pf-1}
		\Norm{\PP\bm{e}}{\G}^2
		& = \mathcal{B}\left((\PP\bm{e},p-p_h^{\ell}-\avg{p_h^{\ell}}{\G}),(\bm{\psi},\phi)\right) \notag \\
		& =  \mathcal{B}\left((\PP\bm{e},p-p_h^{\ell}),(\bm{\psi}-\bm{\psi}_h,\phi-\phi_h)\right)
		+\delta_{ij}\left(\bm{u}_{h}^{\ell},p_h^{\ell}\right)(\bm{\psi}_h,\phi_h)
		+ s_h\left(\bm{u}_{h},\bm{\psi}_h\right).
	\end{align}
	We estimate by \eqref{eq:psi-psih} and \Cref{lem:p-ph-estimate}
	\begin{align*}
		\mathcal{B}\left((\PP\bm{e},p-p_h^{\ell}),(\bm{\psi}-\bm{\psi}_h,\phi-\phi_h)\right)
		&\leqC \EnergyNorm{(\PP\bm{e},p-p_h^{\ell})}{\mathcal{B}}
		\EnergyNorm{(\bm{\psi}-\bm{\psi}_h,\phi-\phi_h)}{\mathcal{B}}\\
		&\leqC h \left(\HNormTan{\PP\bm{e}}{\G} + \Norm{p-p_h^{\ell}}{\G} \right)\Norm{\PP\bm{e}}{\G}\\
		&\leqC h \left(\HNormTan{\PP\bm{e}}{\G} + h^{\hat{m}_i}\rhsConst \right)\Norm{\PP\bm{e}}{\G}.
	\end{align*}
	The $s_h$-term is estimated by \Cref{thm:energy-norm-estimate} and \eqref{eq:Qh_psih_estimate}
	\begin{align*}
		\Abs{s_h\left(\bm{u}_{h},\bm{\psi}_h\right)}
		& \leqC \EnergyNorm{\bm{u}_h}{s_h}\EnergyNorm{\bm{\psi}_h}{s_h} \leqC h^{m_i+\min\{3,\ku+1,\kg\}-\frac{1}{2}}\rhsConst \Norm{\PP\bm{e}}{\G}.
	\end{align*}
	We use \Cref{lem:contGalerkinOrtho} to estimate
	\begin{align*}
		\delta_{ij} ((\bm{u}_h^{\ell},p_h^{\ell}),(\bm{\psi}_h^{\ell},\phi_h^{\ell}))
		&\leqC h^{\kg+1} \left(\rhsConst +h^{-1}\GNorm{\bm{u}_h-\bm{u}^{e}} + h^{j-2}\HNormTan[j-1]{p^{e}-p_h}{\Gh}\right)\Norm{\PP\bm{e}}{\G}\\
		&\quad  +(i-1)h^{\kk}\left(\ahNorm{(\bm{u}_h-\bm{u}^{e})}+\rhsConst\right)\Norm{\PP\bm{e}}{\G}.
	\end{align*}
	By \Cref{lem:velocity_error_norms}, we have
	\begin{align*}
		\GNorm{\bm{u}_h-\bm{u}^{e}}\leqC h^{m_i-\frac{1}{2}}\rhsConst,
	\end{align*}
	and by \Cref{lem:p-ph-estimate}
	\begin{align*}
		h^{j-2}\HNormTan[j-1]{p^{e}-p_h}{\Gh} \leqC h^{-1}\HNormTan{\PP\bm{e}}{\G} +\rhsConst.
	\end{align*}
%
%
	W.l.o.g. we can assume $m_i-\frac{1}{2}\geq 0$, and obtain
	\begin{align*}
		\delta_{ij} ((\bm{u}_h^{\ell},p_h^{\ell}),(\bm{\psi}_h^{\ell},\phi_h^{\ell}))
		&\leqC h^{\kg} \HNormTan{\PP\bm{e}}{\G}\Norm{\PP\bm{e}}{\G}+  \left(h^{\kg+1} +h^{m_i+\kg-\frac{1}{2}} \right)\rhsConst\Norm{\PP\bm{e}}{\G}\\
		&\quad  +(i-1)h^{\kk}\rhsConst\Norm{\PP\bm{e}}{\G}.
	\end{align*}

	Using these estimates in \eqref{eq:l2-pf-1} yields
	\begin{align*}
		\Norm{\PP\bm{e}}{\G}^2
		&\leqC
		h \HNormTan{\PP\bm{e}}{\G}\Norm{\PP\bm{e}}{\G}
		+ \left( h^{\hat{m}_i+1} +h^{m_i+\min\{3,\ku+1,\kg\}-\frac{1}{2}} \right)\rhsConst\Norm{\PP\bm{e}}{\G}\\
		&\quad  +(i-1)h^{\kk}\rhsConst\Norm{\PP\bm{e}}{\G}.
	\end{align*}
	Using $m_1=\min\{\ku,\kg-\frac{1}{2}\}$ and $\hat{m}_1=\min\{\ku,\kg\}$ yields the assertion.
\end{proof}

We will now prove a better estimate (than \Cref{thm:energy-norm-estimate} provides) for the tangential $H^1$-error. We proceed analogously to similar estimates for the tensor Poisson equation in \cite{HP2022Tangential}.

\begin{theorem}\label{thm:H1-norm-estimate}
	The same conditions apply as in \Cref{thm:energy-norm-estimate}. For $h<h_0$ small enough, we have the estimate
	\begin{align*}
	\HNormTan{\bm{u}-\PP\bm{u}_{h}^{\ell}}{\G}
	&\leqC\;h^{\hat{m}_i}\left(\HNormTan[\ku+1]{\bm{u}}{\G} + \HNormTan[\ku]{p}{\G} + \Norm{\bm{f}}{\G}\right)
	\end{align*}
	with $\hat{m}_1= \min\{\ku, \kg\}$ and $\hat{m}_2= \min\{\hat{m}_1,\kk\}$.
\end{theorem}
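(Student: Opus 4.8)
The plan is to bootstrap from coercivity of the \emph{continuous} form $a$ together with a perturbed Galerkin orthogonality for the combined continuous form $\mathcal{B}$, feeding in the sharper geometric estimates of \Cref{lem:geometric_a_errors,lem:contGalerkinOrtho} and the already-established energy-norm (\Cref{thm:energy-norm-estimate}), normal-component (\Cref{lem:velocity_error_norms}), pressure (\Cref{lem:p-ph-estimate}) and tangential-$L^2$ (\Cref{thm:tangential_l2_estimate}) bounds; the last two are exactly the tools that will absorb the error ``feedback''. Write $\bm{e}\colonequals\bm{u}-\bm{u}_h^\ell$, so the object to bound is $\HNormTan{\PP\bm{e}}{\G}=\HNormTan{\bm{u}-\PP\bm{u}_h^\ell}{\G}$. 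By coercivity of $a$ on $\HSpaceTan[1]{\G}$ it suffices to estimate $a(\PP\bm{e},\PP\bm{e})$. I would set $\bm{\eta}_h\colonequals\IIVh\bm{u}^e\in\VelocitySpace$, $\bm{\theta}_h\colonequals\bm{\eta}_h-\bm{u}_h\in\VelocitySpace$, and decompose $\PP\bm{e}=\bm{\rho}+\PP\bm{\theta}_h^\ell$ with $\bm{\rho}\colonequals\bm{u}-\PP\bm{\eta}_h^\ell$. Since $\bm{u}$ is tangential, $\QQ\bm{u}=0$, so \Cref{lem:interpolation-properties} yields $\HNormTan{\bm{\rho}}{\G}\leqC h^{\ku}\rhsConst$, whence $\Abs{a(\PP\bm{e},\bm{\rho})}$ is absorbed into $\tfrac14\HNormTan{\PP\bm{e}}{\G}^2$ up to an $h^{2\ku}$-remainder.

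For the discrete part $a(\PP\bm{e},\PP\bm{\theta}_h^\ell)$ the plan is to test against the pair $(\PP\bm{\theta}_h^\ell,0)$: subtracting the continuous equations \eqref{eq:variational-form} for $(\bm{u},p)$ and the discrete equations \eqref{eq:discrete-variational-form} tested against $\bm{\theta}_h$, and rewriting $a_i,b_j$ through the difference forms $\delta_{a_i},\delta_{b_j},\delta_l$, gives the perturbed orthogonality
\[
	a(\PP\bm{e},\PP\bm{\theta}_h^\ell)+b(\PP\bm{\theta}_h^\ell,p-p_h^\ell)
	=\delta_{ij}(\bm{u}_h^\ell,p_h^\ell)(\bm{\theta}_h^\ell,0)+s_h(\bm{u}_h,\bm{\theta}_h)-\Inner{\bm{f}^e}{\QQh\bm{\theta}_h}{\Gh}.
\]
I would estimate $\delta_{ij}(\bm{u}_h^\ell,p_h^\ell)(\bm{\theta}_h^\ell,0)$ by \Cref{lem:contGalerkinOrtho}, viewing $\bm{\theta}_h$ as a discrete approximation of the trivial field; this is where the sharp $O(h^{\kg+1})$ geometric error for $\HSpaceTan[2]{\G}$-fields and the $O(h^{\kk})$ curvature contribution (present only for $i=2$) enter. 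The $s_h$-term and the $\bm{f}^e$-term are controlled by \Cref{thm:energy-norm-estimate,lem:velocity_error_norms}, exploiting that $\Norm{\QQh\bm{\theta}_h}{\Gh}$ and $\EnergyNorm{\bm{\theta}_h}{s_h}$ are one half-order smaller than the crude energy bound alone would give.

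For the coupling term I would use $\PP\bm{\theta}_h^\ell=\PP\bm{e}-\bm{\rho}$ and split $p-p_h^\ell$ into an interpolation error $p-(\IIQh p^e)^\ell$, a zero-mean discrete part $q_h\in\PressureSpace\cap\LSpaceAvg{\Gh}$, and a constant; the constant drops because $\int_\G\divG\PP\bm{e}\,\mathrm{d}s=0$ on the closed surface, $b(\PP\bm{e},q_h^\ell)$ collapses to a term of geometric order $h^{\kg}$ since $b_j(\bm{u}_h,q_h)=0$, and the remaining pieces $b(\PP\bm{e},p-(\IIQh p^e)^\ell)$ and $b(\bm{\rho},\cdot)$ are bounded by Cauchy--Schwarz, interpolation and \Cref{lem:p-ph-estimate}. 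Collecting all contributions, each occurrence of $\HNormTan{\PP\bm{e}}{\G}$ on the right-hand side is multiplied either by a power $h^{\kg}$, $h^{\ku}$ or $h^{\kk}$, or -- via \Cref{lem:p-ph-estimate,thm:tangential_l2_estimate} -- by an extra factor $h$; hence for $h\leq h_0$ small enough these are absorbed into $\HNormTan{\PP\bm{e}}{\G}^2$, leaving $\HNormTan{\PP\bm{e}}{\G}\leqC h^{\hat m_i}\rhsConst$ with $\hat m_1=\min\{\ku,\kg\}$ and $\hat m_2=\min\{\hat m_1,\kk\}$.

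The step I expect to be the main obstacle is precisely this final power-counting in the pressure coupling: unlike the pure vector-Laplace situation of \cite{HP2022Tangential}, the $b$-terms must be organized so that no uncontrolled $\HNormTan{\PP\bm{e}}{\G}^2$ survives on the right-hand side -- which is why the closed-surface divergence theorem, the identity $b_j(\bm{u}_h,q_h)=0$, and the previously-proven pressure and tangential-$L^2$ estimates are all indispensable -- and the recovery of the \emph{full} order $\kg$ (rather than the $\kg-\tfrac12$ of the energy norm) hinges on the half-order gain in the normal component of $\bm{\theta}_h$ and in $\EnergyNorm{\bm{\theta}_h}{s_h}$ supplied by \Cref{lem:velocity_error_norms}.
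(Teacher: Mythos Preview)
Your overall architecture is right, but the choice of discrete test function is where the argument breaks down. You test with $\bm{\theta}_h=\IIVh\bm{u}^e-\bm{u}_h$ and claim that $\Norm{\QQh\bm{\theta}_h}{\Gh}$ and $\EnergyNorm{\bm{\theta}_h}{s_h}$ are ``one half-order smaller than the crude energy bound alone would give''. That is not true: from \Cref{lem:velocity_error_norms} you get $\Norm{\QQ(\bm{u}-\bm{u}_h^\ell)}{\G}\leqC h^{m_i+1/2}\rhsConst$, but passing from $\QQ$ to $\QQh$ costs $\Norm{(\QQh-\QQ)\bm{\theta}_h}{\Gh}\leqC h^{\kg}\Norm{\bm{\theta}_h}{\Gh}$, and in any case $\EnergyNorm{\bm{\theta}_h}{s_h}=h^{-1/2}\Norm{\QQh\bm{\theta}_h}{\Gh}\leqC h^{m_i}\rhsConst$, exactly the order delivered by the energy norm. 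Consequently $s_h(\bm{u}_h,\bm{\theta}_h)\leqC h^{2m_i}\rhsConst^2$, and for $\ku\geq\kg$ this is $h^{2\kg-1}\rhsConst^2$, not $h^{2\kg}\rhsConst^2$; after bootstrapping you only recover $\HNormTan{\PP\bm{e}}{\G}\leqC h^{m_i}\rhsConst$, i.e.\ no improvement over \Cref{thm:energy-norm-estimate}. The same half-order loss appears when $\NormOne{\bm{\theta}_h}$ (which carries $h^{-1}\Norm{\QQh\bm{\theta}_h}{\Gh}\leqC h^{m_i-1/2}\rhsConst$) is fed into \Cref{lem:basic_delta_estimate} or \Cref{lem:contGalerkinOrtho}.

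The paper avoids this by constructing a \emph{nearly tangential} discrete test function: with $\bm{e}_d=\IIVh\bm{u}^e-\bm{u}_h$ it sets $[\bm{e}_h]^i\colonequals\sum_j\II_{h,\FSpace_V}[\P^i_j e_d^j]\in\VelocitySpace$, a componentwise interpolation of $\PP\bm{e}_d$. The discrete commutator property (Bertoluzza) then gives $\Norm{\PP\bm{e}_d-\bm{e}_h}{H^m(\Gh)}\leqC h\Norm{\bm{e}_d}{H^m(\Gh)}$ for $m=0,1$, and in particular $\Norm{\QQh\bm{e}_h}{\Gh}\leqC h\Norm{\bm{e}_d}{\Gh}$. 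Combined with the tangential $L^2$-bound of \Cref{thm:tangential_l2_estimate} this yields $\Norm{\QQh\bm{e}_h}{\Gh}\leqC h^2\HNormTan{\PP\bm{e}}{\G}+h^{\hat m_i+1}\rhsConst$ and hence $\NormOne{\bm{e}_h}\leqC \HNormTan{\PP\bm{e}}{\G}+h^{\hat m_i}\rhsConst$. These bounds, where every occurrence of $\rhsConst$ already carries a factor $h^{\hat m_i}$ and every bare $\HNormTan{\PP\bm{e}}{\G}$ is eventually multiplied by a positive power of $h$, are precisely what make the $s_h$-term, the $\delta_{ij}$-term, and the pressure coupling $b(\bm{e}_h^\ell,\cdot)$ absorbable at the full order $\hat m_i$. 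Replacing your $\bm{\theta}_h$ by this $\bm{e}_h$ (and noting that $\HNormTan{\PP(\bm{e}-\bm{e}_h^\ell)}{\G}\leqC h\HNormTan{\PP\bm{e}}{\G}+h^{\hat m_i}\rhsConst$, which plays the role of your $\bm{\rho}$-estimate) repairs the argument.
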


\begin{proof}
	Let $\bm{e}=\bm{u}-\bm{u}_{h}^{\ell}$. As in the proof of \Cref{thm:energy-norm-estimate} we split the error $\bm{e}$ into the interpolation error $\bm{e}_{I} = \bm{u}-\IIVh\bm{u}^{\ell}$ and the discrete remainder $\bm{e}_{d} = \IIVh\bm{u}-\bm{u}_{h}$.

	Note that we have by inverse estimates, interpolation error estimates, \Cref{thm:tangential_l2_estimate}, and \Cref{lem:velocity_error_norms}
	\begin{align*}
	\Norm{\bm{e}_d}{\Gh} + h\NormOne{\bm{e}_{d}}
	&\leqC \Norm{\bm{e}}{\G} + h^{\ku}\rhsConst
	\leqC \Norm{\PP\bm{e}}{\G} + \Norm{\QQ\bm{e}}{\G} + h^{\ku}\rhsConst
	\leqC h\HNormTan{\PP\bm{e}}{\G} + h^{\hat{m}_i}\rhsConst.
	\end{align*}
	By \Cref{lem:velocity_error_norms} we have for $\bm{e}$ the estimates
	\begin{align*}
	\GNorm{\bm{e}^{e}}&\leqC h^{m_i-\frac{1}{2}}\rhsConst\leqC h^{\hat{m}_i-1}\rhsConst\,,\\
	\intertext{and}
	\ahNorm{\bm{e}^{e}}&\leqC \HNormTan{\PP\bm{e}}{\G} + h^{\kg}\GNorm{\bm{e}^{e}}
	\leqC \HNormTan{\PP\bm{e}}{\G} + h^{\hat{m}_i}\rhsConst\,.
	\end{align*}

	Consider the product $[\PP\bm{e}_{d}]^i = \sum_j P^i_j e_d^j$ and apply the interpolation operator $\II_{h,\FSpace_V}$ summandwise. As $\PP$ is smooth, we have by discrete commutator property of $\II_{h,\FSpace_V}$, see \cite{Bertoluzza1999DiscreteCommutator,ErnGuermond2004Theory},

	\begin{align*}
	\Norm{P^i_j e_d^j - \II_{h,\FSpace_V}[P^i_j e_d^j]}{H^m(\Gh)}
	&\leqC h \Norm{e_d^j}{H^m(\Gh)},\text{ for }m=0,1\,.
	\end{align*}
	We set $[\bm{e}_h]^i = \sum_j \II_{h,\FSpace_V}[P^i_j e_d^j] \in V_h$ . From the componentwise estimate we deduce
	\begin{align*}
	\Norm{\PP\bm{e}_d - \bm{e}_h}{\Gh} &\leqC h \Norm{\bm{e}_d}{\Gh}\,,\\
	\HNorm[1]{\PP\bm{e}_d - \bm{e}_h}{\Gh} &\leqC h \NormOne{\bm{e}_d}\,.
	\end{align*}
	We estimate the normal part of $\bm{e}_h$,
	\begin{align*}
	\Norm{\QQh\bm{e}_h}{\Gh}
	&\leqC \Norm{\QQh\bm{e}_h - \QQh\PP\bm{e}_d}{\Gh} + \Norm{\QQh\PP\bm{e}_d}{\Gh}
	\leqC h \Norm{\bm{e}_d}{\Gh}\leqC h^2\HNormTan{\PP\bm{e}}{\G} + h^{\hat{m}_i+1}\rhsConst.
	\end{align*}
	Thus, we have
	\begin{align*}
	\HNormTan{\PP\bm{e}_h^\ell}{\G}
	& \leqC \HNorm{\bm{e}_h^\ell-\PP\bm{e}_d}{\G}+  \HNormTan{\PP\bm{e}_d^{\ell}}{\G}
	\leqC \HNormTan{\PP\bm{e}}{\G} + h\NormOne{\bm{e}_d} + h^{\ku}\rhsConst\leqC \HNormTan{\PP\bm{e}}{\G} + h^{\hat{m}_i}\rhsConst,
	\\
	\NormOne{\bm{e}_{h}}
	&\leqC \HNormTan{\PP\bm{e}_h^\ell}{\G} + h^{-1}\Norm{\QQh\bm{e}_{h}}{\Gh}
	\leqC \HNormTan{\PP\bm{e}}{\G} +h^{\hat{m}_i}\rhsConst.
	\end{align*}
	We further estimate
	\begin{align*}
	\HNormTan{\PP(\bm{e} - \bm{e}_{h}^\ell)}{\G}
	\leqC h^{\ku}\rhsConst + h\NormOne{\bm{e}_d}
	\leqC h\HNormTan{\PP\bm{e}}{\G} + h^{\hat{m}_i}\rhsConst\,.
	\end{align*}

	To estimate $\HNormTan{\PP\bm{e}}{\G}$, we use the bilinear form $a$, add and subtract $\PP\bm{e}_h^{\ell}$, and set $q_h=p_h - \IIQh p - \avg{\IIQh p}{\Gh}\in \PressureSpace\cap \LSpaceAvg{\Gh}$ to obtain
	\begin{align*}
	\HNormTan{\PP\bm{e}}{\G}^2
	& \leqC a(\PP\bm{e},\PP(\bm{e}-\bm{e}_h^{\ell})) + a(\PP\bm{e},\PP\bm{e}_h^{\ell})\\
	& = a(\PP\bm{e},\PP(\bm{e}-\bm{e}_h^{\ell})) + \mathcal{B}((\bm{e},p-p_h),(\bm{e}_h^{\ell},q_h^{\ell})) + b(\bm{e}_h^{\ell},p-p_h)+ b(\bm{e},q_h^{\ell})\\
	& = a(\PP\bm{e},\PP(\bm{e}-\bm{e}_h^{\ell})) + s_h(\bm{u}_h,\bm{e}_h) + \delta_{ij}((\bm{u}_h^{\ell},p_h^{\ell}),(\bm{e}_h^{\ell},q_h^{\ell}))\\
	&\quad + b(\bm{e}_h^{\ell},p-(\IIQh p)^{\ell} -  \avg{\IIQh p}{\Gh}) +  b(\bm{e}-\bm{e}_h^{\ell},q_h^{\ell}).
	\end{align*}
	We estimate
	\begin{align*}
	a(\PP\bm{e},\PP(\bm{e}-\bm{e}_h^{\ell})) & \leq \HNormTan{\PP\bm{e}}{\G}\HNormTan{\PP(\bm{e}-\bm{e}_h^{\ell})}{\G}
	\leqC h \HNormTan{\PP\bm{e}}{\G}^2 + h^{\hat{m}_i}\rhsConst \HNormTan{\PP\bm{e}}{\G}.
	\end{align*}
	It is by \Cref{thm:energy-norm-estimate}
	\begin{align*}
	s_h(\bm{u}_h,\bm{e}_h) & \leqC \EnergyNorm{\bm{u}_h}{s_h}\EnergyNorm{\bm{e}_h}{s_h}
	\leqC h^{m_i+\frac{1}{2}}\rhsConst (h\HNormTan{\PP\bm{e}}{\G} + h^{\hat{m}_i}\rhsConst)
	\leqC h^{\hat{m}_i+1}\rhsConst \HNormTan{\PP\bm{e}}{\G} + h^{2\hat{m}_i}\rhsConst^2.
	\end{align*}
	For the $\delta_{ij}$-Term, we split
	\begin{align*}
	\delta_{ij} ((\bm{u}_h^{\ell},p_h^{\ell}),(\bm{e}_h^{\ell},q_h^{\ell}))
	& = \delta_{ij} ((\bm{u},p),(\bm{e}_h^{\ell},q_h^{\ell}))
	- \delta_{ij} ((\bm{e},p_h^{\ell}-p),(\bm{e}_h^{\ell},q_h^{\ell}))
	\end{align*}
	and estimate  by \Cref{lem:basic_delta_estimate} \eqref{eq:discrete-galerkin-ortho_1}
	\begin{align*}
	\delta_{ij} ((\bm{u},p),(\bm{e}_h^{\ell},q_h^{\ell}))
	&\leqC h^{\kg}\rhsConst\left(\NormOne{\bm{e}_{h}} + \Norm{q_h}{\Gh}\right) + (i-1)h^{\kk}\rhsConst \ahNorm{\bm{e}_h}
	\end{align*}
	Note that by interpolation error estimates and \Cref{lem:p-ph-estimate}, we have
	\begin{align*}
	\Norm{q_{h}^{\ell}}{\G}
	& \leqC 	\Norm{p_h-\IIQh p}{\Gh} \leqC 	h^{\ku}\rhsConst + \Norm{p-p_{h}^{\ell}}{\G} \leqC \HNormTan{\PP\bm{e}}{\G} + h^{\hat{m}_i}\rhsConst.
	\end{align*}
	Hence, using the estimates for $\bm{e}_h$ established at the beginning of the proof, we have
	\begin{align*}
		\delta_{ij} ((\bm{u},p),(\bm{e}_h^{\ell},q_h^{\ell}))
		&\leqC h^{\kg}\rhsConst\left( \HNormTan{\PP\bm{e}}{\G} +h^{\hat{m}_i}\rhsConst\right) + (i-1)h^{\kk}\rhsConst ( \HNormTan{\PP\bm{e}}{\G} +  h^{\hat{m}_i}\rhsConst).
	\end{align*}
	Analogously, we have using \Cref{lem:basic_delta_estimate} \eqref{eq:basic_delta_estimate}
		\begin{align*}
	\delta_{ij} ((\bm{e},p_h^{\ell}-p),(\bm{e}_h^{\ell},q_h^{\ell}))
		&\leqC h^{\kg}\left(\NormOne{\bm{e}_h} + \Norm{q_h}{\Gh}\right)\left(\GNorm{\bm{e}^{e}} + \Norm{p_h-p^{e}}{\Gh} + h\rhsConst\right)\\
		&\quad +(i-1)h^{\kk}\ahNorm{\bm{e}^{e}} \ahNorm{\bm{e}_h}\\
	&\leqC h^{\kg}\left(\HNormTan{\PP\bm{e}}{\G} +h^{\hat{m}_i}\rhsConst\right)\left(\HNormTan{\PP\bm{e}}{\G} + \big(h^{\hat{m}_i-1} + h\big)\rhsConst\right)\\
	&\quad +(i-1)h^{\kk}(\HNormTan{\PP\bm{e}}{\G} + h^{\hat{m}_i}\rhsConst)^2.
	\end{align*}
	Thus, we obtain
	\begin{align*}
	\delta_{ij} ((\bm{u}_h^{\ell},p_h^{\ell}),(\bm{e}_h^{\ell},q_h^{\ell}))
	&\leqC h^{\kg}\left(\HNormTan{\PP\bm{e}}{\G} +h^{\hat{m}_i}\rhsConst\right)\left(\HNormTan{\PP\bm{e}}{\G} + \big(h^{\hat{m}_i-1} + 1\big)\rhsConst\right)\\
	&\quad +(i-1)h^{\kk}(\HNormTan{\PP\bm{e}}{\G} +\rhsConst)\left(\HNormTan{\PP\bm{e}}{\G} +h^{\hat{m}_i}\rhsConst\right).
	\end{align*}

	For the $b$-terms, we estimate
	using interpolation error estimates
	\begin{align*}
	b(\bm{e}_h^{\ell},p-(\IIQh p)^{\ell})
	&\leqC \HNormTan{\PP\bm{e}_h^{\ell}}{\G}\Norm{p-(\IIQh p)^{\ell}}{\G}
	\leqC h^{\ku}\rhsConst \left(\HNormTan{\PP\bm{e}}{\G} +h^{\hat{m}_i}\rhsConst\right),
	\end{align*}
	and
	\begin{align*}
	b(\bm{e}-\bm{e}_h^{\ell},q_h^{\ell})
	&\leqC \HNormTan{\PP(\bm{e}-\bm{e}_h^{\ell})}{\G}\Norm{q_h^{\ell}}{\G}
	\leqC\left(h\HNormTan{\PP\bm{e}}{\G} +h^{\hat{m}_i}\rhsConst\right)\left(\HNormTan{\PP\bm{e}}{\G} + h^{\hat{m}_i}\rhsConst\right).
	\end{align*}

	We summarize
	\begin{align*}
		\HNormTan{\PP\bm{e}}{\G}^2
		& \leqC
		h \HNormTan{\PP\bm{e}}{\G}^2 + h^{\hat{m}_i}\rhsConst \HNormTan{\PP\bm{e}}{\G} + h^{2\hat{m}_i}\rhsConst^2 \\
		&\qquad +(i-1)h^{\kk}(\HNormTan{\PP\bm{e}}{\G}^2 +\rhsConst\HNormTan{\PP\bm{e}}{\G} +  h^{\hat{m}_i}\rhsConst^2).
	\end{align*}
	For $i=2$, we can assume w.l.o.g. $\kk>0$. Thus for $h$ small enough, we obtain the assertion.
\end{proof}

\begin{corollary}\label{thm:tangential-L2-errors}
	The same conditions apply as in \Cref{thm:energy-norm-estimate}. For $h<h_0$ small enough, we have the estimates
	\begin{align*}
		\Norm{p-p_h^{\ell}}{\G} & \leqC h^{\hat{m}_i}\left(\HNormTan[\ku+1]{\bm{u}}{\G} + \HNormTan[\ku]{p}{\G}+\Norm{\bm{f}}{\G}\right),\\
		\Norm{\PP(\bm{u}-\bm{u}_{h}^{\ell})}{\G}
		&\leqC h^{l_i} \left(\HNormTan[\ku+1]{\bm{u}}{\G} + \HNormTan[\ku]{p}{\G}+\Norm{\bm{f}}{\G}\right)\,,
	\end{align*}
	with $\hat{m}_1= \min\{\ku, \kg\}$, $\hat{m}_2 = \min\{\hat{m}_1,\kk\}$ as in \Cref{lem:p-ph-estimate} and
	$l_{1}=\min\{\ku+1,\kg+1,2\kg-1\}$, $l_2= \min\{l_1, \kk\}$ as in \Cref{thm:tangential_l2_estimate}.
\end{corollary}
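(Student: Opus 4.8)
The plan is to derive both estimates directly by feeding the tangential $H^1$-error bound from \Cref{thm:H1-norm-estimate} into the two auxiliary estimates already at our disposal: the pressure bound of \Cref{lem:p-ph-estimate} and the tangential $L^2$-velocity bound of \Cref{thm:tangential_l2_estimate}. Both of these are stated with the term $\HNormTan{\bm{u}-\PP\bm{u}_h^\ell}{\G}$ on the right-hand side, and \Cref{thm:H1-norm-estimate} now controls exactly that quantity by $h^{\hat m_i}\rhsConst$ with $\hat m_1=\min\{\ku,\kg\}$, $\hat m_2=\min\{\hat m_1,\kk\}$. So no new analysis is needed, only the substitution and a short check of the exponents.

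For the pressure estimate I would insert $\HNormTan{\bm{u}-\PP\bm{u}_h^\ell}{\G}\leqC h^{\hat m_i}\rhsConst$ into the right-hand side of \Cref{lem:p-ph-estimate}. Since that lemma already contributes a term $h^{\hat m_i}\rhsConst$ (with the same $\hat m_i$), the two pieces combine and yield $\Norm{p-p_h^\ell}{\G}\leqC h^{\hat m_i}\rhsConst$, which is the claimed bound. For the velocity estimate I would insert the same $H^1$-bound into the right-hand side of \Cref{thm:tangential_l2_estimate}, obtaining $\Norm{\PP(\bm{u}-\bm{u}_h^\ell)}{\G}\leqC h\cdot h^{\hat m_i}\rhsConst + h^{l_i}\rhsConst$ with $l_1=\min\{\ku+1,\kg+1,2\kg-1\}$, $l_2=\min\{l_1,\kk\}$.

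It then remains to absorb the first term into $h^{l_i}\rhsConst$, i.e. to verify $l_i\le 1+\hat m_i$ for $h\le h_0$. This is the only step that requires any care (the ``obstacle'', such as it is). For $i=1$ one has $1+\hat m_1=\min\{\ku+1,\kg+1\}\ge\min\{\ku+1,\kg+1,2\kg-1\}=l_1$; for $i=2$, since $l_1\le 1+\hat m_1$ and $\kk\le 1+\kk$, one gets $l_2=\min\{l_1,\kk\}\le\min\{1+\hat m_1,1+\kk\}=1+\hat m_2$. Hence $h^{1+\hat m_i}\leqC h^{l_i}$ and $\Norm{\PP(\bm{u}-\bm{u}_h^\ell)}{\G}\leqC h^{l_i}\rhsConst$, completing the proof. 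In short, all the analytic work sits in the preceding theorems, and this corollary is the bookkeeping that assembles them.
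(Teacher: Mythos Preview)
Your proof is correct and is exactly the argument the paper intends: the corollary has no separate proof in the paper because it follows immediately by inserting the tangential $H^1$-bound of \Cref{thm:H1-norm-estimate} into \Cref{lem:p-ph-estimate} and \Cref{thm:tangential_l2_estimate}, together with the exponent check $l_i\le 1+\hat m_i$ that you carry out.
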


\section{Numerical Experiments and Results}\label{sec:numerical-experiments}
In the numerical experiments we parametrized a sphere $\G=\mathcal{S}^2$ with surface finite elements by starting from a coarse reference triangulation and projecting refined grid vertices to $\G$ using the closest-point projection $\pi(\bm{x})=\bm{x}/\Norm{\bm{x}}{}$. A red-refinement of the grid elements gives a sequence of piecewise flat reference surfaces $\hat{\G}_h$. The higher-order parametrization is then obtained by piecewise interpolation of $\pi$ on the flat elements. The numerical experiments below are implemented in the finite element framework AMDiS/Dune \cite{WLPV2015Software,BBD2021Dune} using dune-foamgrid \cite{SKSF2017Dune} to represent the reference grid and dune-curvedgrid \cite{PS2020DuneCurvedGrid} to implement the higher-order geometry mappings.

\begin{figure}[ht!]
  \centering
  \begin{subfigure}[b]{0.32\textwidth}
      \centering
      \includegraphics[width=\textwidth]{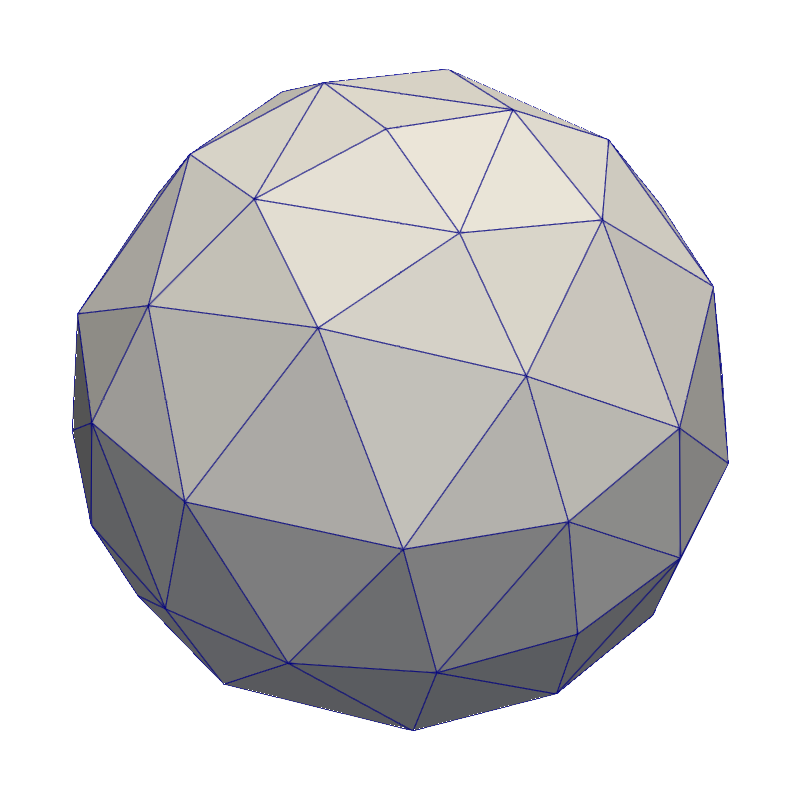}
      \caption{Reference grid $\hat{\G}_h$}
  \end{subfigure}
  \hfill
  \begin{subfigure}[b]{0.32\textwidth}
      \centering
      \includegraphics[width=\textwidth]{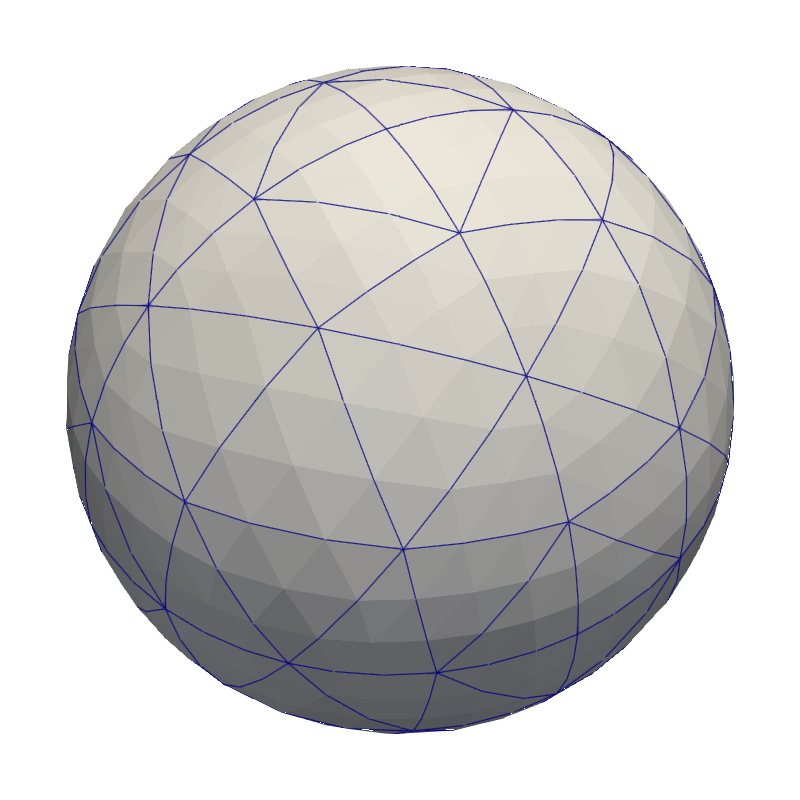}
      \caption{Curved triangulation $\Gh$}
  \end{subfigure}
  \hfill
  \begin{subfigure}[b]{0.32\textwidth}
      \centering
      \includegraphics[width=\textwidth]{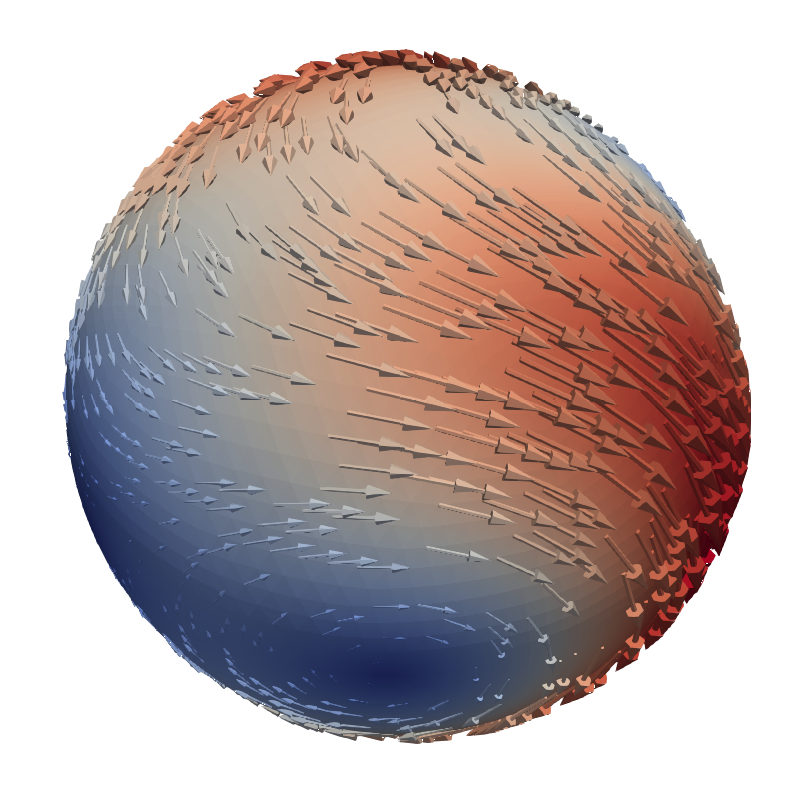}
      \caption{Velocity solution $\bm{u}^*$}
  \end{subfigure}
  \caption{(Colours online) The surface grid of the \textit{sphere} geometry and velocity solution of the surface Stokes equation. In the curved triangulation surface Lagrange parametrization of order $\kg=3$ is used. The arrows in the solution plot indicate the direction of the vector field and the colours its magnitude.}\label{fig:sphere-shape}
\end{figure}

We introduce a reference solution $\{\bm{u}^*,p^*\}$ to the Stokes problem \eqref{eq:stokes-problem} defined in the embedding space $\R^3$ as
\begin{align}
  p^*(x,y,z)      &\colonequals x\,, &
  \bm{u}^*(x,y,z) &\colonequals \CurlG(z - x^2)\,,
\end{align}
with $\CurlG$ the surface curl resulting in a tangential vector field. From these reference solutions we analytically compute the right-hand side load function $\bm{f}$ for the Stokes problem,
\begin{align}
  \bm{f}(x,y,z) &= \transposed{\big(-x^2 - y + 1, x(6z - y + 1), -x(6y+z)\big)}\,,
\end{align}
and use the given solutions to compute discretization errors of the discrete solutions to \eqref{eq:discrete-variational-form}. A numerical solution, the reference grid, and a higher-order parametrized surface are visualized in \Cref{fig:sphere-shape}.

In the following we test four stable pairs of spaces to discretize the Stokes equation, namely
\begin{enumerate}
  \item Taylor--Hood (TH) element $(S_{h,\ku}, S_{h,\ku-1})$, with continuous velocity and pressure, \cite{TH1973numerical},
  \item MINI element $(S_h(\Gh;\FSpace_1\oplus\mathbb{B}_{3}), S_{h,1})$, which enriches the piecewise linear velocity space with element bubble functions, \cite{ArnoldEtAl1984Stable},
  \item Conforming Crouzeix--Raviart (CR) element $(S_h(\Gh;\FSpace_2+\mathbb{B}_{3}), S_{h,1}^\Broken)$, with discontinuous piecewise linear pressure, \cite{CR1973Conforming}, and
  \item P2P0 element, with continuous piecewise quadratic velocity and piecewise constant pressure, $(S_{h,2}, S_{h,0}^\Broken)$, \cite{BoffiEtAl2013Mixed}.
\end{enumerate}
Note that the elements with discontinuous pressure, CR and P2P0, are not suitable for the $b_2$ bilinear form and indeed result in non-converging or indefinite systems.

We plot in the following only the tangential $L^2$-error to compare with the results shown in \Cref{thm:tangential-L2-errors}. In \Cref{fig:a1-b2-bases} we show the measured errors and corresponding experimental convergence orders for three surface discretization parameters $\kg\in\{1,2,3\}$. For the $\kg=1$ case the bound $2\kg-1$ limits the convergence for all finite elements. Also the absolute errors are very close. For piecewise quadratic surfaces with $\kg=2$, the bound in the convergence comes from the interpolation errors and the bound $\ku+1$. Here it is advantages to go for higher-order elements like the Taylor--Hood or Crouzeix--Raviart. For the low-order elements, the P2P0 element has lower absolute errors than the MINI element due to its better approximation of the velocity component. For the even higher order case $\kg=3$ only the Taylor-Hood element can increase the convergence order to the optimal order 4.

\begin{figure}[ht!]
  \centering
  \begin{subfigure}[b]{0.32\textwidth}
      \centering
      \includegraphics[width=\textwidth]{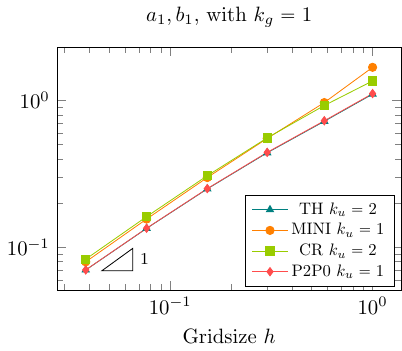}
  \end{subfigure}
  \hfill
  \begin{subfigure}[b]{0.32\textwidth}
      \centering
      \includegraphics[width=\textwidth]{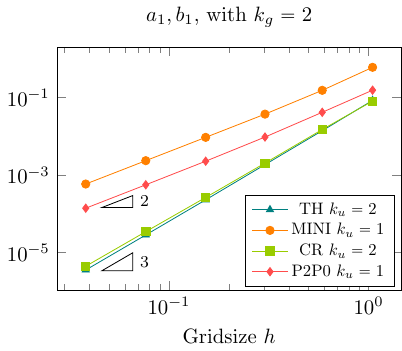}
  \end{subfigure}
  \hfill
  \begin{subfigure}[b]{0.32\textwidth}
      \centering
      \includegraphics[width=\textwidth]{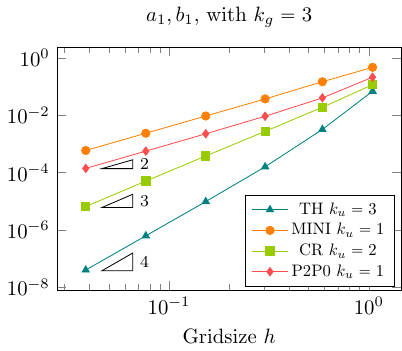}
  \end{subfigure}
  \caption{(Colours online) Tangential $L^2$-errors $\Norm{\PPh(\bm{u}^e-\bm{u}_{h})}{\Gh}$ on the discrete surface $\Gh$}\label{fig:a1-b2-bases}
\end{figure}

In a second study, we use the bilinear form $a_2$ for different curvature approximations $\kk$. The curvature $\KhTilde$ is thereby constructed from a higher order approximation of the surface $\G$ of order $\kg$ or $\kg+1$. Due to \Cref{rem:gauss-curv-approx}, in this case we get only even approximation orders $\kk$ of the continuous Gaussian curvature $\K$. \Cref{fig:a2-b1-kk} shows a comparison for different surface approximations. In the case of odd geometry orders, e.g., $\kg\in\{1,3\}$, the corresponding intrinsic curvature $\KhTilde=\Kh$ is only of order $\kk=\kg-1$. This is not sufficient to obtain the optimal convergence order in the tangential $L^2$-norm. Even in the  case $\kg=2$, where $\kk=\kg$, the curvature approximation limits the convergence order. We conclude that for the bilinear form $a_2$ the parametrization with $\KhTilde=\Kh$ and $\ku=\kg$ is suboptimal and a higher order curvature approximation is required. This must be one or even two orders better than the intrinsic curvature. A higher-order reconstruction of the mean curvature is discussed in \cite{HLZ2015Stabilized,FZ2019cut}, and an approach for a reconstruction of the Gaussian curvature in \cite{GawlikNeunteufel2023ScalarCurvature,GopalakrishnanEtAl2023DistributionalCurvature}. However, this might be an expensive task. Thus, additional knowledge about the surface often needs to be provided in order to use the bilinear form $a_2$.

\begin{figure}[ht!]
  \centering
  \begin{subfigure}[b]{0.32\textwidth}
      \centering
      \includegraphics[width=\textwidth]{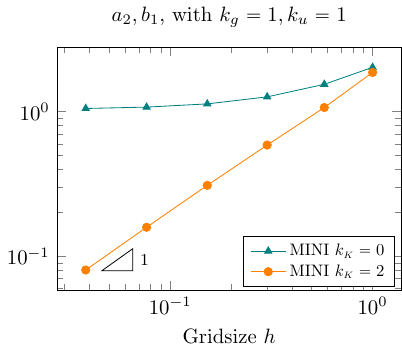}
  \end{subfigure}
  \hfill
  \begin{subfigure}[b]{0.32\textwidth}
      \centering
      \includegraphics[width=\textwidth]{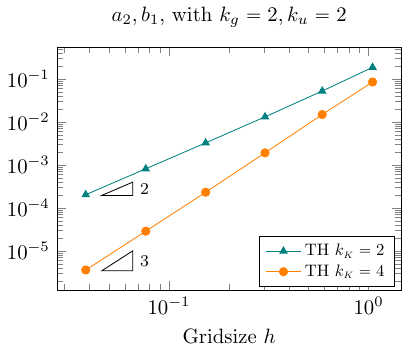}
  \end{subfigure}
  \hfill
  \begin{subfigure}[b]{0.32\textwidth}
      \centering
      \includegraphics[width=\textwidth]{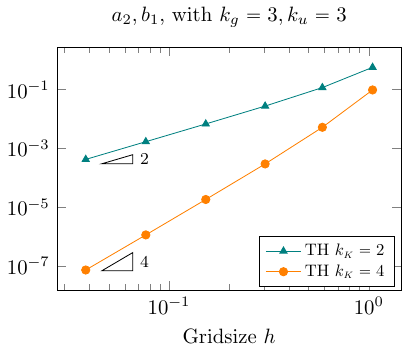}
  \end{subfigure}
  \caption{(Colours online) Tangential $L^2$-errors $\Norm{\PPh(\bm{u}^e-\bm{u}_{h})}{\Gh}$ on the discrete surface $\Gh$ for the bilinear form $a_2$}\label{fig:a2-b1-kk}
\end{figure}

The pressure $L^2$-norm error is visualized in \Cref{fig:p-a1-b1} for the three geometry orders $\kg=1,2,3$. Except for some minor deviations from the results in \Cref{thm:tangential-L2-errors}, the numerical data reflects the theory.

\begin{figure}[ht!]
  \centering
  \begin{subfigure}[b]{0.32\textwidth}
      \centering
      \includegraphics[width=\textwidth]{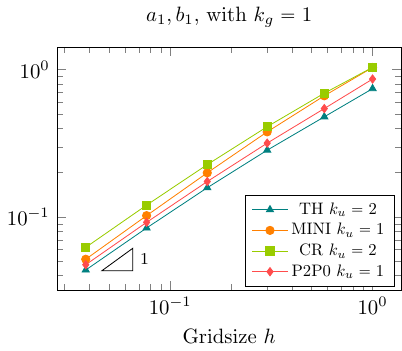}
  \end{subfigure}
  \hfill
  \begin{subfigure}[b]{0.32\textwidth}
      \centering
      \includegraphics[width=\textwidth]{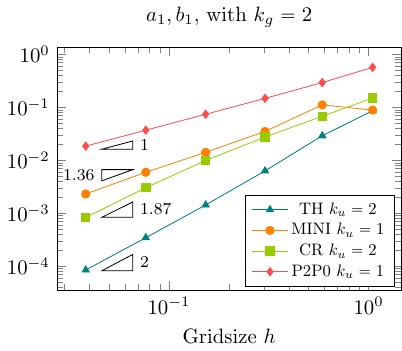}
  \end{subfigure}
  \hfill
  \begin{subfigure}[b]{0.32\textwidth}
      \centering
      \includegraphics[width=\textwidth]{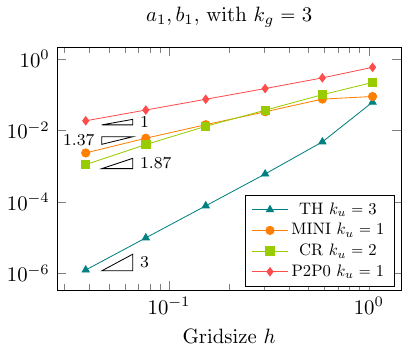}
  \end{subfigure}
  \caption{(Colours online) Pressure $L^2$-errors $\Norm{p^e-p_{h}}{\Gh}$ on the discrete surface $\Gh$}\label{fig:p-a1-b1}
\end{figure}

\section*{Acknowledgements}
The authors wish to thank Arnold Reusken for fruitful discussions and the German Research Foundation (DFG) for financial support within the Research Unit ``Vector- and Tensor-Valued Surface PDEs'' (FOR 3013) with project no. HA 10104/1-1 and PR 1705/1-1.

\appendix
\appendixpage
\addappheadtotoc

\section{Properties of macroelements}

\subsection{Proof of Lemma \ref{lem:prop-flattening}}\label{proof:prop-flattening}
\begin{proof}
	Let $\hat{x}\in\hat{\El}_{j}$.
	If $\hat{\El}_j=\hat{\El}$, the selected element, we have $\bar{\mu}=1$ and the assertion follow directly.
	If $\hat{\El}_i$ is direct neighbor of $\hat{\El}_j$ we can estimate the jump in the normal by comparing against the normal $\bm{n}$ of the continuous surface $\G$ in a point on the edge $\hat{x}_{ij}\in\hat{\El}_i\cap\hat{\El}_j$,
	\begin{align*}
		\Norm{\hat{\bm{n}}_i - \hat{\bm{n}}_j}{}
		\leq \Norm{\hat{\bm{n}}_i - \bm{n}(\hat{x}_{ij})}{} + \Norm{\bm{n}(\hat{x}_{ij}) - \hat{\bm{n}}_j}{} \leq C h\,,
	\end{align*}
	using standard geometric estimates \cite{DE2013Finite}. Since the elements in $\hat{\g}$ are connected via edges we can find a chain of direct neighboring elements from all $\hat{\El}_j$ to $\hat{\El}$.
	Thus, we have
	\begin{align*}
		\Abs{1- \|\hat{\PP}_j\hat{\bm{t}}^k\|} & \leq \|\hat{\bm{t}}^k -\hat{\PP}_j\hat{\bm{t}}^k\|  \leq \Abs{\langle\hat{\bm{t}}^k,\hat{\bm{n}}_j \rangle}\leq \|\hat{\bm{n}}_j-\hat{\bm{n}}\|\leq Ch \quad \textrm{for }k=1,2,
	\end{align*}
	This immediately implies \eqref{eq:property-flattening-map-length}. Moreover, we have
	\begin{align*}
		\restr{\bar{\mu}}{\hat{\El}_{j}}=\operatorname{det}(\D_{\hat{x}}\restr{\bar{\pi}}{\hat{\El}_j}) &=
			\langle\hat{\bm{t}}^1, \hat{\PP}_j\hat{\bm{t}}^1\rangle\cdot\langle\hat{\bm{t}}^2, \hat{\PP}_j\hat{\bm{t}}^2\rangle - \langle\hat{\bm{t}}^1, \hat{\PP}_j\hat{\bm{t}}^2\rangle\cdot\langle\hat{\bm{t}}^2, \hat{\PP}_j\hat{\bm{t}}^1\rangle = \|\hat{\PP}_j\hat{\bm{t}}^1\times\hat{\PP}_j\hat{\bm{t}}^2\| \langle\hat{\bm{n}}, \hat{\bm{n}}_j\rangle\,,
	\end{align*}
	and
	\begin{align*}
	\Abs{\|\hat{\PP}_j\hat{\bm{t}}^1\times\hat{\PP}_j\hat{\bm{t}}^2\|\langle\hat{\bm{n}}, \hat{\bm{n}}_j\rangle - 1}
	&= \frac{1}{2}\left(\langle \hat{\bm{t}}^1, \hat{\bm{n}}_j\rangle^2+\langle \hat{\bm{t}}^2, \hat{\bm{n}}_j\rangle^2\right)  + \frac{1}{2}\|\hat{\bm{n}} -\|\hat{\PP}_j\hat{\bm{t}}^1\times\hat{\PP}_j\hat{\bm{t}}^2\|\hat{\bm{n}}_j \|^2\\
	%
	&\leq 2\|\hat{\bm{n}}_j-\hat{\bm{n}}\|^2  +(1-\|\hat{\PP}_j\hat{\bm{t}}^1\times\hat{\PP}_j\hat{\bm{t}}^2\|)^2\\
	&\leq Ch^2.
	\end{align*}
\end{proof}

\subsection{Proof of Lemma \ref{lem:prop-parametric-macroelement}}\label{proof:prop-parametric-macroelement}
\begin{proof}
  We restrict the dicussion to a single element $\bar{T}$. With $F_h=\pi_h\circ\bar{\pi}^{-1}$ and $\hat{x}=\bar{\pi}^{-1}(\bar{x})$ we get by chain rule $\D_{\bar{x}} F_h(\bar{x}) = \D_{\hat{x}}\pi_h(\hat{x}) \cdot \D_{\hat{x}}\bar{\pi}(\hat{x})^{-1}$.
	As we have from \Cref{lem:prop-flattening} $\|\D_{\hat{x}}\bar{\pi}(\hat{x})-\Id\|\leq Ch$, for $h<h_0$ small enough we can express $\D_{\hat{x}}\bar{\pi}(\hat{x})^{-1}$ as a Neumann series
	\begin{equation*}
		\D_{\hat{x}}\bar{\pi}(\hat{x})^{-1} = \sum_{n=0}^{\infty}(\Id - \D_{\hat{x}}\bar{\pi}(\hat{x}))^n.
	\end{equation*}
	Thus, we have from \Cref{lem:prop-flattening} and the definition of $\bar{\pi}$ as piecewise affine
	\begin{align*}
		\|	\D_{\hat{x}}\bar{\pi}(\hat{x})^{-1}\|&\leq 1+Ch,\\
		\|	\D_{\hat{x}}\bar{\pi}(\hat{x})^{-1}-\Id\|&\leq \|	\D_{\hat{x}}\bar{\pi}(\hat{x})^{-1}\|\|	\D_{\hat{x}}\bar{\pi}(\hat{x})-\Id\| \leq Ch,\\
		\Abs{1 -\det	\D_{\hat{x}}\bar{\pi}(\hat{x})^{-1}}& \leq \Abs{\det	(\D_{\hat{x}}\bar{\pi}(\hat{x}))^{-1}}\Abs{1-\det	(\D_{\hat{x}}\bar{\pi}(\hat{x}))}\leq Ch^2, \textrm{and}\\
		\D_{\hat{x}}^k\bar{\pi}(\hat{x})^{-1}& = 0\quad \textrm{for } k\geq 2.
	\end{align*}
	This implies for a jump across an edge $\hat{e}=\bar{\pi}^{-1}(\bar{e})$ with $\bar{e}\in\bar{\mathcal{E}}$
	\begin{align*}
		\LInfNorm{\Jump{\D_{\hat{x}} \bar{\pi}^{-1}}}{\hat{e}}
			\leq\LInfNorm{\D_{\hat{x}}\bar{\pi}_{+}^{-1}-\Id}{\hat{e}} + \LInfNorm{\D_{\hat{x}}\bar{\pi}_{-}^{-1}-\Id}{\hat{e}} \leq Ch.
	\end{align*}
	For the piecewise polynomial approximation $\pi_h=\pi_{h,\kg}$ of $\pi$, we have
	\begin{alignat*}{3}
		\|\D_{\hat{x}}^{k}\pi_h (\hat{x})\|&\leq \|\D_{\hat{x}}^{k}\pi (\hat{x})\| + \|\D_{\hat{x}}^{k}\pi_h (\hat{x}) - \D_{\hat{x}}^{k}\pi (\hat{x}) \| \leq C (1+h^{\kg+1-k})\|\pi\|_{C^{\kg+1}(\hat{\El})}\quad &\textrm{for } 0&\leq k\leq \kg,\\
		\D_{\hat{x}}^{k}\pi_h (\hat{x})& = 0\quad &\textrm{for } k&\geq \kg+1.
	\end{alignat*}
	For the determinant, we have (see e.g. \cite[Lemma 4.4]{DE2013Finite})
	\begin{align*}
		\|{1-\sqrt{\det(\transposed{(\D_{\hat{x}}\pi_h)} \D_{\hat{x}}\pi_h)}}\|_{L^{\infty}(\hat{\El})}\leq Ch^{\kg+1}.
	\end{align*}
	For jump terms on an edge $\hat{e} = \hat\El_{+}\cap \hat\El_{-}$, we conclude
	\begin{align*}
		\LInfNorm{\Jump{\D_{\hat{x}} {\pi}_{h}}}{\hat{e}}
		&\leq\LInfNorm{\D_{\hat{x}} {\pi}_{h}^{+}-\D_{\hat{x}} {\pi}}{\hat{e}}
			 + \LInfNorm{\D_{\hat{x}} {\pi}_{h}^{-}-\D_{\hat{x}} {\pi}}{\hat{e}}\\
		&\leq Ch^{\kg}\|\pi\|_{C^{\kg+1}}.
	\end{align*}
	The estimates on the norm of $F_h$ and on the jump of $\D F_h$ now follow from the chain and product rules of differentiation. The estimate on the determinant follow from
	\begin{align*}
		\Abs{1 - \mu_h}\leq \Abs{\det(\D_{\hat{x}}\bar{\pi})^{-1}}\Abs{1-\sqrt{\det(\transposed{(\D_{\hat{x}}\pi_h)}\D_{\hat{x}}\pi_h)}}
		+ \sqrt{\det(\transposed{(\D_{\hat{x}}\pi_h)}\D_{\hat{x}}\pi_h)}\Abs{\det(\D_{\hat{x}}\bar{\pi})^{-1}-1}\leq Ch^2.
	\end{align*}
	For the derivative of the determinant, we use
	\begin{align*}
		\LInfNorm{\D_{\hat{x}}\sqrt{\operatorname{det}(\D_{\hat{x}}\pi_h(\hat{x}))^{T}\D_{\hat{x}}\pi_h(\hat{x}))}}{\hat{T}}
		&= \LInfNorm{[\operatorname{tr}\left(\operatorname{adj}(\D_{\hat{x}}\pi_h(\hat{x}))\cdot\D_{\hat{\theta}^i}\D_{\hat{x}}\pi_h(\hat{x})\right)]_i}{\hat{T}}\\
		& \leq C\frac{\LInfNorm{\D_{\hat{x}}\pi_h}{\hat{T}}^3 }{\LInfNorm{\sqrt{\operatorname{det}(\D_{\hat{x}}\pi_h(\hat{x}))^{T}\D_{\hat{x}}\pi_h(\hat{x}))}}{\hat{T}}} \LInfNorm{\D_{\hat{x}}^2\pi_h}{\hat{T}}\;.
	\end{align*}
	To obtain the estimate, we use that $\Abs{\D_{\hat{x}}^2\pi}\leq C h$, almost everywhere. This follows from the representation $\pi(x) = x - d(x)\bm{n}(\pi(x))$, boundedness of the curvature of $\G$ and the estimate on the distance and normal, cf. \cite[Lemma 4.1]{DE2013Finite}. With $\LInfNorm{\D_{\hat{x}}^2\pi - \D_{\hat{x}}^2\pi_h}{\hat{T}}\leq C h^{\kg-1}$ for $\kg > 1$ \cite[Proposition 2.3]{Demlow2009Higher} and $\D_{\hat{x}}^2\pi_{h,1}=0$ we then obtain
	\begin{align*}
		\LInfNorm{\D_{\hat{x}}^2\pi_h}{\hat{T}} &\leq \LInfNorm{\D_{\hat{x}}^2\pi - \D_{\hat{x}}^2\pi_h}{\hat{T}} + \LInfNorm{\D_{\hat{x}}^2\pi}{\hat{T}} \leq C h\,.
	\end{align*}
	 With $\D_{\hat{x}}\bar{\pi}^{-1}$ bounded and $\D_{\hat{x}}^2\bar{\pi}^{-1}=0$ the assertion follows.
\end{proof}

\bibliographystyle{alpha}
\bibliography{references}

\end{document}